\newtheorem{theorem}{Theorem}[section]
\newtheorem{proposition}{Proposition}[section]
\newtheorem{lemma}{Lemma}[section]
\newtheorem{remark}{Remark}[section]
\newtheorem{corollary}{Corollary}[section]
\theoremstyle{Definition}
\newtheorem{definition}{Definition}[section]
\newtheorem{example}{Example}[section]
\def\R{\mathbb{R}}
\def\C{\mathbb C}
\def\E{\mathcal E}
\def \H {\mathcal{H}}
\def \p{\varphi}
\def \e {\varepsilon}
\def \I {\mathcal I}
\def \F {\mathcal F}
\def \n {|n|}
\def \p {|p|}
\def \q {|q|}
\DeclareMathOperator{\capa}{cap}
\title{Harmonic maps with prescribed degrees on the boundary of an annulus and bifurcation of catenoids}
\author{Laurent Hauswirth \and R\'emy Rodiac}
\date{}
\begin{document}

\maketitle

\begin{abstract}
Let $A \subset \mathbb{R} ^2 $ be a smooth doubly connected domain. We consider the Dirichlet energy $E(u)=\int_{A} |\nabla 
u|²$, where $u:A \rightarrow \mathbb{C}$, and look for critical points of this energy with prescribed modulus $|u|=1$ on 
$\partial A$ and with prescribed degrees on the two connected components of $\partial A$. This variational 
problem is a problem with lack of compactness hence we can not use the direct methods of calculus of variations. Our 
analysis relies on the so-called Hopf differential and on a strong link between this problem and the problem of 
finding all minimal surfaces bounded by two $p$ covering of circles in parallel planes. We then construct new immersed minimal 
surfaces in $\mathbb{R} ^3$ with this property. These surfaces are obtained by bifurcation from a family of $p$-coverings of catenoids.
\end{abstract}

\section{Introduction and statement of the results}

Let $A \subset \mathbb{R}^2\simeq \C $ be a smooth, bounded, doubly connected domain of the form $A=\Omega 
\setminus \overline{\omega}$, where $\Omega$ and $\omega$ are simply connected smooth bounded domains and 
$\overline{\omega} \subset \Omega \subset \R^2$. We are interested here in solutions $u:A \rightarrow \C$ of the following equations

\begin{equation}\label{semi-stiff 1}
\left\{
\begin{array}{rcll}
\Delta u & =& 0, \ \ \text{in} \ A, \\
|u|&=&1, \ \ \text{a.e.} \ \text{on} \ \partial  A, \\
u\wedge \partial_\nu u & =& 0, \ \ \text{a.e.} \  \text{on} \ \partial A.
\end{array}
\right.
\end{equation}

\noindent where $\nu$  stands for the outer unit normal to $\partial A$ and $a\wedge b$ stands for the 
determinant of two 
vectors $a,b$ in $\R^2$.
\\

\noindent Solutions of \eqref{semi-stiff 1} are critical points  of the Dirichlet energy
\begin{equation}
E(u)= \frac{1}{2} \int_A |\nabla u|^2 dx
\end{equation}
in the space 

\begin{equation}
\I=\{u\in H^1(A,\mathbb{C}); |u| =1  \ \text{a.e. on} \ \partial A \}.
\end{equation}

$\I$ is not a vector space, and it does not have an obvious structure of smooth Banach manifold. By critical 
point we mean critical with respect to variations of the form $u_t=u+t\varphi$ for all $\varphi$ in 
$\mathcal{C}_c^\infty(A, \C)$ and $u_t=ue^{it\psi}$ for all $\psi$ in $\mathcal{C}^\infty (\overline{A},\R)$. It is classic that the 
first type of variations gives that $\Delta u=0$ in $A$ and we can see that the second type of variations implies 
that 

\begin{equation}\label{condition3}
u\wedge \partial_\nu u = 0, \ \text{a.e.} \  \text{on} \ \partial A.
\end{equation} 

Let us explain in more details what the condition \eqref{condition3} means. We can see that a solution $u$ of 
\eqref{semi-stiff 1} is sufficiently regular (c.f. proposition \ref{regularity}), then we can write locally near 
the boundary $u=|u|e^{i\varphi}$. The boundary conditions give us that Dirichlet boundary conditions are 
prescribed for the modulus, $|u|=1$ on $\partial A$ and homogeneous Neumann conditions are prescribed for the 
phase, $\frac{\partial \varphi}{\partial \nu}=0$ on $\partial A$. In the literature these boundary conditions are 
called ``semi-stiff" because it has been first studied in the context of Ginzburg-Landau (G.L in short) equations 
and in this context the Dirichlet problem is named ``stiff" problem whereas the homogeneous Neumann problem is 
called ``soft".\\

Solutions of \eqref{semi-stiff 1} are linked to the notion of \textit{$\frac{1}{2}$-harmonic 
maps} defined by F.Da 
Lio and T.Rivière in \cite{DaLio}. 
\begin{definition}
Let $U$ be a smooth bounded open  set in $\R^2$, $g: \partial U \rightarrow \mathbb{S}^1$ is a \textbf{$\frac{1}
{2}$-harmonic map} if it is a critical point, for variations of the form $g_t=ge^{it\psi}$ with $\psi \in H^{1/2}
(\partial U,\R)$, of the functional
$$F(g)= \|g\|_{\dot H^{1/2}(\partial U)}^2 =\frac{1}{2}\int_U |\nabla \tilde{u}(g)|^2,$$

\noindent where $\|g\|_{\dot H^{1/2}(\partial U)}$ is the homogeneous $H^{1/2}$ Sobolev norm and $\tilde{u}(g)$ 
denotes the harmonic extension of $g$ in $U$.
\end{definition}

Equivalently, following \cite{Fraserr}, we can say that $g: \partial U \rightarrow \mathbb{S}^1$ is a $\frac{1}{2}$-harmonic map if there is a harmonic map 
$u:U \rightarrow \mathbb{D}$ such that $u=g$ on $\partial U$ and $u$ is stationary with respect to variations which 
preserve $\mathbb{D}$ but do not necessarily preserve $\mathbb{S}^1$. One can then see that $g: \partial A \rightarrow \C$ is a 
$\frac{1}{2}$-harmonic map if and only if its harmonic extension $\tilde{u}(g)$ is a solution of \eqref{semi-stiff 1}. We also note that the problem \eqref{semi-stiff 1} has some connections with the so-called Steklov 
eigenvalue problem. Indeed we can rewrite condition \eqref{condition3} as: there exists  $\sigma:\partial A 
\rightarrow \R$ such that 

\begin{equation}\label{condition4}
\partial_\nu u(x)= \sigma (x) u(x)\ \text{for all} \ x\in \partial A.
\end{equation}

The Steklov eigenvalue problem in an annulus $A$ consists in finding a function $f$ and a real number $\sigma$ 
such that 

\begin{equation}\label{Steklov}
\left\{
\begin{array}{rcll}
\Delta f& =& 0, &  \ \text{in} \ A, \\
\partial_\nu f & =& \sigma f, & \ \text{on} \ \partial A.
\end{array}
\right.
\end{equation}
If $(f,\sigma)$ exists then $\sigma$ is an eigenvalue of the Dirichlet-to-Neumann map. This is the map

$$L: \mathcal{C}^\infty(\partial A) \rightarrow \mathcal{C}^\infty (\partial A)$$
given by 
$$Lg=\partial_\nu \tilde{u}(g)$$

\noindent where $\tilde{u}(g)$ denotes the harmonic extension of $g$ in $A$. If it happens that $u$ is a solution of 
\eqref{semi-stiff 1} which satisfies the condition \eqref{condition4} with the function $\sigma$ which is 
constant then $(\text{Re}(u_{|\partial A}), \text{Im}(u_{|\partial A}))$ are both Steklov eigenfunctions  associated to the 
same Steklov eigenvalue and which satisfy 
$ \text{Re} (u_{|\partial A})^2 +\text{Im} (u_{|\partial A})^2=1$. We will see that if we add the hypothesis that $(\text{Re}(u_{|
\partial A}), \text{Im}(u_{|\partial A}))$ are Steklov eigenfunctions then we are able to describe solutions $u$ of 
\eqref{semi-stiff 1} (c.f. proposition \ref{linkwithSteklov}).  The Steklov eigenvalue problem and $\frac{1}{2}$-
harmonic maps are linked to free boundary minimal surfaces in the euclidean ball $B^n$ for further references on 
this topic see \cite{Fraser}, \cite{Fraserr}, \cite{Schoen}.\\

The physical motivation for studying semi-stiff boundary conditions comes from the following fact. While they 
were examining the  Ginzburg-Landau model, the authors of \cite{BBH} suggested to consider a simplified Ginzburg-
Landau functional
\begin{equation}
E_\e(u)= \frac{1}{2}\int_G|\nabla u|^2 +\frac{1}{4\e^2}(1-|u|^2)^2dx
\end{equation}
and to prescribe a Dirichlet boundary condition $u=g$ on the boundary $\partial G$ (in their work $G$ was a 
simply connected domain). They observed that a boundary data such that $|u|=1$ on $\partial G$ and $\deg(g,
\partial G)=d$ creates ``[...]the same ``quantized vortices" as a magnetic field in type-II superconductors or as 
angular rotation in superfluids." 
However they mentioned that ``in physical situations the Dirichlet condition is not realistic" because in the G.L 
theory only $|u|^2$ has a physical meaning (it is the density of Cooper pairs of electrons see for example 
\cite{ESSS}). That is why, some years later in the work \cite{berlyand2002symmetry} the authors tried to relax 
this condition by imposing only the condition $|u|=1$ on the boundary. 
 The Dirichlet energy can be viewed as a limit, when $\e$ goes to infinity of the Ginzburg-landau energy and it
was studied by L.Berlyand, V.Rybalko, P.Mironescu, E.Sandier in the work \cite{BRMS} in the case of a simply connected  domain.\\

Functions of the space $\I$ are classified by their \textit{degrees} on the two boundaries of the domain. If $g \in 
\mathcal{C}^1(\Gamma, \mathbb{S}^1)$, where $\Gamma$ is a smooth, simple, closed curve then by definition

\begin{equation}
\deg(g,\Gamma)= \int_{\Gamma} g \wedge \partial _\tau g d\tau.
\end{equation}

We can still define the degree for maps in $H^{\frac{1}{2}}(\Gamma,\mathbb{S}^1)$. We refer to section 2 for more information and references on the degree. We set 
\begin{equation}
\I_{p,q}=\{ u \in \I ; \deg(u,\partial \Omega)=p \ \text{and} \ \deg(u,\partial \omega)=q \}
\end{equation}
\noindent and 
\begin{equation}
m(p,q)= \inf\{E(v) ; v\in \I_{p,q}\}.
\end{equation}
 
The main results of this paper describe the existence and non-existence of solutions of \eqref{semi-stiff 1} in 
each $\I_{p,q}$, $(p,q)\in \mathbb{Z}²$ with a special emphasis on minimizing solutions (i.e. minimizers of the 
Dirichlet energy $E$ in spaces $\I_{p,q}$). Let us recall that the degree is not continuous under the weak 
convergence in $H^{\frac{1}{2}}$ as shown by the following example.

\begin{example}
Let $M_n:\mathbb{D} \rightarrow \mathbb{D}$ defined by  $M_n(z)= \frac{z-(1-1/n)}{1-(1-1/n)z}$, then $M_n \rightharpoonup -1$ 
weakly in $H^1$, 
$\deg(M_n(z),\mathbb{S}^1)=1$   for all $n \in \mathbb{N}$ but $\deg(-1,\mathbb{S}^1)=0$.
\end{example}

Hence we can not use the direct methods of calculus of variations. We are in presence of a problem of 
\textbf{lack of compactness}. The same phenomenon occurs for the study of the G.L equations with semi-stiff 
boundary conditions which were studied in \cite{berlyand2002symmetry},\cite{Capacity}, \cite{Nonexistence},\cite{uniqueness},\cite{unpublished},\cite{Solutions},\cite{ multiplyconnected}, \cite{BRMS},
\cite{existenceofminmaxcritical} and \cite{Size}.
\\

In the case of a simply connected domain $\Omega$, in \cite{BRMS} the authors obtained all the critical points of 
the Dirichlet energy with prescribed degrees. Using the conformal invariance of the Dirichlet energy and the 
Riemann's theorem they assumed that $\Omega=\mathbb{D}$ and $\partial \Omega=\mathbb{S}^1$. Recall that a 
Blaschke product is a map of the form 

$$ B_{\alpha,z_1,...,z_d}= \alpha \prod_{j=1}^d \frac{z-z_j}{1-\overline{z_j}z}, \ \ z\in \overline{\mathbb{D}}, 
\ \alpha \in \mathbb{S}^1, \ z_j \in \mathbb{D}, \ \text{for} \ j=1,... d.$$ Then using a lemma similar to 
\ref{Comparison1} and  a tool called the Hopf quadratic differential (c.f. section 3) they proved:

\begin{theorem}[\cite{BRMS}]\label{BRMS1}

The critical points of $E$ in $\I_d=\{u \in \I; \deg(u,\mathbb{S}^1)=d\}$ are precisely
\begin{itemize}
\item[a)] the $d$-Blaschke products if $d>0$,
\item[b)] the conjugates of $(-d)$-Blaschke products if $d<0$,
\item[c)] constant of modulus 1 if $d=0.$
\item[d)] All these solutions are minimizing in $\I_d$.

\end{itemize}  
\end{theorem}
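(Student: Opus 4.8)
The plan is to locate the Hopf quadratic differential of a solution and then to compare its Dirichlet energy with the Jacobian. Throughout I assume, via proposition \ref{regularity}, that a solution $u$ of \eqref{semi-stiff 1} on $\mathbb{D}$ is regular up to $\mathbb{S}^1$, so that every boundary trace below is classical. Writing $\partial_z=\tfrac12(\partial_x-i\partial_y)$ and $\partial_{\bar z}$ for its conjugate, I set $\Phi:=\partial_z u\,\overline{\partial_{\bar z}u}$. Since $u$ is harmonic, $\Delta u=4\,\partial_z\partial_{\bar z}u=0$, so $\partial_z u$ is holomorphic and $\overline{\partial_{\bar z}u}=\partial_z\bar u$ is holomorphic; hence $\Phi$ is holomorphic on $\mathbb{D}$, in particular bounded near $0$. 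The whole argument rests on showing $\Phi\equiv 0$.

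First I would analyse $\Phi$ on the boundary. On $\mathbb{S}^1$ one has $z\,\partial_z=\tfrac12(\partial_\nu-i\partial_\tau)$ and likewise $z\,\overline{\partial_{\bar z}u}=\tfrac12(\partial_\nu\bar u-i\partial_\tau\bar u)$, whence
\begin{equation*}
z^2\Phi=\tfrac14\bigl(|\partial_\nu u|^2-|\partial_\tau u|^2\bigr)-\tfrac{i}{2}\,\text{Re}\bigl(\partial_\nu u\,\overline{\partial_\tau u}\bigr).
\end{equation*}
The two boundary conditions now enter: $|u|=1$ on $\mathbb{S}^1$ forces $\partial_\tau u=i\lambda u$ with $\lambda$ real, while $u\wedge\partial_\nu u=0$, i.e. \eqref{condition4}, forces $\partial_\nu u=\sigma u$ with $\sigma$ real. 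Substituting gives $\partial_\nu u\,\overline{\partial_\tau u}=-i\sigma\lambda|u|^2=-i\sigma\lambda$, which is purely imaginary, so $\text{Re}(\partial_\nu u\,\overline{\partial_\tau u})=0$ and $z^2\Phi$ is real on $\mathbb{S}^1$. Being holomorphic on $\mathbb{D}$ and continuous up to the boundary, $z^2\Phi$ has harmonic imaginary part vanishing on $\mathbb{S}^1$, hence vanishing identically; a holomorphic function with zero imaginary part is constant, so $z^2\Phi\equiv c\in\R$, and letting $z\to0$ with $\Phi$ bounded at $0$ forces $c=0$. Thus $\Phi\equiv 0$.

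The factorization is then immediate: $\partial_z u$ and $\overline{\partial_{\bar z}u}$ are holomorphic on the connected set $\mathbb{D}$ with identically vanishing product, so one of them vanishes identically (the ring of holomorphic functions has no zero divisors). Hence $u$ is holomorphic ($\partial_{\bar z}u\equiv0$) or anti-holomorphic ($\partial_z u\equiv0$). In the holomorphic case $u$ is regular on $\overline{\mathbb{D}}$ with $|u|=1$ on $\mathbb{S}^1$; its zeros in $\mathbb{D}$ are finite in number (continuity and $|u|=1$ on the boundary preclude accumulation), say $z_1,\dots,z_k$ with multiplicity, and dividing by the Blaschke product $B$ carrying these zeros gives a non-vanishing holomorphic $u/B$ with $|u/B|=1$ on $\mathbb{S}^1$; applying the maximum principle to $u/B$ and to $B/u$ yields $u=\alpha B$, $\alpha\in\mathbb{S}^1$, and the argument principle gives $k=\deg(u,\mathbb{S}^1)$. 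This produces case a) when $d=k>0$ and case c) when $d=k=0$; the anti-holomorphic case is the conjugate statement, giving $\bar u$ a Blaschke product of degree $-d$ and hence case b) for $d<0$. Conversely, every such (anti)holomorphic Blaschke product is harmonic and, since $|u|=1$ on $\mathbb{S}^1$ gives $\partial_\nu u=-i\partial_\tau u$ together with $\text{Re}(\bar u\,\partial_\tau u)=0$, satisfies $u\wedge\partial_\nu u=0$; so these maps are genuine solutions with the prescribed degree.

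Finally, for part d) I would use $|\nabla u|^2=2(|\partial_z u|^2+|\partial_{\bar z}u|^2)$ and $\det Du=|\partial_z u|^2-|\partial_{\bar z}u|^2$, so that $E(u)=\int_{\mathbb{D}}(|\partial_z u|^2+|\partial_{\bar z}u|^2)\ge\bigl|\int_{\mathbb{D}}\det Du\bigr|$, with equality exactly when $u$ is holomorphic (positive sign) or anti-holomorphic (negative sign); this is the comparison underlying the analogue of lemma \ref{Comparison1}. By Stokes' theorem $\int_{\mathbb{D}}\det Du=\tfrac12\int_{\mathbb{S}^1}u\wedge\partial_\tau u\,d\tau$ is a boundary integral that the degree theory of section 2 identifies as a fixed multiple of $d$ on all of $\I_d$; hence $E(u)\ge\bigl|\int_{\mathbb{D}}\det Du\bigr|=m(d)$, and the bound is saturated precisely by the (anti)holomorphic Blaschke products. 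Combined with the previous steps this shows simultaneously that they minimize and that they exhaust the critical set. The main obstacle is the passage to the boundary: making the identity for $z^2\Phi$ rigorous and justifying the Jacobian–degree formula for merely $H^1$ maps both rely on proposition \ref{regularity} and on the $H^{1/2}$ degree theory of section 2, which must be in place before the complex-analytic argument can be run.
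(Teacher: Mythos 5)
Your proof is correct and follows essentially the route the paper attributes to \cite{BRMS}: the paper only cites this theorem, noting that its proof rests on the Hopf quadratic differential together with a comparison lemma like lemma \ref{Comparison1}, which is exactly the structure of your argument. Your key step --- that $z^2\H_u$ is a real constant which must vanish because $\H_u$ is holomorphic across the origin, so that $u$ is conformal, hence a (conjugate) Blaschke product saturating the bound $E(u)\geq \pi |d|$ --- is precisely the disc specialization of the machinery the paper develops for the annulus in lemma \ref{Hopfcst}, lemma \ref{numberofzeros2} and lemma \ref{minholo}.
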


In the case of a doubly connected domain $A$, the conformal invariance of the energy $E$ allows us to assume that 
$A=\{z\in \C ; \varrho < |z| < 1 \}$, where $\frac{2\pi}{\ln(1/\varrho)}=\capa(A)$ and $\capa(A)$ is the capacity of 
the domain (c.f. section 2 for a precise definition of the capacity). In the case $p=q=1$, L.Berlyand and 
P.Mironescu in \cite{unpublished} have showed that 

\begin{proposition}[ \cite{unpublished}]\label{BerlyandMironescu}
The only minimizers of $E$ in $\I_{1,1}$ are of the form
$$u_1(z)=\alpha\frac{1}{1+\varrho}(r+\frac{\varrho}{r})e^{i\theta}, $$
where $\alpha$ is a constant of modulus one. Moreover
$$m(1,1)= 2\pi\frac{1-\varrho}{1+\varrho}< 2\pi.$$
\end{proposition}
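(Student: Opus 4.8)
The plan is to prove the sharp lower bound
\[
E(u)\ \geq\ 2\pi\,\frac{1-\varrho}{1+\varrho}\qquad\text{for every } u\in\I_{1,1},
\]
with equality if and only if $u=u_1$ for some $\alpha\in\mathbb S^1$. Since a direct computation gives $E(u_1)=2\pi\frac{1-\varrho}{1+\varrho}$, and this quantity is $<2\pi$ because $\varrho>0$, such a statement delivers at once the value of $m(1,1)$, the existence of a minimizer (namely $u_1$), and uniqueness up to a unimodular constant. I would obtain it by a Fourier analysis in the angular variable combined with a per-mode Dirichlet principle, which turns the problem into a finite-dimensional convex minimization.

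Write $u(r,\theta)=\sum_{n\in\mathbb Z}c_n(r)e^{in\theta}$; by Parseval the energy splits over modes,
\[
E(u)=\pi\sum_{n\in\mathbb Z}\int_\varrho^1\Big(r\,|c_n'(r)|^2+\frac{n^2}{r}\,|c_n(r)|^2\Big)\,dr .
\]
Setting $x_n:=|c_n(1)|^2$ and $y_n:=|c_n(\varrho)|^2$, the boundary conditions become purely modal: $|u|=1$ on each circle gives $\sum_n x_n=\sum_n y_n=1$, while the degree formula $\deg(v)=\sum_n n\,|\widehat v_n|^2$ for a unit-modulus trace $v$ turns the prescribed degrees into $\sum_n n\,x_n=\sum_n n\,y_n=1$. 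For fixed endpoints $c_n(1),c_n(\varrho)$ the radial integral is minimized by the solution of the Euler equation $r^2c_n''+rc_n'-n^2c_n=0$, that is $c_n=\alpha_n r^{|n|}+\beta_n r^{-|n|}$ for $n\neq0$ and $c_0=\alpha_0+\beta_0\log r$; substituting and then using $\mathrm{Re}\big(\overline{c_n(1)}\,c_n(\varrho)\big)\leq\sqrt{x_n y_n}$ to optimize over the relative phase yields the modal lower bound $E(u)\geq\pi\sum_n\mathcal E_n$, where
\[
\mathcal E_n=\frac{|n|}{1-\varrho^{2|n|}}\Big[(1+\varrho^{2|n|})(x_n+y_n)-4\varrho^{|n|}\sqrt{x_n y_n}\Big]\ (n\neq0),\qquad
\mathcal E_0=\frac{(\sqrt{x_0}-\sqrt{y_0})^2}{\log(1/\varrho)} .
\]

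It then remains to minimize $\pi\sum_n\mathcal E_n$ over $x_n,y_n\geq0$ subject to the four linear constraints above. Because the geometric mean $\sqrt{x_ny_n}$ is concave, each $\mathcal E_n$ is jointly convex in $(x_n,y_n)$, so this is a convex program; I expect its unique solution to be the mode-$1$ concentration $x_n=y_n=\delta_{n,1}$, with value $\pi\,\mathcal E_1(1,1)=2\pi\frac{1-\varrho}{1+\varrho}$. Tracing equality back through the inequalities forces $c_n\equiv0$ for $n\neq1$, $|c_1(1)|=|c_1(\varrho)|=1$ with aligned phases, and the harmonic radial profile $c_1(r)=\frac{\alpha}{1+\varrho}(r+\varrho/r)$, i.e. exactly $u=u_1$.

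The hard part will be this constrained optimization, precisely because the penalty $-4\varrho^{|n|}\sqrt{x_ny_n}$ couples the two boundary circles: exciting a mode $n\neq1$ both costs positive energy in that mode and lowers the mode-$1$ boundary moduli below $1$ (hence lowers $\mathcal E_1$), so one must check the net balance is always unfavorable. By convexity it suffices to verify the first-order (KKT) conditions at the candidate point, i.e. that switching on each individual mode $n\neq1$---including the coupled excitation $x_n=y_n>0$ compensated through the $n=0$ mode---has nonnegative directional derivative; these reduce to elementary inequalities such as $(1+\varrho)^2>1+\varrho^2$ and the positivity of $1/\log(1/\varrho)$. As a structural cross-check consistent with the role of the Hopf differential in this paper, any critical point is harmonic with holomorphic Hopf differential $h=u_z\overline{u_{\bar z}}$, and the boundary conditions $|u|=1$, $u\wedge\partial_\nu u=0$ force $z^2h(z)$ to be real on $\partial A$, hence constant by the maximum principle, so $h=c/z^2$ with $c\in\R$; this matches the explicit $u_1$ and independently constrains the form of all solutions.
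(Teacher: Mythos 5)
Your proposal is correct and is essentially the route the paper itself indicates: Proposition \ref{BerlyandMironescu} is not proved here but imported from \cite{unpublished}, whose argument the authors describe as exactly this --- expand the boundary trace in Fourier series, express the harmonic-extension energy and the two degree constraints through the Fourier coefficients, and minimize directly. Your per-mode energies $\mathcal{E}_n$, the convexity of the resulting program, and the equality analysis forcing $c_n\equiv 0$ for $n\neq 1$ all check out, so nothing is missing beyond writing out the KKT verification you sketch.
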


In order to prove this result they used the fact that
\begin{equation}
m(p,q)= \inf_{g: \partial A \rightarrow \mathbb{S}^1} E(\tilde{u}(g))
\end{equation}

\noindent where $g$ satisfies  $\deg(g,\partial \Omega)=p$, $\deg(g,\partial \omega)=q$ and $\tilde{u}(g)$ is the harmonic 
extension of $g$ in $A$. This is the $\frac{1}{2}$-harmonic maps point of view. Their approach consisted in 
writing $E(\tilde{u}(g))$ for any $g$ boundary data with degree one on the two boundaries in function of the 
Fourier coefficient of $g$. Then they minimized that quantity under the constraint $\deg(g,\partial 
\Omega)=\deg(g,\partial \omega)=1$ directly, using the expression of the degree in terms of the Fourier 
coefficients of the boundary data (see \cite{oldanew} for a formula of the degree involving Fourier coefficient). 
We can not use the same method when $p>1$ or $p\neq q$.  \\

We note that we do not need to study all classes $\I_{p,q}$. It is possible to reduce the number of cases to study. Indeed, thanks to the properties of the degree (c.f. lemma \ref{propertiesofdegree}), one can see that if $u$ is a minimizer of $E$ in $\I_{p,q}$ then $\bar{u}$ is a minimizer of $E$ in $\I_{-
p,-q}$, and $u(\frac{\sqrt{\varrho}}{\bar{z}})$ is a 
minimizer of $E$ in $\I_{q,p}$ (this latter fact is true because of the conformal invariance of the Dirichlet 
Energy). Thanks to this remark we can restrict ourselves to three different cases: $p\geq 0\geq q$, $p=q>0$,  $p>q>0$.

The main results of the paper are the followings:

\begin{theorem}\label{theorem1}
Let $p \geq 0 \geq q$ then $m(p,q)= \pi(p+|q|)$ and 
\begin{itemize}
\item[1)] if $p=q=0$ then the only minimizing solutions of \eqref{semi-stiff 1} are constants in $\mathbb{S}^1$.
\item[2)] If $p>0$ and $q=0$ then there is no solution of \eqref{semi-stiff 1} in $\I_{p,0}$, in particular there 
is no minimizer.
\item [3)] If $p>0>q$ then there exist an infinite number of solutions of \eqref{semi-stiff 1}, all solutions are holomorphic and 
energy minimizing, i.e. m(p,q) is attained.
\end{itemize}
\end{theorem}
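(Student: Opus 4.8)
The plan is to first pin down the value $m(p,q)=\pi(p+|q|)$ together with the lower bound, then use the Hopf differential to show that \emph{every} solution is holomorphic, and only afterwards split into the existence statement (Part 3) and the non-existence statement (Part 2). I would start from the pointwise identities $\tfrac12|\nabla u|^2=|\partial_z u|^2+|\partial_{\bar z}u|^2$ and $\det\nabla u=|\partial_z u|^2-|\partial_{\bar z}u|^2$, which give $\tfrac12|\nabla u|^2\ge|\det\nabla u|\ge\det\nabla u$, with equality throughout exactly when $\partial_{\bar z}u=0$. By Stokes' formula and the definition of the degree, $\int_A\det\nabla u\,dx=\tfrac12\int_{\partial A}u\wedge\partial_\tau u\,d\tau=\pi(p-q)$ (outer boundary minus inner, for the induced orientation), so for every $u\in\I_{p,q}$ one gets $E(u)\ge\pi(p-q)=\pi(p+|q|)$ when $p\ge 0\ge q$, and any minimizer is forced to be holomorphic. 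This already settles Part 1: for $p=q=0$ the bound reads $E\ge 0$, with equality iff $\nabla u\equiv 0$, i.e. $u$ is a constant in $\mathbb{S}^1$.

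Next I would exploit the Hopf differential $\Phi=\langle u_z,u_z\rangle\,dz^2=(\partial_z u)(\partial_z\bar u)\,dz^2$, which is holomorphic on $A$ by harmonicity and, thanks to $|u|=1$ and $u\wedge\partial_\nu u=0$, has real $d\theta^2$-component on each boundary circle. A Laurent expansion shows that the only holomorphic quadratic differentials on the annulus that are real on both circles are $\Phi=c\,dz^2/z^2$ with $c\in\R$. Suppose $c\ne 0$. Then $P:=\partial_z u$ and $Q:=\overline{\partial_{\bar z}u}$ are holomorphic and nowhere vanishing on $\overline A$ (since $PQ=c/z^2$), so $P$ has a winding number which is the same along $\partial\Omega$ and $\partial\omega$. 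Using the boundary regularity of Proposition \ref{regularity}, $2\partial_z u=e^{-i\theta}(\partial_r u-\tfrac{i}{r}\partial_\theta u)$, together with $\partial_\theta u=i\psi' u$ (from $|u|=1$, writing $u=e^{i\psi}$) and $\partial_r u\in\R\,u$ (semi-stiff), gives $P=\tfrac12 e^{-i\theta}\lambda\,u$ on each circle with $\lambda$ real and nonvanishing; hence the winding of $P$ equals $p-1$ on $\partial\Omega$ and $q-1$ on $\partial\omega$. Equality forces $p=q$, a contradiction whenever $p\ne q$. Thus in both remaining regimes $c=0$, so $u$ is holomorphic or antiholomorphic, and the antiholomorphic case is excluded by $\int_A\det\nabla u\le 0<\pi(p-q)$. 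Every solution is therefore holomorphic, and by the first paragraph it is energy–minimizing.

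For Part 3 it remains to produce holomorphic elements of $\I_{p,q}$ and to see there are infinitely many; note that a holomorphic $u$ with $|u|=1$ on $\partial A$ automatically satisfies $u\wedge\partial_\nu u=0$ and has energy exactly $\pi(p-q)=m(p,q)$, so it is automatically a minimizing solution. I would construct these as finite Blaschke products for the annulus, built via the reflections $z\mapsto 1/\bar z$ and $z\mapsto \varrho^2/\bar z$ as quotients of theta functions on the torus $\mathbb{C}^*/\langle z\mapsto\varrho^2 z\rangle$, prescribing the $p-q\ge 2$ zeros in $A$ subject to the single balancing relation that keeps $|u|=1$ on both circles; the surviving freedom in the zeros, together with the overall phase $\alpha\in\mathbb{S}^1$, yields infinitely many distinct minimizing solutions. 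Finally, for Part 2 any solution would be holomorphic with $|u|=1$ on $\partial A$, so $\log|u|$ is subharmonic, $\le 0$ inside, with boundary maximum $0$; the Cauchy--Riemann relations give $\int_{\{|z|=1\}}\partial_r\log|u|\,d\theta=2\pi p$ and $\int_{\{|z|=\varrho\}}\partial_r\log|u|\,d\theta=2\pi q/\varrho$, with $\partial_r\log|u|\ge 0$ on $\partial\Omega$ and $\le 0$ on $\partial\omega$ by the maximum principle. If $q=0$ the inner integrand vanishes identically, so Hopf's lemma forces $\log|u|\equiv 0$, hence $u$ constant, contradicting $p>0$; thus $\I_{p,0}$ contains no solution, while $m(p,0)=\pi p$ is a non-attained infimum realized only in the limit by a degenerating sequence carrying a degree-$p$ bubble to the boundary. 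The main obstacle is the explicit construction and counting of the annular Blaschke products in Part 3; the rigidity (all solutions holomorphic) and the non-existence in Part 2 then follow cleanly from the Hopf differential and the maximum principle.
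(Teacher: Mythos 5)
Your overall strategy coincides with the paper's: the pointwise inequality $\tfrac12|\nabla u|^2\ge \det\nabla u$ together with $\int_A \partial_x u\wedge\partial_y u=\pi(p-q)$ (Proposition \ref{diffdegree}) gives the lower bound and forces minimizers to be holomorphic (Lemma \ref{minholo}); the Hopf differential $c\,dz^2/z^2$ with $c\in\R$ (Lemma \ref{Hopfcst}) reduces everything to the dichotomy ``$c=0$ or $p=q$''; and the Hopf-lemma argument on $\log|u|$ excluding holomorphic maps from $\I_{p,0}$ is exactly Lemma \ref{obstruction}. The one genuinely different ingredient is your proof that $c\neq0$ forces $p=q$: you take the winding number of the single nowhere-vanishing holomorphic function $\partial_z u$ (equal on the two boundary circles by the argument principle, since $(\partial_z u)(\partial_z\bar u)=c/z^2$ has no zeros on $\overline A$) and identify it with $p-1$ resp.\ $q-1$ from the boundary conditions, whereas the paper (Proposition \ref{nopq}) splits into the cases $c>0$ and $c<0$ and computes the degree of $\partial_r u/|\partial_r u|$ or of $\partial_\theta u/|\partial_\theta u|$. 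Your version is more unified and equally rigorous, granted the boundary regularity of Proposition \ref{regularity} that you invoke.

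Two points are asserted rather than proved and deserve attention. First, when $q=0$ there is no holomorphic competitor, so the equality $m(p,0)=\pi p$ requires the upper bound $m(p,0)\le\pi p$ separately; your ``degenerating sequence carrying a degree-$p$ bubble'' is the right idea but is a nontrivial construction --- it is precisely the content of Lemma \ref{Comparison1}, which you should either prove or cite. Second, the existence statement in part 3) is where most of the paper's work lies (Proposition \ref{holomorphicsolution}): one must write the theta-quotient explicitly, check convergence of the infinite products, verify $|u|=1$ on \emph{both} circles (this is where the balancing relation $-\sum_i\ln|x_i|/\ln\varrho=q$ enters, and one must also check that admissible configurations of zeros exist, which uses $p-q>|q|$), and compute the boundary degrees by the argument principle applied to truncations of the product. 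You have correctly identified every one of these ingredients, including the balancing condition, but as you yourself acknowledge the construction is not carried out; as written, the existence and infinitude claims in 3) are a plan rather than a proof.
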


\begin{theorem}\label{theorem2}
Let $p=q\geq 2$ then there exist critical values of $\varrho$ called $\varrho_p$ and $\varrho_p'$ such that $\varrho_{p}' 
\leq \varrho_p$ and
\begin{itemize}
\item[1)] If $\varrho > \varrho_p$ then $m(p,p)$ is attained by a unique (modulo rotations) radially symmetric 
minimizer. Hence $m(p,p)=2\pi p\frac{1-\varrho^p}{1+\varrho^p}$.
\item[2)] If $\varrho < \varrho_p'$ then the radially symmetric solution $u_p(z)=\frac{1}{1+\varrho^p}(r^p+\frac{\varrho^p}
{r^p})e^{ip\theta}$ is not minimizing. Furthermore it holds that $\sqrt{2}-1 =\varrho'_2<\varrho'_3<...<\varrho'_p 
<...<1$.
\item[3)] In the case $p=q=2$, we have that if $ \varrho'_2 \leq \varrho \leq \varrho_2$ then $m(2,2)$ is attained.
\end{itemize}
\end{theorem}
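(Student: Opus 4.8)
The plan is to treat $u_p$ as a reference critical point and to measure every other configuration against it. First I would record that $u_p$ solves \eqref{semi-stiff 1} for \emph{every} $\varrho$: writing it as
$$u_p(z)=\frac{1}{1+\varrho^p}\Big(z^p+\frac{\varrho^p}{\bar z^{p}}\Big),$$
it is manifestly harmonic (a holomorphic plus an anti-holomorphic term), has modulus $1$ on both circles, and satisfies $u_p\wedge\partial_\nu u_p=0$ since on each circle $\partial_\nu u_p$ is a real multiple of $u_p$; its energy is the stated $2\pi p\frac{1-\varrho^p}{1+\varrho^p}$, obtained from the one-dimensional integral $\pi\int_\varrho^1\big((f')^2+\tfrac{p^2}{r^2}f^2\big)r\,dr$ with $f(r)=\frac{r^p+\varrho^p r^{-p}}{1+\varrho^p}$. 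The organising tool is the Hopf differential $\Phi=(\partial_z u)\,\overline{(\partial_{\bar z}u)}$, holomorphic whenever $u$ is harmonic; the boundary conditions force $z^2\Phi(z)$ to be a real constant $c$ (reflect across each circle and use that a holomorphic quadratic differential real on both boundaries of the annulus is a real multiple of $dz^2/z^2$), and for $u_p$ one computes $c=-p^2\varrho^p/(1+\varrho^p)^2<0$. This rigidity is what replaces the missing compactness, as it constrains the critical set to an essentially finite-dimensional family.

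\emph{Part 1 (thin annulus).} Here I would first secure the existence of a minimizer and then identify it. Since $E(u_p)<2\pi p$ for all $\varrho$, while the only way a minimizing sequence can fail to converge is by letting degree escape to a boundary (producing in the limit a configuration whose energy is bounded below by a fixed dissociation threshold), and since $E(u_p)\to 0$ as $\varrho\to1$, a minimizer exists for $\varrho$ near $1$. Any minimizer solves \eqref{semi-stiff 1}, hence carries a Hopf differential $c/z^2$ with $c$ real. I would then combine a comparison in the spirit of Lemma \ref{Comparison1} with this rigidity to force the radial profile for $\varrho$ above a threshold $\varrho_p$: writing $u=|u|e^{i\varphi}$ near the boundary and controlling the angular variation of $\varphi$ on a thin annulus pins the map to its rotationally symmetric rearrangement, and the reduced one-dimensional energy is minimised exactly by $f$, with uniqueness modulo rotation coming from its strict convexity. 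The value $m(p,p)=2\pi p\frac{1-\varrho^p}{1+\varrho^p}$ is then this reduced minimum.

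\emph{Part 2 (fat annulus).} To show $u_p$ is not minimising for small $\varrho$ I would compute the second variation of $E$ at $u_p$ inside $\I_{p,p}$ and locate where it turns indefinite. Using $m(p,p)=\inf_g E(\tilde u(g))$, I perturb the boundary phases by $e^{i(p\theta+t\,\psi(\theta))}$ on each circle; the degrees are preserved, and because the harmonic extension decouples across Fourier modes, the quadratic term in $t$ splits into a sum over angular frequencies. The destabilising frequency is $k=1$, coupling the sidebands $e^{i(p\pm1)\theta}$; on this two-dimensional space the second variation is an explicit quadratic form in $\varrho$, and setting its determinant to zero defines $\varrho_p'$. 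For $p=2$ the condition reduces to $\varrho^2+2\varrho-1=0$, whose positive root is $\sqrt2-1$. Below $\varrho_p'$ the form has a negative direction, producing a competitor with $E<E(u_p)$, so $u_p$ is not minimising (this is the harmonic-map counterpart of the classical instability of the catenoid). The ordering $\varrho_2'<\varrho_3'<\dots\to1$ follows by monotonicity analysis of the threshold condition in $p$, and $\varrho_p'\le\varrho_p$ because failing to be a local minimiser forbids being the global one.

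\emph{Part 3 ($p=2$, intermediate range).} The remaining point is attainment of $m(2,2)$ when $\varrho_2'\le\varrho\le\varrho_2$, where $u_2$ is a local but possibly not global minimiser. Here I would run a concentration-compactness argument for a minimizing sequence, using the analysis of how degree is lost to the boundary (as in the Ginzburg-Landau references) to enumerate the finitely many energy levels of the possible escaped limits, and then verify that on $[\varrho_2',\varrho_2]$ the value $m(2,2)$ stays strictly below all of them, which prevents escape and yields a minimizer. This strict comparison, rather than any single computation, is the main obstacle: Part 1 requires the \emph{global} threshold $\varrho_p$ (not merely the local one) to be pinned down, and Part 3 requires the delicate strict inequality that rules out loss of compactness precisely on the interval $[\varrho_2',\varrho_2]$.
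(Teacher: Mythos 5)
Your Part 2 is where the proposal genuinely diverges from the paper, and it is also where it breaks down. The paper does not define $\varrho_p'$ by a second-variation/local-instability analysis at $u_p$; it defines it by a \emph{global} comparison: using Lemma \ref{Comparison1} one has $m(p,p)\le m(1,1)+2\pi(p-1)=2\pi\frac{1-\varrho}{1+\varrho}+2\pi(p-1)$, and $u_p$ fails to be minimizing as soon as $E(u_p)=2\pi p\frac{1-\varrho^p}{1+\varrho^p}$ exceeds this bound, which after simplification is the condition $g_p(\varrho):=(p-1)\varrho^p+p\varrho^{p-1}-1<0$; $\varrho_p'$ is the unique root of $g_p$, and $g_2(\varrho)=\varrho^2+2\varrho-1$ gives $\varrho_2'=\sqrt2-1$. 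Your plan instead computes the second variation over Fourier sidebands $e^{i(p\pm1)\theta}$ and \emph{asserts} that for $p=2$ its determinant vanishes at $\varrho^2+2\varrho-1=0$. You give no computation, and there is no a priori reason the local stability threshold coincides with the global comparison threshold; in general they need not. Likewise the chain $\varrho_2'<\varrho_3'<\cdots<1$ is obtained in the paper by the concrete estimate $g_{p+1}(\varrho_p')<0$ (via showing $p\varrho_p'^2+2\varrho_p'-p<0$), not by a generic ``monotonicity analysis of the threshold condition.'' These are not presentational quibbles: the specific numerical claims in the statement ($\sqrt2-1$, the strict ordering) come out of the explicit function $g_p$, which your route never produces.

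The gap propagates into Part 3. The paper's attainment proof for $\varrho_2'\le\varrho\le\varrho_2$ argues by contradiction: if $m(2,2)$ is not attained, Lemmas \ref{behavior1} and \ref{fall} together with Corollary \ref{nopq2} and Lemma \ref{minholo} force the weak limit into $\I_{1,1}$ and give $m(2,2)=m(1,1)+2\pi$, hence $E(v)>m(1,1)+2\pi$ for every $v\in\I_{2,2}$; but the \emph{definition} of $\varrho_2'$ via the global comparison guarantees $E(u_2)\le m(1,1)+2\pi$ exactly when $\varrho\ge\varrho_2'$, a contradiction. Your ``verify that $m(2,2)$ stays strictly below all escaped energy levels'' is precisely this step, but it is only available because $\varrho_2'$ was defined by that energy comparison; with your second-variation definition of $\varrho_2'$ the needed inequality $E(u_2)\le m(1,1)+2\pi$ on $[\varrho_2',\varrho_2]$ is not established, so Parts 2 and 3 of your proposal do not fit together. (On Part 1 you are no more sketchy than the paper, which defers entirely to the Berlyand--Golovaty argument with $\e=+\infty$; but the step ``pins the map to its rotationally symmetric rearrangement'' is the whole content of that citation and cannot be waved through. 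Your existence argument for $\varrho$ near $1$ via $m(p,p)<\pi$ and the price Lemma \ref{Price lemma1} is, by contrast, sound.)
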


\textbf{Remark:} In point 3) of the previous theorem we do not know if $m(2,2)$ is attained by the radially symmetric solution $u_2$, neither if minimizers are unique (up to rotations).

The existence of minimizers of $E$ depends strongly on the prescribed degrees. We observe that it also depends on 
the capacity of the domain. The role of the capacity of the domain was already pointed out for the Ginzburg-
Landau energy  in the articles \cite{berlyand2002symmetry}, \cite{uniqueness},\cite{Capacity}, 
\cite{unpublished}, \cite{Size}. However in the degree one case for the Dirichlet energy the ``size" of the domain does not play a role in the existence of minimizers.

\begin{theorem}\label{theorem3}
Let $p>q>0$ then  there is no solution of \eqref{semi-stiff 1} in $\I_{p,q}$. In particular there is no minimizer 
of $E$ in $\I_{p,q}$ and we have $m(p,q)=m(q,q)+\pi(p-q)$. 
\end{theorem}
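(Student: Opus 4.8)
The plan is to combine the Hopf differential of Section 3 with an argument-principle computation of the boundary windings for the non-existence part, and to treat the energy identity separately by a comparison/bubbling analysis. By conformal invariance I reduce to $A=\{\varrho<|z|<1\}$, and I suppose for contradiction that $u$ solves \eqref{semi-stiff 1} in $\I_{p,q}$ with $p>q>0$; by Proposition \ref{regularity} such a $u$ is smooth up to $\partial A$. Then $u_z$ and $\partial_z\bar u=\overline{u_{\bar z}}$ are holomorphic, so the Hopf differential $\Phi=u_z\,\overline{u_{\bar z}}$ is holomorphic on $A$. Writing \eqref{condition3} in the form \eqref{condition4}, the boundary data read $\partial_\nu u=\sigma u$ and $\partial_\theta u=i\beta u$ with $\sigma,\beta$ real. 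A direct computation on each circle $|z|=r$ gives $z^2\Phi=\frac14\bigl(r^2\sigma^2-\beta^2\bigr)$, which is \emph{real}. Hence $F(z):=z^2\Phi(z)$ is a holomorphic function on $A$ that is real on both boundary circles; matching the reality relations on the Laurent coefficients of $F$ coming from $|z|=1$ and from $|z|=\varrho$ forces every nonzero-mode coefficient to vanish, so $F\equiv c$ for a real constant $c$ and therefore $\Phi=c/z^2$.

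With this rigidity I split into two cases. If $c=0$ then $u_z\,\overline{u_{\bar z}}\equiv0$, so by unique continuation one factor vanishes identically and $u$ is holomorphic or antiholomorphic; a nonconstant holomorphic $u$ has $|u|^2$ subharmonic, hence $|u|<1$ inside and $u$ is a proper map $A\to\mathbb{D}$, whose two boundary circles wind with opposite signs, giving $p>0>q$ (the antiholomorphic case gives $p<0<q$, and a constant gives $p=q=0$) — never $p>q>0$. If $c\neq0$ then $|u_z|\,|\overline{u_{\bar z}}|=|c|/r^2>0$ shows $u_z$ is nowhere zero on $\overline A$; on $|z|=r$ one has $u_z=\frac{e^{-i\theta}}{2}\bigl(\sigma+\beta/r\bigr)u$ with $\sigma+\beta/r$ real and nonzero, so the winding of $u_z$ equals (winding of $u$)$-1$ on each circle, and the argument principle applied to the nonvanishing holomorphic function $u_z$ yields $0=(p-1)-(q-1)=p-q$, i.e. $p=q$. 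In neither case can $p>q>0$ occur, so \eqref{semi-stiff 1} has no solution in $\I_{p,q}$; since an attained infimum would be a minimizer and hence a solution, $m(p,q)$ is not attained.

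For the identity $m(p,q)=m(q,q)+\pi(p-q)$, I would first prove the upper bound by the comparison construction of Lemma \ref{Comparison1}: starting from a near-minimizer in $\I_{q,q}$ and inserting $p-q$ degree-one Blaschke-type bubbles concentrating at a single point of the outer boundary $\partial\Omega$, each bubble raises the outer degree by one and leaves the inner degree unchanged while adding Dirichlet energy tending to $\pi$, with the mutual interaction energy vanishing in the concentration limit. Summing the $p-q$ contributions gives $m(p,q)\le m(q,q)+\pi(p-q)$. This is also the heuristic behind non-attainment: the extra $p-q$ units of degree are carried by bubbles that escape to $\partial\Omega$, which is exactly the lack-of-compactness phenomenon illustrated by the example with $M_n$.

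The reverse inequality is where I expect the main difficulty. I would analyze a minimizing sequence $u_n\in\I_{p,q}$ and, since no minimizer exists, run a concentration/bubbling decomposition: a ``body'' that converges (after extracting the escaping mass) to a configuration with effective degrees $(q,q)$ and energy at least $m(q,q)$, plus exactly $p-q$ bubbles that detach at the boundary, each contributing at least $\pi$ through the quantization $E\ge\pi\,|{\rm deg}|$ for transporting degree across a thin annulus. The hard part will be to make this decomposition rigorous: controlling the neck and interaction energies, ruling out energy loss other than the $\pi$-quanta, and handling the discontinuity of the degree under weak $H^{1/2}$-convergence. I anticipate that the right framework is the $\tfrac12$-harmonic map / Fourier formulation $m(p,q)=\inf_g E(\tilde u(g))$ used for Proposition \ref{BerlyandMironescu}, together with Lemma \ref{Comparison1}, to pin down the lower bound and thereby obtain $m(p,q)=m(q,q)+\pi(p-q)$.
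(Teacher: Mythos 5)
Your non-existence argument is correct and is essentially the paper's route: the rigidity $\H_u=c/z^2$ with $c\in\R$ is Lemma \ref{Hopfcst} (the paper deduces reality of $z^2\H_u$ from the vanishing of its harmonic imaginary part rather than from Laurent coefficients, but both work), the case $c=0$ is Lemma \ref{obstruction}, and the case $c\neq 0$ is Proposition \ref{nopq}. Your implementation of the latter via the nonvanishing holomorphic function $u_z$ and the identity $u_z=\tfrac{e^{-i\theta}}{2}(\sigma+\beta/r)u$ on each boundary circle is a slightly slicker single computation than the paper's split into $c>0$ (using $\partial_r u/|\partial_r u|$) and $c<0$ (using $\partial_\theta u/|\partial_\theta u|$), and it is sound: $\sigma+\beta/r$ is real, continuous and nonvanishing on each circle, so it has constant sign and the winding of $u_z$ is $\deg(u)-1$ there, whence $p=q$ by the argument principle.

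The genuine gap is the identity $m(p,q)=m(q,q)+\pi(p-q)$: you prove only the upper bound $m(p,q)\le m(q,q)+\pi(p-q)$ (Lemma \ref{Comparison1}) and then describe the matching lower bound as a programme ("the hard part will be to make this decomposition rigorous") without carrying it out. The bubbling machinery you are worried about is in fact already packaged in the paper: Lemma \ref{Price lemma1} is exactly the quantized lower-semicontinuity $E(u)\le\liminf E(u^{(n)})-\pi(|p-r|+|q-s|)$, and combined with Lemma \ref{Comparison1} it yields Lemma \ref{behavior1}, namely that the weak limit $u$ of a minimizing sequence minimizes in its own class $\I_{r,s}$ with the exact energy drop $E(u)=m(p,q)-\pi(|p-r|+|q-s|)$. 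What remains, and what your sketch does not address, is to identify where the limit lands. The paper uses Lemma \ref{fall} to get $r\le p$, $s\le q$, then the non-existence results to conclude that $u$ lies either in some $\I_{d,d}$ or is (anti)holomorphic in a class $\I_{r,s}$ with $r>0>s$; the second alternative is excluded by a quantitative argument using $m(1,1)=2\pi\tfrac{1-\varrho}{1+\varrho}<2\pi$, which forces $m(p,q)<\pi(p+q)$ and contradicts $E(u)=\pi(r+|s|)$. The same strict inequality shows $d\ge 1$, and a final squeeze with Lemma \ref{Comparison1} (if $m(q,q)<m(d,d)+2\pi(q-d)$ one contradicts $m(p,q)=m(d,d)+\pi(p-d+q-d)$) upgrades $m(p,q)=m(d,d)+\pi(p+q-2d)$ to the stated identity. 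Without the step ruling out escape into a holomorphic class of mixed signs -- which requires the explicit value of $m(1,1)$, not just degree bookkeeping -- your proposed decomposition cannot close.
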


Since radial solutions are not always minimizing for $p=q\geq2$ one can wonder if other non-radial solutions of \eqref{semi-stiff 1} exist in $\I_{p,p}$. We obtained non radially symmetric solutions of \eqref{semi-stiff 1} in $\I_{p,p}$  which could be minimizer of $E$ in $\I_{p,p}$. 

\begin{theorem}\label{nonrad}
There exist non radial solutions of \eqref{semi-stiff 1} in $\I_{p,p}$ if $p \geq 2$.
\end{theorem}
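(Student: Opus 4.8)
The plan is to produce these solutions by bifurcation from the explicit radial branch $u_p(z)=\frac{1}{1+\varrho^p}\left(r^p+\frac{\varrho^p}{r^p}\right)e^{ip\theta}$ of Theorem \ref{theorem2}, using $\varrho$ (equivalently the conformal modulus of $A$) as the bifurcation parameter. First I would recast \eqref{semi-stiff 1} as the zero set of a smooth operator $\mathcal{G}(w,\varrho)=0$ between suitable Banach spaces, where $w$ denotes the deviation from the radial solution so that $\mathcal{G}(0,\varrho)=0$ is the trivial branch. Two preliminary reductions are needed. Since the annulus $A=\{\varrho<|z|<1\}$ depends on $\varrho$, I would pass to logarithmic coordinates $z=e^{t+i\theta}$ and rescale $t$ so that the domain becomes the fixed cylinder $[0,1]\times S^1$ with $\varrho$ entering only as a coefficient. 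Since the constraint $|u|=1$ on $\partial A$ destroys the linear structure, I would encode the boundary conditions by the two scalar equations obtained from linearising $|u|^2=1$ and $u\wedge\partial_\nu u=0$ (using the boundary regularity of proposition \ref{regularity}), so that $\mathcal{G}$ records harmonicity in the interior together with a Dirichlet-type condition on the modulus and a Neumann-type condition on the phase.

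Next I would study the Fréchet derivative $L_\varrho:=D_w\mathcal{G}(0,\varrho)$. Both the target rotation $u\mapsto e^{i\alpha}u$ and the domain rotation $u(z)\mapsto u(e^{i\beta}z)$ preserve \eqref{semi-stiff 1} and the degrees, and on $u_p$ they act through the single circle $e^{i\gamma}u_p$, whose infinitesimal generator $iu_p$ spans the symmetry-induced part of $\ker L_\varrho$. This equivariance makes $L_\varrho$ commute with $\theta$-translation, so it is diagonal in the angular Fourier modes: writing $w=\sum_n c_n(t)e^{in\theta}$, the linearised boundary conditions coming from $\mathrm{Re}(\overline{u_p}\,w)$ and its normal-derivative analogue couple the modes $n=p+k$ and $n=p-k$, and the problem splits into sectors indexed by $k\geq 0$. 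Crucially the symmetry direction $iu_p$ sits in the sector $k=0$, so it decouples entirely from the sectors $k\geq 1$ in which a non-radial perturbation lives. In each sector $L_\varrho$ reduces to a boundary value problem for a second-order ODE system in $t$ whose solutions are combinations of $r^{\pm(p\pm k)}$, and the sector has nontrivial kernel only for a discrete set of $\varrho$.

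To locate a genuine bifurcation point I would first observe, by a short convexity argument, that within the radial class $u_p$ is the unique critical point and minimises $E$ for every $\varrho$; hence the loss of minimality recorded in Theorem \ref{theorem2} as $\varrho$ decreases below $\varrho_p$ must be caused by a non-radial sector $k\geq 1$. Consequently $L_\varrho$ develops a nontrivial kernel in some sector $k\geq 1$ at a value $\varrho^\ast_p\in[\varrho_p',\varrho_p]$, which is the candidate, genuinely non-radial, bifurcation point. Restricting $\mathcal{G}$ to the first such sector, I would verify the Crandall--Rabinowitz hypotheses: that $\ker L_{\varrho^\ast_p}$ is one dimensional (simplicity of the crossing eigenvalue of the sectorial ODE), that $\mathrm{Ran}\,L_{\varrho^\ast_p}$ has codimension one, and the transversality condition $\partial_\varrho L_{\varrho^\ast_p}[v_0]\notin\mathrm{Ran}\,L_{\varrho^\ast_p}$ for the kernel generator $v_0$, each reducing to an explicit computation with the radial profile and the power solutions $r^{\pm(p\pm k)}$. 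The Crandall--Rabinowitz theorem then yields a local curve of solutions of $\mathcal{G}(w,\varrho)=0$ with $w\neq 0$ emanating from $(0,\varrho^\ast_p)$; since these carry the angular mode $e^{i(p\pm k)\theta}$ with $k\geq 1$ they are not of the form $f(r)e^{ip\theta}$ and are not rotations of $u_p$, hence non-radial, and their degrees remain $(p,p)$ by continuity of the degree along the branch. This gives the asserted non-radial solutions in $\I_{p,p}$ for every $p\geq 2$.

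The main obstacle I anticipate is the combination of the nonlinear boundary constraint and the moving domain in producing a $\mathcal{G}$ that is genuinely smooth on fixed Banach spaces, together with the verification of simplicity and transversality inside the sectorial ODE. Via the Hopf-differential correspondence of section 3 the whole construction carries the geometric reading announced in the title: the branch $u_p$ corresponds to the family of $p$-covered catenoids spanning two circles in parallel planes, the degeneracy at $\varrho^\ast_p$ is the loss of stability of the covered catenoid, and the bifurcating non-radial maps are the non-axially-symmetric immersed minimal surfaces. I would use this picture as a cross-check that a bifurcation does occur for every $p\geq 2$.
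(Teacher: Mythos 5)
Your overall strategy (Crandall--Rabinowitz bifurcation from the explicit radial branch, with the conformal modulus as parameter and an angular Fourier decomposition) is the same as the paper's in spirit; the paper simply runs it on the minimal-surface side, perturbing the $p$-covered catenoid $X_t$ normally and studying its Jacobi operator, which sidesteps your difficulties with the nonlinear boundary constraint and the moving domain. But there is a genuine gap at the step where you locate the bifurcation point. You argue that because $u_p$ ceases to be a global minimizer of $E$ in $\I_{p,p}$ for $\varrho<\varrho_p'$ (Theorem \ref{theorem2}), the linearization $L_\varrho$ must develop a kernel in some non-radial sector $k\geq 1$ at some $\varrho_p^*\in[\varrho_p',\varrho_p]$. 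This does not follow: the loss of global minimality in Theorem \ref{nonminrad} is obtained by exhibiting a far-away competitor (essentially $u_1$ with $p-1$ inserted boundary bubbles, via Lemma \ref{Comparison1}), and is perfectly compatible with $u_p$ remaining a nondegenerate strict local minimizer for every $\varrho$; moreover a global minimizer may simply fail to exist, so no second critical point is forced either. Nothing in Theorem \ref{theorem2} produces a spectral crossing, let alone one in a sector $k\geq 1$ rather than in the radial sector $k=0$ (where the stable/unstable catenoid transition lives and yields only another rotationally symmetric solution). The existence of the crossing is exactly the content that must be proved, and the paper obtains it by computing that the first eigenvalue of the Jacobi operator of the infinite $p$-covered catenoid equals $-p^2$ (eigenfunction $1/\cosh(pr)$), so that by continuity and strict monotonicity of the eigenvalues in the height $t$ the value $-k^2$ is attained for each $k=1,\dots,p-1$ at some finite $t_k$; this is precisely where the hypothesis $p\geq 2$ enters. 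Your sketch contains no substitute for this computation.

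A secondary issue: you propose to ``restrict $\mathcal{G}$ to the first such sector'' to make the kernel one dimensional, but a single angular Fourier sector is not invariant under the nonlinear operator. One must instead restrict to the fixed-point space of a genuine symmetry (the paper uses the space $\E$ of functions even in $r$ and in $\theta$), and then separately show that at the chosen parameter value the zero eigenspace inside that space reduces to the single function $v_1(r)\cos\theta$ --- which requires ruling out a simultaneous zero mode coming from a higher radial eigenvalue (the paper's Sturm--Liouville argument that $\mu_2>0$ at $t_1$, and the reason the construction is carried out only at $t_1$ and not at $t_k$ for $k\geq 2$). With the spectral computation supplied and the sector restriction replaced by a symmetry restriction, your plan would go through and would essentially reproduce the paper's proof in harmonic-map variables.
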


However we do not know if these solutions are indeed minimizing or even if a minimizer of $E$ in $\I_{p,p}$ exists when the radial solution $u_p$ is not minimizing. 

The essential tool we used to obtain these results are the so-called  Hopf quadratic differential (cf. definition 
\ref{Hopff}). Using the three conditions of equation \eqref{semi-stiff 1} we can prove that the Hopf differential 
has the following form 

\begin{equation}
Q(u)=\H_u(z)(dz)^2=\frac{c}{z^2}(dz)^2, \ \text{in} \ A
\end{equation}

\noindent with $c$ a constant real number. We also use a deep relation between harmonic maps and minimal 
surfaces. This link is made thanks to the Hopf differential. Roughly speaking given a harmonic map $u$ such that 
$\Delta u =0$, locally we can find a harmonic function $h$ such that $X=(u,h)$ is a conformal parametrization 
of a minimal surface and the function $h$ is given in terms of the Hopf differential (see lemma \ref{lifting}) 
for a precise statement). It turns out that finding solutions of \eqref{semi-stiff 1} with $\H_u(z)=\frac{c}
{z^2}(dz)^2$ where $c<0$ is equivalent to finding minimal surfaces bounded by two concentric $p$-coverings of circles in parallel 
planes. We call $p$-covering of  a circle a parametrization of a circle of degree $p$. In 1956 in a beautiful paper \cite{Shiffman} M.Shiffman proved that if $S$ is a minimal surface bounded by two concentric 
circles in parallel planes then $S$ is (part of) a catenoid.
However in this theorem we assume that the circles are described only once. In terms of solutions of 
\eqref{semi-stiff 1} it is equivalent to ask that $\deg(u,\mathbb{S}^1)= \deg(u,C_\varrho)=1$. In order to find non-radially symmetric solutions of \eqref{semi-stiff 1}, with $\deg(u,\mathbb{S}^1)= \deg(u,C_\varrho)=p \geq 2$  one 
can look 
for immersed minimal surfaces bounded by two $p$-coverings of circles in parallel planes that are not rotationally symmetric. We 
obtained such surfaces by bifurcation of a $p$-covering of catenoids. Then thanks to the link between equation 
\eqref{semi-stiff 1} and the minimal surface problem we deduce theorem \ref{nonrad} from

\begin{theorem}\label{theorem4}
There exist non rotationally symmetric, immersed minimal surfaces in $\R^3$ bounded by two concentric $p$-coverings of circles in 
parallel planes if $p\geq 2$. 
\end{theorem}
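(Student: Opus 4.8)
The plan is to obtain these surfaces by bifurcation, in the conformal modulus $\varrho$, from the explicit family of $p$-coverings of catenoids, and to invoke the Crandall--Rabinowitz theorem. By the lifting lemma (Lemma \ref{lifting}), a solution $u$ of \eqref{semi-stiff 1} in $\I_{p,p}$ with Hopf differential $\frac{c}{z^2}(dz)^2$, $c<0$, lifts to a conformal harmonic (hence minimal) immersion $X=(u,h):A\to\R^3$ whose boundary consists of two concentric unit circles in parallel horizontal planes, each covered $p$ times. The radially symmetric solution $u_p$ lifts precisely to a symmetric piece of a $p$-covering of a catenoid, and a direct computation gives its Hopf constant $c=-p^2\varrho^p/(1+\varrho^p)^2<0$. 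Working in the conformal coordinates $(s,\theta)$ in which the base catenoid is $(\cosh s\cos\theta,\cosh s\sin\theta,s)$, the symmetric surface corresponds to $s\in[-s_0,s_0]$ with $\theta$ running $p$ times around; the half-height $s_0$ is a strictly monotone function of $\varrho$, so I may use either as the bifurcation parameter.

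First I would set up the nonlinear problem $F(\phi,s_0)=0$, where $\phi$ is a normal perturbation of the surface in a suitable H\"older space and $F(\phi,s_0)$ measures the mean curvature of the perturbed surface subject to the constraint that its boundary still consists of two coaxial unit circles in parallel planes, the vertical translation and scaling gauge being fixed and the family parametrised by $\varrho$. One has $F(0,s_0)\equiv 0$, and the linearisation $D_\phi F(0,s_0)=L_{s_0}$ is the Jacobi operator of the catenoid, $L=\Delta_S+|A|^2=\frac{1}{\cosh^2 s}\left(\partial_s^2+\partial_\theta^2+\frac{2}{\cosh^2 s}\right)$, together with the boundary conditions encoding the constraint. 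Separating variables $\phi=f(s)\cos(k\theta)$ reduces $L\phi=0$ to $f''+\left(\frac{2}{\cosh^2 s}-m^2\right)f=0$ with $m=k/p$, a reflectionless P\"oschl--Teller equation whose solutions are elementary: $f_{\pm}(s)=(\tanh s\mp m)e^{\pm m s}$. The key point is that the $p$-covering admits the angular frequencies $m=k/p$; for $1\le k\le p-1$ one has $0<m<1$, and such modes exist exactly when $p\ge 2$.

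Next I would impose the boundary condition. Since a mode with $k\neq 0$ cannot be absorbed by translating or rescaling the two bounding circles (these affect only the $k=0$ mode, which is carried by the trivial branch and the ambient symmetries), the requirement that the perturbed boundary remain two round circles forces the homogeneous condition $f(\pm s_0)=0$ on the non-symmetric modes. The even Jacobi field then lies in $\ker L_{s_0}$ precisely when
\begin{equation}
f_e(s_0)=(\tanh s_0-m)e^{m s_0}-(\tanh s_0+m)e^{-m s_0}=0 .
\end{equation}
Since $f_e(0)=-2m<0$ while the leading behaviour of $f_e(s_0)$ as $s_0\to\infty$ has the sign of $1-m$, the intermediate value theorem provides a critical half-height $s_0^{*}=s_0^{*}(m)\in(0,\infty)$, hence a critical modulus $\varrho^{*}$, whenever $0<m<1$; for $m\ge 1$ no such crossing occurs, which is consistent with the rigidity of the once-covered case governed by Shiffman's theorem \cite{Shiffman}. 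Restricting the whole construction to functions even in $\theta$ breaks the rotational $S^1$-symmetry and renders this kernel one-dimensional, spanned by $\cos(k\theta)\,f_e(s)$. The transversality (eigenvalue-crossing) condition reduces to $f_e'(s_0^{*})\neq 0$, i.e. $s_0^{*}$ is a simple zero, which is immediate from the explicit formula. Crandall--Rabinowitz then yields a branch of genuine solutions $F(\phi,s_0)=0$ with $\phi$ carrying a nonzero $\cos(k\theta)$ component; since immersion is an open condition these branches consist of non-rotationally-symmetric immersed minimal surfaces bounded by two $p$-coverings of circles in parallel planes, for $\varrho$ near $\varrho^{*}$ and every $p\ge 2$.

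The hard part is the analytic framework that makes Crandall--Rabinowitz applicable: choosing function spaces in which the constrained mean-curvature operator $F$ is smooth and $L_{s_0}$ is Fredholm of index zero, and rigorously translating the geometric constraint ``boundary equals two round circles in parallel planes'' into the boundary conditions for $F$ and for $L_{s_0}$ -- in particular the reduction to homogeneous Dirichlet data on the modes $k\neq 0$ must be justified rather than assumed, and the $k=0$ Jacobi fields $\tanh s$ and $1-s\tanh s$ must be accounted for by the trivial branch and the gauge fixing so that they do not pollute the kernel of the reduced operator. A secondary difficulty is ensuring that the chosen $\varrho^{*}$ is crossed by a single admissible mode $k$, so that the kernel is genuinely simple after the symmetry reduction; this is handled by selecting, for a fixed admissible $k$, the first crossing. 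The explicit, elementary solvability of the catenoid's Jacobi equation is exactly what renders the critical-value analysis and the transversality check completely transparent, and is the feature that separates the bifurcating regime $p\ge 2$ from the rigid case $p=1$.
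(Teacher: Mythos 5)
Your strategy is essentially the one the paper uses to prove Theorem \ref{theorem4} (via Theorem \ref{nonsymmetricmminsurf}): bifurcate, by Crandall--Rabinowitz, from the family of compact pieces of $p$-coverings of catenoids parametrized by their height; compute the Jacobi operator; separate variables so that the $p$-covering admits the angular frequencies $k/p$ with $1\le k\le p-1$ (exactly what distinguishes $p\ge 2$ from the rigid case $p=1$); and restrict to $\theta$-even functions to make the kernel simple. The differences are in execution, and some are genuinely instructive. You locate the critical heights from the explicit P\"oschl--Teller fields by an intermediate value argument (the crossing condition can be rewritten $\tanh(s_0)\tanh(m s_0)=m$), and your transversality condition $f_e'(s_0^*)\neq 0$ is automatic, since a nontrivial solution of a second-order linear ODE cannot have a double zero; the paper instead gets stability of short pieces from Barbosa--do Carmo \cite{BarbosaDoCarmo}, uses the explicit ground state $1/\cosh(pr)$ of the infinite $p$-covering (eigenvalue $-p^2$), and obtains the crossings and transversality from continuity, strict domain-monotonicity and Kato analyticity of the Dirichlet eigenvalues. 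Note also that the paper sidesteps what you call the hard part: instead of letting the boundary circles move and fixing a gauge, it only seeks surfaces $Y_t=X_t+uN_t$ with $u\in\mathcal{C}^{2,\alpha}_0$ vanishing on the boundary, i.e.\ with \emph{exactly} the same boundary circles as $X_t$. That is enough for the statement and makes the functional-analytic framework standard.

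The one genuine gap is the simplicity of the kernel. Crandall--Rabinowitz requires a one-dimensional kernel, and after the restriction to $\theta$-even functions you must still exclude that at your critical height $s_0^*$ some \emph{other} mode $k'\neq k$ also admits a Dirichlet Jacobi field -- including $k'=0$, whose even Jacobi field $1-s\tanh s$ vanishes at the height determined by $s\tanh s=1$. Your proposed fix, ``select, for a fixed admissible $k$, the first crossing,'' does not do this: it constrains only mode $k$. The paper resolves the point by showing $\mu_2(A_t)>0$ for all $t$ (the Sturm--Liouville argument taken from Shiffman \cite{Shiffman}, using that the $\mu=0$ solutions $a\tanh(pr)+b\left(1-pr\tanh(pr)\right)$ have at most two zeros), so that a zero eigenvalue can only arise from $\mu_1(A_t)=-k^2$, and then strict monotonicity of $\mu_1$ makes the heights $t_0<t_1<\dots<t_{p-1}$ pairwise distinct; working at $t_1$ yields the kernel $v_1(r)\cos\theta$ in the symmetric class. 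Within your explicit framework the same can be recovered: the map $m\mapsto m/\tanh(m s_0)$ is strictly increasing on $(0,1)$ (because $\sinh(2x)>2x$ for $x>0$) and tends to $1/s_0$ as $m\to 0^+$, so at a fixed height at most one frequency $m\ge 0$ can satisfy the crossing or the mode-zero condition -- but this computation (or an equivalent eigenvalue argument) must actually be supplied. A smaller omission: ``immersion is an open condition'' gives immersedness, not asymmetry; to conclude you must still argue, as the paper does in its last step, that the bifurcating surfaces are not reparametrized catenoids -- they leave $X_{t_1}$ in the normal direction with a nonzero $\cos\theta$ component, hence differ from $X_{t_1}$, and they stay far from the other (stable) catenoid spanning the same circles.
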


There are some results in the literature concerning bifurcation of constant mean curvature (CMC) submanifolds. In 
particular in \cite{KPP} the authors studied bifurcation of (compact portions of) CMC nodoids in $\R^3$ whose 
boundary consists of two fixed coaxial circles of the same radius lying in parallel planes. This situation 
presents some similarity with our problem. However we used different techniques to obtain bifurcation of 
catenoids. The novelty in our approach is to consider bifurcation of $p$-coverings of catenoids. If $p \geq 2$ we 
can apply the theorem of Crandall-Rabinowitz (see \cite{Crandall}) to prove that bifurcation occurs and produces 
non rotationally symmetric minimal surfaces.\\

Let us mention that in a series of papers (see \cite{doublyconnectedminsurface}, 
\cite{nharmonicmappings}, \cite{mappingsofleastenergy} and the references therein) T. Iwaniec and J.Onninen studied harmonic mappings with the 
same form of Hopf differential i.e. $\H_u(z)=\frac{c}{z^2}$ with $c$ real. One of their purpose was to minimize 
the Dirichlet Energy on an annulus among some class of homeomorphisms. The common feature of their problem and 
ours is that we are both interested in critical points of the Dirichlet energy among maps with a given homotopy 
class at the boundary. The main difference is that they considered one-to-one mapping while we allow maps which are not one to one. They also made a link between such mappings and minimal surfaces (see 
\cite{doublyconnectedminsurface}).\\

The paper is organized as follows: part 2 is devoted to known analytic results and to the 
analysis of lack of 
compactness for minimizing sequences of the energy, in section 3 we present the properties of 
the Hopf quadratic differential of solutions of our problem and make a link with minimal surfaces theory. 
In 
section 4 we study 
holomorphic solutions of the problem. Section 5 is devoted to a discussion on radial solutions 
of \eqref{semi-stiff 1}. In section 6 we prove theorem \ref{theorem4}.

\section{Preliminaries}
\subsection{Notations and definitions}
Throughout the paper we use the following notations:

\begin{itemize}
\item[*] The vectors $a=(a_1,a_2)$ are identified with complex numbers $a=a_1 +ia_2$.
\item[*] $a \wedge b$ stands for the vector product $a \wedge b =a_1b_2-a_2b_1=\frac{i}{2}(a
\bar{b}-\bar{a}b)=\det(a,b)$.
\item[*] $ \langle a, b \rangle$ stands for the scalar product $\langle a,b \rangle 
=a_1b_1+a_2b_2= \frac{1}{2} (a\bar{b}+\bar{a}b)$.
\item[*]$\mathbb{D}$ denotes the unit disc, $\mathbb{S}^1$ denotes the unit circle. More 
generally we set 
$\mathbb{D}_r:=\{z\in \C ; |z|< r\}$ and $C_r:=\{z \in \mathbb{C}; |z|=r\}$.
\item[*] $A_r:=\{z \in \C ; \ r<|z|<1 \}$.
\item[*] The orientation of simple curves (in particular $\partial \omega$ and $\partial \Omega$) in 
$\R^2$ is assumed to be counter-clockwise. We denote by $\tau$ the unit tangent vector pointing 
counter-clockwise and $\nu$ the outer unit normal vector, hence $(\nu,\tau)$ is direct on $
\partial \Omega$ and $(\nu,\tau)$ is indirect on $\partial \omega$.
\end{itemize}

The main ingredient of the functional setting adapted to the study of critical points of $E$ in 
$\I$ is the degree. Let us recall this notion, we refer to \cite{oldanew}, 
\cite{Newquestiondegree}
and the references therein for more on this subject.
Let $\Gamma$ be a smooth, simple, closed curve. 
\begin{definition}
Let $g:\Gamma \rightarrow \mathbb{S}^1$, the \textbf{degree} of $g$ is the following quantity:
$$\deg(g,\Gamma)=\frac{1}{2\pi}\int_\Gamma g \wedge \partial_\tau g d\tau . $$
\end{definition}

The degree of a function is an integer and it measures the algebraic change of phase 
of $g$. A.Boutet de Monvel and O.Gabber have noticed that we can still define a degree for maps 
$g \in H^{\frac{1}{2}} (\Gamma,\mathbb{S}^1)$ (see \cite{Ogabber} and \cite{oldanew}). This 
degree is defined by approximation, indeed $\mathcal{C}^\infty(\Gamma,\mathbb{S}^1)$ is dense in  $H^
\frac{1}{2} (\Gamma,\mathbb{S}^1)$ and we can see that the degree is continuous with respect to 
the strong $H^{\frac{1}{2}}$ convergence. Here are well-known properties of the degree (c.f. \cite{oldanew} or \cite{Newquestiondegree}):

\begin{lemma}\label{propertiesofdegree}
Let $g,h\in H^{1/2}(\Gamma, \mathbb{S}^1)$. Then the following hold.
\begin{itemize}
\item[1)] If $g$ is continuous, then the degree of $g$ in the sense $H^{1/2}$ maps is the same as the degree of $g$ in the sense of continuous maps.
\item[2)] $\deg(gh)=\deg(g)+ \deg(h)$.
\item[3)] $\deg(\overline{g})=-\deg(g)$.
\item[4)] $\deg(g/h)= \deg(g)-\deg(h)$.
\end{itemize}
\end{lemma}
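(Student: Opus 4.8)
The plan is to reduce all four assertions to the corresponding classical statements for smooth $\mathbb{S}^1$-valued maps and then transfer them to $H^{1/2}$ using the two facts recalled just above: the density of $\mathcal{C}^\infty(\Gamma,\mathbb{S}^1)$ in $H^{1/2}(\Gamma,\mathbb{S}^1)$, and the continuity of the degree under strong $H^{1/2}$ convergence. For a smooth map $g=e^{i\p}$ with $\p$ a smooth (multivalued) lift, properties 2)--4) are nothing but the elementary additivity, negation and subtraction rules for winding numbers, so the genuine content lies entirely in how one passes these identities to the $H^{1/2}$ setting, and 1) encodes the compatibility of the two definitions of degree.

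For property 2) I would use that $H^{1/2}(\Gamma)\cap L^\infty(\Gamma)$ is an algebra, with $\|fg\|_{H^{1/2}}\lesssim \|f\|_{H^{1/2}}\|g\|_{L^\infty}+\|f\|_{L^\infty}\|g\|_{H^{1/2}}$, so that multiplication is continuous on subsets bounded in $L^\infty$; since every element of $H^{1/2}(\Gamma,\mathbb{S}^1)$ has modulus one this applies here. Choosing smooth $\mathbb{S}^1$-valued $g_n\to g$ and $h_n\to h$ in $H^{1/2}$, the products $g_nh_n$ are again smooth and $\mathbb{S}^1$-valued and converge to $gh$ in $H^{1/2}$; the smooth identity $\deg(g_nh_n)=\deg(g_n)+\deg(h_n)$ then passes to the limit by continuity of the degree. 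Property 3) is simpler, since complex conjugation is an isometry of $H^{1/2}$, so $\overline{g_n}\to\overline{g}$ and the smooth relation $\deg(\overline{g_n})=-\deg(g_n)$ survives the limit. Property 4) is then immediate, because on $\mathbb{S}^1$-valued maps $1/h=\overline{h}$, whence $g/h=g\overline{h}$ and 2) together with 3) give $\deg(g/h)=\deg(g)+\deg(\overline{h})=\deg(g)-\deg(h)$.

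The point that genuinely requires care is property 1). The obstacle is that $\Gamma$ is one dimensional, so $H^{1/2}(\Gamma)$ is the critical Sobolev space and does \emph{not} embed into $\mathcal{C}^0(\Gamma)$; thus one cannot extract uniform convergence from $H^{1/2}$ convergence, and the topological degree and the $H^{1/2}$ degree are a priori attached to two different modes of approximation. The remedy is to produce a single smooth approximating sequence converging to $g$ simultaneously in the uniform and in the $H^{1/2}$ topologies. Parametrizing $\Gamma$ by arc length, I would mollify $g$ to obtain smooth $\C$-valued maps $g_\varepsilon=g*\rho_\varepsilon$; as $g\in H^{1/2}$ one has $g_\varepsilon\to g$ in $H^{1/2}$, while the uniform continuity of the continuous map $g$ gives $g_\varepsilon\to g$ uniformly, so $|g_\varepsilon|\ge 1/2$ for $\varepsilon$ small. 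The radial projection $w\mapsto w/|w|$ is smooth and Lipschitz on $\{1/2\le |w|\le 2\}$, so $\tilde g_\varepsilon:=g_\varepsilon/|g_\varepsilon|$ are smooth $\mathbb{S}^1$-valued maps still converging to $g$ both uniformly and in $H^{1/2}$ (composition with a Lipschitz map being continuous on $H^{1/2}\cap L^\infty$ for maps with values in a fixed compact set). By definition the $H^{1/2}$ degree of $g$ is $\lim_\varepsilon\deg(\tilde g_\varepsilon)$, whereas uniform convergence forces $\deg(\tilde g_\varepsilon)$ to equal the topological degree of $g$ for all small $\varepsilon$; the two limits therefore coincide, proving 1). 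I expect this simultaneous-approximation step to be the crux, the remaining assertions being routine limits of classical winding-number identities.
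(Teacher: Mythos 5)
The paper offers no proof of this lemma at all: it is stated as a list of well-known facts with a pointer to the references \cite{oldanew} and \cite{Newquestiondegree}, so your proposal can only be measured against the standard literature argument. Your reconstruction is correct and is essentially that argument: define the $H^{1/2}$ degree by smooth approximation (using the density of $\mathcal{C}^\infty(\Gamma,\mathbb{S}^1)$ in $H^{1/2}(\Gamma,\mathbb{S}^1)$ and the continuity of the degree under strong $H^{1/2}$ convergence, both recalled in the paper just before the lemma), prove 2)--4) for smooth maps as elementary winding-number identities, pass to the limit, and handle 1) by producing a single approximating sequence converging simultaneously uniformly and in $H^{1/2}$ (mollification followed by radial projection). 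Your diagnosis that 1) is the only delicate point --- because $H^{1/2}$ is critical in dimension one and does not embed into $\mathcal{C}^0$, so the two degrees are a priori attached to different topologies --- is exactly the issue the cited sources address; an equivalent route there goes through the VMO degree of Brezis--Nirenberg, of which both the continuous and the $H^{1/2}$ degrees are restrictions, but your direct mollify-and-project argument is self-contained and equally valid.

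One justification is too quick, although the conclusion it serves is true and the repair is standard, so this is a fixable imprecision rather than a fatal gap. The algebra inequality $\|fg\|_{H^{1/2}}\lesssim\|f\|_{H^{1/2}}\|g\|_{L^\infty}+\|f\|_{L^\infty}\|g\|_{H^{1/2}}$ gives \emph{boundedness} of multiplication, not its \emph{continuity}: applied to $g_nh_n-gh=(g_n-g)h_n+g(h_n-h)$ it leaves the term $\|g_n-g\|_{L^\infty}\|h_n\|_{H^{1/2}}$, and strong $H^{1/2}$ convergence does not control $\|g_n-g\|_{L^\infty}$ --- the very critical-embedding failure you invoke in 1). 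The standard fix works on the Gagliardo seminorm directly: in the contribution of $(g_n-g)h_n$, the part carrying increments of $g_n-g$ is at most $|g_n-g|_{H^{1/2}}\to 0$ since $|h_n|=1$; for the part carrying increments of $h_n$, split $h_n=(h_n-h)+h$, bound the first piece by $2\,|h_n-h|_{H^{1/2}}\to 0$ using $|g_n-g|\le 2$, and write the second as $\int_\Gamma |g_n-g|^2(y)\,D(y)\,dy$ with $D(y)=\int_\Gamma |h(x)-h(y)|^2|x-y|^{-2}\,dx\in L^1$ fixed, which tends to $0$ by dominated convergence along a.e.-convergent subsequences (since $g_n\to g$ in $L^2$ and $|g_n-g|\le 2$). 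The same caveat applies to your appeal to ``composition with a Lipschitz map is continuous on $H^{1/2}\cap L^\infty$'' in step 1): the Lipschitz bound gives boundedness of the superposition operator, and its continuity requires an analogous Pratt/dominated-convergence argument on the seminorm. Alternatively, 2) can bypass multiplication entirely via the distributional formula $\deg(g)=\frac{1}{2\pi}\int_\Gamma g\wedge\partial_\tau g$, understood as an $H^{1/2}$--$H^{-1/2}$ pairing continuous in $g$, for which the pointwise Leibniz identity $(gh)\wedge\partial_\tau(gh)=g\wedge\partial_\tau g+h\wedge\partial_\tau h$ for unimodular maps yields additivity directly.
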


The degree can be used to characterize the connected 
components of $\I$. These are exactly the sets $\I_{p,q}=\{ u \in \I ; \deg(u,\partial 
\Omega)=p \ \text{and} \ \deg(u,\partial \omega)=q \},$ defined in introduction:

\begin{proposition}[\cite{Ogabber}]
The $\I_{p,q}$ are the connected components of $\I$. They are open and closed in $\I$ for the topology induced by the $H^1(A)$ norm.
 $$\I =\bigcup_{p,q \in \mathbb{Z}^2} \I_{p,q}.$$
\end{proposition}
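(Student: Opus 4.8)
The plan is to realize the decomposition of $\I$ as the fibres of a continuous, integer-valued map, and then to prove that each nonempty fibre is path-connected. Define $\Phi:\I\to\mathbb{Z}^2$ by $\Phi(u)=(\deg(u,\partial\Omega),\deg(u,\partial\omega))$. This is well defined: the trace operator sends $H^1(A,\C)$ continuously into $H^{1/2}(\partial A,\C)$, and for $u\in\I$ the two restricted traces lie in $H^{1/2}(\partial\Omega,\mathbb{S}^1)$ and $H^{1/2}(\partial\omega,\mathbb{S}^1)$ respectively, on which the $H^{1/2}$-degree of Boutet de Monvel--Gabber is defined. By construction $\I_{p,q}=\Phi^{-1}(p,q)$, and every $u\in\I$ has a well-defined pair of integer degrees, which already yields $\I=\bigcup_{(p,q)\in\mathbb{Z}^2}\I_{p,q}$.

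Next I would prove that $\Phi$ is locally constant, which gives both the openness and the closedness of each $\I_{p,q}$. Since the trace operator is continuous from $H^1(A)$ to $H^{1/2}(\partial A)$ and, as recalled in the excerpt, the degree is continuous for the strong $H^{1/2}$ topology, the composition $\Phi$ is continuous from $(\I,\|\cdot\|_{H^1})$ into the discrete space $\mathbb{Z}^2$. A continuous map into a discrete space is locally constant, so each fibre $\I_{p,q}$ is simultaneously open and closed in $\I$. Consequently $\I$ is the disjoint union of the clopen sets $\I_{p,q}$, and every connected component of $\I$ is contained in exactly one of them.

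It remains to show that each nonempty $\I_{p,q}$ is connected; together with clopenness this identifies it as a genuine connected component (a connected subset that meets the clopen set $\I_{p,q}$ has intersection clopen and nonempty in it, hence is contained in it). I would establish path-connectedness in two reductions. First, for $u\in\I_{p,q}$ with trace $g$, the affine homotopy $u_t=(1-t)u+t\,\tilde u(g)$ stays in $H^1$, keeps the trace equal to $g$ (hence $|u_t|=1$ on $\partial A$ and the degrees fixed), and joins $u$ to the harmonic extension $\tilde u(g)$; since $g\mapsto\tilde u(g)$ is continuous from $H^{1/2}$ to $H^1$, connectedness of $\I_{p,q}$ follows from connectedness of the trace class $\{g\in H^{1/2}(\partial A,\mathbb{S}^1):\deg(g,\partial\Omega)=p,\ \deg(g,\partial\omega)=q\}$. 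Because $\partial A$ is the disjoint union of two circles, this class is a product of single-circle classes, so it suffices to connect, on one circle $\Gamma\cong\mathbb{S}^1$, an arbitrary $g$ of degree $d$ to the model map $e^{id\theta}$.

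For this last step I would invoke the $H^{1/2}$ lifting theory: writing $g=e^{id\theta}h$ with $h$ of degree $0$, one has $h=e^{i\phi}$ for some real $\phi\in H^{1/2}(\Gamma,\R)$, and the path $s\mapsto e^{id\theta}e^{i(1-s)\phi}$ is continuous in $H^{1/2}$, has degree $d$ throughout, and joins $g$ to $e^{id\theta}$. I expect this lifting to be the main obstacle: the existence of a single-valued $H^{1/2}$ phase relies precisely on the degree-zero hypothesis and on the (nontrivial) lifting results for fractional Sobolev maps, whereas the rest of the argument is soft topology. Collecting the pieces, namely $\Phi$ continuous into a discrete space (clopenness and the union) and each fibre path-connected (connectedness), shows that the $\I_{p,q}$ are exactly the connected components of $\I$.
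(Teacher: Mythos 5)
The paper does not actually prove this proposition: it is quoted as a known result and attributed to Boutet de Monvel and Gabber (\cite{Ogabber}), so there is no in-text argument to compare yours against. Your reconstruction is correct and is essentially the standard proof. The decomposition $\I=\bigcup\I_{p,q}$ and the clopenness are indeed soft: trace continuity $H^1(A)\to H^{1/2}(\partial A)$ plus the continuity of the $H^{1/2}$-degree under strong $H^{1/2}$ convergence (which the paper itself recalls) make $\Phi$ locally constant, and a locally constant map into $\mathbb{Z}^2$ has clopen fibres. The affine homotopy to the harmonic extension is fine since all $u_t$ share the trace $g$, and the reduction to path-connectedness of the single-circle degree classes is legitimate because $H^{1/2}(\partial A,\mathbb{S}^1)$ splits as a product over the two boundary components. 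You correctly identify the one genuinely nontrivial ingredient: the existence of a single-valued phase $\phi\in H^{1/2}(\Gamma,\R)$ for a degree-zero map $h\in H^{1/2}(\Gamma,\mathbb{S}^1)$. This is exactly the borderline case $sp=n=1$ of the lifting problem for fractional Sobolev maps, where lifting does hold (Boutet de Monvel--Gabber's original argument, later systematized by Bourgain--Brezis--Mironescu); once $\phi$ exists, the continuity in $H^{1/2}$ of $s\mapsto e^{i(1-s)\phi}$ follows by dominated convergence on the Gagliardo seminorm, and multiplication by the fixed smooth reference map $e^{id\theta}$ is bounded on $H^{1/2}\cap L^\infty$. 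Finally, your closing observation that a nonempty clopen connected set is automatically a connected component is the right way to assemble the pieces. I see no gap; the only thing worth making explicit if you were to write this up is the citation for the lifting step, since that is where all the analytic content sits.
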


This proposition allows us to say that if a minimizer for the energy $E$ in the class $\I_{p,q}$ exists then it is a local minimizer of the energy in $\I$ and hence a solution of \eqref{semi-stiff 1}. As stated in the introduction the degree is not continuous under the weak $H^1$ convergence. That is why finding solutions of \eqref{semi-stiff 1} is a non trivial problem.\\

The statement of theorem \ref{theorem2} shows that the capacity of an annulus is an essential notion in the study of problem \eqref{semi-stiff 1}. \begin{definition}
Let $A$ be a doubly connected domain. Let $V \in \mathcal{C}^{\infty}(A, \mathbb{R})$ be a solution of 

\begin{equation}
\left\{
\begin{array}{rcll}
\Delta V&=&0, \ \ \text{in} \ A, \\
V&=& 1, \ \ \text{on} \ \partial \Omega,\\
V&=& 0, \ \ \text{on} \ \partial \omega. 
\end{array}
\right.
\end{equation}
then
\begin{equation}
\capa(A):=\frac{1}{2}\int_A |\nabla V|²dx.
\end{equation}
\end{definition}

The capacity of a domain is preserved under conformal transformations and for an annulus $ G=\mathbb{D}_R \setminus \mathbb{D}_r$ it holds $\capa(G)= \frac{2\pi}{\ln(R/r)}$. The capacity measure the ``thickness" of the domain. Every annular domain $A$ is conformally equivalent to an annulus $A_\varrho=\{ z \in \C ;\varrho <|z|<1\}$ with $\frac{2\pi}{\ln(1/\varrho)}=\capa(A)$. 

Many lemmas collected in the rest of this section were first proved for the G.L energy, or the G.L equation their proof can be found in \cite{Capacity},\cite{unpublished},\cite{multiplyconnected}. Their adaptation to the Dirichlet energy is straightforward.\
\subsection{Properties of solutions of \eqref{semi-stiff 1} }
In this subsection we state two classical results concerning elliptic partial differential equations: regularity and maximum principle. 
\begin{proposition}\label{regularity}
Let $u$ be a solution of \eqref{semi-stiff 1} then $u$ is in $\mathcal{C}^{\infty}(\overline{A})$.
\end{proposition}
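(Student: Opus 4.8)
The plan is to separate interior from boundary regularity. Since each component of $u$ is harmonic in $A$, interior smoothness (indeed real-analyticity) is immediate from the mean value property and the interior elliptic estimates, so the whole difficulty lies in regularity up to $\partial A$. Throughout I may assume, by the conformal invariance already used in the introduction, that $A = A_\varrho$, so that $\partial A = \mathbb{S}^1 \cup C_\varrho$ consists of two smooth circles, and I work in a one-sided neighborhood of each boundary point.

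First I would record two preliminary facts. The function $|u|^2$ is subharmonic, since $\Delta |u|^2 = 2|\nabla u|^2 \ge 0$ for harmonic $u$; together with $|u|^2 = 1$ on $\partial A$ this gives $|u| \le 1$ in $A$ by the maximum principle. Next I claim that $u$ extends continuously to $\overline{A}$ with $|u|$ close to $1$ near $\partial A$. This is the one place where I would invoke the regularity theory for stationary harmonic maps with free boundary on the smooth submanifold $\mathbb{S}^1 \subset \mathbb{R}^2$: the conditions \eqref{condition3} say exactly that $u$ meets $\mathbb{S}^1$ orthogonally, and such maps are H\"older continuous up to the boundary (in two dimensions one may also argue directly by a hole-filling / Morrey estimate exploiting \eqref{condition4}). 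In particular there is a boundary neighborhood on which $|u| \ge 1/2$.

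On a sufficiently small neighborhood of each boundary point $u$ does not vanish, so I may write $u = \rho\, e^{i\varphi}$ with $\rho > 0$ and $\varphi$ a single-valued real function of the same regularity as $u$. Substituting into $\Delta u = 0$ and separating the factor $e^{i\varphi}$ decouples the equation into
\[
\Delta \rho - \rho\,|\nabla \varphi|^2 = 0, \qquad \operatorname{div}\!\big(\rho^2 \nabla \varphi\big) = 0,
\]
while the boundary conditions of \eqref{semi-stiff 1} become the constant Dirichlet datum $\rho = 1$ together with the homogeneous Neumann (conormal) datum $\partial_\nu \varphi = 0$ on $\partial A$, as explained after \eqref{condition3}. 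This is a coupled elliptic system with smooth structure: a semilinear Dirichlet problem for $\rho$ and a divergence-form conormal problem for $\varphi$, coupled only through lower-order smooth functions of $(\rho,\nabla\varphi)$.

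I would then bootstrap. Starting from $\rho,\varphi \in H^1 \cap C^{0,\alpha}$, the coefficient $\rho^2$ and the right-hand side $\rho\,|\nabla\varphi|^2$ lie in suitable H\"older/Sobolev classes; applying the boundary Schauder (or $L^p$) estimates for the Dirichlet problem for $\rho$ and for the Neumann problem for $\varphi$ raises the regularity of both by one order, and feeding the improved regularity back into the coefficients lets me iterate to obtain $\rho,\varphi \in C^\infty$ up to $\partial A$, hence $u = \rho\, e^{i\varphi} \in C^\infty(\overline{A})$. The main obstacle is the preliminary step: because the boundary conditions are nonlinear (the modulus is prescribed in the Dirichlet sense while the phase is prescribed in the Neumann sense, with the Steklov factor $\sigma$ of \eqref{condition4} a priori only a trace of $\partial_\nu u$), one must secure continuity up to the boundary and a positive lower bound for $|u|$ there before the polar change of variables is even available; once the problem is recast in the form above, the remaining iteration is routine elliptic regularity with smooth coefficients.
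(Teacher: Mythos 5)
Your proof is essentially correct, but it takes a genuinely different route from the paper, whose own proof contains no analysis: the paper observes that interior smoothness is classical for harmonic functions and then defers smoothness up to $\partial A$ entirely to Lemma 4.4 of \cite{unpublished}. Your proposal replaces that black box by a structured argument, and its ingredients are all correctly set up: the decoupling of $\Delta u=0$ under $u=\rho e^{i\varphi}$ into $\Delta\rho=\rho|\nabla\varphi|^2$ and $\operatorname{div}(\rho^2\nabla\varphi)=0$, the identification of the boundary conditions as $\rho=1$ (Dirichlet) and $\partial_\nu\varphi=0$ (conormal), and the alternating Schauder bootstrap all work as you describe. The one point you must make precise is your own crucial citation: ``stationary harmonic maps with free boundary are H\"older continuous up to the boundary'' is not true as a blanket statement (in dimension $\geq 3$ only partial boundary regularity is known for stationary maps); what makes it legitimate here is that the domain is two-dimensional and the supporting manifold is $\mathbb{S}^1\subset\R^2$, so that, as the paper itself records in its introduction, the trace of a solution of \eqref{semi-stiff 1} is exactly a $\frac{1}{2}$-harmonic map in the sense of Da Lio--Rivi\`ere \cite{DaLio}, and their regularity theory for $\frac{1}{2}$-harmonic maps into spheres gives continuity (indeed smoothness) of the trace. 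With that reference pinned down your argument is complete --- and in fact, once the trace is known to be smooth, you could bypass the bootstrap altogether, since a harmonic function with smooth Dirichlet data on a smooth domain is smooth up to the boundary; your bootstrap retains independent value because it needs only the weaker input of H\"older continuity and a lower bound on $|u|$ near $\partial A$. In short, the paper buys brevity by citing \cite{unpublished}, while you buy transparency: you isolate where the genuine analytic difficulty sits (continuity and non-vanishing of $u$ at the boundary) and show that everything beyond it is routine elliptic theory, at the price of requiring an equally serious external input at that single point.
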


\begin{proof}
Since we have $\Delta u =0$ in $A$, $u$ is harmonic and a classical result says that $u\in \mathcal{C}^{\infty}(A)$.
It is the smoothness up to the boundary which is nontrivial and for that we refer to lemma 4.4 in \cite{unpublished}.
\end{proof}

\begin{proposition}
Let $u$ be a solution of \eqref{semi-stiff 1} then 
\begin{equation}
|u| \leq 1, \ \ in \ A.
\end{equation}
\end{proposition}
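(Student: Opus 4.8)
The plan is to exploit the subharmonicity of $|u|^2$ together with the maximum principle. First I would write $u=u_1+iu_2$ with $u_1,u_2$ real-valued and compute the Laplacian of $|u|^2=u_1^2+u_2^2$. Since each component of a harmonic map is itself harmonic, $\Delta(u_j^2)=2u_j\Delta u_j+2|\nabla u_j|^2=2|\nabla u_j|^2$, and therefore
\[
\Delta |u|^2 = 2|\nabla u_1|^2 + 2|\nabla u_2|^2 = 2|\nabla u|^2 \geq 0,
\]
so that $|u|^2$ is subharmonic in $A$.

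Next I would invoke Proposition \ref{regularity} to guarantee that $u \in \mathcal{C}^\infty(\overline{A})$, so that $|u|^2$ is continuous on the compact set $\overline{A}$ and attains its maximum there. The maximum principle for subharmonic functions then forces this maximum to be attained on $\partial A$. Since $|u|=1$ a.e. on $\partial A$ and $|u|$ is continuous up to the boundary, the boundary condition upgrades to $|u|^2 = 1$ everywhere on $\partial A$, whence $\max_{\overline{A}}|u|^2 = 1$. It follows that $|u|^2 \leq 1$, and therefore $|u| \leq 1$, throughout $A$.

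I do not expect any serious obstacle here: the argument uses only that $u$ is harmonic and of modulus one on the boundary, and the third (Neumann-type) condition in \eqref{semi-stiff 1} plays no role in this particular estimate. The only point requiring a little care is the passage from the almost-everywhere boundary condition $|u|=1$ to the pointwise identity $|u|^2=1$ on all of $\partial A$, which is justified precisely by the up-to-the-boundary regularity already established in Proposition \ref{regularity}.
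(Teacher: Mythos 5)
Your proof is correct and follows the same route the paper intends: the paper simply states that the result is an application of the maximum principle (citing \cite{BBH}), and your computation $\Delta|u|^2=2|\nabla u|^2\geq 0$ followed by the maximum principle for subharmonic functions is precisely the standard way to carry that out. The remark that the boundary condition upgrades from almost everywhere to everywhere via the regularity up to the boundary is a reasonable point of care and is consistent with Proposition \ref{regularity}.
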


The latter proposition is an application of the maximum principle and can be found in \cite{BBH} for the G.L equation.

 \subsection{Minimizing sequences: price lemma and insertion of bubbles}

The lemmas presented here are very similar to those in \cite{kuwert}. It is because the problem studied in \cite{kuwert} presents some similarity with our problem.
First we can construct test functions that give us some estimation about the value of $m(p,q)$.

\begin{lemma}(\cite{multiplyconnected})\label{Comparison1}
For $u\in \I_{r,s}$ , $(p,q)\in \mathbb{Z}\times \mathbb{Z}$ and $\delta>0$ there exists $v\in \I_{p,q}$  such that 
$$ E(v) \leq E(u)+\pi(|p-r|+|q-s|)+\delta.$$ In particular

$$m(p,q) \leq m(r,s)+ \pi(|p-r|+|q-s|).$$
\end{lemma}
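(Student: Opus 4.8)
The plan is to prove the lemma by \emph{insertion of bubbles}: starting from $u \in \I_{r,s}$, I would modify it near the boundary so as to raise or lower the degree on each component, paying an energy of approximately $\pi$ per unit of degree changed. Since the total discrepancy is $|p-r|$ on $\partial\Omega$ and $|q-s|$ on $\partial\omega$, it suffices to treat a single unit change on one prescribed component and then iterate, placing the successive bubbles in disjoint small regions so that their costs simply add. Thus the heart of the matter is the claim: for every $\varepsilon>0$ and every $w \in \I$, one can build $\tilde w \in \I$ with $\deg(\tilde w, \partial\Omega)=\deg(w,\partial\Omega)\pm 1$, $\deg(\tilde w,\partial\omega)=\deg(w,\partial\omega)$, and $E(\tilde w)\le E(w)+\pi+\varepsilon$, together with the symmetric statement for $\partial\omega$.

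For the single bubble I would work on the model annulus $A_\varrho$, fix a point $x_0 \in \mathbb{S}^1=\partial\Omega$, and set $\tilde w = w\, M_a$ with the Blaschke-type factor $M_a(z)=\frac{z-a}{1-\bar a z}$, where $\varrho<|a|<1$ and $a\to x_0$. Since the zero $a$ lies in $\mathbb{D}$ but outside $\overline{\mathbb{D}_\varrho}$, one has $\deg(M_a,\mathbb{S}^1)=1$ and $\deg(M_a,C_\varrho)=0$, so multiplication raises only the outer degree by one (the sign is reversed by using $\overline{M_a}$). The key point is the \textbf{scale invariance of the two-dimensional Dirichlet energy}: any degree-one Blaschke factor has energy exactly $\pi$ on $\mathbb{D}$, so as $a\to x_0$ the bubble concentrates at $x_0$ while its energy on $A_\varrho$ tends to $\pi$ (the part lost inside $\mathbb{D}_\varrho$ is negligible). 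Crucially, away from $x_0$ the factor $M_a$ converges uniformly to the unimodular constant $-x_0$; hence $\tilde w$ equals a rotation of $w$ away from $x_0$ and is nearly unimodular on the far boundary $C_\varrho$, where a small $H^1$ correction restores $|\tilde w|=1$ exactly at negligible energy cost. (Conceptually, this records that the degree on a boundary component depends only on the trace there, so a bubble attached near $x_0\in\partial\Omega$ alters only $\deg(\cdot,\partial\Omega)$.)

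It remains to control the interaction between the bubble and $w$. Expanding $|\nabla(wM_a)|^2=|M_a|^2|\nabla w|^2+2\operatorname{Re}\big(\overline{M_a\nabla w}\cdot u\nabla M_a\big)+|w|^2|\nabla M_a|^2$ and using $|M_a|\le 1$, $|w|\le 1$, the excess over $\frac12\int|\nabla w|^2$ is the bubble energy ($\to\pi$) plus a cross term bounded by Cauchy--Schwarz by $\big(\int|\nabla w|^2\,\mathbb{1}_{U_\delta}\big)^{1/2}$ times the bounded bubble energy, where $U_\delta$ is the shrinking region on which $\nabla M_a$ is non-negligible. Since $w\in H^1(A)$ and $|U_\delta|\to 0$, absolute continuity of the integral forces $\int_{U_\delta}|\nabla w|^2\to 0$, so the cross term is $\le\varepsilon$ for $a$ close enough to $x_0$. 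Summing the $|p-r|+|q-s|$ bubbles over disjoint collars yields $v\in\I_{p,q}$ with $E(v)\le E(u)+\pi(|p-r|+|q-s|)+\delta$. The main obstacle is exactly this energy bookkeeping: verifying that each bubble's energy stays arbitrarily close to $\pi$ and no larger while simultaneously enforcing $|v|=1$ on all of $\partial A$ and absorbing the cross terms; the degeneration of $M_a$ to a constant away from $x_0$ is precisely what makes the modulus matching on the opposite component cheap.

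Finally, the ``in particular'' statement follows by choosing $u\in\I_{r,s}$ with $E(u)\le m(r,s)+\delta$ and applying the construction: $m(p,q)\le E(v)\le m(r,s)+\pi(|p-r|+|q-s|)+2\delta$, and letting $\delta\to 0$ gives $m(p,q)\le m(r,s)+\pi(|p-r|+|q-s|)$.
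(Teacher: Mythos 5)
The paper itself contains no proof of this lemma: it is quoted from \cite{multiplyconnected}, with the remark that adapting the Ginzburg--Landau arguments to the Dirichlet energy is straightforward, and the proof in that literature is exactly the bubble-insertion scheme you propose -- multiplication by a M\"obius factor concentrating at a boundary point at cost $\pi+\varepsilon$ per unit of degree, iteration over disjoint concentration regions, and a conformal inversion $z\mapsto\varrho/\bar z$ to treat the inner boundary. Your identification of the winding numbers ($\deg(M_a,\mathbb{S}^1)=1$, $\deg(M_a,C_\varrho)=0$), the exact value $\pi$ of the bubble energy, the degeneration of $M_a$ to the unimodular constant $-x_0$ away from $x_0$, and the need for a modulus correction on $C_\varrho$ are all correct, so your route is the intended one.

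There is, however, one step that fails as written: you invoke the pointwise bound $|w|\le 1$ in $A$ to control both the cross term and the bubble term $\frac12\int_A|w|^2|\nabla M_a|^2$. An element of $\I_{r,s}$ is an arbitrary $H^1$ map whose \emph{trace} is unimodular; nothing forces $|w|\le 1$ inside $A$. Indeed, adding to a fixed element of $\I_{r,s}$ a complex-valued $H^1$ perturbation with zero trace built from capacity bumps centred at interior points accumulating on the boundary (points have zero $H^1$-capacity in dimension two) produces $w\in\I_{r,s}$ with $|w|$ unbounded in every neighbourhood of every boundary point, so no choice of $x_0$ avoids the problem. Since $|\nabla M_a|^2$ concentrates all of its mass near $x_0$ as $a\to x_0$, without a pointwise bound the term $\int_A|w|^2|\nabla M_a|^2$ is not controlled by $2\pi+\varepsilon$, and H\"older cannot rescue it: $|w|^2$ lies in every $L^q$ with $q<\infty$ but not in $L^\infty$, while $\||\nabla M_a|^2\|_{L^{q'}}\to\infty$ for every $q'>1$. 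The standard repair is a one-line reduction to insert at the start: replace $u$ by the harmonic extension $\hat u$ of its own trace. Then $\hat u$ has the same trace, hence the same degrees, so $\hat u\in\I_{r,s}$; the harmonic extension minimizes energy among maps with that trace, so $E(\hat u)\le E(u)$; and $|\hat u|^2$ is subharmonic ($\Delta|\hat u|^2=2|\nabla\hat u|^2\ge 0$) with boundary values $1$, so $|\hat u|\le 1$ in $A$. Proving the inequality for $\hat u$ then yields it for $u$, and with this reduction your estimates go through; the remaining schematic step (interpolating between $M_a$ and its phase $M_a/|M_a|$ in a collar around $C_\varrho$, at an energy cost tending to $0$ because $M_a\to -x_0$ in $C^1$ there) is routine, and you correctly flagged it.
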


We can deduce from this lemma, since $m(0,0)=0$ that
\begin{equation}
m(p,q) \leq \pi(|p|+|q|). 
\end{equation}

The next lemma gives us some information about the ``cost" for a weak limit to jump in another class.

\begin{lemma}[\cite{unpublished},\cite{Capacity}]\label{Price lemma1}
Let $\{u^{(n)}\} \subset \I_{p,q}$ be a sequence that converges to $u$ weakly in $H^1(A,\R^2)$ with $u\in \I_{r,s}$. Then
$$ E(u) \leq \liminf_{n \rightarrow + \infty} E(u^{(n)})-\pi(|p-r|+|q-s|).$$
\end{lemma}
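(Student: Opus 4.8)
The plan is to combine the weak lower semicontinuity of the Dirichlet energy on the interior of $A$ with a \emph{quantized} lower bound for the energy that must be spent, near each boundary component, in order to change the boundary degree. Fix a small $\eta>0$ and split the annulus as $A=A_\varrho=K_\eta\cup N^{\mathrm{out}}_\eta\cup N^{\mathrm{in}}_\eta$, where $K_\eta=\{\varrho+\eta\le |z|\le 1-\eta\}$ is a compact core and $N^{\mathrm{out}}_\eta=\{1-\eta<|z|<1\}$, $N^{\mathrm{in}}_\eta=\{\varrho<|z|<\varrho+\eta\}$ are thin collars. As these three regions are disjoint the energies add, and on the core the weak convergence $u^{(n)}\rightharpoonup u$ in $H^1$ gives $\liminf_n E(u^{(n)};K_\eta)\ge E(u;K_\eta)$ by weak lower semicontinuity. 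It then suffices to prove that in the limit the outer collar carries at least $\pi|p-r|$ of energy and the inner collar at least $\pi|q-s|$; adding the three contributions and finally letting $\eta\to 0$ (so that $E(u;K_\eta)\to E(u)$) yields the asserted inequality.

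The quantization constant $\pi$ comes from a Jacobian lower bound, which I would isolate as the key step. On any region $R$ one has the pointwise inequality $\tfrac12|\nabla v|^2\ge |\det\nabla v|$, together with the divergence identity $\int_R \det\nabla v=\tfrac12\int_{\partial R} v\wedge\partial_\tau v\,d\tau$; when $v$ is $\mathbb{S}^1$-valued on the edges of $R$ the right-hand side equals $\pi$ times the difference of the boundary degrees. On a disc $\mathbb{D}_\sigma$ with $|v|=1$ and $\deg(v,C_\sigma)=k$ this gives $\tfrac12\int_{\mathbb{D}_\sigma}|\nabla v|^2\ge \pi|k|$, and the linear map $v(z)=z/\sigma$ shows the constant $\pi$ is sharp; this is precisely the energy of the bubble appearing in the example $M_n$. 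Applied to the outer collar, whose outer edge is $C_1$ (where $u^{(n)}$ is $\mathbb{S}^1$-valued of degree $p$), the same computation yields $E(u^{(n)};N^{\mathrm{out}}_\eta)\ge \pi\,|\,p-d_n|$ once the winding $d_n$ of $u^{(n)}$ on the interior edge is identified, and symmetrically on the inner collar.

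The main obstacle, and the only delicate point, is to show that the interior-edge windings converge to the degrees of the limit, namely $d_n\to r$ and $d'_n\to s$, so that the collar bounds become $\pi|p-r|$ and $\pi|q-s|$; this is exactly where the difficulty of the problem lies, since the degree is \emph{not} continuous under weak $H^1$ convergence (cf.\ the example $M_n$). To control it I would choose the interior edges along ``good slices'': by Fubini and the boundedness of $E(u^{(n)})$, for almost every radius $\rho$ in the core the circle energies $\int_{C_\rho}|\partial_\tau u^{(n)}|^2$ stay bounded along a subsequence, while the strong $L^2(A)$ convergence furnished by Rellich gives $u^{(n)}|_{C_\rho}\to u|_{C_\rho}$ in $L^2(C_\rho)$; the compact embedding $H^1(C_\rho)\hookrightarrow \mathcal{C}^0(C_\rho)$ then upgrades this to uniform convergence on $C_\rho$. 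On a slice where $u$ stays bounded away from $0$ the maps $u^{(n)}/|u^{(n)}|$ are well defined for large $n$ and their windings converge, pinning $d_n$ (resp.\ $d'_n$) to the winding of $u$ on that circle, which tends to $r$ (resp.\ $s$) as the slice approaches the boundary; the underlying rigidity is the distributional convergence $\det\nabla u^{(n)}\rightharpoonup\det\nabla u$ (compensated compactness in two dimensions), which forbids degree from leaking through a generic interior circle. Combining the core lower semicontinuity with the two collar estimates and letting $\eta\to 0$ then gives $\liminf_n E(u^{(n)})\ge E(u)+\pi(|p-r|+|q-s|)$, as claimed.
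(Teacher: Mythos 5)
Your overall architecture (core plus collars, lower semicontinuity on the core, Jacobian quantization near the boundary, Fubini-type good slices) is the standard route to this lemma -- note, for comparison, that the paper itself offers no proof but quotes the result from \cite{unpublished} and \cite{Capacity}. However, there is a genuine error in your collar estimate. The inequality $E(u^{(n)};N^{\mathrm{out}}_\eta)\ge \pi|p-d_n|$ is false. What the pointwise bound $\tfrac12|\nabla v|^2\ge|\det\nabla v|$ and the divergence identity actually give on the collar is
\begin{equation*}
E\bigl(u^{(n)};N^{\mathrm{out}}_\eta\bigr)\;\ge\;\Bigl|\pi p-\tfrac12\oint_{C_{1-\eta}}u^{(n)}\wedge\partial_\tau u^{(n)}\,d\tau\Bigr|,
\end{equation*}
and the boundary term on the interior edge is a \emph{modulus-weighted} winding: its quantization to integer multiples of $\pi$ uses $|u^{(n)}|=1$, which holds only on $\partial A$, not on $C_{1-\eta}$. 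Concretely, take on the collar $v(re^{i\theta})=a(r)e^{ip\theta}+\tfrac1N\,b(r)e^{iN\theta}$ with $a,b$ affine, $a(1)=b(1-\eta)=1$, $a(1-\eta)=b(1)=0$: the cross terms integrate to zero in $\theta$, so $E(v;N^{\mathrm{out}}_\eta)\le \pi\bigl(\tfrac{2}{\eta}+2(p^2+1)\eta\bigr)$ uniformly in $N$, while $v$ has degree $p$ on $C_1$ and winding $d=N$ on the interior edge, so $\pi|p-d|\to\infty$. Thus arbitrarily large winding on an interior circle of small modulus costs arbitrarily little energy, and your chain of inequalities breaks even when $d_n$ is well defined. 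A secondary problem is your restriction to "slices where $u$ stays bounded away from $0$": the weak limit is merely an $H^1$ map, and its modulus can vanish somewhere on \emph{every} interior circle near $\partial A$ at finite energy cost (pinch $u$ to zero along a radial segment, with the pinching width tending to $0$ like $\sqrt{1-|z|}$), so such slices need not exist and $d_n$ need not be defined.

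The fix stays entirely within the tools you already set up: never quantize on interior circles. Keep the collar bound in the integral form displayed above, and use your good-slice machinery (circle energies bounded along a subsequence, Rellich, compact embedding $H^1(C_{1-\eta})\hookrightarrow \mathcal{C}^0(C_{1-\eta})$) to pass to the limit in the \emph{bilinear} boundary term: uniform convergence of $u^{(n)}$ together with weak $L^2(C_{1-\eta})$ convergence of $\partial_\tau u^{(n)}$ gives $\oint u^{(n)}\wedge\partial_\tau u^{(n)}\to\oint u\wedge\partial_\tau u$, with no non-vanishing hypothesis. Then apply the divergence identity to $u$ itself on the collar: since $\det\nabla u\in L^1(A)$, one gets $\tfrac12\oint_{C_{1-\eta}}u\wedge\partial_\tau u\to\pi r$ as $\eta\to0$ along good radii (and similarly $\pi s$ on the inner side). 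Combining the core lower semicontinuity with the two collar bounds and letting $\eta\to0$ yields exactly $\liminf_n E(u^{(n)})\ge E(u)+\pi(|p-r|+|q-s|)$. With this replacement your proof is correct and is essentially the argument of the cited references; as written, though, the step $E\ge\pi|p-d_n|$ is not a gap one can wave through, since it is precisely the failure of integer quantization away from $\partial A$ that makes the lemma an inequality (energy can be lost) rather than an equality.
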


These two lemmas used together allow us to make a first description of what can happen to a minimizing sequence in $\I_{p,q}$ for $E$. The proof of the following lemma is inspired by \cite{kuwert}.

\begin{lemma}\label{behavior1}
Assume that a minimizing sequence  $\{u^{(n)}\} \subset \I_{p,q}$ for $m(p,q)$ converges weakly to some $u\in \I_{r,s}$. Then
$$ E(u)=\liminf_{n\rightarrow +\infty} E (u^{(n)}) -\pi(|p-r|+|q-s|),$$
and $u$ minimizes energy in $\I_{r,s}$ that is 
$$E(u)= m(r,s).$$
\end{lemma}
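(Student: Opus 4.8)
The plan is to prove both equalities by a single sandwich argument that combines the two preceding quantitative lemmas, using only that $\{u^{(n)}\}$ is minimizing. Since the sequence is minimizing for $m(p,q)$ we have $\liminf_{n\to+\infty} E(u^{(n)})=m(p,q)$, so it suffices to establish the two equalities with $\liminf_n E(u^{(n)})$ replaced by $m(p,q)$; the two formulations are then literally the same statement.

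First I would feed the weak convergence $u^{(n)} \rightharpoonup u$ into the price Lemma \ref{Price lemma1}, which provides the cost-penalized lower semicontinuity
$$E(u) \le \liminf_{n\to+\infty} E(u^{(n)}) - \pi(|p-r|+|q-s|) = m(p,q) - \pi(|p-r|+|q-s|).$$
On the other hand, $u \in \I_{r,s}$ by hypothesis, so by the very definition of $m(r,s)$ as an infimum over $\I_{r,s}$ one has $E(u) \ge m(r,s)$. Chaining these two facts gives
$$m(r,s) \le E(u) \le m(p,q) - \pi(|p-r|+|q-s|).$$

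To close the sandwich I would invoke the comparison Lemma \ref{Comparison1}, whose second conclusion reads $m(p,q) \le m(r,s) + \pi(|p-r|+|q-s|)$, equivalently $m(p,q) - \pi(|p-r|+|q-s|) \le m(r,s)$. Combined with the chain above, this forces every inequality in sight to be an equality, so that simultaneously $E(u) = m(r,s)$ and $E(u) = m(p,q) - \pi(|p-r|+|q-s|) = \liminf_n E(u^{(n)}) - \pi(|p-r|+|q-s|)$, which is exactly the assertion. There is no genuine analytic obstacle here: the argument is purely a matter of combining the cost-penalized lower bound (the price lemma) with the matching upper bound on $m(p,q)$ (the comparison lemma). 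The only point demanding care is the bookkeeping of signs — one must check that the penalty $\pi(|p-r|+|q-s|)$ enters the two lemmas with opposite orientations, which is precisely what makes the estimates meet and collapse into equalities rather than leaving a gap.
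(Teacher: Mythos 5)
Your proof is correct and follows essentially the same route as the paper: both arguments sandwich $E(u)$ between the lower bound $m(r,s)$ (definition of the infimum), the upper bound $m(p,q)-\pi(|p-r|+|q-s|)$ from the price Lemma \ref{Price lemma1}, and the reverse inequality $m(p,q)\leq m(r,s)+\pi(|p-r|+|q-s|)$ from Lemma \ref{Comparison1}, forcing all inequalities to be equalities. Your explicit remark that $\liminf_n E(u^{(n)})=m(p,q)$ for a minimizing sequence is the same observation the paper uses implicitly.
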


\begin{proof}
Thanks to lemma \ref{Price lemma1} we have
$$ E(u) \leq \liminf_{n \rightarrow + \infty} E(u^{(n)})-\pi(|p-r|+|q-s|).$$ But thanks to lemma \ref{Comparison1}
$$E(u) \geq m(r,s) \geq m(p,q)-\pi(|p-r|+|q-s|)$$
thus $$E(u)=m(p,q)-\pi(|p-r|+|q-s|)$$ and we can apply lemma \ref{Comparison1} again to obtain
$$m(r,s) \leq E(u) \leq m(r,s)$$ hence
$$E(u)= m(r,s).$$
\end{proof}
With the notation of lemma \ref{behavior1}, it means that if the infimum $m(p,q)$ is not attained then the weak limit of a minimizing sequence falls into an another class where the infimum is attained. 

To conclude this section let us state a stronger version of lemma \ref{Comparison1} which allows us to give a little better description of the behavior of the minimizing sequences.

\begin{lemma}(\cite{Insertion} )\label{strong bubble}
Let  $u\in \I_{p,q}$ be a solution of the Laplace Equation with semi-stiff boundary conditions and $k\in \mathbb{N}^*$. 
\begin{itemize}
\item[i)] Assume that there is $x_0 \in \partial \Omega$ such that
$$u\wedge \partial_{\tau}u(x_0) > -\langle u , \partial_{\nu}u\rangle (x_0), $$ then there exists $v\in \I_{p-k,q}$ such that 
$$E(v)< E(u)+k\pi.$$
\item[ii)] Assume that there is $x_0 \in \partial \Omega$ such that 
$$u\wedge \partial_{\tau}u(x_0)< \langle u , \partial_{\nu}u \rangle (x_0) ,$$ then there exists $v\in \I_{p+k,q}^d$ such that
$$E(v)< E(u)+k\pi.$$
\end{itemize}
\end{lemma}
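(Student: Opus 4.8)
The plan is to prove the statement by an explicit \emph{bubble insertion}: starting from the given solution $u$, I would glue near $x_0$ a Blaschke-type factor carrying degree $\mp k$, and show that, thanks to the sign hypothesis, the extra energy is strictly less than $k\pi$ (this is the only improvement over the non-strict bound $E(v)\le E(u)+k\pi$ furnished by Lemma \ref{Comparison1}).

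\textbf{Localization and normalization.} First I would use the conformal invariance of $E$ together with the boundary regularity of $u$ (Proposition \ref{regularity}) to reduce to a flat model: fix a conformal chart sending a neighborhood of $x_0$ in $A$ onto a half-disk $D^+=\{|w|<1,\ \mathrm{Im}\,w>0\}$, with $x_0\mapsto 0$ and the arc of $\partial\Omega$ onto $(-1,1)\subset\R$. After a rotation of the target I may assume $u(x_0)=1$. Since $u$ is harmonic it splits as $u=f+\bar g$ with $f,g$ holomorphic, and I record the first-order expansion $u(w)=1+\alpha w+\bar\beta\,\bar w+o(|w|)$, with $\alpha=f'(0)$, $\beta=g'(0)$. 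Imposing $|u|=1$ and $u\wedge\partial_\nu u=0$ on the real segment, a short computation expresses the two boundary quantities at $x_0$ through $\alpha,\beta$; the upshot is that the hypotheses of i) and ii) become strict sign conditions on $\mathrm{Im}\,\alpha$, respectively $\mathrm{Im}\,\beta$ (up to the orientation sign of $\tau$). This is the form of the hypothesis I will actually exploit.

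\textbf{The bubble and the competitor.} For part i) I would take $b$ to be the conjugate of a degree-$k$ Blaschke product whose zeros are clustered at the interior point at distance $\lambda$ from $x_0$, so that $|b|=1$ on $\partial\Omega$ while $b$ concentrates its gradient in a disk of radius $\sim\lambda$ around $x_0$; for part ii) I take $b$ holomorphic instead. I then set $v=u\,\hat b$, where $\hat b$ equals $b$ on the half-disk around $x_0$ and is smoothly deformed to a unimodular function away from it, so that $|v|=1$ on all of $\partial A$ and $v\equiv u$ near $\partial\omega$. This yields $v\in\I_{p\mp k,q}$: the degree on $\partial\omega$ is untouched, while the Blaschke factor shifts the degree on $\partial\Omega$ by exactly $\mp k$. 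Since $b$ is already $O(\lambda)$-close to a unimodular constant away from $x_0$, the deformation layer contributes only $o(1)$ to the energy.

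\textbf{Energy expansion and the sign.} Finally I would expand, using $\nabla v=\hat b\,\nabla u+u\,\nabla\hat b$,
\[
E(v)-E(u)=\tfrac12\int\big(|\hat b|^2-1\big)|\nabla u|^2+\tfrac12\int|u|^2|\nabla \hat b|^2+\int\sum_j\mathrm{Re}\big(\bar{\hat b}\,u\,\overline{\partial_j u}\,\partial_j \hat b\big).
\]
Because a degree-$k$ Blaschke product is a conformal covering of the disk, $\tfrac12\int|\nabla b|^2=k\pi$ exactly, so the middle piece equals $k\pi+o(1)-\tfrac12\int(1-|u|^2)|\nabla\hat b|^2$; the first piece tends to $0$ as $\lambda\to0$. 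The two genuinely $O(\lambda)$ contributions — namely $-\tfrac12\int(1-|u|^2)|\nabla\hat b|^2$ and the interaction integral — combine, after the rescaling $w=\lambda\zeta$ and the Step-1 expansion, into $-c\,\lambda+o(\lambda)$ with $c$ a \emph{positive} multiple of the hypothesis quantity $u\wedge\partial_\tau u(x_0)+\langle u,\partial_\nu u\rangle(x_0)$ in case i) (respectively $\langle u,\partial_\nu u\rangle(x_0)-u\wedge\partial_\tau u(x_0)$ in case ii)). Hence $E(v)=E(u)+k\pi-c\,\lambda+o(\lambda)$, and choosing $\lambda$ small gives $E(v)<E(u)+k\pi$. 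The main obstacle is precisely this last step: isolating the first-order-in-$\lambda$ term of the interaction integral and verifying that its coefficient is exactly (a positive multiple of) the hypothesis quantity — so that the strict inequality is obtained — while controlling the matching layer so it does not disturb this balance and keeping $|v|=1$ exactly on $\partial A$.
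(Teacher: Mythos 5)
First, a point of order: the paper does not actually prove Lemma \ref{strong bubble} --- it states it and defers entirely to \cite{Insertion} (``The proof of this result can be found in \cite{Insertion}''). So the only meaningful comparison is with the argument of that reference, and your plan is indeed the same strategy used there: conformal chart onto a half-disk at $x_0$, multiplication by a Blaschke/M\"obius factor concentrated at scale $\lambda$ (holomorphic for ii), conjugate for i)), and an expansion $E(v)=E(u)+k\pi-c\lambda+o(\lambda)$. Your degree bookkeeping is right, and so are the quantities you claim $c$ is proportional to in each case.

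The genuine gap --- which you flag yourself as ``the main obstacle'' --- is that the decisive estimate is asserted, not proved, and it is not a routine verification, because the naive bounds fail. Take $k=1$ and $b(w)=(w-i\lambda)/(w+i\lambda)$ in the chart: then $|\nabla b|\sim\lambda/|w|^{2}$, so the interaction integral $2\int\mathrm{Re}\bigl(\bar u\,\partial_z u\,b\,\overline{b'}\bigr)$ is a priori only $O\bigl(\lambda\log(1/\lambda)\bigr)$ with no sign, and the matching layer at scale $\delta$ a priori contributes $C\|\nabla u\|_\infty\lambda$ with an uncontrolled constant; either would swamp the claimed gain $-c\lambda$. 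What makes the argument work is an angular cancellation you never invoke: the dangerous leading integrands are of the form $\mathrm{Re}\bigl(c'e^{2i\theta}\bigr)/r$ and $\int_0^\pi e^{2i\theta}\,d\theta=0$, which kills both the logarithm and the leading layer term. After that, rescaling $w=\lambda\zeta$ shows the interaction equals $2\lambda\,\mathrm{Re}\bigl(\bar u(x_0)\partial_z u(x_0)\,I\bigr)+o(\lambda)$ with $I=\int_{\{\mathrm{Im}\,\zeta>0\}}B\,\overline{B'}\,d\zeta$, $B(\zeta)=(\zeta-i)/(\zeta+i)$; a Stokes-theorem computation gives $I=-i\pi$, hence (in the chart where $\partial_\tau=\partial_x$, $\partial_\nu=-\partial_y$) the interaction is $\pi\lambda\,[\,u\wedge\partial_\tau u+\langle u,\partial_\nu u\rangle\,](x_0)+o(\lambda)$, while $\int|u|^2|b'|^2=\pi-2\pi\lambda\,\langle u,\partial_\nu u\rangle(x_0)+o(\lambda)$, using $1-|u|^2\approx 2\langle u,\partial_\nu u\rangle\,\mathrm{Im}\,w$ near the boundary. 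Summing gives $E(v)-E(u)=\pi+\pi\lambda\,[\,u\wedge\partial_\tau u-\langle u,\partial_\nu u\rangle\,](x_0)+o(\lambda)$, which is exactly case ii), and case i) follows by conjugation. None of this mechanism (cancellation, rescaled limit, explicit constants) is in your plan, so as written it cannot be executed. Two smaller inaccuracies: the term $\tfrac12\int(|\hat b|^2-1)|\nabla u|^2$ does not merely ``tend to $0$'' --- it is itself of order $\lambda$, with coefficient $O(\delta)$ --- and is harmless only because $|b|\le 1$ inside the half-plane makes it nonpositive (or because $\delta$ may be taken small); and your interpolation $\hat b$ must be a pure phase interpolation on the boundary segment, to keep $|v|=1$ exactly on $\partial A$, while still respecting the angular cancellation above --- a generic ``smooth deformation to a unimodular function'' guarantees neither.
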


The proof of this result can be found in \cite{Insertion}.

\begin{remark} If one writes locally near the boundary $u=\varrho e^{i\varphi}$ then 

$$ u\wedge \partial_{\tau}u(x_0) = \partial_\tau \varphi (x_0) $$
$$ \langle  u , \partial_{\nu}u \rangle (x_0)= \partial_\nu \varrho (x_0). $$
\end{remark}
\begin{lemma}\label{fall}
Let $(u_n) \subset \I_{p,q}$ be a minimizing sequence for $E$,  ($\lim_{n \rightarrow \infty}E(u_n)=m(p,q)$), 
up to extraction we have $u_n$ converges weakly to some $u$ in $H^1$ and $u \in \I_{r,s}$ for some $(r,s) \in \mathbb{N}^2$.

\begin{itemize}
\item[1)] If $p>0$ then $r\leq p$.
\item[2)] If $q>0$ then $s\leq q$.
\item[3)] If $p<0$  then $r\geq p$.
\item[4)] If $q<0$ then $s \geq q$.
\end{itemize}
\end{lemma}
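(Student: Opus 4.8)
The plan is to combine the sharp energy identity coming from Lemma \ref{behavior1} with the \emph{directional} information carried by the insertion-of-bubbles Lemma \ref{strong bubble}. The symmetric comparison and price Lemmas \ref{Comparison1} and \ref{Price lemma1} alone cannot detect the \emph{sign} of the degree jump (they only involve $|p-r|$ and $|q-s|$), so Lemma \ref{strong bubble} is the decisive ingredient.

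First I would record the setup. Since $|u_n|\le 1$ and $E(u_n)$ is bounded, $(u_n)$ is bounded in $H^1(A)$, so a subsequence converges weakly to a limit $u$ which, as the statement records, still belongs to some class $\I_{r,s}$. By Lemma \ref{behavior1}, $u$ is a minimizer of $E$ in $\I_{r,s}$, hence a solution of \eqref{semi-stiff 1}, and moreover the sharp identity
\begin{equation*}
m(p,q)=m(r,s)+\pi(|p-r|+|q-s|)\tag{$\ast$}
\end{equation*}
holds. By Proposition \ref{regularity}, $u\in\mathcal C^\infty(\overline A)$. The argument then rests on two elementary sign facts for this smooth solution. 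From $|u|\le 1$ in $A$ together with $|u|=1$ on $\partial A$, the maximum principle gives $\partial_\nu|u|\ge 0$ on $\partial A$, that is $\langle u,\partial_\nu u\rangle\ge 0$ at \emph{every} point of $\partial A$; and since $\int_{\partial\Omega}u\wedge\partial_\tau u\,d\tau=2\pi r$, the single-valued function $u\wedge\partial_\tau u$ takes a strictly positive value somewhere on $\partial\Omega$ when $r>0$, and a strictly negative value somewhere when $r<0$.

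Next I would prove 1) by contradiction. Assume $p>0$ but $r>p$, and set $k:=r-p\ge 1$. Since $r>0$, choose $x_0\in\partial\Omega$ with $u\wedge\partial_\tau u(x_0)>0$; at this same point $u\wedge\partial_\tau u(x_0)>0\ge-\langle u,\partial_\nu u\rangle(x_0)$, so hypothesis i) of Lemma \ref{strong bubble} is met and yields $v\in\I_{r-k,s}=\I_{p,s}$ with $E(v)<m(r,s)+k\pi$. Adjusting the second degree by Lemma \ref{Comparison1} produces, for every $\delta>0$, some $w\in\I_{p,q}$ with $E(w)\le E(v)+\pi|q-s|+\delta$. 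Because the bubble estimate is \emph{strict}, taking $\delta$ small enough gives $m(p,q)\le E(w)<m(r,s)+\pi(r-p)+\pi|q-s|$, which contradicts $(\ast)$ since $|p-r|=r-p$. Hence $r\le p$.

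The remaining cases are entirely analogous. Statement 3) (the case $p<0$, $r<p$) uses part ii) of Lemma \ref{strong bubble}: now $r<0$ supplies a point with $u\wedge\partial_\tau u<0\le\langle u,\partial_\nu u\rangle$, so one may instead \emph{raise} the outer degree by $k=p-r\ge 1$ at cost strictly below $k\pi$, again contradicting $(\ast)$. Statements 2) and 4) repeat these two arguments on the inner boundary $\partial\omega$, using the evident analogue of Lemma \ref{strong bubble} at $\partial\omega$ (the sign is fixed by $\int_{\partial\omega}u\wedge\partial_\tau u\,d\tau=2\pi s$, and $\partial_\nu|u|\ge 0$ holds there as well). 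I expect the only genuinely delicate point to be verifying the hypotheses of Lemma \ref{strong bubble} at a \emph{single common} boundary point $x_0$: this is precisely where the global bound $\partial_\nu|u|\ge 0$, valid at every point, is used, so that the choice of $x_0$ can be dictated solely by the sign of $u\wedge\partial_\tau u$.
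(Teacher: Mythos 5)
Your proposal is correct and follows essentially the paper's own route: it rests on exactly the same ingredients — the identity from Lemma \ref{behavior1}, the insertion Lemma \ref{strong bubble} combined with Lemma \ref{Comparison1} to adjust the second degree, the maximum-principle fact $\partial_\nu|u|\ge 0$ on $\partial A$, and the integral formula for the degree — the only difference being organizational, since the paper negates the bubble hypothesis at every point of $\mathbb{S}^1$ and integrates $\partial_\tau\varphi\le 0$ to conclude $r\le 0$, whereas you use $r>0$ to exhibit a bubble point directly. One cosmetic slip: elements of a minimizing sequence need not satisfy $|u_n|\le 1$ in $A$ (only solutions of \eqref{semi-stiff 1} do), so the $H^1$-boundedness should instead be deduced from the energy bound together with a Poincar\'e-type inequality exploiting $|u_n|=1$ a.e.\ on $\partial A$ — a preliminary point the paper's proof does not address either.
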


\begin{proof}
Let us suppose that $p>0$. By contradiction if $r>p$ then (if we write locally $u=\varrho e^{i\varphi}$)
\begin{equation}\label{condition}
\frac{\partial \varphi}{\partial \tau}(x) \leq - \frac{\partial \varrho}{\partial \nu}(x) \ \ \ \forall x \in \mathbb{S}^1.
\end{equation}
Indeed assume that \eqref{condition} is not true then thanks to the lemma \ref{strong bubble}
you can find $v \in \I_{p,q}$ such that 
$$E(v) < E(u)+\pi(|p-r|+|q-s|).$$
However thanks to the lemma \ref{behavior1} we have  

$$E(u)= m(p,q) -\pi(|p-r| +|q-s|)$$ and then
$$E(v)< m(p,q)$$
this is a contradiction since $v \in \I_{p,q}$. \\

\noindent Furthermore the Hopf maximum principle tells us that $\frac{\partial \varrho}{\partial \nu }\geq 0$ on $\partial A$ thus
$$\frac{\partial \varphi}{\partial \tau} \leq 0 \ \text{on} \ \partial \Omega$$ and by integrating over $\partial \Omega$ we find that 

$$ 2\pi r \leq 0.$$
The proof is the same for the other cases.
\end{proof}

\section{Hopf differentials of solutions of \eqref{semi-stiff 1}}

In this section we present the main tool used to prove the results of this paper: the Hopf 
quadratic differential. We refer to \cite{Hopfdifferential} for properties of the Hopf differential.

\begin{definition}\label{Hopff}

Let $u: A \rightarrow \C$, the \textbf{ Hopf quadratic differential} of $u$ is 
\begin{equation}\label{Hopf}
\begin{array}{rclll}
Q(u)&=&\H_u(z)(dz)^2&=&\frac{1}{4}\big[|\partial_xu|^2-|\partial_yu|^2-2i \langle \partial_xu,\partial_yu 
\rangle \big](dz)^2\\
& & &=& (\partial_zu) (\overline{\partial_{\bar{z}} u)})(dz)^2 \\
& & &=& (\partial_zu)(\partial_{z} \bar{u})(dz)^2
\end{array}
\end{equation}
\end{definition}

\begin{proposition}\label{Hopf differential}
Let $u:A \rightarrow \C$.
\begin{itemize}
\item[1)] If $u$ is harmonic ($\Delta u=0$), then $ \H _u$ is holomorphic.
\item[2)] $\H_u=0$ is equivalent to $u$ conformal (i.e. $u$ holomorphic or $u$ anti-holomorphic).
\end{itemize}
\end{proposition}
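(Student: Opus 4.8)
The plan is to work throughout with the Wirtinger operators $\partial_z=\tfrac12(\partial_x-i\partial_y)$ and $\partial_{\bar z}=\tfrac12(\partial_x+i\partial_y)$, and to exploit the two elementary identities $\partial_z\partial_{\bar z}=\tfrac14\Delta$ and $\partial_z\bar u=\overline{\partial_{\bar z}u}$, together with the factorisation $\H_u=(\partial_z u)(\partial_z\bar u)$ recorded in Definition \ref{Hopff}.

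For part 1), I would simply differentiate. To show $\H_u$ is holomorphic it suffices to check $\partial_{\bar z}\H_u=0$. Applying the product rule to $\H_u=(\partial_z u)(\partial_z\bar u)$ gives
$$\partial_{\bar z}\H_u=(\partial_{\bar z}\partial_z u)(\partial_z\bar u)+(\partial_z u)(\partial_{\bar z}\partial_z\bar u)=\tfrac14(\Delta u)(\partial_z\bar u)+\tfrac14(\partial_z u)(\Delta\bar u).$$
Since $u$ is harmonic and $\Delta$ is a real operator, $\Delta u=0$ and $\Delta\bar u=\overline{\Delta u}=0$, so both terms vanish and $\H_u$ is holomorphic.

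For part 2), the direct implications are immediate from the factorisation: if $u$ is holomorphic then $\partial_{\bar z}u=0$, hence $\partial_z\bar u=\overline{\partial_{\bar z}u}=0$ and $\H_u=0$; if $u$ is anti-holomorphic then $\partial_z u=0$ and again $\H_u=0$. The content is the converse. Here I would first observe that, $u$ being harmonic, both factors $f:=\partial_z u$ and $g:=\partial_z\bar u$ are holomorphic on $A$ — this is exactly the computation of part 1) applied to each factor separately ($\partial_{\bar z}f=\tfrac14\Delta u=0$ and $\partial_{\bar z}g=\tfrac14\Delta\bar u=0$). The hypothesis $\H_u=0$ then reads $fg\equiv0$ on the connected open set $A$.

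The key step is to conclude that one of the two holomorphic factors vanishes identically. This follows from the identity principle: the ring of holomorphic functions on the connected domain $A$ is an integral domain, so $fg\equiv 0$ forces $f\equiv 0$ or $g\equiv 0$ (for instance, if $g\not\equiv0$ its zeros are isolated, and $f$ then vanishes on the dense complement of that discrete set, hence everywhere by continuity). If $f=\partial_z u\equiv0$ then $u$ is anti-holomorphic; if $g=\partial_z\bar u\equiv0$ then $\bar u$ is anti-holomorphic, i.e. $u$ is holomorphic. This is the only non-routine point: it is where connectedness of $A$ and harmonicity (which upgrades the merely pointwise alternative ``$\partial_z u=0$ or $\partial_{\bar z}u=0$'' into a global dichotomy) are genuinely used, and it is worth stressing that without harmonicity the factors need not be holomorphic and the global conclusion can fail.
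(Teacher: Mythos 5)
Your proof of part 1) is exactly the paper's: the same product-rule computation giving $\partial_{\bar z}\H_u=\tfrac14(\Delta u)(\partial_z\bar u)+\tfrac14(\partial_z u)(\Delta\bar u)=0$, so there is nothing to add there.

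On part 2) you take a genuinely different route. The paper's argument is purely pointwise linear algebra: $\H_u=0$ is equivalent to $|\partial_x u|=|\partial_y u|$ and $\langle\partial_x u,\partial_y u\rangle=0$, i.e.\ to the differential of $u$ being a similitude at every point, and the paper simply takes this as the definition of ``conformal''; the global dichotomy ``$u$ holomorphic or $u$ anti-holomorphic'' asserted in the parenthetical of the statement is not addressed at all. You, by contrast, actually prove that dichotomy, but at the price of an extra hypothesis: your converse uses harmonicity of $u$ (so that both factors $\partial_z u$ and $\partial_z\bar u$ are holomorphic, after which the identity principle on the connected set $A$ forces one factor to vanish identically). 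Strictly speaking, part 2) is stated for an arbitrary map $u:A\rightarrow\C$, so your argument does not cover the literal statement; but everywhere the paper invokes part 2), the map $u$ is a harmonic solution of \eqref{semi-stiff 1}, so your version suffices for all of the paper's applications, and it is more careful than the paper's own proof precisely on the point (pointwise similitude versus global holomorphy) that the paper glosses over.

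One correction, however: your closing claim that without harmonicity ``the global conclusion can fail'' is wrong. For any $u\in\mathcal{C}^1(A)$ with $\H_u\equiv0$ on the connected set $A$, the dichotomy still holds; it merely requires a less elementary tool. Indeed, set $h=\partial_z u$ and $g=\partial_z\bar u$, both continuous on $A$. On the open set $\{h\neq0\}$ one has $g=0$, i.e.\ $\partial_{\bar z}u=0$, so $u$ is holomorphic there and $h=u'$ is holomorphic on $\{h\neq 0\}$; by Rad\'o's theorem (a continuous function holomorphic off its zero set is holomorphic everywhere), $h$ is holomorphic on all of $A$. The same argument applied to $\bar u$ shows $g$ is holomorphic on all of $A$, and then your integral-domain step finishes the proof. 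What is true in your remark is only the weaker statement that the two factors are not a priori holomorphic, so the elementary factorization argument breaks down; the conclusion itself survives.
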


\begin{proof}

1) Assume that $\Delta u=0$. Recall that $\Delta v =4 \partial_{\bar{z}} \partial{_z} v$ and let us compute 
\begin{equation}\nonumber
\begin{array}{rcll}
\partial_{\bar{z}}\H_u(z)&=&4 (\partial_{\bar{z}}\partial_z)u \partial_z\bar{u}+4\partial_z u(\partial_{\bar{z}}\partial_z)\bar{u} \\
&=&\Delta u \partial_z\bar{u}+ \partial_z u \Delta \bar{u} \\
&=&0.
\end{array}
\end{equation}
Hence $\partial_{\bar{z}}\H_u(z)=0$ that is $\H_u$ is holomorphic.\\

2) $\H_u=0$ is equivalent to $ \langle \partial_xu,\partial_yu\rangle=0$ and $|\partial_xu|=|\partial_yu|$. This means precisely that the differential of $u$ (which is a $2\times 2$ matrix) is a similitude, and that is the definition of a conformal application.

\end{proof}

The conformal invariance of the Dirichlet energy allows us to work in an annulus which is a rotational symmetric domain, then we will work in polar coordinates $z=re^{i\theta}$, with $\varrho<r<1$ and $0\leq \theta <2\pi$. A simple computation shows that we can write 

\begin{equation}
4z^2\H_u(z)=r^2|\partial_ru|^2-|\partial _\theta u|^2-2ir \langle \partial_ru,\partial_\theta u \rangle.
\end{equation}

\begin{lemma}\label{Hopfcst}
Let $u$ be a solution of \eqref{semi-stiff 1}, then $z^2\H_u(z)=c \in \R$ in $A$.
\end{lemma}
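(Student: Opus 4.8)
The plan is to show that the holomorphic function $f(z):=z^2\H_u(z)$ has identically vanishing imaginary part on the annulus, which forces it to be a real constant. First I would record that $f$ is holomorphic on $A$: by part 1) of Proposition \ref{Hopf differential} the harmonicity $\Delta u=0$ gives that $\H_u$ is holomorphic, and multiplication by the holomorphic factor $z^2$ preserves this. Moreover, by Proposition \ref{regularity} the solution $u$ is $\mathcal{C}^\infty(\overline A)$, so $\H_u$, and hence $f$, extends smoothly up to $\partial A$; this regularity up to the boundary is what will let me invoke a boundary maximum principle.

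The key computation is to read off the imaginary part of $f$ from the polar expression derived above, namely $4z^2\H_u = r^2|\partial_r u|^2 - |\partial_\theta u|^2 - 2ir\langle\partial_r u,\partial_\theta u\rangle$, which gives $\text{Im}(4f) = -2r\langle\partial_r u,\partial_\theta u\rangle$. It then remains to show that $\langle\partial_r u,\partial_\theta u\rangle = 0$ on both boundary circles. I would use the two boundary conditions of \eqref{semi-stiff 1} for this. Since $r$ is constant on each component of $\partial A$, the tangent direction is $\partial_\theta$ and the normal direction is $\pm\partial_r$. Differentiating the constraint $|u|^2=1$ along the boundary yields $\langle u,\partial_\theta u\rangle=0$, while $u\wedge\partial_\nu u=0$ (equivalently $u\wedge\partial_r u=0$) says that $\partial_r u$ is parallel to $u$, i.e. $\partial_r u=\lambda u$ for some real $\lambda$; note $u\neq 0$ on $\partial A$ since $|u|=1$. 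Combining the two gives $\langle\partial_r u,\partial_\theta u\rangle=\lambda\langle u,\partial_\theta u\rangle=0$ on $\partial A$.

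With this in hand the conclusion is immediate. The function $\text{Im}(f)$ is harmonic in $A$ (the imaginary part of a holomorphic function), continuous on $\overline A$, and vanishes on all of $\partial A$; hence by the maximum principle $\text{Im}(f)\equiv 0$ in $A$. A holomorphic function with identically zero imaginary part is a real constant, since by the Cauchy--Riemann equations its real part then has vanishing gradient. Therefore $z^2\H_u(z)=c\in\R$ in $A$, as claimed.

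I expect the only delicate point to be the boundary step, namely converting the geometric condition $u\wedge\partial_\nu u=0$ together with $|u|=1$ into the analytic identity $\langle\partial_r u,\partial_\theta u\rangle=0$. One must be slightly careful about the orientation of $\nu$ on the inner boundary $\partial\omega$ versus the outer boundary $\partial\Omega$, where $\nu=-\partial_r$ and $\nu=\partial_r$ respectively; however, since one only needs the cross term to vanish, the sign of $\nu$ relative to $\partial_r$ is irrelevant, and the argument goes through identically on both components.
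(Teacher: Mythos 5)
Your proof is correct and follows essentially the same route as the paper: both establish that $z^2\H_u(z)$ is holomorphic, use the boundary conditions ($u\wedge\partial_r u=0$ together with $\langle u,\partial_\theta u\rangle=0$ from $|u|^2=1$) to show $\langle\partial_r u,\partial_\theta u\rangle=0$ on $\partial A$, and conclude via harmonicity of the imaginary part and the maximum principle that the function is real-valued, hence constant. Your writing of $\partial_r u=\lambda u$ with $\lambda$ real just makes explicit the step the paper leaves implicit, and your orientation remark mirrors the paper's separate treatment of $\mathbb{S}^1$ and $C_\varrho$.
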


\begin{proof}
We have that $u$ is smooth up to the boundary and that $z^2\H_u(z)$ is holomoprhic in $A$ thanks to proposition \ref{Hopf differential}. On $\partial \Omega=\mathbb{S}^1$ we have $\partial_\nu u=\partial_r u$ and $\partial_\tau u =\partial_\theta u$. Hence 
$$\langle \partial_r u,\partial _\theta u \rangle=0, \ \ \text{on} \ \mathbb{S}^1.$$
Indeed on $\mathbb{S}^1$ we have $u\wedge \partial_r u=0$ according to \eqref{semi-stiff 1} and $\langle u ,\partial_\theta u \rangle=0$ because $|u|^2=1$. The same method with $\partial_\nu u=-\partial_r u$ and $\partial_\tau u=\frac{1}{\varrho}\partial_\theta u$ on $C_\varrho$ leads to 
$$\langle \partial_r u,\partial _\theta u \rangle=0, \ \ \text{on} \ C_\varrho.$$

Thus $z\mapsto z^2\H_u(z)$ is holomorphic in $A$ and takes real values on $\partial A$. We can conclude that this function is real valued in all $A$. Indeed the imaginary part of $z^2\H_u(z)$ is harmonic in $A$ and null on $\partial A$ thus it is identically null on $A$. $z^2\H_u(z)$ being holomorphic and real-valued we deduce that it is constant in $A$. 
\end{proof}

Thanks to this lemma we can say that if $u$ is a solution of \eqref{semi-stiff 1} then 
\begin{equation}\label{constHopf}
r^2|\partial_ru|^2-|\partial_\theta u|^2=c \ \ \text{in} \ A
\end{equation}

\begin{equation}\label{orthoradial}
\langle \partial_ru,\partial_\theta u \rangle =0, \ \ \text{in} \ A.
\end{equation}

There is a strong link between  $\C$-valued harmonic function, their Hopf differential and minimal surfaces. In order to explain this link let us recall few facts about and minimal surfaces theory. We also refer to \cite{Hauswirth} for more explanations on the link between the Hopf quadratic differential, harmonic maps and minimal surfaces.

\begin{definition}
Let $V \subset \R² \simeq \C$ be a domain. $X: V \rightarrow \R³$ is \textbf{a conformal (or isothermal) parametrization} of a surface if $X$ is an immersion (i.e. $|\partial_xX \wedge \partial_yX|$ is never zero in $V$) and 
\begin{equation}\label{conformalityrelation}
\left\{
\begin{array}{rcll}
\langle \partial_x X,\partial_yX \rangle =0 \\
|\partial_xX|²= |\partial_y|² = 0
\end{array}
\right.
\end{equation}
\end{definition}

It is well-known( see \cite{minimalsurfaces} p.77) that we can represent every regular surface of class $\mathcal{C}^2$ by 
conformal parameters.

\noindent Now we take advantage of the complex variable. If we set $X=(u,h)=(u_1,u_2,h)$ then $u:V \rightarrow \C$ and the conformality relations \eqref{conformalityrelation} reduce to one complex equation

\begin{equation}
\partial_zu_1²+\partial_z u_2² +\partial_z h² =0.
\end{equation}

\noindent A direct computation shows that 
\begin{equation}
\begin{array}{rcll}
\partial_zu_1^2+\partial_zu_2^2 &= &(\partial_zu_1+i\partial_zu_2)(\partial_zu_1-i\partial_zu_2) \\
&=& \partial_zu \overline{\partial_{\bar{z}}u} \\
&=& \H_u.
\end{array}
\end{equation}

\noindent Hence the conformality relations mean that 

\begin{equation}\label{conformality2}
\H_u +\partial_z h^2=0
\end{equation}
thus we see how the Hopf quadratic differential appears in surface theory.

\begin{proposition}(\cite{doublyconnectedminsurface} )\label{confandhopf}
Let $V\subset \C$ be a domain and  $X=(u,h):V \rightarrow \C \times \R=\R^3$  be the conformal representation of a surface (not necessarily minimal). Then 
\begin{itemize}
\item[*] the function $\partial_zu\overline{\partial_{\overline{z}}u}$ admits a continuous branch of square root in $V$,
\item[*]for each smooth closed curve $\Gamma \subset V$ we have 

$$ \text{Re} \int_\Gamma -i\sqrt{\partial_zu \overline{\partial_{\overline{z}}u}}dz =0 $$
\item[*] the real isothermal coordinate is given by 
$$h=\text{Re} \int_z^{z_0} -2i\sqrt{\partial_zu \overline{\partial_{\overline{z}}u}} dz $$
where the line integral runs along any smooth curve $\gamma \subset V$ beginning at a given point $z_0\in V$ and terminating at $z$.
\end{itemize}
\end{proposition}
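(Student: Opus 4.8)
The plan is to deduce all three assertions from the conformality identity \eqref{conformality2}, namely $\H_u+(\partial_z h)^2=0$, which was derived above directly from the isothermal relations \eqref{conformalityrelation}. Since $X$ is a $\mathcal{C}^2$ conformal parametrization, $h$ is at least $\mathcal{C}^1$, so $\partial_z h$ is a continuous function on $V$. I would then set $g:=i\,\partial_z h$; squaring gives $g^2=-(\partial_z h)^2=\H_u=\partial_z u\,\overline{\partial_{\bar z}u}$ everywhere on $V$, and $g$ is continuous throughout $V$, including at the zeros of $\H_u$, where $\partial_z h$ vanishes as well. This exhibits a continuous branch of $\sqrt{\partial_z u\,\overline{\partial_{\bar z}u}}$ and settles the first bullet. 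The point worth stressing is that although the map $w\mapsto\sqrt{w}$ is not continuous at $w=0$, the relation $g=i\,\partial_z h$ furnishes a globally continuous square root for free, bypassing any monodromy issue around the zeros of $\H_u$.

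With this branch fixed one has $-i\sqrt{\H_u}=-i\,g=\partial_z h$, and the remaining two bullets reduce to elementary facts about the real, single-valued function $h$. Because $h$ is real-valued, $\partial_{\bar z}h=\overline{\partial_z h}$, so its exterior derivative is $dh=\partial_z h\,dz+\partial_{\bar z}h\,d\bar z=2\,\text{Re}\big(\partial_z h\,dz\big)=2\,\text{Re}\big(-i\sqrt{\H_u}\,dz\big)$. For a smooth closed curve $\Gamma\subset V$, the function $h$ returns to its initial value, hence $\int_\Gamma dh=0$, which yields $\text{Re}\int_\Gamma -i\sqrt{\H_u}\,dz=0$, the period condition of the second bullet. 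Integrating the same one-form along a smooth path from a base point $z_0$ to $z$ gives $h(z)-h(z_0)=\text{Re}\int_{z_0}^{z}-2i\sqrt{\H_u}\,dz$, which is the reconstruction formula of the third bullet up to the additive normalization $h(z_0)$; the vanishing of all periods just established guarantees that the right-hand side is independent of the chosen path.

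I expect no serious analytic obstacle: the heart of the matter is the bookkeeping identity $-i\sqrt{\H_u}=\partial_z h$ together with the reality of $h$. The one step requiring genuine care is the global continuity claimed in the first bullet — one must resist constructing $\sqrt{\H_u}$ by analytic continuation of a pointwise square root, which could fail to be single-valued around zeros of $\H_u$, and instead read the branch off from $\partial_z h$ directly. A secondary, purely cosmetic point is to fix the sign of the branch and the orientation of the line integral so that the constants $-i$ and $-2i$ appear with exactly the signs stated.
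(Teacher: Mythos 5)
Your argument is correct: the identity $\H_u+(\partial_z h)^2=0$ from \eqref{conformality2} gives the continuous branch $i\partial_z h$ of $\sqrt{\H_u}$ directly, and the period condition and reconstruction formula then follow from $dh=2\,\mathrm{Re}(\partial_z h\,dz)$ and the single-valuedness of $h$. The paper itself does not prove this proposition (it is quoted from \cite{doublyconnectedminsurface}), but your proof is exactly the natural completion of the computation the paper performs just before the statement, and it is the same identity $\partial_z h=i\sqrt{\H_u}$ that the paper uses in reverse in the proof of Proposition \ref{lifting}.
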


Since we are interested in harmonic $\C$-valued functions we now explain how they are linked to minimal surface theory.

\begin{proposition}(\cite{minimalsurfaces} p.72)\label{minsurfaceanddharmfunction}
Let $X=(u_1,u_2,h): V \rightarrow \R³$ be a conformal parametrization of a surface. Then this surface is minimal if and only if $\Delta X=0$ i.e. $\Delta u_1=\Delta u_2 =\Delta h =0$.
\end{proposition}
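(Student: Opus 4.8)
The plan is to establish the classical pointwise identity that, in isothermal coordinates, the Laplacian of the position vector is, up to a positive factor, the mean curvature vector:
$$\Delta X = 2\lambda^2 H \vec{n},$$
where $\lambda^2 := |\partial_x X|^2 = |\partial_y X|^2$ is the conformal factor, $\vec{n}$ is a unit normal to the surface, and $H$ is its mean curvature. Once this identity is in hand the proposition follows immediately: because $X$ is an immersion we have $\lambda^2 > 0$ everywhere on $V$, so $\Delta X = 0$ holds if and only if $H \equiv 0$, which is exactly the definition of a minimal surface.

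To prove the identity I would decompose $\Delta X = \partial_{xx} X + \partial_{yy} X$ in the moving frame $\{\partial_x X, \partial_y X, \vec{n}\}$. For the normal component, write $L = \langle \partial_{xx}X, \vec{n}\rangle$, $M = \langle \partial_{xy} X, \vec{n}\rangle$, $N = \langle \partial_{yy} X, \vec{n}\rangle$ for the coefficients of the second fundamental form. The conformality relations \eqref{conformalityrelation} give $E = G = \lambda^2$ and $F = 0$ for the first fundamental form, so the standard formula for the mean curvature reduces to $H = (L+N)/(2\lambda^2)$, whence $\langle \Delta X, \vec{n}\rangle = L + N = 2\lambda^2 H$.

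The heart of the argument, and the only place where genuine computation is needed, is to show that the two tangential components of $\Delta X$ vanish. To obtain $\langle \Delta X, \partial_x X\rangle = 0$ I would differentiate $|\partial_x X|^2 = |\partial_y X|^2$ in $x$, giving $\langle \partial_{xx}X, \partial_x X\rangle = \langle \partial_{xy} X, \partial_y X\rangle$, and differentiate $\langle \partial_x X, \partial_y X\rangle = 0$ in $y$, giving $\langle \partial_{yy} X, \partial_x X\rangle = -\langle \partial_{xy} X, \partial_y X\rangle$; adding these two relations makes the mixed-derivative terms cancel, so $\langle \Delta X, \partial_x X\rangle = 0$. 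The symmetric manipulation, differentiating $|\partial_x X|^2 = |\partial_y X|^2$ in $y$ and $\langle \partial_x X, \partial_y X\rangle = 0$ in $x$, yields $\langle \Delta X, \partial_y X\rangle = 0$. Since $\{\partial_x X, \partial_y X\}$ spans the tangent plane, $\Delta X$ is purely normal and equals $2\lambda^2 H \vec{n}$, completing the proof. No real obstacle arises here; the only delicate point is the bookkeeping of which conformality relation is differentiated in which variable so that the mixed second-derivative terms cancel cleanly.
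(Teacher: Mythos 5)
Your proof is correct: the identity $\Delta X = 2\lambda^2 H\vec{n}$ in isothermal coordinates, with the tangential components killed by differentiating the conformality relations and the normal component identified with $L+N=2\lambda^2 H$ via $E=G=\lambda^2$, $F=0$, is exactly the classical argument. The paper itself gives no proof of this proposition --- it is quoted from \cite{minimalsurfaces} p.~72 --- and what you wrote is the standard textbook derivation of that cited result, so there is nothing to reconcile; the only cosmetic point is that the conformality relations as printed in the paper contain a typo ($|\partial_x X|^2=|\partial_y X|^2=0$ should read $|\partial_x X|^2=|\partial_y X|^2$), which you have correctly interpreted.
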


Now we can state a proposition which allows us to build a minimal surface from a $\C$-valued harmonic function and its Hopf differential .

\begin{proposition}\label{lifting}
Let $u:V \rightarrow \C$ be a complex harmonic function and $\H_u(z)= \partial_zu \overline{\partial_{\overline{z}}u}$ its Hopf quadratic differential. Then locally outside the zeros of odd order of $\H_u$ if we set 
\begin{equation}\label{Hopfdef}
h(z)= \text{Re} \int_{z_0}^z -2i \sqrt{\H_u(z)} dz
\end{equation}
where the  line integral runs along any smooth curve $\gamma \subset V$ beginning at a given point $z_0\in V$ and terminating at $z$, then 
$X=(u,h)$, defined locally, is the isothermal parametrization of a minimal surface.  
\end{proposition}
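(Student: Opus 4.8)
The plan is to construct $h$ as the real part of a local holomorphic primitive, check that $X=(u,h)$ satisfies the conformality relation \eqref{conformality2}, verify that it is an immersion, and then invoke Proposition \ref{minsurfaceanddharmfunction}.

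First I would settle the well-definedness of $h$. Since $u$ is harmonic, Proposition \ref{Hopf differential} gives that $\H_u$ is holomorphic. Where $\H_u$ does not vanish a holomorphic branch of $\sqrt{\H_u}$ exists; at a zero of even order $2k$ one writes $\H_u(z)=(z-z_0)^{2k}g(z)$ with $g(z_0)\neq 0$, so $\sqrt{\H_u}=(z-z_0)^{k}\sqrt{g}$ is again holomorphic, whereas at a zero of odd order there is a genuine branch point, which is precisely why such points are excluded. Hence $-2i\sqrt{\H_u}$ is a single-valued holomorphic function on a simply connected neighborhood of any admissible point and admits a holomorphic primitive $H$. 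The line integral in \eqref{Hopfdef} is therefore path independent, $h=\text{Re}\,H$ is well defined, and as the real part of a holomorphic function it is harmonic, $\Delta h=0$.

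Next I would verify conformality. Writing $h=\tfrac12(H+\overline H)$ and using that $H$ is holomorphic yields $\partial_z h=\tfrac12 H'=-i\sqrt{\H_u}$, so that $(\partial_z h)^2=-\H_u$, which is exactly relation \eqref{conformality2}; as recalled just before that equation, this single identity is equivalent to the full conformality relations \eqref{conformalityrelation} for $X=(u,h)$. The constant $-2i$ in \eqref{Hopfdef} is chosen precisely so that the factor $\tfrac12$ coming from the real part produces this identity. To confirm that $X$ is an immersion I would compute the conformal factor: from $|\partial_x X|^2=|\partial_y X|^2=2|\partial_z X|^2$ together with $|\partial_z h|^2=|\H_u|=|\partial_z u|\,|\partial_{\bar z}u|$, a direct calculation gives $|\partial_z X|^2=\tfrac12\bigl(|\partial_z u|+|\partial_{\bar z}u|\bigr)^2$, so the conformal factor equals $|\partial_z u|+|\partial_{\bar z}u|$. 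Thus $X$ fails to be an immersion only at the common zeros of $\partial_z u$ and $\partial_{\bar z}u$ (the branch points); in particular, wherever $\H_u\neq 0$ the immersion property holds automatically, which is the situation relevant to the applications in this paper. With conformality established and $\Delta X=(\Delta u,\Delta h)=0$, Proposition \ref{minsurfaceanddharmfunction} concludes that $X$ is the isothermal parametrization of a minimal surface.

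The main obstacle is the bookkeeping around the square root: guaranteeing that $\sqrt{\H_u}$ is single valued (which forces both the exclusion of odd-order zeros and the restriction to a simply connected neighborhood, explaining the word ``locally'' in the statement) and correctly reading off the immersion-failure points from the conformal factor. Once $\sqrt{\H_u}$ is under control, the identity $\partial_z h=-i\sqrt{\H_u}$ and hence the verification of \eqref{conformality2} is a short computation.
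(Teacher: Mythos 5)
Your proposal is correct and follows essentially the same route as the paper: define $h$ locally as the real part of a holomorphic primitive of $-2i\sqrt{\H_u}$ (which exists on simply connected neighborhoods away from odd-order zeros), deduce $\Delta h=0$ and $\partial_z h=-i\sqrt{\H_u}$, hence $\H_u+(\partial_z h)^2=0$, and conclude via Proposition \ref{minsurfaceanddharmfunction}. Your computation of the conformal factor $|\partial_z u|+|\partial_{\bar z}u|$ to check the immersion condition is a correct addition that the paper's proof silently omits, and your sign $\partial_z h=-i\sqrt{\H_u}$ is the right one (the paper writes $+i$, a harmless typo since only the square is used).
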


\begin{proof}
Locally, near a point $z_0$ such that $\H_u(z_0) \neq 0$ or  near every zero of even order of $\H_u$, in simply connected sub-domains, we can always define $h$ by the relation \eqref{Hopfdef}. \\
This function $h$ is harmonic because it is the imaginary part of an holomorphic function. 
Hence if we want to show that $X$ defines a minimal surface we only must show that $X=(u,h)$ is 
a conformal parametrization thanks to proposition \ref{minsurfaceanddharmfunction}.  We must 
then show that 
$$\H_u +\partial_z h²=0.$$
But this fact comes from the definition of $h$ and from the fact that if $U$ is a holomorphic 
function  then 
$$U'(z)= \partial_zU =2\frac{\partial \text{Re} (U)}{\partial z} $$
thus with the definition \eqref{Hopfdef} we find that 
$$ \partial_zh=i\sqrt{\H_u} $$
and 
then 
$$\H_u +\partial_z h²=0.$$
\end{proof}

\begin{remark} The surface is planar ($h \equiv 0$) if and only if $u$ is conformal. The global 
lifting exists provided the imaginary part of the integral in \eqref{Hopfdef} is single valued. 
Since we are interested in $\H_u(z)=\frac{c}{z^2}$ it will be the case if $c<0$, because in 
this case the imaginary part of this term will be $\ln |z|$, but not in the case where $c>0$ 
(the imaginary part of the term will be $\arg (z)$). 
\end{remark}

\section{The case $c=0$: holomorphic solutions}							

Let $u$ be a solution of \eqref{semi-stiff 1} then $\H_u(z)=\frac{c}{z^2}$. What can we say about solutions with $c=0$ i.e. what can we say about conformal solutions of \eqref{semi-stiff 1}? More precisely do such solutions exist? If yes in which $\I_{p,q}$? Are they minimizing? We will restrict ourselves to holomorphic solutions, the antiholomorphic case being obtained by conjugation.

We first recall a formula which will be very useful in this section (see also \cite{oldanew}).

\begin{proposition}\label{diffdegree}
Let $u \in \I_{p,q}$ then 
$$\int_A \partial_xu \wedge \partial_yu =\pi (p-q).$$
\end{proposition}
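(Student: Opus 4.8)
The plan is to recognize $\partial_x u \wedge \partial_y u$ as the Jacobian determinant $\det(Du) = \partial_x u_1\,\partial_y u_2 - \partial_x u_2\,\partial_y u_1$ (using the convention $a\wedge b = a_1b_2-a_2b_1$) and to exploit its null-Lagrangian structure. Writing $u=(u_1,u_2)$, one checks by a direct computation that
\[
\partial_x u \wedge \partial_y u = \partial_x\big(u_1\,\partial_y u_2\big) - \partial_y\big(u_1\,\partial_x u_2\big),
\]
the crossed second derivatives cancelling; equivalently the integrand is $d\alpha$ for the $1$-form $\alpha = u_1\,du_2$. Since $Du\in L^2(A)$, the products above lie in $L^1(A)$, so $\int_A \partial_x u\wedge\partial_y u$ is well defined.

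First I would apply Green's theorem to pass to the boundary. With $P=u_1\,\partial_x u_2$ and $Q=u_1\,\partial_y u_2$ one has $\partial_x u\wedge\partial_y u = \partial_x Q-\partial_y P$, so Green's formula on the annulus $A=A_\varrho$, oriented by the outward normal, gives
\[
\int_A \partial_x u\wedge\partial_y u = \int_{\partial\Omega} u_1\,du_2 \; - \int_{\partial\omega} u_1\,du_2,
\]
where $\partial\Omega=\mathbb{S}^1$ and $\partial\omega=C_\varrho$ are each traversed counter-clockwise; the minus sign on the inner boundary records that the outward-normal orientation of $\partial A$ is clockwise there, i.e.\ that $(\nu,\tau)$ is indirect on $\partial\omega$.

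Next I would evaluate each boundary term using $|u|=1$ on $\partial A$, so that on a boundary curve $\Gamma$ the trace $g:=u_{|\Gamma}$ maps into $\mathbb{S}^1$. The antisymmetric part recovers the degree, $\int_\Gamma g\wedge dg = \int_\Gamma (g_1\,dg_2-g_2\,dg_1)=2\pi\deg(g,\Gamma)$, while the symmetric part is exact, $g_1\,dg_2+g_2\,dg_1=d(g_1g_2)$, hence integrates to zero around the closed curve $\Gamma$. Adding the two identities yields $\int_\Gamma u_1\,du_2 = \frac{1}{2}\int_\Gamma g\wedge dg = \pi\,\deg(u,\Gamma)$. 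Applying this on $\partial\Omega$ (degree $p$) and $\partial\omega$ (degree $q$) gives $\int_A \partial_x u\wedge\partial_y u = \pi p-\pi q = \pi(p-q)$, as claimed.

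The main obstacle is making the two integration-by-parts steps rigorous for a general $u\in\I_{p,q}$ rather than for a smooth one: at this regularity the form $\alpha=u_1\,du_2$ has only $L^1$ coefficients and the boundary degree is the $H^{1/2}$ degree of Boutet de Monvel--Gabber. The clean way around this is to observe that the integrand is exact, so the left-hand side depends only on the boundary trace $g$; one then approximates $g$ in $H^{1/2}(\partial A,\mathbb{S}^1)$ by smooth $\mathbb{S}^1$-valued maps $g_n$ (density holds for circle-valued maps on a curve), extends them into $A$ with $u_n\to u$ in $H^1$, and passes to the limit. The left-hand side is continuous under $H^1$ convergence, since gradients converging in $L^2$ make the bilinear Jacobian converge in $L^1$, and the degree is continuous under strong $H^{1/2}$ convergence and integer-valued, hence eventually constant. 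A secondary point to watch is the orientation bookkeeping on the inner boundary, which is precisely what produces the relative minus sign and hence the difference $p-q$ rather than a sum.
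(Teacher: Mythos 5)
Your proof is correct and follows essentially the same route as the paper's: both recognize $\partial_x u\wedge\partial_y u$ as a divergence (a null Lagrangian), apply the divergence/Green formula on the annulus, and identify the resulting boundary integrals with $\pi\deg(u,\partial\Omega)-\pi\deg(u,\partial\omega)$, the relative minus sign coming from the orientation of the inner boundary. The only cosmetic difference is your choice of primitive $u_1\,du_2$ instead of the symmetrized $\tfrac{1}{2}\,u\wedge du$ used in the paper, which forces the extra (correct) observation that $\int_\Gamma d(g_1g_2)=0$ because $|u|=1$ on $\partial A$; your closing density argument additionally handles the $H^1$ regularity that the paper leaves implicit.
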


\begin{proof}
This is an application of the divergence formula. We have 

\begin{eqnarray}
\int_A \partial_xu \wedge \partial_y u&=&\frac{1}{2}\int_A [\partial_x(u\wedge \partial_y u)+\partial_y(\partial_x u\wedge u)] \nonumber \\
&=& \frac{1}{2}\int_{\partial A} [(u\wedge \partial_yu)\nu_1+(\partial_x u\wedge  u)\nu_2] \nonumber \\
&=& \frac{1}{2} \int_{\partial A} u \wedge (\nu_1\partial_y u-\nu_2 \partial_xu) \nonumber \\
&=& \frac{1}{2} \int_{\partial \Omega} u\wedge (\tau_1 \partial_x u  +\tau_2 \partial_y u )-\frac{1}{2} \int_{\partial \omega} u\wedge (\tau_1\partial_xu  +\tau_2 \partial_yu ) \nonumber \\
&=& \frac{1}{2} \int_{\partial \Omega} u \wedge \partial_\tau u -\frac{1}{2}\int_{\partial \omega} u \wedge \partial_\tau u \nonumber \\
&=& \pi(p-q). \nonumber
\end{eqnarray}
Where we used that $\tau_1=-\nu_2$ and $\tau_2=\nu_1$ on $\partial \Omega$ because $(\nu,\tau)$ is direct on $\partial \Omega$ whereas  $\tau_1=\nu_2$, $\tau_2=-\nu_1$ on $\partial \omega$ because $(\nu,\tau)$ is direct on this boundary.
\end{proof}
We note that if $u$ is holomorphic and $u$ belongs to $\I$ then the difference between the degrees at the boundaries gives exactly the number of zeros of $u$.

\begin{lemma}\label{numberofzeros2}
Let $u$ be holomorphic and $u \in \I_{p,q}$ then $p \geq q$ and  $u$ possesses exactly $p-q$ zeros in $A$ counted with their multiplicities.
\end{lemma}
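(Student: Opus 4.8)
The plan is to apply the classical argument principle to the holomorphic map $u$ on the annulus $A$, and then to identify the two boundary contributions with the prescribed degrees $p$ and $q$. First I would record the two facts that make the argument principle applicable: by Proposition \ref{regularity} the map $u$ is smooth up to $\overline{A}$, and since $|u|=1$ on $\partial A$ the function $u$ has no zero on the boundary. Unless $u$ is constant (the trivial case $p=q=0$), $u$ is a nonconstant holomorphic function, so its zeros are isolated; being confined to a compact subset of $A$ bounded away from $\partial A$, there can be only finitely many of them, say $N$ counted with multiplicity.

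The core step is the argument principle. Orienting $\partial A$ positively with respect to $A$, that is $\partial\Omega=\mathbb{S}^1$ counter-clockwise and $\partial\omega=C_\varrho$ clockwise, one has
\[
N=\frac{1}{2\pi i}\oint_{\partial A}\frac{u'(z)}{u(z)}\,dz
=\frac{1}{2\pi i}\left[\oint_{\partial\Omega}\frac{u'}{u}\,dz-\oint_{C_\varrho}\frac{u'}{u}\,dz\right],
\]
where in the last expression both circles are traversed counter-clockwise. It remains to match each contour integral to a degree. On a curve $\Gamma$ where $|u|=1$, writing $u=e^{i\varphi}$ along $\Gamma$ and using $\partial_\tau u=u'(z)\tau$ with $\tau$ the unit tangent, a short computation gives $\frac{u'}{u}\,dz=i\,\partial_\tau\varphi\,d\tau$ along $\Gamma$, hence
\[
\frac{1}{2\pi i}\oint_\Gamma\frac{u'}{u}\,dz=\frac{1}{2\pi}\int_\Gamma\partial_\tau\varphi\,d\tau=\deg(u,\Gamma),
\]
the last equality being exactly the definition of the degree from Section 2, since $u\wedge\partial_\tau u=\partial_\tau\varphi$ whenever $|u|=1$. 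Applying this to $\partial\Omega$ and to $C_\varrho=\partial\omega$ yields $N=p-q$, and since $N\geq 0$ is an integer we conclude $p\geq q$, which finishes both assertions at once.

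The only real care needed, and what I would flag as the main bookkeeping obstacle, is the orientation: in the paper's convention both $\partial\Omega$ and $\partial\omega$ are oriented counter-clockwise for the purpose of computing degrees, whereas the positively oriented boundary of the annulus in the argument principle requires the inner circle reversed. This is precisely what produces the combination $p-q$ rather than $p+q$, so getting the sign right is the crux of the computation. As a consistency check for the inequality alone, Proposition \ref{diffdegree} gives $\int_A\partial_xu\wedge\partial_yu=\pi(p-q)$, and for holomorphic $u$ the integrand equals the Jacobian $|u'|^2\geq 0$, so $p\geq q$ follows immediately; however this integral does not count zeros, so the argument principle remains indispensable for the exact count $p-q$.
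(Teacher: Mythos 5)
Your proof is correct, but it is organized differently from the paper's. The paper splits the lemma into two separate arguments: it first gets $p\geq q$ from Proposition \ref{diffdegree} together with the pointwise inequality $\partial_x u\wedge\partial_y u\geq 0$ (positivity of the holomorphic Jacobian), exactly as in your closing ``consistency check''; then, for the zero count, it removes small disks $D(z_i,r)$ around the zeros, sets $T=u/|u|$ on the punctured annulus, applies the divergence-type degree formula to get $p-q=\sum_i\deg(T,\partial D(z_i,r))$, and finally identifies each local degree with the multiplicity of $z_i$ by the argument principle applied \emph{locally} on $\partial D(z_i,r)$. You instead apply the argument principle \emph{globally} on the annulus and identify the two boundary contour integrals with the degrees via $\tfrac{1}{2\pi i}\oint_\Gamma u'/u\,dz=\deg(u,\Gamma)$ when $|u|=1$ on $\Gamma$ (an identity the paper itself uses later, in the proof of Proposition \ref{holomorphicsolution}), so that $N=p-q$ and $p\geq q$ drop out simultaneously. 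Your route is shorter and gives both conclusions in one stroke, but it needs $u$ to be regular up to $\partial A$ to make sense of the contour integrals there; this is legitimate (a holomorphic $u\in\I$ satisfies \eqref{semi-stiff 1}, so Proposition \ref{regularity} applies — you should state this chain explicitly, since Proposition \ref{regularity} is phrased for solutions of \eqref{semi-stiff 1}, not for holomorphic maps), or one can integrate over circles $C_s$ slightly inside $A$ and use continuity of the $H^{1/2}$ degree. The paper's route avoids this issue entirely, since the divergence formula holds for $H^1$ maps and complex analysis is only invoked in the interior. A small point in your favor: your remark that the zeros are confined to a compact subset away from $\partial A$ (because $|u|=1$ on the boundary) is the correct justification of finiteness, and is actually slightly more careful than the paper's appeal to compactness of $\overline{A}$ alone, which by itself would not exclude accumulation of zeros at the boundary.
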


\begin{proof}
 If $u$ is holomorphic in $A$ we have $\partial_x u \wedge \partial_y u \geq 0$ because the differential of $u$ is a direct similitude. Hence we obtain thanks to the previous lemma that $\int_A \partial_x u \wedge \partial_y u=\pi(p-q) \geq 0$. Now  since $u$ is holomorphic its zeros are isolated in $A$, and hence there is a finite number of zeros of $u$ in $A$ because $\overline{A}$ is compact. Let us denote by $N$ the number of zeros of $u$ in $A$ (counted with multiplicity) and by $m$ the number of distinct zeros. Let $z_1,...,z_m$ be the zeros of $u$ in $A$ and $r>0$ small enough for $D(z_i,r)$ to contain only $z_i$ as a zero of $u$ for all $1 \leq i \leq m$. We set $\mathcal{B}:= A \setminus \bigcup_{i=1}^m D(z_i,r)$. The function $u$ does not vanish in $\mathcal{B}$ so we can set $T:=\frac{u}{|u|}$ in $\mathcal{B}$. We then have

\begin{equation}
\int_{\mathcal{B}} \partial_xT \wedge \partial_yT = \deg(T,\partial \Omega)- \deg(T,\partial \omega)- \sum_{i=1}^m \deg(T, \partial D(z_i,r) )
\end{equation}
thanks to the divergence formula (this is a formula analog to \ref{diffdegree}). However because $T$ is $\mathbb{S}^1$ valued we have $\int_{\mathcal{B}} \partial_xT \wedge \partial_yT =0$ and hence we find that 

\begin{eqnarray}
\deg(T,\partial \Omega)-\deg(T,\partial \omega)&=&\sum_{i=1}^m \deg(T,\partial D(z_i,r)) \nonumber \\
\deg(u,\partial \Omega)-\deg(u,\partial \omega)&=&\sum_{i=1}^m \deg(T,\partial D(z_i,r)) \nonumber \\
p-q &=& \sum_{i=1}^m \deg(T,\partial D(z_i,r)). \label{numberofzeros} 
\end{eqnarray}

We now claim that $\deg(T,\partial D(z_i,r))$ is the multiplicity of $z_i$ as a zero of $u$. Indeed $\deg(T,\partial D(z_i,r))$ measures the algebraic change of phase of $T$ (and hence of $u$ because $T=\frac{u}{|u|}$) on $\partial D(z_i,r)$. The variation of the argument (or the phase) of $u$ is also given by 
$$\frac{1}{2\pi i} \int_{\partial D(z_i,r)} \frac{u'(z)}{u(z)}dz .$$
But the argument principle for holomorphic functions (cf. \cite{Stein} th.4.1 p.90) gives us that 
$\frac{1}{2\pi i} \int_{\partial D(z_i,r)} \frac{u'(z)}{u(z)}dz$ is the number of zeros of $u$ inside $D(z_i,r)$ counted with multiplicity thus it is equal to the multiplicity of $z_i$ as a zero of $u$. And thus \eqref{numberofzeros} implies that 
$$N=p-q.$$
\end{proof}
\begin{lemma}\label{obstruction}
 Let $u$ be holomorphic, $u\in\I_{p,q}$ then $p>0$ and $q<0$.
\end{lemma}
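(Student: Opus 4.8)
The plan is to read off the signs of the two degrees directly from the boundary behaviour of the modulus $\varrho=|u|$, exploiting that a nonconstant holomorphic map with $|u|=1$ on $\partial A$ must strictly decrease as one moves inward. We may assume $u$ is nonconstant, the constant maps of modulus one being exactly the (trivial) elements of $\I_{0,0}$. For such a $u$ the function $\ln|u|$ is harmonic wherever $u\ne 0$ (hence in a collar neighbourhood of $\partial A$, where $|u|=1$ forces $u\ne 0$), vanishes on $\partial A$, and is not identically zero; the strong maximum principle then gives $|u|<1$ in the interior, and Hopf's lemma upgrades the inequality $\partial_\nu\varrho\ge 0$ already recorded in the proof of Lemma \ref{fall} to the strict inequality $\partial_\nu\varrho>0$ at every point of $\partial A$.

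Next I would convert this into information about the phase. Writing $u=\varrho e^{i\varphi}$ near the boundary (legitimate since $u\ne 0$ there), the remark following Lemma \ref{strong bubble} gives $u\wedge\partial_\tau u=\partial_\tau\varphi$. Because $u$ is holomorphic, $\log u=\ln\varrho+i\varphi$ is holomorphic, so $\ln\varrho$ and $\varphi$ are harmonic conjugates; the Cauchy–Riemann equations in the positively oriented orthonormal frame $(e_r,e_\theta)$ read $\partial_{e_r}(\ln\varrho)=\partial_{e_\theta}\varphi$ and $\partial_{e_\theta}(\ln\varrho)=-\partial_{e_r}\varphi$. Since $\partial_{e_\theta}=\partial_\tau$ and $\varrho=1$ on $\partial A$, the first relation becomes $\partial_\tau\varphi=\partial_{e_r}\varrho$ on the boundary. (The second relation, together with $\partial_\tau\varrho=0$, shows en passant that $u\wedge\partial_\nu u=\partial_\nu\varphi=0$, so holomorphic elements of $\I$ automatically solve \eqref{semi-stiff 1}.)

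Finally I would treat the two boundary components separately, where the only real care is orientation. On $\partial\Omega=\mathbb{S}^1$ the outward normal is $\nu=e_r$, so $\partial_\tau\varphi=\partial_{e_r}\varrho=\partial_\nu\varrho>0$; integrating, $2\pi p=\int_{\partial\Omega}u\wedge\partial_\tau u\,d\tau>0$, hence $p>0$. On $\partial\omega=C_\varrho$ the outward normal of the annulus points inward, $\nu=-e_r$, so now $\partial_\tau\varphi=\partial_{e_r}\varrho=-\partial_\nu\varrho<0$; integrating over the counter-clockwise circle, $2\pi q=\int_{\partial\omega}u\wedge\partial_\tau u\,d\tau<0$, hence $q<0$.

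The main obstacle is purely the bookkeeping of orientations: the frame $(\nu,\tau)$ is direct on $\partial\Omega$ but indirect on $\partial\omega$, so the identification of $\partial_{e_r}\varrho$ with $+\partial_\nu\varrho$ versus $-\partial_\nu\varrho$ must be tracked consistently through the Cauchy–Riemann relation, and it is exactly this sign flip that yields the opposite conclusions $p>0$ and $q<0$ from the single fact $\partial_\nu\varrho>0$. The only genuine analytic input beyond this is the \emph{strict} Hopf inequality, for which the nonconstancy of $u$ is essential.
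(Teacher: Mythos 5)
Your proof is correct and follows essentially the same route as the paper's: both apply the maximum modulus principle and Hopf's lemma to $G=\ln|u|$ to get the strict inequality $\partial_\nu|u|>0$ on $\partial A$, convert this into $\partial_\tau\varphi$ using holomorphy (your Cauchy--Riemann relation for $\log u$ is exactly the paper's observation that $\partial_\nu u$ and $\partial_\tau u$ are directly orthogonal, i.e.\ $u\wedge\partial_\tau u=\pm\langle u,\partial_\nu u\rangle$ depending on the orientation of $(\nu,\tau)$), and integrate over each boundary component. The only point worth noting is that, like the paper, the statement implicitly excludes constants of modulus one (which are holomorphic and lie in $\I_{0,0}$); you handle this explicitly and correctly.
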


\begin{proof}
If $u$ is holomorphic in $A$ and $u\in \I_{p,q}$  thanks to the previous lemma $p \geq q$ and $u$ possesses  exactly $p-q$ zeros in $A$. Let $\varrho< \mu<1$ with $\mu$ close enough to 1 such that $u$ has no zero in $A_\mu:=\{z\in \mathbb{C}; \mu < |z|<1\}$. We set $G=\ln|u|$, the logarithm of the modulus of an holomorphic function which does not vanish is an harmonic function thus we have 
\begin{equation}
\left\{
\begin{array}{rclll}
\Delta G & =& 0, & \text{in} & A_\mu, \\
G&=&0, &  \text{on} &   \mathbb{S}^1, \\
G&=&\ln|u|, & \text{on} & C_\mu.
\end{array}
\right.
\end{equation}

Now the maximum principle for holomorphic functions tells us that $|u|<1$ inside the ring $A$, hence $G=\ln|u|<0$, on $C_\mu$. Applying the Hopf's lemma (see \cite{evans} p.330) we find that
$$\frac{\partial G}{\partial \nu}(x)>0, \ \ \forall x\in \mathbb{S}^1.$$ 

But $\frac{\partial G}{\partial \nu}(x)=\frac{\partial(\ln|u|)}{\partial \nu}=\frac{1}{|u|}\frac{\partial |u|}{\partial \nu}=\frac{\partial |u|}{\partial \nu}(x), \ \ \forall x\in \mathbb{S}^1$ (recall that $|u|=1$ on $\mathbb{S}¹)$. We also have 

$$\frac{\partial |u|}{\partial \nu}=\frac{1}{2|u|}\frac{\partial |u|^2}{\partial\nu}=\frac{1}{|u|}\langle u,\frac{\partial u}{\partial \nu} \rangle =\frac{1}{|u|} u\wedge\frac{\partial u}{\partial \tau}$$

\noindent the last equality being true because $(\nu,\tau)$ is direct on $\mathbb{S}^1$ then $\frac{\partial u}{\partial \nu}$ and $\frac{\partial u}{\partial \tau}$ are directly orthogonal since $u$ holomorphic. Hence
$$2\pi p=\int_{\mathbb{S}^1}u\wedge \partial_\tau u=\int_{\mathbb{S}^1}\frac{\partial G}{\partial \nu}>0$$ and $p>0$. We can apply a similar argument on the other boundary $C_\varrho$ but this time $(\nu,\tau)$ is indirect thus 
$\langle u,\frac{\partial u}{\partial \nu} \rangle =-u\wedge \frac{\partial u}{\partial \tau}$ and $q<0$.
\end{proof}

\begin{remark}
Let $u$ be a holomorphic function, if $u$ belongs to the space $\I$ then $u$ satisfies the system \eqref{semi-stiff 1}. Indeed 
\begin{itemize}
\item[*] $\Delta u =0$ because the real and imaginary parts of a holomorphic function are harmonic.
\item[*] Because of the fact that $|u|=1$ on $\partial A$ we have $ \langle u,\partial_\tau u \rangle=0$ on $\partial A$, but 
$\partial_\tau u$ and $\partial_\nu u$ are orthogonal on $\partial A$ because $u$ is holomorphic, hence 
$u\wedge \partial_\nu u=  \langle u,\partial_\tau u \rangle=0$ on $\partial \Omega$ and $u\wedge \partial_\nu u= -\langle u, \partial_\tau u \rangle =0$ on $\partial \omega$ (the sign depending on the orientation of $(\nu,\tau))$. 
\end{itemize}
This means that holomorphic functions are natural candidates for the problem \eqref{semi-stiff 1}.
\end{remark}

\begin{lemma}\label{minholo}
Let $p\geq 0 \geq q$ then $m(p,q)=\pi(p+|q|)$ and $u$ is a minimizer of $E$ in $\I_{p,q}$ if and only if $u$ is holomorphic.
\end{lemma}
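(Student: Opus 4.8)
The plan is to obtain a sharp lower bound on the Dirichlet energy by decomposing its density into holomorphic and antiholomorphic parts, and to evaluate the cross term via Proposition \ref{diffdegree}. The starting point is the pair of pointwise Wirtinger identities
$$\frac{1}{2}|\nabla u|^2 = |\partial_z u|^2 + |\partial_{\bar{z}} u|^2, \qquad \partial_x u \wedge \partial_y u = |\partial_z u|^2 - |\partial_{\bar{z}} u|^2,$$
valid for any $u \in H^1(A,\C)$; both follow from writing $u = u_1 + i u_2$ and expanding the squares, using $\partial_z = \frac{1}{2}(\partial_x - i\partial_y)$ and $\partial_{\bar{z}} = \frac{1}{2}(\partial_x + i\partial_y)$. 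Integrating the first identity gives $E(u) = \int_A \big(|\partial_z u|^2 + |\partial_{\bar{z}} u|^2\big)$, while integrating the second over $A$ and invoking Proposition \ref{diffdegree} gives $\int_A \big(|\partial_z u|^2 - |\partial_{\bar{z}} u|^2\big) = \pi(p-q)$ for every $u \in \I_{p,q}$.

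Combining these two relations yields the key identity
$$E(u) = \pi(p-q) + 2\int_A |\partial_{\bar{z}} u|^2 = \pi(p+|q|) + 2\int_A |\partial_{\bar{z}} u|^2,$$
where the second equality uses the hypothesis $q \leq 0$, so that $|q| = -q$ and hence $p - q = p + |q|$. Since the remaining integral is nonnegative, I immediately obtain $E(u) \geq \pi(p+|q|)$ for all $u \in \I_{p,q}$, and therefore $m(p,q) \geq \pi(p+|q|)$. Coupling this with the upper bound $m(p,q) \leq \pi(|p|+|q|) = \pi(p+|q|)$ supplied by Lemma \ref{Comparison1} forces the equality $m(p,q) = \pi(p+|q|)$.

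The characterization of minimizers then reads off directly from the same identity. A map $u \in \I_{p,q}$ satisfies $E(u) = m(p,q) = \pi(p+|q|)$ if and only if $\int_A |\partial_{\bar{z}} u|^2 = 0$, that is, if and only if $\partial_{\bar{z}} u \equiv 0$, which is precisely the condition that $u$ be holomorphic. Thus within $\I_{p,q}$ the properties ``being a minimizer of $E$'' and ``being holomorphic'' are equivalent, in both directions, and the equivalence requires no separate existence argument; in the degenerate subcases where no holomorphic representative exists in $\I_{p,q}$ (for instance $p>0$, $q=0$, by Lemma \ref{obstruction}) both sides of the equivalence are simply vacuous.

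I do not expect a genuine obstacle here: the entire argument rests on the splitting of the Dirichlet energy into its $\partial_z$ and $\partial_{\bar{z}}$ contributions, with all of the topological rigidity encoded in Proposition \ref{diffdegree}. The only points demanding care are bookkeeping: the sign convention $a\wedge b = a_1 b_2 - a_2 b_1$ must be matched to the Jacobian so that $\int_A \partial_x u \wedge \partial_y u = \pi(p-q)$ carries the correct sign, and the hypothesis $q \leq 0$ must be used to rewrite $p-q$ as $p+|q|$ — it is exactly this sign of $q$ that makes the holomorphic (as opposed to antiholomorphic) part of the energy the ``free'' term and thereby makes the bound sharp.
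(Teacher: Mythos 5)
Your proof is correct and follows essentially the same route as the paper's: the pointwise Wirtinger splitting of the Dirichlet density (the paper writes it as $\frac{1}{2}|\nabla u|^2 = \partial_x u \wedge \partial_y u + 2|\partial_{\bar{z}}u|^2$, which is exactly your identity rearranged), Proposition \ref{diffdegree} to evaluate $\int_A \partial_x u \wedge \partial_y u = \pi(p-q)$, Lemma \ref{Comparison1} for the matching upper bound, and the equality case $\partial_{\bar{z}} u \equiv 0$ characterizing minimizers as holomorphic maps. No gap; your remark that the equivalence is vacuous when no holomorphic map exists in $\I_{p,q}$ (e.g.\ $q=0$, $p>0$) is consistent with how the paper uses this lemma afterwards.
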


\begin{proof}
If $p=q=0$ then constant solutions in $\mathbb{S}¹$ are minimizers. During the proof we always assume that $p>0\geq q$.
We have the following point-wise equalities
$$ \frac{1}{2} |\nabla u|^2= \partial_x u \wedge \partial _y u +2|\partial_{\overline{z}}u|^2$$
$$ \frac{1}{2} |\nabla u|^2= -\partial_x u \wedge \partial _y u +2|\partial_{z}u|^2.$$

Hence we have $E(u)=\frac{1}{2}\int_A |\nabla u|² \geq |\int_A \partial_x u \wedge \partial _y u |$
and an integration by parts (formula \ref{diffdegree}) gives $E(u) \geq \pi |p-q| $. Then if $p>0$ and $q\leq0$ 
$$ E(u) \geq \pi (p+|q|)$$
with equality if and only if $u$ is holomorphic. But  lemma \ref{Comparison1}  tells us that 
$$m(p,q)\leq \pi (p+ |q|)$$ 
thus we can conclude that $m(p,q)=\pi(p+|q|)$ and that if a minimizer exists it must be holomorphic. Conversely if $u$ is holomorphic thanks to the previous computation we find that $u$ minimizes the energy in $\I_{p,q}$.\\
\end{proof}
This lemma allows us to prove the following part of  point 2) of  theorem \ref{theorem1}. 

\begin{proposition}
There is no minimizer of $E$ in $\I_{p,0}$ with $p>0$. 
\end{proposition}

\begin{proof}
Indeed there is no minimizer in $\I_{p,0}$ with $p>0$ because if this minimizer exists it should be holomorphic.
But lemma \ref{obstruction} says that there is no holomorphic function in $\I_{p,0}$.
\end{proof} 
 A similar result was obtained with a different proof in \cite{BRMS} (see lemma 9.9 p.1001) but only in the case $\I_{1,0}$. The authors proved that there is no minimizer of $E$ in $\I_{1,0}$ because there is no holomorphic function in $\I_{1,0}$. \\

\begin{proposition}\label{holomorphicsolution}
If $p>0>q$ then there is an infinite number of critical points of $E$ in $\I_{p,q}$ and these are holomorphic solutions which can be written explicitly. Furthermore every solution is energy minimizing.
\end{proposition}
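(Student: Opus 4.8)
The plan is to prove the two assertions separately: first to exhibit explicit holomorphic maps lying in $\I_{p,q}$, and then to show conversely that \emph{every} solution of \eqref{semi-stiff 1} in $\I_{p,q}$ is holomorphic. Once these are in hand the proposition follows, because a holomorphic map belonging to $\I$ automatically solves \eqref{semi-stiff 1} (see the remark following lemma \ref{obstruction}) and is energy minimizing by lemma \ref{minholo}. The infinitude of solutions is then immediate: if $u$ is a holomorphic solution in $\I_{p,q}$ then so is $\lambda u$ for every $\lambda\in\mathbb{S}^1$, since the modulus on $\partial A$, the two degrees and holomorphy are all preserved, and these maps are pairwise distinct. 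Thus it suffices to produce \emph{one} holomorphic map in $\I_{p,q}$, which I regard as the substantive point.

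For existence I would work on $A=A_\varrho$ and look for $u$ holomorphic with $|u|=1$ on both circles and with exactly $N:=p-q$ prescribed zeros (by lemmas \ref{obstruction} and \ref{numberofzeros2} these are the correct necessary data, and $N>0$ since $p>0>q$). The natural tool is Schwarz reflection: $|u|=1$ on $\mathbb{S}^1$ forces $u(1/\bar z)=1/\overline{u(z)}$ and $|u|=1$ on $C_\varrho$ forces $u(\varrho^2/\bar z)=1/\overline{u(z)}$; composing the two reflections is the dilation $z\mapsto\varrho^2 z$, whence the continued $u$ satisfies $u(\varrho^2 z)=u(z)$ and descends to an elliptic function on the torus $\C^\ast/\langle\varrho^2\rangle$, holomorphic on $A$ with the $N$ zeros there and their reflections as poles. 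Writing this elliptic function as a ratio of Weierstrass $\sigma$-functions (equivalently of Jacobi theta functions) in the variable $w=\log z$ provides the promised explicit formula, the positions of the $N$ zeros furnishing the free parameters. The \textbf{main obstacle} is exactly to check that the reality and symmetry conditions encoding $|u|=1$, together with Abel's relation on the divisor and the period conditions fixing the degrees at $(p,q)$, are solvable for the whole range $p>0>q$; this divisor-and-period bookkeeping is where the real work lies. Alternatively one may build $\log|u|$ as a sum of $N$ Dirichlet Green's functions of $A$ and recover $u$ from the conjugate harmonic function, single-valuedness being the period condition.

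For the converse I would argue through the Hopf differential, which is clean. Let $u$ be any solution and set $a:=\partial_z u$; since $u$ is harmonic, $a$ is holomorphic on $A$, and by lemma \ref{Hopfcst} one has $\H_u=a\,\overline{\partial_{\bar z}u}=c/z^2$ with $c\in\R$. Suppose first $c\neq0$. Then $a$ vanishes nowhere in $A$, since $c/z^2\neq0$ there. On a boundary circle of radius $r$ the identities $r\,\partial_r u=z a+\bar z\,\partial_{\bar z}u$ and $\partial_\theta u=i(z a-\bar z\,\partial_{\bar z}u)$, combined with the semi-stiff conditions $\partial_r u=\lambda u$ and $\partial_\theta u=i\mu u$ (with $\lambda,\mu\in\R$, using $u\wedge\partial_\nu u=0$ and $|u|=1$), give $z a=\tfrac12(r\lambda+\mu)\,u$, so $a$ is a \emph{real} multiple of $u/z$ along that circle. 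Hence the winding number of $a$ on the circle equals $\deg(u,\cdot)-1$, that is $p-1$ on $\mathbb{S}^1$ and $q-1$ on $C_\varrho$. But a nowhere-vanishing holomorphic function on an annulus has the same winding number on every circle, so $p-1=q-1$, i.e. $p=q$, contradicting $p>0>q$. Therefore $c=0$, so $\H_u\equiv0$ and $u$ is conformal by proposition \ref{Hopf differential}; the orientation forced by $p>0>q$ through lemma \ref{obstruction} excludes the anti-holomorphic case, so $u$ is holomorphic, and then minimizing by lemma \ref{minholo}. This completes the proof.
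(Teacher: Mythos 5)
Your overall architecture is sound, and your converse is correct: the identity $za=\frac{1}{2}(r\lambda+\mu)u$ on each boundary circle, the non-vanishing of $a=\partial_z u$ when $c\neq 0$, and the constancy of the winding number of a zero-free holomorphic function on an annulus do force $p=q$, so any solution in $\I_{p,q}$ with $p>0>q$ has $c=0$ and is holomorphic (the anti-holomorphic case being excluded by lemma \ref{obstruction}), hence minimizing by lemma \ref{minholo}. This is exactly the paper's proposition \ref{nopq} and corollary \ref{nop0}, which the paper proves with $T=\partial_r u/|\partial_r u|$ or $R=\partial_\theta u/|\partial_\theta u|$ according to the sign of $c$; your version via $\partial_z u$ handles both signs at once, which is a small gain. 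Note, however, that the paper places that converse in Section 5, and its proof of the present proposition consists entirely of the existence half.

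That existence half is where your proposal has a genuine gap: you never actually exhibit a holomorphic map in $\I_{p,q}$. The reflection framework is right --- $|u|=1$ on both circles yields $u(\varrho^2 z)=u(z)$ after continuation, and the continued function is a theta/sigma quotient determined by the $N=p-q$ zeros --- but you explicitly defer ``the divisor-and-period bookkeeping,'' i.e.\ the verification that single-valuedness and the prescribed degrees $(p,q)$ can be achieved simultaneously for every pair $p>0>q$. That verification is the substantive content of the proposition (``which can be written explicitly''); without it you have only shown that \emph{if} a solution exists it is holomorphic and minimizing. The paper closes this by writing the candidate as \eqref{holomorphicsol}, a $z^q$ prefactor times the infinite Blaschke-type products $f_{x_i}$ (precisely the theta quotient you allude to), checking $|u|=1$ on both circles from the functional equations $f_{x_i}(z)f_{\overline{x_i}}(1/z)=1/|x_i|^2$ and $f_{x_i}(z)f_{\overline{x_i}}(\varrho^2/z)=1$ of lemma \ref{f_xprop}, and computing $\deg(u,C_\varrho)=q$ by the argument principle on partial products. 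The period condition you flag reduces to \eqref{Conditionpointsholo}, $-\sum_i \ln|x_i|/\ln\varrho=q$, and its solvability is elementary but must be said: each summand lies in $(0,1)$ and there are $p-q>|q|$ of them, the inequality $p-q>|q|$ being exactly where the hypothesis $p>0$ enters. You should carry out this step (or quote the explicit formula) to complete the proof.
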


For the proof of this proposition we give an explicit formula for a solution and we will check that it satisfies all the desired properties. We explain in the remark below the heuristic of the derivation of this formula. Let $p>0>q$ be two integers. We choose $p-q$ points $x_1,x_2,..., x_{p-q}$ in $A$ which satisfy:
\begin{equation}\label{Conditionpointsholo}
 -\displaystyle{\sum_{i=1}^{p-q} \frac{\ln |x_i|}{\ln\varrho}=q}.
\end{equation}
Note that it is possible to realize these conditions since $0<\sum_{i=1}^{p-q} \frac{\ln |x_i|}{\ln\varrho}<p-q$ because $\varrho<|x_i|<1$ for all $1 \leq i\leq p-q $ and $p-q > |q|$.
We then set 
\begin{equation}\label{holomorphicsol}
u(z):= z^q  \prod_{i=1}^{p-q} |x_i|\frac{1-\frac{z}{x_i}}{1-z\overline{x_i}}\prod_{k=1}^{+\infty} \frac{(1-\frac{\varrho^{2k}z}{x_i})(1-\frac{\varrho^{2k}x_i}{z})}{(1-\frac{\varrho^{2k}}{z\overline{x_i}})(1-\varrho^{2k}z\overline{x_i})}.
\end{equation}

We want to show that 
\begin{itemize}
\item[1)] $u$ is holomorphic in $A$,
\item[2)] $u\in \I$ i.e. $|u|=1$ on $\mathbb{S}^1$ and on $C_\varrho$,
\item[3)] $\deg(u,C_\varrho)=q$ and $\deg(u,\mathbb{S}^1)=p$.
\end{itemize}

\noindent To this end we set 
\begin{equation}\label{f_x}
f_{x_i}(z):=\frac{1-\frac{z}{x_i}}{1-z\overline{x_i}}\prod_{k=1}^{+\infty} \frac{(1-\frac{\varrho^{2k}z}{x_i})(1-\frac{\varrho^{2k}x_i}{z})}{(1-\frac{\varrho^{2k}}{z\overline{x_i}})(1-\varrho^{2k}z\overline{x_i})}
\end{equation}
we can thus rewrite 
$$u(z)=z^q \prod_{i=1}^{p-q} x_i f_{x_i}(z).$$
We have 
\begin{lemma}\label{f_xprop}
For all $1 \leq i \leq p-q$ 
\begin{itemize}
\item[i)] $f_{x_i}$ is a meromorphic function on $\C$.
\item[ii)] $f_{x_i}$ has simple zeros at the points $\frac{x_i}{\varrho^{2k}}$ for all $k \in \mathbb{N}$ and $\varrho^{2k} x_i$ for all $k \in \mathbb{N}^*$.
\item[iii)]  $f_{x_i}$ has simple poles at the points $\frac{\varrho^{2k}}{\overline{x_i}}$ for all $k \in \mathbb{N}$ and $\frac{1}{\varrho^{2k} \overline{x_i}}$ for all $k \in \mathbb{N}^*$.
\item[iv)] $\overline{f_{x_i}(z)}=f_{\overline{x_i}}(\overline{z})$ for all $z\in \C$.
\item[v)] For all $z\in \C$ it holds $f_{x_i}(z)f_{\overline{x_i}}(\frac{1}{z})=\frac{1}{|x_i|^2}$ 
and $f_{x_i}(z)f_{\overline{x_i}}(\frac{\varrho^2}{z})=1$.
\end{itemize}
\end{lemma}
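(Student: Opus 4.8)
The plan is to treat the five assertions in turn, reading off (i)--(iii) directly from the factored form of the product and obtaining (iv)--(v) from algebraic manipulations of the individual factors.

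First I would establish (i). Writing $f_{x_i}$ as the rational prefactor times the infinite product, I would take logarithms of the generic factor and observe that each of the four terms $\log(1-\varrho^{2k}z/x_i)$, $\log(1-\varrho^{2k}x_i/z)$, $\log(1-\varrho^{2k}/(z\overline{x_i}))$ and $\log(1-\varrho^{2k}z\overline{x_i})$ is $\bigO{\varrho^{2k}}$ uniformly on every compact subset of $\C^*=\C\setminus\{0\}$ avoiding the poles. Since $\varrho<1$ the resulting series converges geometrically, so the product converges locally uniformly to a holomorphic function away from its poles and is meromorphic there. I would note that the zeros and poles accumulate at $0$ and at $\infty$, so strictly $f_{x_i}$ is meromorphic on $\C^*$, these two points being non-isolated singularities.

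For (ii) and (iii) I would simply read off the zeros and poles factor by factor. Each factor has the shape $1-(\text{monomial in }z^{\pm1})$, hence contributes a single simple root: the numerator factors $(1-\varrho^{2k}z/x_i)$ and $(1-\varrho^{2k}x_i/z)$ give zeros at $x_i/\varrho^{2k}$ and $\varrho^{2k}x_i$ for $k\ge1$, while the prefactor $1-z/x_i$ supplies the remaining zero at $x_i=x_i/\varrho^{0}$; symmetrically the denominator factors together with $1-z\overline{x_i}$ yield the poles. To see that all of these are simple I would verify that the listed points are pairwise distinct: the zeros lie on the circles $|z|=|x_i|\varrho^{2n}$ and the poles on $|z|=\varrho^{2n}/|x_i|$ for $n\in\mathbb{Z}$, and since $\varrho<|x_i|<1$ no modulus of a zero equals a modulus of a pole, so no cancellation occurs.

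For (iv) I would conjugate the product termwise, using that $\varrho$ is real so that $\overline{1-\varrho^{2k}z/x_i}=1-\varrho^{2k}\overline{z}/\overline{x_i}$ and likewise for the other factors; matching against the definition of $f_{\overline{x_i}}(\overline{z})$ gives the claim. The two identities in (v) are the computational core, and I would obtain them by substituting $w=1/z$ and $w=\varrho^2/z$ into $f_{\overline{x_i}}(w)$. For $w=1/z$ the four product factors of $f_{\overline{x_i}}(1/z)$ coincide termwise with those of $f_{x_i}(z)$, so the infinite products cancel entirely and only the rational prefactors survive, simplifying to $1/|x_i|^2$. For $w=\varrho^2/z$ the substitution sends $\varrho^{2k}$ to $\varrho^{2(k\pm1)}$; after reindexing $k\mapsto k\pm1$ the shifted products again cancel against those of $f_{x_i}(z)$, the leftover boundary factors cancelling with the prefactor $\frac{1-\varrho^2/(z\overline{x_i})}{1-\varrho^2 x_i/z}$, and the product equals $1$. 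The main obstacle I expect is precisely this $\varrho^2/z$ identity: the index shift produces mismatched endpoints in the four infinite products, and one must track exactly which $k=0$ or $k=1$ factor is gained or lost in each product and check that these stray terms combine with the prefactor to cancel perfectly. The convergence of (i) and the zero/pole bookkeeping of (ii)--(iii) are routine by comparison.
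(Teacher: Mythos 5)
Your proposal is correct and follows essentially the same route as the paper: convergence of each infinite product by comparison with a geometric series (the paper invokes the standard criterion from Stein--Shakarchi where you take logarithms, which is the same estimate), zeros and poles read off factor by factor, termwise conjugation for (iv), and the telescoping cancellation after substituting $w=1/z$ and $w=\varrho^2/z$ for (v). Your extra observations are sound and slightly more careful than the paper's ``immediate from the definition'': the modulus comparison showing zeros and poles never collide does justify simplicity, and you are right that since the poles and zeros accumulate at $0$ and $\infty$ the function is, strictly speaking, meromorphic only on $\C\setminus\{0\}$, a harmless imprecision in the paper's statement.
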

\begin{proof}
i) To prove the first point it suffices to prove that each infinite product converges and 
is a holomorphic function in $\C$. For example for $\prod_{k=1}^{+\infty}(1-
\frac{\varrho^{2k}z}{x_i})$ it holds that $|\frac{\varrho^{2k}z}{x_i}| < \varrho^{2k-1}|z|$ because $|
x_i|>\varrho$. Besides since $\varrho<1$ the sum $\sum_{k\geq 1} \varrho^{2k-1}|z|$ is finite for 
all $z \in \C$. This implies (see for example proposition  3.2 p.141 in \cite{Stein}) that 
$\prod_{k=1}^{+\infty}(1-\frac{\varrho^{2k}z}{x_i})<+\infty$ and this product is holomorphic 
in $\C$. The proof goes the same way for the three other infinite products in $f_{x_i}$. 
\\
Points ii),iii) and iv) are immediate from the definition of $f_{x_i}$. \\
The point v) is obtained via the following computations:
\begin{eqnarray}
f_{x_i}(z)f_{\overline{x_i}}(\frac{1}{z}) &=& \frac{(1-\frac{z}{x_i})(1-\frac{1}{\overline{x_i}z})}{(1-z\overline{x_i})(1-\frac{x_i}{z})}\prod_{k=1}^{+\infty} \frac{(1-\frac{\varrho^{2k}z}{x_i})(1-\frac{\varrho^{2k}x_i}{z})(1-\frac{\varrho^{2k}}{\overline{x_i}z})(1-\varrho^{2k}\overline{x_i}z)}{(1-\frac{\varrho^{2k}}{\overline{x_i}z})(1-\varrho^{2k}\overline{x_i}z)(1-\frac{\varrho^{2k}z}{x_i})(1-\frac{\varrho^{2k}x_i}{z})} \nonumber \\
&=&\frac{(1-\frac{z}{x_i})(1-\frac{1}{\overline{x_i}z})}{(1-z\overline{x_i})(1-\frac{x_i}{z})} \nonumber \\
&=& \frac{1}{|x_i|^2}
\end{eqnarray}
and

\begin{eqnarray}
f_{x_i}(z)f_{\overline{x_i}}(\frac{\varrho^2}{z}) &=& \frac{(1-\frac{z}{x_i})(1-\frac{\varrho^2}{\overline{x_i}z})}{(1-z\overline{x_i})(1-\frac{\varrho^2x_i}{z})} \prod_{k=1}^{+\infty} \frac{(1-\frac{\varrho^{2k}z}{x_i})(1-\frac{\varrho^{2k}x_i}{z})(1-\frac{\varrho^{2k+2}}{\overline{x_i}z})(1-\varrho^{2k-2}\overline{x_i}z)}{(1-\frac{\varrho^{2k}}{\overline{x_i}z})(1-\varrho^{2k}\overline{x_i}z)(1-\frac{\varrho^{2k-2}z}{x_i})(1-\frac{\varrho^{2k+2}\overline{x_i}}{z})} \nonumber \\
&=& \frac{1-\frac{z}{x_i}}{1-z\overline{x_i}}\times \frac{1-z\overline{x_i}}{1-\frac{z}{x_i}}\prod_{k=1}^{+\infty}  \frac{(1-\frac{\varrho^{2k}z}{x_i})(1-\frac{\varrho^{2k}x_i}{z})(1-\frac{\varrho^{2k}}{\overline{x_i}z})(1-\varrho^{2k}\overline{x_i}z)}{(1-\frac{\varrho^{2k}}{\overline{x_i}z})(1-\varrho^{2k}\overline{x_i}z)(1-\frac{\varrho^{2k}z}{x_i})(1-\frac{\varrho^{2k}x_i}{z})} \nonumber \\
&=& 1.
\end{eqnarray}
\end{proof}
\begin{proof}[Proof of proposition \ref{holomorphicsolution}]

We now go back to the proof of proposition \ref{holomorphicsolution}.  To prove that $u$ is holomorphic in $A$ it suffices to apply the previous lemma \ref{f_xprop} and to observe that $u$ has no pole in $A$ (the points $\frac{\varrho^{2k}}{\overline{x_i}}$ for $k\in \mathbb{N}$ and $\frac{1}{\varrho^{2k}\overline{x_i}}$ for $k\in \mathbb{N}^*$ are not in $A$). \\
We now prove the point 2) that is $u \in \I$. For all $z\in \mathbb{S}^1$ we have $\overline{z}=\frac{1}{z}$. Thus for all $z\in \mathbb{S}^1$:
\begin{eqnarray}
|u(z)|^2 &=& u(z) \overline{u(z)} \nonumber \\
&=& \prod_{i=1}^{p-q}|x_i|^2f_{x_i}(z) \overline{f_{x_i}(z)}. \nonumber
\end{eqnarray}
we use  points iv) and v) of lemma \ref{f_xprop} to obtain that for all $z\in \mathbb{S}^1$
\begin{eqnarray}
|u(z)|^2 &=&  \prod_{i=1}^{p-q}|x_i|^2f_{x_i}(z)f_{\overline{x_i}}(\overline{z}) \nonumber \\
&=&  \prod_{i=1}^{p-q}|x_i|^2f_{x_i}(z)f_{\overline{x_i}}(\frac{1}{z}) \nonumber \\
&=& \prod_{i=1}^{p-q}|x_i|^2 \frac{1}{|x_i|^2} \nonumber \\
&=&1.
\end{eqnarray}
Hence $|u|=1$ on $\mathbb{S}^1$. Likewise for all $z\in \C_{\varrho}$
\begin{eqnarray}
|u(z)|^2&=& u(z)\overline{u(z)} \nonumber \\
&=& |z|^{2q}\prod_{i=1}^{p-q}|x_i|^2f_{x_i}(z) \overline{f_{x_i}(z)} \nonumber \\
&=& \varrho^{2q}\prod_{i=1}^{p-q}|x_i|^2f_{x_i}(z)f_{\overline{x_i}}(\overline{z}) \nonumber \\
&=&\varrho^{2q}\prod_{i=1}^{p-q}|x_i|^2f_{x_i}(z)f_{\overline{x_i}}(\frac{\varrho^2}{z}) \nonumber \\
&=& \varrho^{2q}\prod_{i=1}^{p-q}|x_i|^2 \nonumber \\
&=&1
\end{eqnarray}
the last equality being true because of the choice \eqref{Conditionpointsholo} that is $q=\displaystyle{-\sum_{i=1}
^{p-q} \frac{\ln |x_i|}{\ln \varrho}}$ and $\displaystyle{\prod_{i=1}^{p-q} x_i^2} \in \R^+$. We then have 
$u\in \I$. \\
To conclude the proof we only need to show that $\deg(u,C_\varrho)=q$. Indeed since $u$ has $p-q$ zeros in $A$ and since $\deg(u,\mathbb{S}^1)-\deg(u,C_\varrho)= \text{number of zeros of u in A}$ (c.f. proposition \ref{numberofzeros2}) we will deduce that $\deg(u,\mathbb{S}^1)=p$.  For all $N \in \mathbb{N}^*$ we set 
\begin{equation}\nonumber
u_N(z)=z^q\prod_{i=1}^{p-q}x_i  \frac{1-\frac{z}{x_i}}{1-z\overline{x_i}}\prod_{k=1}^{N} \frac{(1-\frac{\varrho^{2k}z}{x_i})(1-\frac{\varrho^{2k}x_i}{z})}{(1-\frac{\varrho^{2k}}{z\overline{x_i}})(1-\varrho^{2k}z\overline{x_i})}.
\end{equation}
The functions $u_N$ are meromorphic on $\C$. We count the number of zeros and poles of 
$u_N$ inside the disk $\mathbb{D}_\varrho$. The zeros of $u_N$ inside this disk are at the 
points $\varrho^{2k}x_i$ for $1\leq k \leq N$. We can see that $0$ is a pole of order $|q|$ (let us recall that $q<0$)
of $u_N$ and its other poles inside $\mathbb{D}_\varrho$ are simple and are at $
\frac{\varrho^{2k}}{\overline{x_i}}$ for $1\leq k \leq N$. We then use the variation of argument principle (see for example 
\cite{Stein} p.90) to obtain that 
\begin{equation}
\int_{C_\varrho} \frac{u'_N(z)}{u_N(z)}dz=-2i\pi |q|.
\end{equation}
But we have that $u_N$ converges uniformly to $u$ near $C_\varrho$. Thus $u'_N$ converges 
uniformly to $u'$ near $C_\varrho$. We can pass to the limit and obtain
\begin{eqnarray}
\frac{1}{2i\pi}\int_{C_\varrho}\frac{u'(z)}{u(z)}dz &=&\frac{1}{2\pi}\int_{C_\varrho} u\wedge 
\partial_\tau u \nonumber \\
&=&\deg(u,C_\varrho) \nonumber \\
&=& q. \nonumber
\end{eqnarray}
This concludes the proof of the proposition because we showed that every holomorphic functions in $\I$ is minimizing in its class $\I_{p,q}$ for $p>0>q$. (c.f. lemma \ref{minholo}).
\end{proof}

Several remarks are in order concerning this proof. We want to indicate how we derived the formula \eqref{holomorphicsol} for holomorphic solutions in $\I$. If we assume that such a function $u \in \I_{p,q}$ exists. Then $u$ has $p-q$ zeros counted with their multiplicities (c.f. lemma \ref{numberofzeros2}).  We denote them by $x_1,x_2,...,x_{p-q}$. We then set $G=\ln |u|$ , since $u$ is holomorphic in $A$, $G$ satisfies
\begin{equation}\label{linear}
\left\{
\begin{array}{rcll}
\Delta G & =&2\pi \displaystyle{\sum_{i=1}^{p-q} \delta_{x_i} }, & \text{in} \ A, \\
G&=&0, & \text{on} \   \mathbb{S}^1, \\
G&=&0, & \text{on} \ C_\varrho
\end{array}
\right.
\end{equation}
\noindent where $\delta_{x_i}$ denotes the Dirac measure at $x_i$. Since the problem \eqref{linear} is linear, to solve it it suffices to find a solution of 
\begin{equation}\label{linear2}
\left\{
\begin{array}{rcll}
\Delta G & =&2\pi \delta_{x_0} , & \text{in} \ A, \\
G&=&0, & \text{on} \   \mathbb{S}^1, \\
G&=&0, & \text{on} \ C_\varrho
\end{array}
\right.
\end{equation}

\noindent where $x_0$ is a point in $A$. By definition a solution of \eqref{linear2} is called a Green's function. Hence 
we must find a Green function in the ring $A$. This Green's function is unique for each $x_0 \in A$. We can 
find the construction of such a Green's function in the book of Courant and Hilbert \cite{courant1989methods} 
p.388-389. In fact Courant and Hilbert affirm that in order to find a 
Green's function in a ring it suffices to find a holomorphic function, with modulus constant equal to one on the 
boundaries with a simple zero in $x_0$.  However the holomorphic function that they give is not a function but a 
\textit{multivalued} function. We do not want to discuss here the notion of multivalued function, we just call multivalued a function where appears a term of the form $z^\alpha$ with $\alpha$ non integer. Note that a holomorphic function in $\I$ with a simple zero should be either in 
$\I_{1,0}$ or in $\I_{0,-1}$ but lemma \ref{obstruction} says that there is no holomorphic function in such class. \\
If $x_0\in \mathbb{C}$ then thanks to similar computations done in proposition \ref{holomorphicsolution} it can be shown that the solution of \eqref{linear2} is 
\begin{equation}\nonumber
G_{x_0} =\ln |F_{x_0}|
\end{equation}
\noindent where

\begin{equation}\nonumber
F_{x_0}=|x_0|z^{\frac{-\ln|x_0|}{\ln\varrho}}\frac{1-\frac{z}{x_0}}{1-z\overline{x_0}}\prod_{k=1}^{+\infty} \frac{(1-\frac{\varrho^{2k}z}{x_0})(1-\frac{\varrho^{2k}x_0}{z})}{(1-\frac{\varrho^{2k}}{z\overline{x_0}})(1-\varrho^{2k}z\overline{x_0})}
\end{equation}

Thus a solution of \eqref{linear} is given by 

\begin{equation}\nonumber
G=\ln|F_{x_1}F_{x_2}...F_{x_{p-q}}|
\end{equation}

\noindent and we can show that $F_{x_1}F_{x_2}...F_{x_{p-q}}$ is an holomorphic (single-valued) function  if and only if $\displaystyle{\sum_{i=1}^{p-q} \frac{\ln|x_i|}{\ln\varrho}}$ is an integer. This is exactly the condition \eqref{Conditionpointsholo}. Thus this geometric condition on the positions of the $x_i$ are sufficient and necessary for an holomorphic function in $\I$ with prescribed zeros at the $x_i$'s to exist.

\section{The case $c \neq 0$ : properties of radial solutions}

\subsection{Non existence of solutions  with $c\neq 0$ in $\I_{p,q}$ if $p\neq q$}

\begin{proposition}\label{nopq}
Let $u$ be a solution of \eqref{semi-stiff 1} in $\I_{p,q}$ with $c\neq 0$, where $c$ is the constant in the Hopf differential \eqref{Hopf} then $p=q$.
\end{proposition}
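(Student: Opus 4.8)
The plan is to reduce the statement to a winding-number comparison for the single holomorphic function $\phi:=\partial_z u$. Since $u$ is harmonic, $\partial_{\bar z}\phi=\tfrac14\Delta u=0$, so $\phi$ is holomorphic in $A$, and by Proposition \ref{regularity} it extends continuously to $\overline{A}$. The decisive point is that the hypothesis $c\neq 0$ forces $\phi$ to be \emph{non-vanishing} on $\overline{A}$: Lemma \ref{Hopfcst} gives $\H_u(z)=c/z^2$, which never vanishes on $\overline{A}$ when $c\neq 0$, and since by Definition \ref{Hopff} the Hopf differential factors as $\H_u=(\partial_z u)(\partial_z\bar u)$, the factor $\phi=\partial_z u$ cannot vanish anywhere on $\overline{A}$.

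Once $\phi$ is a non-vanishing holomorphic function on the closed annulus, its degree (winding number) $\deg(\phi,C_r)$ is independent of $r\in[\varrho,1]$: by the argument principle the difference $\deg(\phi,\mathbb{S}^1)-\deg(\phi,C_\varrho)$ equals the number of zeros of $\phi$ lying in $A$, which is $0$. Hence $\deg(\phi,\mathbb{S}^1)=\deg(\phi,C_\varrho)$, and the whole proposition comes down to computing these two boundary degrees separately and checking that they equal $p-1$ and $q-1$.

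To carry out that computation I would pass to polar coordinates, where $\partial_z u=\tfrac12 e^{-i\theta}\big(\partial_r u-\tfrac{i}{r}\partial_\theta u\big)$, and feed in the boundary information from \eqref{semi-stiff 1}. On each boundary circle, $|u|=1$ yields $\langle u,\partial_\theta u\rangle=0$, so $\partial_\theta u=\mu\, iu$ with $\mu=u\wedge\partial_\theta u$ real; and the semi-stiff condition $u\wedge\partial_\nu u=0$ yields $\partial_r u=\lambda u$ with $\lambda$ real. Substituting gives $\phi=\tfrac12\big(\lambda+\tfrac{\mu}{r}\big)e^{-i\theta}u$ on $\partial A$, that is, $\phi/|\phi|=\pm\,e^{-i\theta}u$ up to a constant sign. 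Using the additivity of the degree (Lemma \ref{propertiesofdegree}), together with $\deg(e^{-i\theta})=-1$, $\deg(u,\mathbb{S}^1)=p$ and $\deg(u,C_\varrho)=q$, I obtain $\deg(\phi,\mathbb{S}^1)=p-1$ and $\deg(\phi,C_\varrho)=q-1$. Equating the two, as justified above, gives $p=q$.

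The delicate points are bookkeeping rather than conceptual. One must verify that $\phi$ genuinely extends up to $\overline{A}$ without zeros (this uses both the smoothness of Proposition \ref{regularity} and the non-vanishing of $c/z^2$ on the \emph{closed} annulus), and that the real scalar $\lambda+\mu/r$ keeps a constant sign along each boundary component, so that it contributes nothing to the winding number. I expect no genuine obstacle here: the non-vanishing of $\phi$ is precisely what simultaneously guarantees the sign-constancy of that scalar and the $r$-independence of the winding number, which are the only two facts the argument relies on.
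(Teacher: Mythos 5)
Your proof is correct, but it takes a genuinely different route from the paper's. The paper also starts from the identity $r^2|\partial_r u|^2-|\partial_\theta u|^2=c$ of Lemma \ref{Hopfcst}, but then splits into the two cases $c>0$ and $c<0$: when $c>0$ the radial derivative $\partial_r u$ cannot vanish, when $c<0$ the angular derivative $\partial_\theta u$ cannot vanish, and in each case the normalized map $T=\partial_r u/|\partial_r u|$ (resp. $R=\partial_\theta u/|\partial_\theta u|$) is $\mathbb{S}^1$-valued, so $\int_A\partial_x T\wedge\partial_y T=0$ forces equal degrees on the two boundary circles via the divergence identity of Proposition \ref{diffdegree}; the boundary conditions then identify $T$ with $\pm u$ (resp. $R$ with $\pm iu$) on each component of $\partial A$. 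You instead work with the single holomorphic function $\phi=\partial_z u$, observe that $c\neq0$ makes it zero-free on $\overline{A}$ because it divides $\H_u=c/z^2$, and obtain the equality of the two boundary winding numbers in one stroke from the argument principle, the boundary identification $\phi=\tfrac12\bigl(\lambda+\tfrac{\mu}{r}\bigr)e^{-i\theta}u$ playing the role of the paper's $T=\pm u$ and $R=\pm iu$. The two arguments are close in spirit (both reduce to a degree comparison across the annulus for a nowhere-vanishing field built from $u$), but yours unifies the two signs of $c$ and replaces the real-variable identity for $\mathbb{S}^1$-valued maps by the argument principle for $\partial_z u$; the price is the explicit boundary computation of $\deg(\phi,\cdot)=p-1$ and $q-1$, which you handle correctly. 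The bookkeeping you flag as delicate is indeed fine: on $\partial A$ one has $|\phi|=\tfrac12\bigl|\lambda+\tfrac{\mu}{r}\bigr|$, so the non-vanishing of $\phi$ does give the sign-constancy of the real scalar on each boundary component, and Proposition \ref{regularity} gives the continuity of $\phi$ up to $\overline{A}$ needed to apply the argument principle.
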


\begin{proof}
Let $u$ be a solution of \eqref{semi-stiff 1} with $c \neq 0$, we have thanks to lemma \ref{Hopfcst}
$$r^2|\partial_ru|^2- |\partial_\theta u|^2=c.$$

\begin{itemize}
\item[*] If $c>0$ then we can see that $\partial_ru$ does not vanish in $A$. We can then set $T:=\frac{\partial_ru}{|\partial_ru|}: A \rightarrow \mathbb{S}^1$.
The map $T$ takes all its values in $\mathbb{S}^1$ then it has same degree on the two boundaries of $A$, i.e. on $\mathbb{S}^1$ and $C_\varrho$. This comes from the fact that $\partial_xT\wedge \partial_yT=0$ because $T$ is $\mathbb{S}^1$-valued and from the following formula which is true for all $T$ in $\I$ (c.f. proposition \ref{diffdegree})
\begin{equation}
\int_A \partial_xT \wedge \partial_yT = \deg(T,\mathbb{S}^1)-\deg(T,C_\varrho).
\end{equation}

But we claim that $\deg(T,\mathbb{S}^1)=\deg(u,\mathbb{S}^1)$ and $\deg(T,C_\varrho)=\deg(u,C_\varrho).$
\\
Indeed on $\partial A$ we have 
$$T \wedge u =0 $$
because $u \wedge \partial_r u =0$ on $\partial A$ thanks to \eqref{semi-stiff 1}. Then on $\mathbb{S}^1$ there exists a real function $\lambda_1$ such that $\partial_ru (e^{i\theta})=\lambda_1(\theta) u(e^{i\theta})$.

 We can write $\lambda_1=\frac{\partial_r u}{u}$ because $|u|=1$ on $\mathbb{S}^1$, thus $\lambda_1$ is continuous and does not vanish thus $\lambda_1$ has constant sign. Then 
$T=u$ on $\mathbb{S}^1$ or $T=-u$ on $\mathbb{S}^1$ and hence $\deg(T,\mathbb{S}^1)=\deg(u,\mathbb{S}^1)$.
On the other hand for the same reason we also have on $C_\varrho$ the existence of $\lambda_2$ such that $\partial_ru (\varrho e^{i\theta})=\lambda_2(\theta) u(\varrho e^{i\theta})$ and the same argument as before implies that $T=u$ on $C_\varrho$ or $T=-u$ on $C_\varrho$ thus $\deg(T,C_\varrho)=\deg(u,C_\varrho)$ and finally

$$\deg(u,\mathbb{S}^1)=\deg(u,C_\varrho).$$

\item[*] If $c<0$ then this time $\partial_\theta u$ does not vanish in $A$ and we can consider the map $R:=\frac{\partial_\theta u}{|\partial_\theta u|}$. We can conclude by the same argument except that this time there exists a function $\lambda$ defined on $\partial A$ such that $\partial_\theta u= i\lambda u$ on $\partial A$. We have $\lambda= \frac{\partial_\theta u}{u}$ on $\mathbb{S}^1$ then $\lambda$ is continuous and does not vanish on $\mathbb{S}^1$. This implies that $R=iu$ or $R=-iu$ on $\mathbb{S}^1$ and we can deduce that $\deg(R,\mathbb{S}^1)=\deg(u,\mathbb{S}^1)$. The same argument on $C_\varrho$ provides us with $\deg(R,C_\varrho)=\deg(u,C_\varrho)$ and then

$$\deg(u,\mathbb{S}^1)=\deg(u,C_\varrho).$$

\end{itemize}
\end{proof}

\begin{corollary}\label{nop0}
The following hold: 
\begin{itemize}
\item[1)] For all $p\in \mathbb{N}^*$ there is no solution of \eqref{semi-stiff 1} in $\I_{p,0}$.
\item[2)] Let $p>0>q$ two integers, there is no non-holomorphic solution of \eqref{semi-stiff 1} in $\I_{p,q}$ and hence no non-minimizing solution of \eqref{semi-stiff 1} in $\I_{p,q}$.
\end{itemize}
\end{corollary}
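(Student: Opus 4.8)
The plan is to obtain both statements as a direct packaging of the structural results already proved, with the entire argument reduced to a case distinction on the value of the Hopf constant $c$ from Lemma \ref{Hopfcst}. The four ingredients I would combine are: Lemma \ref{Hopfcst}, which guarantees that any solution $u$ of \eqref{semi-stiff 1} satisfies $\H_u(z)=c/z^2$ with $c\in\R$; Proposition \ref{nopq}, which says that $c\neq 0$ forces the two boundary degrees to coincide; part 2) of Proposition \ref{Hopf differential}, which says that $c=0$ (i.e. $\H_u\equiv 0$) forces $u$ to be conformal, hence holomorphic or anti-holomorphic; and Lemma \ref{obstruction}, which pins down the admissible degrees of a holomorphic map in $\I$. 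The anti-holomorphic subcase will always be reduced to the holomorphic one by passing to $\bar u$, using property 3) of Lemma \ref{propertiesofdegree}, namely $\deg(\bar g)=-\deg(g)$ on each boundary component.

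For point 1), I would take a putative solution $u\in\I_{p,0}$ with $p\in\mathbb{N}^*$ and split on the sign of $c$. If $c\neq 0$, then Proposition \ref{nopq} gives $p=q=0$, contradicting $p>0$. If $c=0$, then $u$ is conformal by Proposition \ref{Hopf differential}. When $u$ is holomorphic, Lemma \ref{obstruction} applied to $u\in\I_{p,0}$ demands the inner degree to be strictly negative, contradicting $q=0$. When $u$ is anti-holomorphic, I pass to the holomorphic map $\bar u\in\I_{-p,0}$ and apply Lemma \ref{obstruction} to it, which demands $-p>0$, i.e. $p<0$, again contradicting $p>0$. Every branch is impossible, so no solution exists in $\I_{p,0}$.

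For point 2), I run the same dichotomy on a solution $u\in\I_{p,q}$ with $p>0>q$. The case $c\neq 0$ is excluded immediately by Proposition \ref{nopq}, since it would force $p=q$ whereas here $p\neq q$. Hence $c=0$ and $u$ is conformal; the anti-holomorphic possibility is ruled out exactly as before, because $\bar u\in\I_{-p,-q}$ with $-p<0$ violates Lemma \ref{obstruction}. Therefore $u$ must be holomorphic, which proves that no non-holomorphic solution exists. To deduce the ``no non-minimizing solution'' clause, I invoke Lemma \ref{minholo}: for $p\geq 0\geq q$ a solution of \eqref{semi-stiff 1} minimizes $E$ in $\I_{p,q}$ if and only if it is holomorphic; since we have just shown that every solution is holomorphic, every solution is minimizing. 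I expect no genuine obstacle here, as the corollary is purely a logical assembly of earlier results; the only point requiring care is keeping the case split exhaustive and checking the degree bookkeeping under conjugation so that the anti-holomorphic branch really lands in a class forbidden by Lemma \ref{obstruction}.
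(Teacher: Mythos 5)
Your proposal is correct and follows essentially the same route as the paper: rule out $c\neq 0$ via Proposition \ref{nopq}, reduce $c=0$ to the conformal case via Proposition \ref{Hopf differential}, apply Lemma \ref{obstruction} (with the anti-holomorphic branch handled by conjugation, which is exactly what the paper calls its ``straightforward adaptation''), and invoke Lemma \ref{minholo} for the minimizing clause. Your treatment of point 2 is in fact slightly cleaner than the paper's, since you explicitly exclude the anti-holomorphic case rather than folding it into the appeal to Lemma \ref{minholo}.
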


\begin{proof}
Proposition \ref{nopq} implies that there is no solution in $\I_{p,0}$ nor in $\I_{p,q}$ for $p>0>q$ integers with $c \neq 0$ (where $c$ is the constant in the Hopf differential $\H_u(z)=\frac{c}{z^2}$). Hence 
\begin{itemize}
\item[1)] if there exists a solution of  \eqref{semi-stiff 1} in $\I_{p,0}$ for $p>0$ then its Hopf differential satisfies $c=0$ and $\H_u(z) =0$ thus it must be holomorphic or antiholomorphic. However lemma \ref{obstruction} (and its straightforward adaptation to the antiholomorphic case) implies that there is no holomorphic functions in $\I_{p,0}$.
\item[2)] Again if there exists a solution of \eqref{semi-stiff 1} in $\I_{p,q}$ with $p>0>q$ then thanks that what precedes it must be holomorphic or antiholomorphic. But we have seen in lemma \ref{minholo} that holomorphic solutions (or antiholomorphic solutions) minimize the Dirichlet energy in $\I_{p,q}$ for $p>0>q$.
\end{itemize}
\end{proof}

The corollary \ref{nop0} and the results obtained in the previous section imply  \textbf{ theorem \ref{theorem1}}.
\begin{corollary}\label{nopq2}

There is no solution of  \eqref{semi-stiff 1} in $\I_{p,q}$ with $p>q > 0$, in particular there is no minimizer of $E$ in $\I_{p,q}$ with $p>q>0$.
\end{corollary}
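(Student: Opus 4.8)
The plan is to argue by contradiction and to exhaust all possible values of the Hopf constant. Suppose $u$ were a solution of \eqref{semi-stiff 1} belonging to $\I_{p,q}$ with $p>q>0$. By Lemma \ref{Hopfcst} its Hopf differential has the form $\H_u(z)=c/z^2$ for a real constant $c$, so the whole argument splits according to whether $c$ vanishes or not.

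First I would dispose of the case $c\neq 0$. Here Proposition \ref{nopq} applies directly and forces $\deg(u,\partial\Omega)=\deg(u,\partial\omega)$, that is $p=q$. This contradicts the hypothesis $p>q$, so no solution with $c\neq 0$ can exist in $\I_{p,q}$. Consequently any putative solution must have $c=0$.

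It remains to handle $c=0$. Then $\H_u\equiv 0$, and part 2) of Proposition \ref{Hopf differential} tells us that $u$ is conformal, hence either holomorphic or antiholomorphic. If $u$ is holomorphic, Lemma \ref{obstruction} forces $q<0$, contradicting $q>0$. If $u$ is antiholomorphic, then $\bar u$ is holomorphic, and by property 3) of the degree (Lemma \ref{propertiesofdegree}) one has $\bar u\in\I_{-p,-q}$; applying Lemma \ref{obstruction} to $\bar u$ now requires $-p>0$, which contradicts $p>0$. Either way we reach a contradiction, so no solution of \eqref{semi-stiff 1} exists in $\I_{p,q}$ when $p>q>0$. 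Since a minimizer of $E$ in $\I_{p,q}$ would in particular be a solution of \eqref{semi-stiff 1} (the classes $\I_{p,q}$ being the open and closed connected components of $\I$), the non-existence of solutions immediately yields the non-existence of minimizers.

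I expect the main difficulty to be bookkeeping rather than analysis: the genuinely new input is already contained in Proposition \ref{nopq}, and the $c=0$ case only recombines the sign constraints of Lemma \ref{obstruction} with the behaviour of the degree under conjugation. The one point requiring care is that conformality permits both the holomorphic and the antiholomorphic branch, so both sign patterns, $p>0>q$ and $p<0<q$, must be excluded; each is incompatible with $p>q>0$, and that is what closes the argument.
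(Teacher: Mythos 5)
Your argument is correct and follows exactly the paper's own route: Proposition \ref{nopq} rules out $c\neq 0$ by forcing $p=q$, and Lemma \ref{obstruction} rules out the conformal case $c=0$ by forcing $p$ and $q$ to have opposite signs. The only difference is that you spell out the antiholomorphic branch via conjugation and the degree identity, which the paper leaves implicit; this is a welcome clarification but not a different proof.
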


\begin{proof}
This corollary is obtained by combining the preceding proposition \ref{nopq} and lemma \ref{obstruction}. Indeed we have seen that there is no solution of \eqref{semi-stiff 1} with $c\neq 0$ in $\I_{p,q}$ if $p>q>0$. But if there is a solution with $c=0$ then it must be conformal and because of lemma \ref{obstruction} we must have $p$ and $q$ of opposite signs.
\end{proof}

In order to complete the proof of theorem \ref{theorem3} it remains to show

\begin{proposition}\label{theorem32}
Let $p>q>0$ we have $m(p,q)= m(q,q) + \pi(p-q)$.
\end{proposition}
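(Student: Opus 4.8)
The plan is to establish the two inequalities separately; the upper bound is immediate from the comparison lemma, and the lower bound carries the real content. For the upper bound I would apply Lemma \ref{Comparison1} with source $(r,s)=(q,q)$ and target $(p,q)$, which gives $m(p,q)\le m(q,q)+\pi(|p-q|+|q-q|)=m(q,q)+\pi(p-q)$ since $p>q$. I would also record the strict consequence needed later: testing with the radial competitor $u_q(z)=\frac{1}{1+\varrho^q}(r^q+\varrho^q r^{-q})e^{iq\theta}\in\I_{q,q}$ gives $m(q,q)\le E(u_q)=2\pi q\frac{1-\varrho^q}{1+\varrho^q}<2\pi q$, whence the strict bound $m(p,q)<\pi(p+q)$.

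For the lower bound I would run the concentration analysis already assembled in the preliminaries. Take a minimizing sequence $\{u_n\}\subset\I_{p,q}$ for $m(p,q)$; up to extraction it converges weakly in $H^1$ to some $u\in\I_{r,s}$, and Lemma \ref{fall} forces $r\le p$ and $s\le q$ because $p,q>0$. By Lemma \ref{behavior1} the weak limit minimizes in its own class and the energy is accounted for exactly, so that, using $r\le p$ and $s\le q$,
\begin{equation}\label{eq:accounting}
m(p,q)=m(r,s)+\pi\big((p-r)+(q-s)\big),\qquad E(u)=m(r,s).
\end{equation}
In particular $u$ is a solution of \eqref{semi-stiff 1} in $\I_{r,s}$, so I can feed it into the classification obtained so far: by Lemma \ref{Hopfcst} its Hopf constant is real, and either $c\neq 0$, in which case $r=s$ by Proposition \ref{nopq}, or $c=0$ and $u$ is conformal, hence constant ($r=s=0$) or, by Lemma \ref{obstruction}, holomorphic with $r>0>s$ (respectively antiholomorphic with $r<0<s$).

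The decisive step, and the one I expect to be the main obstacle, is to discard every conformal weak limit: a priori the degree could split into any lower class, and I must show that such splitting is strictly too expensive compared with the diagonal competitor. In each conformal case Lemma \ref{minholo} (and its antiholomorphic counterpart) gives $m(r,s)=\pi(|r|+|s|)$, so substituting into \eqref{eq:accounting} yields $m(p,q)=\pi(p+q-2s)$ in the holomorphic case ($s\le-1$), $m(p,q)=\pi(p+q-2r)$ in the antiholomorphic case ($r\le-1$), and $m(p,q)=\pi(p+q)$ in the constant case. In all three instances $m(p,q)\ge\pi(p+q)$, contradicting the strict bound $m(p,q)<\pi(p+q)$ from the first paragraph. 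Hence the conformal alternative cannot occur.

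There remains the case $c\neq 0$, that is $r=s=:k$, where $k\le q$ because $s\le q$. Now \eqref{eq:accounting} reads $m(p,q)=m(k,k)+\pi(p+q-2k)=m(k,k)+2\pi(q-k)+\pi(p-q)$, and since $k\le q$ a single application of Lemma \ref{Comparison1} between the diagonal classes $(k,k)$ and $(q,q)$ gives $m(q,q)\le m(k,k)+2\pi(q-k)$. Combining these yields $m(p,q)\ge m(q,q)+\pi(p-q)$, and together with the upper bound of the first paragraph this proves the claimed equality $m(p,q)=m(q,q)+\pi(p-q)$.
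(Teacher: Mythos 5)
Your proof is correct and follows essentially the same route as the paper: weak-limit analysis of a minimizing sequence via Lemmas \ref{fall} and \ref{behavior1}, elimination of conformal limits using the strict bound $m(p,q)<\pi(p+q)$, reduction to a diagonal class via Proposition \ref{nopq}, and the comparison lemma between diagonal classes. The only (cosmetic) differences are that you obtain $m(p,q)<\pi(p+q)$ from the radial competitor $u_q$ rather than from $m(1,1)<2\pi$, and you close the argument with a direct inequality chain instead of the paper's contradiction argument pinning down $m(q,q)=m(d,d)+2\pi(q-d)$ exactly.
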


\begin{proof}
Thanks to the previous corollary \ref{nopq2} we know that $m(p,q)$ is not attained. Let $(u_n)$ be a minimizing sequence for $m(p,q)$ then, up to a subsequence $u_n$ converges weakly in $H^1$ to some $u \in \I_{r,s}$. Thanks to lemma \ref{fall} we have $r \leq p$ and $s \leq q$, but applying lemma \ref{behavior1} we also have that $u$ minimizes the Dirichlet energy in its class, thus $u$ is a solution of \eqref{semi-stiff 1} and 
$$E(u)=m(r,s)=m(p,q) -\pi[(p-r) +(q-s)].$$

We are going to prove that  $u$ belongs to some $\I_{d,d}$ for some $ d \geq 1$. Indeed $u$ can not belong to some $\I_{p',q'}$ for $p'\neq q'$  and $p',q'>0$ because of corollary \ref{nopq2}. Furthermore because $u$ is a solution of \eqref{semi-stiff 1}, we have several possibilities : either $u \in \I_{d,d}$ for some $d \in \mathbb{Z}$ or  $u \in \I_{r,s}$ with $r>0>s$ and $u$ is holomorphic or $u \in \I_{r,s}$ with $s>0>r$ and $u$ is antiholomorphic. We claim that the two last cases do not occur. Indeed if $u \in \I_{r,s}$ with $r>0>s$ and $u$ is holomorphic for example. We then have 

\begin{eqnarray}
E(u) &=& \pi(r+|s|) =  \lim_{n\rightarrow +\infty } E(u_n) -\pi(p-r +q+|s|) \nonumber \\
&=& m(p,q) -\pi(p+q) +\pi(r-|s|) \label{ineq}
\end{eqnarray}

The last equalities are obtained by using lemma \ref{behavior1} and \ref{minholo}. However because we always have $m(1,1)=2\pi\frac{1-\varrho}{1+\varrho} < 2\pi$ (c.f. theorem \ref{BerlyandMironescu}) we obtain that $m(p,q) < 2\pi +\pi( p-1 +q-1)$ for $p>q>0$ by lemma \ref{Comparison1} and then 
$$m(p,q) < \pi(p+q)$$
which implies, using \eqref{ineq} that $\pi(r+|s|) < \pi(r-|s|)$ which is a contradiction. The reasoning for $u$ antiholomorphic is the same. \\
Thus we obtain that $u \in \I_{d,d}$ for some $d \in \mathbb{Z}$ and 

\begin{equation}\label{eqfall}
E(u)=m(d,d) = \lim_{n \rightarrow +\infty} E(u_n) -\pi(p-d +q-d) =m(p,q) -\pi(p-d+q-d).
\end{equation}

Hence we deduce that $d \geq 1$. Indeed because  $m(p,q) < \pi(p+q)$ if $d \leq 0$ then $E(u) <0$ which is a contradiction. Thus thanks to what precedes we have that 
\begin{equation}\label{fall2}
m(p,q)=m(d,d) +\pi(p-d+q-d). 
\end{equation}

\noindent for some $1 \leq d \leq q$. If $d=q$ the proposition is proved, but it can occur that $d<q$, in this case we claim that $$m(q,q)=m(d,d)+ \pi(q-d).$$

Indeed because of lemma \ref{Comparison1} we have $m(q,q) \leq m(d,d)+ 2\pi(q-d)$. But if 
$m(q,q) < m(d,d)+ 2\pi(q-d)$ applying lemma \ref{Comparison1} again we obtain $m(p,q) \leq 
m(q,q)+\pi(p-q) < m(d,d) +2\pi(q-d) +\pi(p-q) < m(d,d) +\pi(p-d+q-d)$ which is a contradiction  
with \eqref{fall2}. This proves that $m(p,q)=m(q,q)+\pi(p-q)$.
\end{proof}

Corollary \ref{nopq2} and proposition \ref{theorem32} proves \textbf{theorem \ref{theorem3}}. 
In order to conclude this subsection we prove that all solutions $u$ of \eqref{semi-stiff 1} which satisfy that $(\text{Re}(u_{|\partial A}), \text{Im}(u_{|\partial A}))$ are Steklov eigenfunctions are radially symmetric. 

\begin{proposition}\label{linkwithSteklov}
Let $u$ be a solution of \eqref{semi-stiff 1} such that there exists a constant $\sigma$ such that $\partial_\nu u =\sigma u$ on $\partial A$. Then $u$ is radial i.e. $u=\alpha u_p:=\alpha  \frac{1}{1+\varrho^p}(r^p+\frac{\varrho^p}{r^p})e^{ip\theta}$ or $u=\alpha \tilde{u}_p:=\alpha \frac{1}{1-\varrho^p}(r^p-\frac{\varrho^p}{r^p})e^{ip\theta}$ for some constant of modulus one $\alpha$ and some $p\in \mathbb{Z}^*$.
\end{proposition}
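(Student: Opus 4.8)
The plan is to use that $u$ is harmonic on the round annulus $A_\varrho$ and expand it in an angular Fourier series $u(re^{i\theta})=\sum_{n\in\mathbb{Z}}c_n(r)e^{in\theta}$. Harmonicity forces each radial coefficient to solve the Euler equation $c_n''+\frac1r c_n'-\frac{n^2}{r^2}c_n=0$, so $c_n(r)=a_n r^{|n|}+b_n r^{-|n|}$ for $n\neq0$ and $c_0(r)=a_0+b_0\ln r$. The decisive feature of the hypothesis is that $\sigma$ is \emph{constant} on each boundary circle, because multiplication by a constant is diagonal in the Fourier basis. Using $\partial_\nu u=\partial_r u$ on $\mathbb{S}^1$ and $\partial_\nu u=-\partial_r u$ on $C_\varrho$, the boundary conditions therefore decouple into two scalar relations per mode, namely $|n|(a_n-b_n)=\sigma(a_n+b_n)$ on $\mathbb{S}^1$ and $-|n|(a_n\varrho^{|n|-1}-b_n\varrho^{-|n|-1})=\sigma(a_n\varrho^{|n|}+b_n\varrho^{-|n|})$ on $C_\varrho$, with $\sigma$ the constant value on the relevant circle.

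Next I would show that only finitely many modes can occur. If $c_n\not\equiv0$ then the two linear relations above admit a nontrivial solution $(a_n,b_n)$, so their $2\times2$ determinant vanishes; written in $k=|n|$ this determinant is a quadratic in $k$ minus $\varrho^{2k}$ times a quadratic, hence tends to $+\infty$ and vanishes for only finitely many integers $k$. Consequently the traces $u|_{\mathbb{S}^1}$ and $u|_{C_\varrho}$ are trigonometric (Laurent) polynomials. I then bring in the constraint $|u|=1$: a unit-modulus Laurent polynomial $P$ satisfies $P(z)P^*(z)=1$ on $|z|=1$ (with $P^*(z)=\sum_n\bar\gamma_n z^{-n}$), an identity that extends to all of $\mathbb{C}\setminus\{0\}$, so $P$ has no zeros there and must be a single unimodular monomial. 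Hence $u|_{\mathbb{S}^1}=\gamma e^{im\theta}$ and $u|_{C_\varrho}=\gamma' e^{im'\theta}$.

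The key reduction is then $m=m'$, so that $u$ is one mode. For $n\neq m,m'$ the coefficient vanishes at $r=1$ and $r=\varrho$, hence $c_n\equiv0$. If $m\neq m'$, mode $m$ would have $c_m(\varrho)=0$; but the Steklov relation on $C_\varrho$ then gives $c_m'(\varrho)=0$ as well, and a nonzero $a_m r^{|m|}+b_m r^{-|m|}$ cannot vanish together with its derivative, a contradiction. Thus $m=m'$ and $u=c_m(r)e^{im\theta}$ with $|c_m(1)|=|c_m(\varrho)|=1$. (The degenerate mode $m=0$ yields only constants or the radial degree-zero solution $a_0(1+\sigma\ln r)$, which I would treat separately; the statement concerns $p\in\mathbb{Z}^*$.)

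Finally I would pin down the coefficients. Comparing $|c_m(1)|=|c_m(\varrho)|=1$ yields $|b_m/a_m|=\varrho^{|m|}$, while $|m|(a_m-b_m)=\sigma(a_m+b_m)$ with $\sigma$ real forces $(a_m-b_m)/(a_m+b_m)\in\mathbb{R}$ and hence $b_m/a_m\in\mathbb{R}$; together these give $b_m/a_m=\pm\varrho^{|m|}$. The plus sign, after normalizing, produces $c_m(r)=\alpha\frac{1}{1+\varrho^{|m|}}(r^{|m|}+\varrho^{|m|}r^{-|m|})$, i.e. $u=\alpha u_p$, and the minus sign produces $c_m(r)=\alpha\frac{1}{1-\varrho^{|m|}}(r^{|m|}-\varrho^{|m|}r^{-|m|})$, i.e. $u=\alpha\tilde{u}_p$, with $p=m$. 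I expect the main obstacle to be precisely the single-mode reduction: proving that the Steklov relations admit only finitely many active modes, upgrading $|u|=1$ to ``single monomial'' on each circle, and ruling out $m\neq m'$; once $u$ is a single mode the identification is a short computation. I note that the semi-stiff condition $u\wedge\partial_\nu u=0$ could substitute for the Steklov relation in this last step, since for a single mode it likewise forces $b_m/a_m$ to be real.
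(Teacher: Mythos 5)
Your argument is correct, but it follows a genuinely different route from the paper's. The paper does not Fourier-decompose at all: it invokes Lemma \ref{Hopfcst} (the Hopf differential of a solution satisfies $r^2|\partial_r u|^2-|\partial_\theta u|^2=c$ with $c$ a real constant), combines this with $|\partial_\nu u|=|\sigma|$ on $\partial A$ to conclude that $|\partial_\theta u|=|\partial_\theta\varphi|$ is constant on each boundary circle, hence that the phase is affine in $\theta$ and the boundary traces are $\alpha e^{ip\theta}$ and $\beta e^{iq\theta}$; it then computes the harmonic extension of such data explicitly (Appendix B) and checks that $u\wedge\partial_\nu u=0$ forces $p=q$ and $\beta/\alpha=\pm1$, yielding $u_p$ and $\tilde u_p$. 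Your proof replaces the Hopf-differential input by three purely Fourier-theoretic observations: constancy of $\sigma$ makes the Steklov condition diagonal in the angular modes, the resulting $2\times2$ determinant $(k-\sigma)(k-\sigma\varrho)-\varrho^{2k}(k+\sigma)(k+\sigma\varrho)$ vanishes for only finitely many $k=|n|$, and a unimodular trigonometric polynomial on a circle must be a single monomial; the mode-matching step ($c_m(\varrho)=c_m'(\varrho)=0$ forces $c_m\equiv0$) then reduces to one mode. What your route buys is self-containedness — it uses only harmonicity, $|u|=1$ on $\partial A$, and the Steklov relation, never the constancy of $z^2\H_u$ — and it visibly isolates where constancy of $\sigma$ enters; what the paper's route buys is brevity, since Lemma \ref{Hopfcst} is already available and does the mode-killing in one stroke. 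One shared loose end, which you at least flag and the paper does not: constants of modulus one satisfy all the hypotheses with $\sigma=0$ and are not of the form $\alpha u_p$ or $\alpha\tilde u_p$ with $p\in\mathbb{Z}^*$, so the statement implicitly assumes $u$ non-constant; your $m=0$ analysis disposes of this mode correctly. Also note that your final normalization claim is cleanest if stated in the order: $|c_m(1)|=|c_m(\varrho)|=1$ gives $|b_m/a_m|=\varrho^{|m|}$ (the cross terms $2\,\mathrm{Re}(b_m\bar a_m)$ cancel, so this holds without assuming reality), and the Steklov relation at $r=1$ then gives $b_m/a_m\in\R$, whence $b_m/a_m=\pm\varrho^{|m|}$.
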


\begin{proof}
Let $u$ be a solution of \eqref{semi-stiff 1}, assume furthermore that  $(\text{Re}(u_{|\partial A}), \text{Im}(u_{|\partial A}))$ are Steklov eigenfunctions. We then have 
$$\partial_\nu u =\sigma u, \ \text{on} \ \partial A$$
for some $\sigma \in \R$.
Using the fact that $|u|=1$ on $\partial A$ we find that $|\partial_\nu u| =|\sigma|$ is constant on $\partial A$. We know use \eqref{constHopf} to obtain that 
\begin{eqnarray}
|\partial_\theta u|^2=-c+\sigma^2, \ \text{on} \ \mathbb{S}^1 \nonumber \\
|\partial_\theta u|^2=-c+\varrho^2 \sigma^2 \text{on} \ C_\varrho. \nonumber
\end{eqnarray}

Near the boundaries of $A$ we can write $u=|u|e^{i\varphi}$. Since $|u|=1$ on $\partial A$ we find that $|\partial_\theta u| =|\partial_\theta \varphi|$. Hence we deduce that $|\partial_\theta \varphi |$ is constant on each boundary of $A$. Thus we obtain that $\varphi(\theta) = a\theta +b$ on $\mathbb{S}^1$ and  $\varphi(\theta) = a'\theta +b'$ on $C_\varrho$ with $(a',b') \in \R$. Because $\deg(u,\mathbb{S}^1)=\frac{1}{2\pi}\int_0^{2\pi} \partial_\theta \varphi d\theta$ and a similar relation is true on $C_ \varrho$ we find that $a,a'$ are integers. Hence $u$ is the harmonic extension of $g=\alpha e^{ip\theta}$ on $\mathbb{S}^1$ ($\alpha\in \mathbb{S}^1$) and $g=\beta e^{iq\theta}$ on $C_\varrho$ ($\beta \in \mathbb{S}^1$) for some $(p,q) \in \mathbb{Z}^2$. We can compute explicitly these harmonic extensions (for example using Fourier coefficients) and a simple but tedious computation (see appendix B) shows that $u$ is a solution if and only if $u=\alpha u_p$ or $u=\alpha \tilde{u}_p$ for some $p\in \mathbb{Z}^2$ and some $\alpha \in \mathbb{S}^1$.
\end{proof}
\subsection{The case $c<0$ : geometry of the catenoid}

From now on, because of the previous corollary \ref{nopq2} we are looking for critical points 
of $E$ in $\I_{p,p}$ for $p>0$ (the case $p<0$ being obtained by complex conjugation). In this 
subsection we are interested in solutions of \eqref{semi-stiff 1} with $c<0$ (where $c$ is the 
constant in the Hopf differential). This case is of particular importance because if $u$ is a 
minimizer of $E$ in $\I_{p,p}$ then its Hopf differential satisfies $c<0$ (c.f. proposition 
\ref{1symetry} below). We will see that in this case a solution $u$ can be lifted to a minimal 
surface bounded by two $p$-coverings of circles in parallel planes. In degree one case the theorem of Shiffman 
gives that this surface is a portion of catenoid and then all solutions of \eqref{semi-stiff 1} 
in $\I_{1,1}$ with $c<0$ are radial. However if the degree  $p$  of the solution is greater 
than $2$ then the theorem of Shiffman does not apply and we will construct non radial solutions 
of \eqref{semi-stiff 1} in the next section.

\begin{proposition}\label{1symetry}
Let $u$ be a minimizer of $E$ in $\I_{p,p}$ then $\frac{\partial u}{\partial r}=0$ on $C_{\sqrt{\varrho}}$ and $c<0$.
\end{proposition}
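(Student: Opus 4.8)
The plan is to exploit the reflection symmetry of the annulus across the middle circle $C_{\sqrt\varrho}$, combined crucially with the hypothesis $p=q$, to build symmetric energy competitors. I would first pass to logarithmic coordinates $t=\ln r$, so that $A$ becomes the flat cylinder $(-L,0)\times\mathbb{S}^1$ with $L=\ln(1/\varrho)$. By conformal invariance the energy reads $E(u)=\tfrac12\int_{-L}^0\int_0^{2\pi}(|\partial_t u|^2+|\partial_\theta u|^2)\,d\theta\,dt$, the relations \eqref{constHopf}--\eqref{orthoradial} become $|\partial_t u|^2-|\partial_\theta u|^2=c$ and $\langle\partial_t u,\partial_\theta u\rangle=0$, and $C_{\sqrt\varrho}$ is exactly the midline $\{t=-L/2\}$.

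The key construction uses the reflection $R(t,\theta)=(-L-t,\theta)$, an isometry of the cylinder fixing the midline; set $\tilde u=u\circ R$. Since $R$ preserves the energy density and merely swaps the two boundary circles, on each of which $u$ has degree $p$, the map $\tilde u$ again lies in $\I_{p,p}$ with $E(\tilde u)=E(u)=m(p,q)$ (this is precisely where $p=q$ enters). Splitting the cylinder into an upper half $\{t>-L/2\}$ and a lower half $\{t<-L/2\}$, I glue $u$ on top to $\tilde u$ on the bottom into a map $U$: it is continuous across the midline because $R$ fixes it, satisfies $|U|=1$ on $\partial A$ with degree $p$ on both components, so $U\in\I_{p,p}$. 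As $R$ carries the lower half isometrically onto the upper half, $E(U)=2\,E(u\mid_{\{t>-L/2\}})$. Comparing with $E(U)\ge m(p,q)=E(u\mid_{\{t>-L/2\}})+E(u\mid_{\{t<-L/2\}})$ yields $E(u\mid_{\{t>-L/2\}})\ge E(u\mid_{\{t<-L/2\}})$, and the mirror competitor (gluing $\tilde u$ on top of $u$) gives the reverse inequality; hence both halves carry equal energy, $E(U)=m(p,q)$, and $U$ is itself a minimizer.

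Now $U$, being a minimizer in $\I_{p,p}$, is a solution of \eqref{semi-stiff 1} and therefore smooth up to the boundary by Proposition \ref{regularity}; in particular $U\in\mathcal{C}^1$ across the interior midline. Its one-sided $t$-derivatives there must then coincide: from the upper side this is $\partial_t u(-L/2,\theta)$, while from the lower side it is $\partial_t\tilde u(-L/2,\theta)=-\partial_t u(-L/2,\theta)$. Equating forces $\partial_t u(-L/2,\cdot)\equiv 0$, i.e.\ $\partial_r u=0$ on $C_{\sqrt\varrho}$, which is the first claim. Evaluating the Hopf relation on the midline then gives $c=|\partial_t u|^2-|\partial_\theta u|^2=-|\partial_\theta u|^2\le 0$. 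Finally $c\ne 0$: if $c=0$ then $\H_u\equiv 0$, so $u$ is conformal by Proposition \ref{Hopf differential}, contradicting Lemma \ref{obstruction} (and its antiholomorphic analogue), which forbids conformal maps in $\I_{p,p}$ for $p\ge 1$. Hence $c<0$.

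The main obstacle is verifying that the glued competitor $U$ is admissible and genuinely minimizing—so that the regularity theory applies and the $C^1$-matching across $C_{\sqrt\varrho}$ is legitimate—rather than a merely continuous, nonsmooth gluing. The delicate points are the bookkeeping of the two boundary degrees (where $p=q$ is indispensable) and the identification $E(U)=m(p,q)$ that upgrades $U$ from a mere competitor to a smooth solution of \eqref{semi-stiff 1}. A pleasant feature of this route is that it requires no uniqueness statement for minimizers, which is not available here.
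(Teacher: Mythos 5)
Your proof is correct and follows essentially the same route as the paper: reflect across the middle circle (your map $R(t,\theta)=(-L-t,\theta)$ is exactly the paper's inversion $z\mapsto \varrho/\bar z$), glue to form the symmetric competitors, conclude that both halves carry equal energy and that the glued map is itself a minimizer, and then read off $c=-|\partial_\theta u|^2<0$ on $C_{\sqrt\varrho}$ with $c=0$ excluded by Lemma \ref{obstruction}. The only (minor, and perfectly valid) difference is that you extract the Neumann condition from the $\mathcal{C}^1$-matching of the glued minimizer across the midline, whereas the paper phrases it as the natural boundary condition of the reduced free-boundary problem in $\F_p$ from Lemma \ref{reduction1}.
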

\begin{proof}
If $\frac{\partial u}{\partial r}=0$ on $C_{\sqrt{\varrho}}$ we have on $C_{\sqrt{\varrho}}$: $c=0-|\partial_\theta u |^2$
where $c$ is the constant in the Hopf differential \eqref{Hopfcst}, hence $c \leq 0$ in $A$. However if $c=0$ then $u$ is holomorphic or antiholomorphic and then $u \in \I_{r,s}$ with $r>0>s $ or $r<0<s$  thanks to lemma \ref{obstruction} and this is a contradiction with $u \in \I_{p,p}$. The fact that $\frac{\partial u}{\partial r}=0$ on $C_{\sqrt{\varrho}}$ is a consequence of the following lemma \ref{reduction1}.

We set 
$$\F_p=\{u\in H^1(A_{\sqrt{\varrho}},\C); |u|=1 \ \text{a.e.  on} \ \mathbb{S}^1, \ \deg(u,\mathbb{S}^1)=p\}.$$
$$E_r(u):=\int_{A_r} |\nabla u|^2$$

\begin{lemma}\label{reduction1}
Assume that there exists  $u \in \I_{p,p}$ such that $E_{\varrho}(u)=\min \{E_\varrho(v) ;v \in \I_{p,p}\}$ then 
$E_{\sqrt{\varrho}}(u)=\min \{E_{\sqrt{\varrho}}(v) ; v \in \F_p \}$.\\ Conversely if there exists $u_0$ such that 
$E_{\sqrt{\varrho}}(u_0)=\min \{E_{\sqrt{\varrho}}(v) ; v \in \F_p \}$ then there exists $u$ such that $u$ is a minimizer of $E$ in $\I_{p,p}$.
\end{lemma}

\begin{remark} Thanks to this lemma, in order to solve the problem \eqref{semi-stiff 1} we can consider the simpler problem of minimizing $E$ in $\F_p$. This problem is simpler because we prescribe constraints  only on one boundary. The condition obtained on the boundary $C_{\sqrt{\varrho}}$ is a Neumann homogeneous condition: $\frac{\partial u}{\partial \nu}=0$ on $C_{\sqrt{\varrho}}.$ This lemma comes from an idea of I.Shafrir and can be found in an other setting in the article of E.Sandier \cite{sandier1993symmetry}.
\end{remark}

Let $u \in \I_{p,p}$ we set:

$$u_1(z)= \begin{cases}   u(z) & \text{if} \ \sqrt{\varrho}<|z|<1, \\
u(\frac{\varrho}{\overline{z}}) & \text{if} \ \varrho<|z|<\sqrt{\varrho}.
\end{cases}$$

$$u_2(z)= \begin{cases} u(\frac{\varrho}{\overline{z}}) & \text{if} \ \sqrt{\varrho}<|z|<1, \\
u(z) & \text{if} \ \varrho<|z|<\sqrt{\varrho}.
\end{cases}$$

We have that $u_1,u_2 \in {H}^1(A,\C)$ ($A=A_{\varrho}=\{z\in \C; \varrho < |z| <1 \}$). For example for $u_1$ we can see that $u_1 \in L^2(A, 
\C)$ thanks to the change of variable formula and the fact that $u\in L^2(A,\C)$. Furthermore 
because $z=\frac{\varrho}{\overline{z}}$ on $C_{\sqrt{\varrho}}$ we have that $u(z)=u(\frac{\varrho}
{\overline{z}})$ on $C_{\sqrt{\varrho}} $ and then $\nabla u_1 =\nabla u \mathbb1 _{A_{\sqrt{\varrho}}} + 
\nabla [u(\frac{\varrho}{\overline{z}})] \mathbb{1}_{A\setminus{A_{\sqrt{\varrho}}}}$. Then because 
the map $z \mapsto \frac{\varrho}{\overline{z}}$ is conformal we obtain
$$E_\varrho(u_1)=2\int_{A_{\sqrt{\varrho}}}|\nabla u|^2 $$
$$E_\varrho(u_2)=2\int_{A_\varrho \setminus A_{\sqrt{\varrho}}}|\nabla u|^2 $$

Hence $2E_\varrho(u)=E_\varrho(u_1)+E_\varrho(u_2)$. Observe that if $u\in \I_{p,p}$ then $u_1\in \I_{p,p}$ and $u_2\in \I_{p,p}$. Let us assume that $u$ is a minimizer of $E_\varrho$ in $\I_{p,p}$ then $E_\varrho(u)=E_\varrho(u_1)=E_\varrho(u_2)$, indeed if it were not the case either $u_1$ or $u_2$ would satisfy $E_\varrho(u_i)<E_\varrho(u)$, thus $u_1$ and $u_2$ are also minimizers. Now let $v\in \F_p$ we have 
$$E_{\sqrt{\varrho}}(v) \geq E_{{\sqrt{\varrho}}}(u)$$

\noindent Indeed define

$$ 
v_1(z)= \begin{cases} v(z) & \text{if} \ \sqrt{\varrho}<|z|<1, \\
v(\frac{\varrho}{\overline{z}}) &  \text{if} \ \varrho<|z|<\sqrt{\varrho}.
\end{cases}
$$

\noindent We have $v_1\in \I_{p,p}$ and because $u_1$ is a minimizer of $E_\varrho$ we also have
\begin{eqnarray}
E_\varrho(v_1) & \geq & E_\varrho(u_1) \nonumber\\
& \geq & 2E_{\sqrt{\varrho}}(u) \nonumber
\end{eqnarray}
\noindent and
\begin{eqnarray}
2E_{\sqrt{\varrho}}(v) & \geq & 2E_{\sqrt{\varrho}}(u) \nonumber \\
E_{\sqrt{\varrho}}(v) & \geq & E_{\sqrt{\varrho}}(u) \nonumber
\end{eqnarray}

\noindent then we can deduce that the restriction of $u$ to $A_{\sqrt{\varrho}}$ minimizes $E_{\sqrt{\varrho}}$ in $\F_p$.

Conversely if $v\in \F_p$ minimizes $E_{\sqrt{\varrho}}$ it is not difficult to see, thanks to the previous computation that 
$$ 
v_1(z)= \begin{cases} v(z) & \text{if} \ \sqrt{\varrho}<|z|<1, \\
v(\frac{\varrho}{\overline{z}}) &  \text{if} \ \varrho<|z|<\sqrt{\varrho}.
\end{cases}$$

\noindent minimizes $E_\varrho$ in $\I_{p,p}$.
This proves the lemma and then the proposition \ref{1symetry}.
\end{proof}

\begin{proposition}\label{buildingminsurf1}
 Let $u$ be a solution of \eqref{semi-stiff 1} with $\H_u (z)=\frac{-|c|}{z^2}$. If we set 
$$ h(x,y)=\sqrt{|c|}\ln(x²+y^2), \ \ \ for \ (x,y)\in A $$
then 
\begin{equation}
\begin{array}{rcll}
X:A & \rightarrow & \R³ \\
(x,y) & \mapsto & (u(x,y), h(x,y))
\end{array}
\end{equation}
is a conformal immersion which parametrizes a minimal surface.
\end{proposition}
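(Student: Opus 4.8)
The plan is to verify directly the three ingredients packaged into the statement: that $X=(u,h)$ is conformal, that it is an immersion, and that the resulting surface is minimal. For minimality I would appeal to Proposition \ref{minsurfaceanddharmfunction}: once $X$ is known to be a conformal parametrization, the surface is minimal if and only if $\Delta X=0$. Here $\Delta u=0$ because $u$ solves \eqref{semi-stiff 1}, and $\Delta h=0$ because $h(x,y)=\sqrt{|c|}\ln(x^2+y^2)=2\sqrt{|c|}\,\mathrm{Re}(\log z)$ is a multiple of the real part of a holomorphic function on $A$, hence harmonic (its only singular point, the origin, does not lie in $A$). So minimality becomes immediate once conformality and the immersion property are in hand.

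For conformality I would use the reformulation \eqref{conformality2}: writing $X=(u,h)$, the conformality relations \eqref{conformalityrelation} are equivalent to the single complex identity $\H_u+(\partial_z h)^2=0$. Since $h=\sqrt{|c|}\ln(z\bar z)$ one computes $\partial_z h=\sqrt{|c|}/z$, hence $(\partial_z h)^2=|c|/z^2$, while by hypothesis $\H_u(z)=-|c|/z^2$; the two terms cancel and conformality holds on all of $A$. This $h$ is nothing but the explicit evaluation of the abstract line integral \eqref{Hopfdef} of Proposition \ref{lifting} in the present case: a branch of $-2i\sqrt{\H_u}$ equals $2\sqrt{|c|}/z$, whose primitive is $2\sqrt{|c|}\log z$, and taking the real part returns exactly $\sqrt{|c|}\ln(x^2+y^2)$.

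The immersion property is the one point deserving attention, since Proposition \ref{lifting} only asserts the construction locally and away from zeros of $\H_u$. For a conformal map the regularity condition $|\partial_x X\wedge\partial_y X|\neq0$ is equivalent to non-vanishing of the conformal factor, i.e. to $|\partial_z u|^2+|\partial_z h|^2\neq0$. Here $|\partial_z h|^2=|c|/|z|^2$, which is strictly positive everywhere on $A$ because $0\notin\overline{A}$; thus the sum is bounded below by $|c|/|z|^2>0$ and $X$ is an immersion at every point of $A$, with no exceptional points. I note moreover that $\H_u=-|c|/z^2$ never vanishes on $A$, so there are no branch-point issues to worry about.

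Finally I would stress that the only genuinely global ingredient is the single-valuedness of the lift $h$, and this is precisely where the sign $c<0$ enters, as recorded in the remark following Proposition \ref{lifting}: the expression $\mathrm{Re}(2\sqrt{|c|}\log z)=\sqrt{|c|}\ln(x^2+y^2)$ is single-valued on the annulus, whereas for $c>0$ the analogous primitive would involve $\arg z$ and fail to be well defined. Hence the main (and rather mild) obstacle is not any computation but confirming that the locally defined object of Proposition \ref{lifting} patches into a globally defined conformal minimal immersion on the whole annulus, a fact that the explicit closed form for $h$ makes transparent.
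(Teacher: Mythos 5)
Your proof is correct and follows essentially the same route as the paper, whose entire argument is the single sentence ``This is an application of proposition \ref{lifting}.'' You simply spell out the verification that the cited proposition packages ($\H_u+(\partial_z h)^2=0$, harmonicity of $h$, global single-valuedness for $c<0$), and your explicit check of the immersion property via $|\partial_z h|^2=|c|/|z|^2>0$ on $A$ is a welcome detail that the paper leaves implicit.
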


\begin{proof}
This is an application of proposition \ref{lifting}. 
\end{proof}

\begin{proposition}\label{planesymmetry}
Let $p>0$ be an integer. Let $u$ be a minimizer of the Dirichlet energy $E$ in $\I_{p,p}$. Then the minimal 
surface obtained by the process of the previous proposition \ref{buildingminsurf1} is symmetric with respect to 
the plane $\{ z =\sqrt{|c|}\ln(\varrho) \}$ in $\R^3$. 
\end{proposition}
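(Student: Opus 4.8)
The plan is to exhibit the reflection across the plane $\{x_3=\sqrt{|c|}\ln\varrho\}$ as the image of an explicit isometry of the parameter domain, and then to check that $X$ intertwines the two. First I record that, by Proposition \ref{buildingminsurf1}, the third coordinate of $X$ is
\[
h(z)=\sqrt{|c|}\ln(x^2+y^2)=2\sqrt{|c|}\ln|z|,
\]
so that $h\equiv h_0:=\sqrt{|c|}\ln\varrho$ exactly on the middle circle $C_{\sqrt{\varrho}}$ (where $|z|=\sqrt{\varrho}$). The candidate symmetry of the domain is the anti-conformal involution $\phi(z)=\varrho/\bar{z}$, which maps $A=A_\varrho$ bijectively onto itself (since $|\phi(z)|=\varrho/|z|$), fixes $C_{\sqrt{\varrho}}$ pointwise, and in polar coordinates reads $(r,\theta)\mapsto(\varrho/r,\theta)$. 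I would then show that $X\circ\phi$ is precisely the reflection of $X$ across $\{x_3=h_0\}$.

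The vertical part is immediate: $h(\phi(z))=2\sqrt{|c|}\ln(\varrho/|z|)=2h_0-h(z)$, so the third coordinate already reflects correctly. Hence everything reduces to the single claim that $u\circ\phi=u$ on $A$; granting this, $X(\phi(z))=(u(z),2h_0-h(z))$ is the mirror image of $X(z)=(u(z),h(z))$, and since $\phi$ is a bijection of $A$ the image surface $X(A)$ is invariant under the reflection, which is the asserted symmetry.

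Proving $u\circ\phi=u$ is the crux, and here I would invoke Proposition \ref{1symetry}, which gives $\partial_r u=0$ on $C_{\sqrt{\varrho}}$ for the minimizer $u$. Set $\tilde{u}:=u\circ\phi$; it is harmonic on $A$ because $\phi$ is anti-conformal. Since $\phi$ fixes $C_{\sqrt{\varrho}}$ pointwise, $\tilde{u}=u$ on $C_{\sqrt{\varrho}}$; and differentiating $(r,\theta)\mapsto(\varrho/r,\theta)$ gives $\tfrac{d}{dr}(\varrho/r)=-1$ at $r=\sqrt{\varrho}$, whence $\partial_r\tilde{u}=-\partial_r u=0$ on $C_{\sqrt{\varrho}}$. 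Therefore $w:=u-\tilde{u}$ is harmonic on $A$ with vanishing Cauchy data on the interior circle $C_{\sqrt{\varrho}}$: both $w=0$ and $\partial_r w=0$ there, and since $w$ vanishes identically along the circle its tangential derivative vanishes too, so $\nabla w=0$ on $C_{\sqrt{\varrho}}$. For each real component $w_j$, the function $\partial_z w_j$ is holomorphic (as $w_j$ is harmonic) and vanishes on the whole circle $C_{\sqrt{\varrho}}$; by the identity theorem it vanishes identically, so $w_j$ is constant and, being zero on $C_{\sqrt{\varrho}}$, is identically zero. Thus $w\equiv 0$ and $u\circ\phi=u$.

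The only delicate point is this last step, the unique continuation from the Cauchy data on $C_{\sqrt{\varrho}}$, and it is exactly here that the minimizing property enters, through the homogeneous Neumann condition $\partial_r u=0$ on $C_{\sqrt{\varrho}}$ supplied by Proposition \ref{1symetry}. Everything else is a direct computation with $h$ and with the involution $\phi$, together with the observation that $\phi\circ\phi=\mathrm{id}$ so that the reflection genuinely permutes the points of $X(A)$.
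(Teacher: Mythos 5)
Your proof is correct, but it follows a genuinely different route from the paper's. The paper also starts from Proposition \ref{1symetry} ($\partial_r u=0$ on $C_{\sqrt{\varrho}}$, $c<0$), but then argues geometrically: along the curve $X(\sqrt{\varrho}e^{i\theta})$ one has $\partial_r X=(0,2\sqrt{|c|}/r)$ vertical and $\partial_\theta X$ horizontal, so the unit normal $N$ is horizontal there, i.e.\ the surface meets the plane $\{z=\sqrt{|c|}\ln\varrho\}$ perpendicularly; the symmetry then follows by citing Schwarz's classical theorem that a plane met perpendicularly by a minimal surface is a plane of symmetry. You instead realize the reflection explicitly through the anti-conformal involution $\phi(z)=\varrho/\bar z$ of the annulus, reduce everything to the identity $u\circ\phi=u$, and prove that identity by a Cauchy-data/unique-continuation argument: $w=u-u\circ\phi$ is harmonic with $w=0$ and $\partial_r w=0$ on $C_{\sqrt{\varrho}}$ (this is exactly where the Neumann condition from Proposition \ref{1symetry} enters, since $\partial_r(u\circ\phi)=-\partial_r u$ on the fixed circle), hence $\partial_z w_j$ is holomorphic and vanishes on a circle, so $w\equiv 0$ by the identity theorem. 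Your version is self-contained --- it avoids invoking Schwarz's reflection theorem and uses nothing beyond harmonicity of $u$ and the explicit form of $h$ --- and it yields the slightly stronger parametric statement $X\circ\phi=R\circ X$, where $R$ is the reflection; the paper's version is shorter and delegates the analytic continuation to a standard classical result. Both hinge on the same input, the free-boundary condition $\partial_r u=0$ on the middle circle.
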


\begin{proof}
This is a consequence of proposition \ref{1symetry}. Indeed  if $u$ is a minimizer of $E$ in $\I_{p,p}$ then 
$$\partial_ru =0 \ \text{on} \ C_{\sqrt{\varrho}}$$

\noindent and $\H_u(z)=\frac{c}{z^2}$ with $c<0$. This imply that the minimal surface given by $X=(u,h):A \rightarrow \
\R^3$,  where $h=2\sqrt{|c|}\ln(r)$ (with $r=\sqrt{x^2+y^2}$) intersects the plane $\{z =\sqrt{|c|}\ln(\varrho) 
\}$ perpendicularly. Indeed the  intersection between the minimal surface and this plane is the plane curve 
$X(\sqrt{\varrho} e^{i\theta})$, $\theta \in [0,2\pi[$ and the normal of the surface on this curve is given by 
$$N=\frac{\partial_x X\wedge \partial_y X}{|\partial_x X\wedge \partial_y X|}= \frac{\partial_r X \wedge 
\partial_\theta X}{|\partial_r X \wedge \partial_\theta X |}$$ taken at $(\sqrt{\varrho},\theta )$ for $\theta \in 
[0,2\pi[$. But because of the fact that $\partial_ru =0$ on $C_{\sqrt{\varrho}}$ we obtain that $N$ is horizontal on 
the curve  $X(\sqrt{\varrho} e^{i\theta})$, $\theta \in [0,2\pi[$ and hence the minimal surface parametrized by $X$ 
intersects the plane $\{z =\sqrt{|c|}\ln(\varrho) \}$ perpendicularly. Now we use the following classical symmetry 
result due to H.A. Schwarz (see \cite{minimalsurfaces} p.128) if a minimal surface intersects some plane $P$ 
perpendicularly, then $P$ is a plane of symmetry of the surface. 
\end{proof}

In the case $\H_u(z)=\frac{c}{z^2}$ ($c<0$) the minimal surface obtained by corollary \ref{lifting} is a minimal surface bounded by two $p$-coverings of  circles in parallel planes. This is due to the fact that $|u|= 1$ on $\partial A$ and $h=-\ln r$. In fact such a minimal surface gives rise to a solution of \eqref{semi-stiff 1} with $c<0$. Thus  the problem of finding solution of \eqref{semi-stiff 1} with $c<0$ and the problem of finding minimal surfaces bounded by two $p$-coverings of circles in parallel planes are \textbf{completely equivalent}.

\begin{proposition}
Let $X=(u,h):A \rightarrow \R²$ be a conformal parametrization of a doubly connected minimal surface bounded by two $p$-coverings of  circles in parallel planes then 
$$h=a\ln r +b, \ \ with \ (a,b) \in \R^2 $$
and $u$ satisfies 

\begin{equation}
\left\{
\begin{array}{rclll}
\Delta u & =& 0, & \text{in} \ A, \\
|u|&=&C_1,& \text{a.e. on} \ \mathbb{S}^1 \\
|u|&=&C_2, & \text{a.e. on} \ C_\varrho \\
u\wedge \partial_\nu u & =& 0, & \text{a.e. on} \ \partial A.
\end{array}
\right.
\end{equation}
with $C_1,C_2$ two real constants.
\end{proposition}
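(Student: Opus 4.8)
The plan is to read off the three assertions---harmonicity, the radial form of $h$, and the boundary behaviour of $u$---directly from the two structural hypotheses, namely that $X$ is conformal and minimal and that the boundary consists of two circles lying in parallel planes. First I would invoke Proposition \ref{minsurfaceanddharmfunction}: since $X=(u,h)$ is a conformal parametrization of a minimal surface, $\Delta X=0$, so each coordinate is harmonic and in particular $\Delta u=0$ in $A$. I would then fix Euclidean coordinates on $\R^3$ so that the two parallel planes carrying the boundary circles are horizontal, $\{x_3=\mathrm{const}\}$, which makes $h$ the vertical coordinate. By hypothesis $X(\mathbb{S}^1)$ and $X(C_\varrho)$ lie in these planes, so $h\equiv b_1$ on $\mathbb{S}^1$ and $h\equiv b_2$ on $C_\varrho$ for constants $b_1,b_2$. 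As the circles are concentric, a horizontal translation of the surface (which affects neither minimality, conformality, nor the planar boundary structure) places their common axis on the $x_3$-axis, so that $u$ maps $\mathbb{S}^1$ onto the circle of some radius $C_1$ about the origin and $C_\varrho$ onto the circle of radius $C_2$; hence $|u|=C_1$ on $\mathbb{S}^1$ and $|u|=C_2$ on $C_\varrho$.

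Next I would pin down $h$. It is harmonic in $A=A_\varrho$ and equal to the constants $b_1,b_2$ on the two boundary circles. The radial function $a\ln r+b$, with $b=b_1$ and $a=(b_2-b_1)/\ln\varrho$, is harmonic and assumes the same boundary data, so by uniqueness of the solution of the Dirichlet problem on $A$ (maximum principle) we conclude $h=a\ln r+b$.

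It remains to produce the semi-stiff condition $u\wedge\partial_\nu u=0$ on $\partial A$. Here I would use that conformality, being a pointwise statement about the first fundamental form, survives the passage to polar coordinates: $\langle\partial_r X,\partial_\theta X\rangle=0$ throughout $A$. On each boundary component $\partial_\theta h=0$, because $h$ is constant there, so the vertical components drop out and $\langle\partial_r u,\partial_\theta u\rangle=0$. Differentiating $|u|^2=C_i^2$ along each boundary circle gives $\langle u,\partial_\theta u\rangle=0$. Finally, since $X$ is an immersion $\partial_\theta X\neq0$, and $\partial_\theta h=0$ on the boundary forces $\partial_\theta u\neq0$ there; thus $u$ and $\partial_\nu u=\pm\partial_r u$ are both orthogonal, in $\R^2$, to the nonzero vector $\partial_\theta u$, hence parallel, which is exactly $u\wedge\partial_\nu u=0$.

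The step I expect to be the genuine subtlety, rather than routine, is the passage from ``the boundary is two circles'' to ``$|u|$ is constant on each boundary component.'' This is where concentricity is essential: an arbitrary pair of circles in parallel planes could have distinct centres, and then a single horizontal translation cannot centre both, so $|u|$ would fail to be constant and the symmetric condition $u\wedge\partial_\nu u=0$ (rather than $(u-w)\wedge\partial_\nu u=0$ for some centre $w$) would not hold. I would therefore make the concentricity explicit in the normalization above; everything else follows directly from the conformality relations and from uniqueness of the Dirichlet problem.
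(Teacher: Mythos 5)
Your proof is correct, and it reaches the same conclusions through a slightly different mechanism than the paper. The skeleton is shared: both arguments determine $h$ by uniqueness of the Dirichlet problem with constant boundary data, both get $\langle u,\partial_\theta u\rangle=0$ from $|u|=C_i$ on each boundary circle, and both combine this with $\langle\partial_r u,\partial_\theta u\rangle=0$ to conclude $u\wedge\partial_\nu u=0$. The difference is how that last orthogonality is obtained. The paper routes it through Proposition \ref{confandhopf}: from $\H_u=-(\partial_z h)^2$ and $h=a\ln r+b$ it computes $\H_u(z)=c/z^2$ with $c=-a^2/4$ real (and negative), and then reads $\langle\partial_r u,\partial_\theta u\rangle=0$ off the vanishing imaginary part of $z^2\H_u$. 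You instead use the conformality of $X$ directly in polar coordinates, $\langle\partial_r X,\partial_\theta X\rangle=0$, and subtract the vanishing contribution $\partial_r h\,\partial_\theta h$. These are the same identity in different clothing (the imaginary part of $4z^2\H_X$ is exactly $-2r\langle\partial_r X,\partial_\theta X\rangle$), so your version is more elementary and self-contained for the statement as written; the paper's version has the side benefit of exhibiting $\H_u(z)=c/z^2$ with $c<0$ explicitly, which is what it needs immediately afterwards to set up the equivalence with the $c<0$ case of \eqref{semi-stiff 1}. Two of your additions are genuine improvements in rigor over the paper's terse write-up: you point out that concentricity of the two boundary circles is what allows a single normalization making $|u|$ constant on both components, and you justify the final step $u\parallel\partial_\nu u$ by noting $\partial_\theta u\neq 0$ on $\partial A$ (from the immersion hypothesis together with $\partial_\theta h=0$ there), so that the common orthogonal complement of $\partial_\theta u$ in $\R^2$ is genuinely one-dimensional.
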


\begin{proof}
The height function $h$ is constant on each of the circles bounding the annular ring, $h$ being harmonic we must have $h=a \ln r +b$. Indeed $h$ solve an equation of the following form 

\begin{equation}
\left\{
\begin{array}{rclll}
\Delta h &=&0, & \text{in} \ A \\
h&=& c_1, & \text{on} \ \mathbb{S}^1 \\
h&=& c_2, & \text{on} \ C_\varrho
\end{array}
\right.
\end{equation}
This is a Dirichlet problem for the Laplacian and the solution is unique, of the form $h=a\ln r +b$. 
Now we apply proposition \ref{confandhopf} to find that 
$$\H_u(z)=-(\partial_zh)^2$$

But $\partial_zh=\frac{1}{2}(\partial_rh-i\frac{1}{r}\partial_\theta h)e^{-i\theta}=\frac{b/2}{r}e^{-i\theta}$
thus $$\H_u(z)=\frac{c}{z^2}$$ 
with $c=-b^2/4<0$. 
Now recall that 
$$z^2\H_u(z)= r²|\partial_ru|²-|\partial_\theta u|²-2ir \langle \partial_ru,\partial_\theta u \rangle$$ 
then we can deduce that 
\begin{equation}\label{orthoradial2}
\langle \partial_ru,\partial_\theta u\rangle=0 \ \text{in} \ A. 
\end{equation}

Moreover we have  that $|u|=C_1$ on $\mathbb{S}^1$ and $|u|=C_2$ on $C_\varrho$ because the surface is bounded by two circles in parallel planes. This implies that $\langle u,\partial_\theta u\rangle=0$ on $\partial A$. We can then conclude from this and from \eqref{orthoradial2} that 
$$u\wedge \partial _\nu u =0, \ \text{on} \ \partial A.$$ 
\end{proof}

Thanks to a theorem of M.Shiffman and  the equivalence between the problem of finding solutions of \eqref{semi-stiff 1} with $c<0$ and finding minimal surfaces bounded by two $p$-covering of circles in parallel planes we can prove that every solution of \eqref{semi-stiff 1} in $\I_{1,1,}$ with $c<0$ is radial.

\begin{theorem}[\cite{Shiffman}]
Let $S$ be a compact minimal surface  in $\R^3$ bounded by two plane curves $\Gamma_1, \Gamma_2$ lying in parallel planes. If $\Gamma_1, \Gamma_2$ are circles then the intersection of $S$ by a plane parallel to the planes of $\Gamma_1,\Gamma_2$ is again a circle. If the two circles $\Gamma_1$ and $\Gamma_2$ have a common axis of symmetry then the minimal surface $S$ is a portion of catenoid.
\end{theorem}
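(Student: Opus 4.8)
The plan is to follow M.~Shiffman's original strategy, whose heart is the construction of a special Jacobi field, the \emph{Shiffman function}, on the minimal annulus $S$. First I would set up conformal (isothermal) parameters $X=(x_1,x_2,x_3):A \to \R^3$ for $S$, so that each coordinate function is harmonic and the conformality relations hold. Since $\Gamma_1$ and $\Gamma_2$ lie in parallel planes, after a rigid motion making those planes horizontal the height function $x_3$ is constant (and distinct) on each boundary component; being harmonic, $x_3$ satisfies $t_1 \leq x_3 \leq t_2$ in the interior by the maximum principle. Away from its (isolated) critical points the level sets $\Gamma_t = S \cap \{x_3 = t\}$ are then real-analytic closed curves foliating $S$, and I would use the Morse structure of the harmonic function $x_3$ on the annulus, together with the fact that both boundary level sets are single circles, to control this foliation.

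Next I would introduce the Shiffman function $\phi$, which measures the deviation of the sections $\Gamma_t$ from circularity: geometrically $\phi$ is, up to a positive conformal factor, the derivative $\partial \kappa/\partial s$ of the Euclidean curvature $\kappa$ of $\Gamma_t$ with respect to arc length $s$ along $\Gamma_t$. The two facts to establish by direct computation in the Weierstrass representation are: (i) $\phi$ is a Jacobi field, i.e. $\Delta_S \phi + |A|^2 \phi = 0$, where $\Delta_S$ is the Laplace--Beltrami operator of the induced metric and $|A|^2$ denotes the squared norm of the second fundamental form of $S$; and (ii) $\phi \equiv 0$ if and only if every level curve $\Gamma_t$ has constant curvature, that is, is a circle (or a line). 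Because the prescribed boundary curves $\Gamma_1,\Gamma_2$ are circles, $\kappa$ is constant along each, so $\partial\kappa/\partial s = 0$ and hence $\phi = 0$ on $\partial S$.

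The main obstacle is to upgrade the boundary vanishing of $\phi$ to global vanishing $\phi \equiv 0$; this cannot follow from stability alone, since catenoids themselves are unstable and carry nontrivial Jacobi fields. The point is that the Shiffman function is not an arbitrary Jacobi field: it arises from a holomorphic deformation of the surface and therefore admits a conjugate Jacobi field $\phi^{*}$, so that $\phi + i\phi^{*}$ corresponds to a holomorphic object on the Riemann surface $A$. I would exploit this together with the annular topology: the condition $\phi = 0$ on both circles, combined with an argument-principle and maximum-principle analysis of the nodal set of $\phi$ (Shiffman's delicate key step), shows that a nontrivial such field is incompatible with the foliation $\{\Gamma_t\}$ on the annulus. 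This forces $\phi \equiv 0$, and by (ii) every horizontal section of $S$ is a circle, which is the first assertion of the theorem.

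Finally, assuming that $\Gamma_1$ and $\Gamma_2$ share a common axis of symmetry, I would conclude rotational symmetry. Writing each section as a circle of center $c(t)\in\R^2$ and radius $R(t)$, I would use the fact that the conormal (flux) $\oint_{\Gamma_t}\partial_\nu X\, ds$ of a minimal surface is the same across every homologically nontrivial $\Gamma_t$; coaxiality of $\Gamma_1,\Gamma_2$ makes this flux vertical, and the flux computation for a circular section shows that its horizontal part is governed by $\dot c(t)$, so that $c(t)$ is constant and all the circles are centered on the common axis. Hence $S$ is a surface of revolution, and it only remains to invoke the classical classification: for a surface of revolution with profile $r=\rho(x_3)$ the minimality condition is the catenary equation $\rho\,\rho'' = 1 + (\rho')^2$, whose solutions are exactly $\rho(x_3)=a\cosh\!\big((x_3-b)/a\big)$, giving the stated conclusion that $S$ is a portion of a catenoid.
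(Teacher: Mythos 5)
The paper does not prove this statement: it is quoted verbatim from Shiffman's 1956 paper and used as a black box, so there is no internal proof to compare your argument with. Judged on its own terms, your outline correctly identifies the main objects of Shiffman's proof (isothermal coordinates, the height function $x_3$, the function $\phi=\Lambda\,\partial\kappa/\partial s$ on the horizontal level curves, the facts that $\phi$ solves the Jacobi equation and that $\phi\equiv 0$ characterises circular sections, and the vanishing of $\phi$ on the two boundary circles). The closing step is also sound: once all sections are circles, the flux (or period) argument shows the centres $c(t)$ move affinely, coaxiality of the two boundary circles forces $c(t)$ to be constant, and the catenary ODE $\rho\rho''=1+(\rho')^2$ finishes the classification; alternatively one can invoke the Riemann--Enneper classification of minimal surfaces foliated by circles in parallel planes.

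The genuine gap is the central implication ``$\phi$ is a Jacobi field with $\phi=0$ on $\partial S$, hence $\phi\equiv 0$.'' You rightly observe that stability cannot be used, but then you defer the actual mechanism to ``an argument-principle and maximum-principle analysis of the nodal set (Shiffman's delicate key step)'' without carrying it out. This is exactly where the content of the theorem lies, and the implication is false for a general Jacobi field on these surfaces: the bifurcation analysis of section~6 of this very paper produces, at the critical instants $t_0,\dots,t_{p-1}$, nonzero solutions of $J_t v=0$ vanishing on both boundary circles --- already for $p=1$ at $t=t_0$ the compact catenoid carries the rotationally invariant Jacobi field $w(r)=a\tanh(pr)+b\bigl[1-(pr)\tanh(pr)\bigr]$ of \eqref{26} with Dirichlet boundary values. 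So one must use the specific structure of the Shiffman field: on every non-circular level curve $\Gamma_t$ the function $\kappa_s$ has zero mean and changes sign at least twice (indeed at least four times by the four-vertex phenomenon for the relevant curves), so the nodal set of $\phi$ meets every level curve in several points; Shiffman then plays this off against the Sturm--Liouville/nodal-domain structure of the Jacobi operator on the annulus (essentially the same separation-of-variables count that step~3 of the proof of theorem \ref{nonsymmetricmminsurf} performs for \eqref{eigenvalueradial}) to derive a contradiction with $\phi\not\equiv 0$. Until that counting argument is actually supplied, the proof is incomplete at its decisive point.
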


\begin{theorem}
Let $u$ be any (non necessary minimizing) solution of \eqref{semi-stiff 1} in $\I_{1,1}$ with $c<0$ then there exists $\alpha \in \mathbb{S}^1$ such that

\begin{equation}
u(z)= \frac{\alpha}{1+\varrho}(r+\frac{\varrho}{r})e^{i \theta}
\end{equation}
\end{theorem}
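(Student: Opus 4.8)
The plan is to exploit the equivalence established above between solutions of \eqref{semi-stiff 1} with $c<0$ and minimal surfaces bounded by two circles in parallel planes, and then to invoke the theorem of Shiffman recalled just before the statement. First I would lift $u$ to a minimal surface. Since $\H_u(z)=\frac{c}{z^2}$ with $c<0$, Proposition \ref{buildingminsurf1} (itself an application of Proposition \ref{lifting}) furnishes a conformal immersion $X=(u,h):A\to\R^3$ parametrizing a minimal surface $S$, where $h=2\sqrt{|c|}\ln r$; this height is single-valued precisely because $c<0$. Because $u\in\mathcal{C}^\infty(\overline A)$ and $|u|=1$ on $\partial A$, the surface extends to a compact minimal surface whose boundary lies at height $0$ over $\mathbb{S}^1$ and at height $2\sqrt{|c|}\ln\varrho$ over $C_\varrho$, and in each of these two parallel planes it lies on the unit circle. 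Since $\deg(u,\mathbb{S}^1)=\deg(u,C_\varrho)=1$, each boundary component is a $1$-covering of the unit circle, i.e. an honest circle described exactly once, and the two circles are coaxial about the vertical axis.

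Next I would apply Shiffman's theorem: a compact minimal surface bounded by two coaxial circles in parallel planes is a portion of a catenoid. Hence $S$ is rotationally symmetric about the vertical axis, and from this I would deduce that $u$ is radially symmetric, i.e. $u(re^{i\theta})=\alpha f(r)e^{i\theta}$ for some real function $f$ and some $\alpha\in\mathbb{S}^1$. The idea is that the horizontal slices of the catenoid are circles centered on the axis, so the level sets $\{r=r_0\}$ of the domain, which $X$ sends into the slice at height $2\sqrt{|c|}\ln r_0$, are mapped by $u$ onto circles centered at the origin; this forces $|u|=\rho(r)$ to depend on $r$ only. To control the phase I would use the orthoradiality relation \eqref{orthoradial}, $\langle\partial_r u,\partial_\theta u\rangle=0$, together with the fact that both $X$ and the standard conformal parametrization of the catenoid are conformal and therefore differ by a conformal automorphism of the annulus; since this automorphism must preserve the radial harmonic height function $h$, it is a rotation, which translates into $u=\alpha f(r)e^{i\theta}$, the degree forcing the angular exponent to be exactly $+1$.

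I expect this passage from the rotational symmetry of $S$ to the analytic normal form of $u$ to be the main obstacle, since one must go carefully from a geometric symmetry of the image surface to a normal form for the map itself; the delicate point is to rule out a nonconstant reparametrization of the phase along each circle $C_r$, which is exactly where the conformality relations \eqref{constHopf}--\eqref{orthoradial} and the fact that $u$ does not vanish in $A$ (so that $u/|u|$ is well defined on every intermediate circle) are essential.

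Finally, once $u=\alpha f(r)e^{i\theta}$ is known, the conclusion follows from a short computation. Harmonicity of $u$ forces $f$ to solve $f''+\frac{1}{r}f'-\frac{1}{r^2}f=0$, whose general solution is $f(r)=Ar+\frac{B}{r}$. The Hopf relation \eqref{constHopf} evaluates to a negative multiple of $AB$, so $c<0$ is equivalent to $AB>0$; absorbing a sign into $\alpha$ we may take $A,B>0$. The boundary conditions $|u|=1$ on $\mathbb{S}^1$ and on $C_\varrho$ then read $A+B=1$ and $A\varrho+\frac{B}{\varrho}=1$, which solve uniquely to $A=\frac{1}{1+\varrho}$ and $B=\frac{\varrho}{1+\varrho}$. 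This yields $u(z)=\frac{\alpha}{1+\varrho}\left(r+\frac{\varrho}{r}\right)e^{i\theta}$, as claimed; one recognizes the radial solution of Proposition \ref{BerlyandMironescu}, and the competing radial family $\tilde u_1$ is excluded precisely because it corresponds to $c>0$.
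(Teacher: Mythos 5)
Your proposal is correct and follows essentially the same route as the paper, which states this theorem precisely as a consequence of the lifting $X=(u,h)$ with $h=2\sqrt{|c|}\ln r$ (proposition \ref{buildingminsurf1}), the observation that degree one means each boundary circle is described once, and Shiffman's theorem, leaving the remaining details implicit. Your added details — the passage from the rotational symmetry of the catenoid to the normal form $u=\alpha f(r)e^{i\theta}$ via uniqueness of the conformal parametrization (with the degree ruling out the orientation-reversing/inverted automorphisms), and the ODE computation $f=Ar+B/r$ with $c=-4AB<0$ forcing $A,B>0$ and the boundary conditions giving $A=\frac{1}{1+\varrho}$, $B=\frac{\varrho}{1+\varrho}$ — are accurate fillings of the gaps the paper does not spell out.
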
 

Let us recall that L.Berlyand and P.Mironescu proved in \cite{unpublished} that the function $u_1(z)=\frac{1}{1+\varrho}(r+\frac{\varrho}{r})e^{i \theta}$ is the only (modulo rotations) minimizer of $E$ in $\I_{1,1}$.

If $p>1$, the theorem of Shiffman and its proofs do not apply so we can not conclude that every solution of \eqref{semi-stiff 1} with $c<0$ is radial. However we can use the same method as L.Berlyand and D.Golovaty in \cite{uniqueness} to prove that if the annulus is thin enough then we have existence and uniqueness (modulo rotations) of a minimizer of $E$ in $\I_{p,p}$.

\begin{theorem}\label{unicityrad}
Let $p \geq 2$. There exists $\varrho_p<1$ such that if $\varrho> \varrho_p$ then $E$ has a unique (up to an arbitrary rotation), radially symmetric minimizer
\begin{equation}
u_p(z)= \frac{\alpha}{1+\varrho^p}(r^p+\frac{\varrho^p}{r^p})e^{ip\theta}.
\end{equation}
\end{theorem}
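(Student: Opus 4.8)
The plan is to establish existence, then uniqueness together with radial symmetry, the former by excluding loss of compactness and the latter by the Fourier/Dirichlet-to-Neumann analysis of Berlyand--Golovaty specialized to the thin annulus. The radial competitor already gives the upper bound $m(p,p)\le E(u_p)=2\pi p\frac{1-\varrho^p}{1+\varrho^p}$, which tends to $0$ as $\varrho\to 1$; this smallness is the mechanism that forces everything.

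For existence, I would take a minimizing sequence $(u_n)\subset\I_{p,p}$ for $m(p,p)$, which up to extraction converges weakly to some $u\in\I_{r,s}$ with $0\le r\le p$ and $0\le s\le p$ by lemma \ref{fall}. If $(r,s)\ne(p,p)$, lemma \ref{behavior1} gives $E(u)=m(r,s)=m(p,p)-\pi(|p-r|+|p-s|)$, so that $m(p,p)\ge\pi(|p-r|+|p-s|)\ge\pi$. Choosing $\varrho$ close enough to $1$ that $2\pi p\frac{1-\varrho^p}{1+\varrho^p}<\pi$ makes this impossible, forcing $(r,s)=(p,p)$; then $u$ is itself a minimizer. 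This fixes a first threshold, which will be absorbed into $\varrho_p$.

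For the structure of minimizers I would pass, via proposition \ref{1symetry} and lemma \ref{reduction1}, to the equivalent problem of minimizing $E_{\sqrt{\varrho}}$ over $\F_p$ on $A_{\sqrt{\varrho}}$, where the inner condition on $C_{\sqrt{\varrho}}$ is the free Neumann condition $\partial_r u=0$. Writing the trace on $\mathbb{S}^1$ as $g=\sum_n c_n e^{in\theta}$ and extending each mode harmonically with $\partial_r=0$ at $r=\sqrt{\varrho}$, a direct Dirichlet-integral computation gives, up to the fixed factor $2\pi$,
\begin{equation}\nonumber
E_{\sqrt{\varrho}}(\tilde{u}(g))=\sum_{n\in\mathbb{Z}}K_n(\varrho)\,|c_n|^2,\qquad K_n(\varrho)=|n|\,\frac{1-\varrho^{|n|}}{1+\varrho^{|n|}},
\end{equation}
while $|g|=1$ forces $\sum_n|c_n|^2=1$ and the Fourier formula for the degree gives $\sum_n n|c_n|^2=p$. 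Relaxing to these two linear constraints, the problem becomes minimizing $\sum_n K_n(\varrho)\,w_n$ over probability weights $w_n=|c_n|^2$ of mean $p$. If the sequence $n\mapsto K_n(\varrho)$ admits a supporting affine function $n\mapsto\mu+\lambda n$ touching it only at $n=p$, then $\sum_n K_n w_n\ge\mu+\lambda p=K_p$ with equality iff $w_p=1$, so $g=\alpha e^{ip\theta}$; since this pure mode already satisfies $|g|=1$ pointwise, the relaxed minimum equals the true one and is attained only there. Transporting back through lemma \ref{reduction1} and invoking unique continuation for the harmonic map $u$ on all of $A_\varrho$ then yields $u=\alpha u_p$ as the unique minimizer modulo the rotation $\alpha$.

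The crux, and the only genuinely computational step, is the supporting-line estimate for the discrete sequence $K_n(\varrho)=|n|\frac{1-\varrho^{|n|}}{1+\varrho^{|n|}}$ at $n=p$. Setting $t=\tfrac12\ln(1/\varrho)$ one has $K_x=x\tanh(tx)$ for $x>0$, with second derivative $2t\,\mathrm{sech}^2(tx)\,(1-tx\tanh(tx))$, positive exactly while $tx\tanh(tx)<1$; thus $K$ is strictly convex on a $\varrho$-dependent range that expands to cover any fixed $p$ as $\varrho\to1$. The work is to select $\varrho_p<1$ so that the supporting line at $n=p$ stays strictly below $K_n$ for every integer $n\ne p$, which requires controlling three regimes at once: the moderate positive modes (handled by the strict convexity above), the large modes (where $K_n\sim|n|$ grows linearly so any slope $\lambda<1$ suffices), and the nonpositive modes $n\le 0$ (disfavoured since they pull the mean below $p$ while still costing energy $K_n=K_{|n|}\ge 0$). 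Verifying these regimes and extracting the explicit $\varrho_p$ is where I expect the real effort to lie; the remaining steps are the soft compactness and reduction arguments assembled above.
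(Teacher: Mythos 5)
Your proposal is correct in outline, but it takes a genuinely different route from the paper, whose entire proof of this theorem is a one-line deferral: ``follow step by step'' the Ginzburg--Landau argument of Berlyand--Golovaty in \cite{uniqueness} with $\e=+\infty$. You instead assemble a self-contained argument: (a) existence for $\varrho$ near $1$ by noting that $m(p,p)\le 2\pi p\frac{1-\varrho^p}{1+\varrho^p}<\pi$ forbids any degree loss of the weak limit, since lemmas \ref{fall} and \ref{behavior1} would then force $m(p,p)\ge\pi$; (b) reduction via lemma \ref{reduction1} to a one-boundary problem on $A_{\sqrt{\varrho}}$ with free Neumann data on $C_{\sqrt{\varrho}}$; (c) Fourier diagonalization of the resulting Dirichlet-to-Neumann energy with symbol $K_n=|n|\frac{1-\varrho^{|n|}}{1+\varrho^{|n|}}=|n|\tanh(|n|t)$, the two constraints $\sum|c_n|^2=1$ and $\sum n|c_n|^2=p$, and a supporting-line argument forcing $|c_p|=1$. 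I checked the key computations: the Neumann-harmonic extension of $e^{in\theta}$ does give exactly your $K_n$, the cross terms vanish, $K_p$ matches $E(u_p)$, and the convexity analysis of $x\tanh(tx)$ (second derivative $2t\,\mathrm{sech}^2(tx)(1-tx\tanh(tx))$, tangent slope at $p$ below $1$ for $tp$ small, derivative exceeding $1$ on the concave tail, strict negativity of the supporting line at $n\le 0$) does close for $\varrho$ close enough to $1$, with the nonuniformity in $p$ correctly reflected in a threshold of the rough form $p\ln(1/\varrho)<\mathrm{const}$. This is in effect the Berlyand--Mironescu Fourier method that the paper declares unusable for $p>1$; your reflection trick (one boundary trace instead of two coupled ones) plus the restriction to thin annuli is precisely what rescues it, and the loss of convexity of $K_n$ for small $\varrho$ is consistent with the paper's theorem \ref{nonminrad} that $u_p$ stops minimizing there. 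What you lose relative to the paper's route is only economy (the citation outsources all the work); what you gain is an explicit, checkable proof with a quantitative $\varrho_p$, plus uniqueness up to rotation falling out of the strictness of the supporting-line inequality rather than from a separate argument. The one step you should spell out when writing this up is the passage from uniqueness of the half-annulus minimizer to uniqueness on all of $A_\varrho$: lemma \ref{reduction1} only controls the restriction to $A_{\sqrt{\varrho}}$, so you need, as you indicate, that a minimizer is harmonic on $A_\varrho$ and therefore determined by its values on the open outer half-annulus by real analyticity.
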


\begin{proof}
In order to prove this theorem we can follow step by step the proof of an analogous result for the Ginzburg-Landau energy of L.Berlyand and D.Golovaty in \cite{uniqueness} taking $\e=+\infty$.
\end{proof}

In fact, contrarily to the case $p=1$, if $p>1$ and if the annulus is too thick then the radial solution is not minimizing anymore. Such similar phenomenon occurs in the study of the Ginzburg-Landau energy, but for the G.L energy even in degree one case the existence of minimizer depends on the capacity of the domain c.f. \cite{Nonexistence}, \cite{unpublished}, \cite{Capacity}. 

\begin{theorem} \label{nonminrad}
Let $p\geq 2$. There exists $\varrho_p'\leq \varrho_p$ such that if $\varrho < \varrho_p'$ then \\
$u_p(z)= \frac{\alpha}{1+\varrho^p}(r^p+\frac{\varrho^p}{r^p})e^{ip\theta}$ is not a minimizer of $E$ in $\I_{p,p}$.
\end{theorem}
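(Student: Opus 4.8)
The plan is to show directly that $E(u_p) > m(p,p)$ when $\varrho$ is small, so that the radial map $u_p$ cannot realize the infimum of $E$ in $\I_{p,p}$ and is therefore not a minimizer. Rather than analysing the second variation of $E$ at $u_p$ and hunting for an unstable direction, I would argue at the level of the infimum, combining an explicit evaluation of $E(u_p)$ with the sharp, $\varrho$-dependent upper bound for $m(p,p)$ furnished by lemma \ref{Comparison1}.

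First I would compute the Dirichlet energy of the radial solution in polar coordinates. Writing $u_p=f(r)e^{ip\theta}$ with $f(r)=\frac{1}{1+\varrho^p}\bigl(r^p+\varrho^p r^{-p}\bigr)$, one has $|\nabla u_p|^2=f'(r)^2+p^2 f(r)^2/r^2$, and after integration this gives
$$E(u_p)=2\pi p\,\frac{1-\varrho^p}{1+\varrho^p},$$
which matches the value that theorem \ref{theorem2} attributes to $m(p,p)$ in the regime where $u_p$ does minimize. This step is routine.

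Next I would apply lemma \ref{Comparison1} with the pair $(r,s)=(1,1)$ to obtain $m(p,p)\le m(1,1)+2\pi(p-1)$, and insert the exact value $m(1,1)=2\pi\frac{1-\varrho}{1+\varrho}$ from proposition \ref{BerlyandMironescu}. The heart of the argument is then the sign of
$$g(\varrho):=E(u_p)-\bigl(m(1,1)+2\pi(p-1)\bigr)=\frac{4\pi\varrho}{(1+\varrho)(1+\varrho^p)}\bigl(1-p\varrho^{p-1}-(p-1)\varrho^p\bigr).$$
Setting $\phi_p(\varrho)=1-p\varrho^{p-1}-(p-1)\varrho^p$, one checks that $\phi_p$ is strictly decreasing on $(0,1)$ with $\phi_p(0^+)=1>0$ and $\phi_p(1)=2(1-p)<0$ for $p\ge 2$; hence $\phi_p$ has a unique root $\varrho_p'\in(0,1)$ and $\phi_p>0$ on $(0,\varrho_p')$. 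For $\varrho<\varrho_p'$ this yields $g(\varrho)>0$, i.e. $E(u_p)>m(1,1)+2\pi(p-1)\ge m(p,p)$, so $u_p$ does not attain the infimum and is not minimizing. The ordering $\varrho_p'\le\varrho_p$ then follows by non-overlap: on $(\varrho_p,1)$ the map $u_p$ is the minimizer by theorem \ref{unicityrad}, so the ranges $(\varrho_p,1)$ and $(0,\varrho_p')$ cannot intersect.

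The step I expect to be the main obstacle is the sign analysis, together with the realisation that the naive comparison is useless. Since $E(u_p)<2\pi p$ for every $\varrho$, testing against the trivial bound $m(p,p)\le 2\pi p$ coming from $m(0,0)=0$ can never detect non-minimality; one is forced to use the strictly $\varrho$-dependent bound issued from the $(1,1)$-class. The delicate point is that both $E(u_p)$ and $m(1,1)+2\pi(p-1)$ tend to $2\pi p$ as $\varrho\to 0$, so the competition is between a gain of order $\varrho$ (from the $(1,1)$-comparison) and a loss of order $\varrho^{p-1}$ (from $E(u_p)$); that $p\ge 2$ makes the former dominant is exactly the content encoded in the factor $\phi_p$, and verifying its sign is where the hypothesis $p\ge 2$ is used in an essential way.
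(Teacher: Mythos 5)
Your proposal is correct and follows essentially the same route as the paper: compute $E(u_p)=2\pi p\frac{1-\varrho^p}{1+\varrho^p}$, compare with the bound $m(p,p)\le m(1,1)+2\pi(p-1)$ from lemma \ref{Comparison1} and the exact value $m(1,1)=2\pi\frac{1-\varrho}{1+\varrho}$, and reduce to the sign of $1-p\varrho^{p-1}-(p-1)\varrho^p$ (the paper studies the negative of your $\phi_p$, with the same monotonicity and endpoint checks). Your closing non-overlap argument for $\varrho_p'\le\varrho_p$ also matches the paper's appeal to theorem \ref{unicityrad}.
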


\begin{proof}
Using lemma \ref{Comparison1} we have that for $p \geq 2$ 

\begin{equation}
m(p,p)\leq m(1,1) +2\pi(p-1).
\end{equation}

\noindent We know that $u_1(z)=\frac{1}{1+\varrho}(r+\frac{\varrho}{r})e^{i \theta}$ is a minimizer of $E$ in $\I_{1,1}$ thus
$$m(1,1)=E(u_1)=2\pi\frac{1-\varrho}{1+\varrho}.$$

\noindent Let $u_p(z)= \frac{1}{1+\varrho^p}(r^p+\frac{\varrho^p}{r^p})e^{ip\theta}$ then a direct computation leads to 
$E(u_p)= 2\pi p\frac{1-\varrho^p}{1+\varrho^p}.$

\noindent Hence if we have 
\begin{equation}\label{inequrad}
E(u_p)>E(u_1)+2\pi(p-1)
\end{equation}
then $u_p$ can not be a minimizer of $E$ in $\I_{p,p}$. Let us examine the condition \eqref{inequrad}:

\begin{eqnarray}
 \eqref{inequrad} & \Leftrightarrow & 2\pi p \frac{1-\varrho^p}{1+\varrho^p} > 2\pi \frac{1-\varrho}{1+\varrho}+2\pi(p-1) \nonumber \\
 & \Leftrightarrow & p(1-\frac{2\varrho^p}{1+\varrho^p}) >(1-\frac{2\varrho}{1+\varrho}) + p-1 \nonumber \\
 & \Leftrightarrow & \frac{2\varrho}{1+\varrho}> \frac{2p\varrho^p}{1+\varrho^p} \nonumber \\
 &  \Leftrightarrow & 1+\varrho^p >p\varrho^{p-1}(1+\varrho) \nonumber \\
 & \Leftrightarrow & (p-1)\varrho^p+p\varrho^{p-1}-1 <0. 
\end{eqnarray}

We set $g_p(\varrho)=(p-1)\varrho^p+p\varrho^{p-1}-1$. One can study  the function $g_p$ for $0\leq \varrho\leq 1$ and show that  

\begin{equation}\label{studyofg}
\left\{
\begin{array}{rclll}
g_p(0)&=&-1<0, \\
g_p(1) &=&2(p-1) >0, \\
g_p'(\varrho)&= & p(p-1)(\varrho^{p-1}+\varrho^{p-2})\geq0,\ \forall \ \varrho \in [0,1]. 
\end{array}
\right.
\end{equation}

Thus \eqref{studyofg} proves that there exists $\varrho_p'$ such that if $ 0<\varrho<\varrho_p'$ then the solution $u_p$ is not minimizing. The fact that $\varrho_p' \leq \varrho_p$ follows from the definition of $\varrho_p$ in theorem \ref{unicityrad}. To prove that the sequence $(\varrho'_p)_p$ is increasing with $p$ we show that $g_{p+1}(\varrho'_p)<0$ for all $p \in \mathbb{N}$, $p\geq 2$. This will prove the monotonicity of the sequence $\varrho'_p$ because $g_p$ is an increasing function on $[0,1]$ and by definition $g_p( \varrho'_p)=0$.

$$g_p(\varrho'_p)=0 \Leftrightarrow (p-1)\varrho_p^{'p}+p \varrho_p^{'p-1} =1  .$$

\noindent Hence we deduce that 
\begin{eqnarray}
g_{p+1}(\varrho'_p)&= &p \varrho_p^{'p+1}+(p+1)\varrho_p^{'p}-1 \nonumber \\
&=& p\varrho_p^{'p+1}+(p+1)\varrho_p^{'p}-(p-1)\varrho_p^{'p}-p\varrho_p^{'p-1} \nonumber \\
&=& \varrho_p^{'p-1}(p\varrho_p^{'2}+2\varrho'_p-p) \nonumber
\end{eqnarray}

Then we can study the sign of $p\varrho_p^{'2}+2\varrho'_p-p$. We need to show that $\varrho^{'p} < \frac{-1+\sqrt{1+p^2}}{p}$ to deduce that $p\varrho_p^{'2}+2\varrho'_p-p<0$. To this end we can prove that $g_p(\frac{-1+\sqrt{1+p^2}}{p})>0$. Simple but tedious computations lead to 
$$g_p(\frac{-1+\sqrt{1+p^2}}{p})=\frac{(\sqrt{1+p^2}+p)p^p-(\sqrt{1+p^2}+1)^p}{(\sqrt{1+p^2}+1)^p}.$$ One can study the sign of the numerator and obtain that 
\begin{eqnarray}
(\sqrt{1+p^2}+p)p^p-(\sqrt{1+p^2}+1)^p & > & (\sqrt{1+p^2}+1)(p^p-(\sqrt{1+p^2}+1)^{p-1}) \nonumber \\
& > &(\sqrt{1+p^2}+1) (p^p-(p+2)^{p-1}).\nonumber
\end{eqnarray}
One shows that $p^p-(p+2)^{p-1}\geq 0$ for all $p \geq 2$ by showing that $p\ln p -(p-1)\ln(p+2) \geq 0$ for $p \geq 2$ (we study the function $l(x)=x\ln x -(x-1)\ln (x+2)$). This proves the strict monotonicity of the sequence $(\varrho^{'}_{p})_p.$
  The fact that $\varrho_2' = \sqrt{2}-1$ is obtained by solving 
$g_2(\varrho'_2)=0$ which is equivalent to $\varrho_2^{'2}+2\varrho'_2-1=0$.
\end{proof}

\begin{proposition}\label{casep=2}
Let $p=2$. Let $\varrho'_2$ as in theorem \ref{nonminrad}. Let us assume that $\varrho \geq \varrho'_2$  then $m(2,2)$ is attained.
\end{proposition}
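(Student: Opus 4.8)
The plan is to run the standard concentration--compactness dichotomy on a minimizing sequence and to show that, under the hypothesis $\varrho \geq \varrho'_2$, the only possible outcomes are compatible with attainment of $m(2,2)$. First I would take a minimizing sequence $(u_n) \subset \I_{2,2}$ for $m(2,2)$. By lemma \ref{fall}, after extraction $u_n \rightharpoonup u$ weakly in $H^1$ with $u \in \I_{r,s}$ for some $(r,s) \in \mathbb{N}^2$ satisfying $r \leq 2$ and $s \leq 2$, so that $r,s \in \{0,1,2\}$. Lemma \ref{behavior1} then provides two facts: that $u$ is itself a minimizer of $E$ in $\I_{r,s}$ (hence $\I_{r,s}$ admits a minimizer), and that the energy balance
\[
m(2,2) = m(r,s) + \pi(|2-r| + |2-s|)
\]
holds. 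If I can force $(r,s) = (2,2)$, or otherwise exhibit a minimizer in $\I_{2,2}$, the proposition follows.

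The next step is to discard most pairs $(r,s)$. Since $\I_{r,s}$ must carry a minimizer, corollaries \ref{nop0} and \ref{nopq2}, together with the symmetry between $\I_{p,q}$ and $\I_{q,p}$ recalled in the introduction, eliminate every pair with $r \neq s$ in $\{0,1,2\}^2$ (namely $(1,0),(2,0),(0,1),(0,2),(2,1),(1,2)$): none of these classes admits a solution, hence none admits a minimizer. This leaves only the diagonal case $d := r = s \in \{0,1,2\}$. When $d = 2$ the weak limit $u$ lies in $\I_{2,2}$ with $E(u) = m(2,2)$, i.e. it is a minimizer, and there is nothing more to prove.

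It remains to treat $d = 0$ and $d = 1$, and this is where the threshold $\varrho'_2$ enters, which I expect to be the \emph{main point} of the argument. For $d = 0$ the balance reads $m(2,2) = 4\pi$, which I would contradict by testing with the radial competitor: since $m(2,2) \leq E(u_2) = 4\pi\frac{1-\varrho^2}{1+\varrho^2} < 4\pi$, this case cannot occur. For $d = 1$ the balance gives $m(2,2) = m(1,1) + 2\pi$. Here I would invoke the computation behind theorem \ref{nonminrad}: the hypothesis $\varrho \geq \varrho'_2$ is exactly $g_2(\varrho) \geq 0$, which is equivalent to $E(u_2) \leq E(u_1) + 2\pi = m(1,1) + 2\pi$. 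Combining this with $m(2,2) \leq E(u_2)$ yields the chain $m(2,2) \leq E(u_2) \leq m(1,1) + 2\pi = m(2,2)$, forcing $E(u_2) = m(2,2)$; thus the radial map $u_2 \in \I_{2,2}$ realizes the infimum.

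In every case $m(2,2)$ is attained, which is the claim. The crux, and the only place where the specific inequality $\varrho \geq \varrho'_2$ is used, is this last comparison in the $d=1$ case: below $\varrho'_2$ one has $E(u_2) > m(1,1) + 2\pi$, so the fall to $\I_{1,1}$ is strictly energetically favorable and no minimizer need exist, whereas at or above $\varrho'_2$ the radial solution itself caps the infimum from above at the right level, so the dichotomy can only terminate in a genuine minimizer. I anticipate no serious computational difficulty beyond correctly translating the threshold condition $g_2(\varrho)\geq 0$ into the energy inequality $E(u_2)\leq m(1,1)+2\pi$, which is already available from the proof of theorem \ref{nonminrad}.
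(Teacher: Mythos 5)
Your argument is, in substance, the paper's own: the same inputs (lemmas \ref{fall} and \ref{behavior1}, the non-existence corollaries, and the threshold inequality $E(u_2)\leq m(1,1)+2\pi$ equivalent to $g_2(\varrho)\geq 0$) are combined in the same way, merely organized as a direct case analysis rather than a proof by contradiction. Your treatment of the diagonal cases $d=0,1,2$ is correct, and your observation that in the $d=1$ branch the radial map $u_2$ itself realizes the infimum is a valid (branch-local) refinement.

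There is, however, one genuine gap: the reduction to $(r,s)\in\{0,1,2\}^2$. You justify it by reading lemma \ref{fall} literally, with $(r,s)\in\mathbb{N}^2$, but that restriction is not what the lemma's proof establishes -- the proof only yields the one-sided bounds $r\leq p$, $s\leq q$ (for $p,q>0$), and the $\mathbb{N}^2$ is evidently a slip for $\mathbb{Z}^2$; indeed the paper's own proof of this very proposition explicitly confronts the possibility that the weak limit lies in $\I_{r,s}$ with $r>0>s$ (or $s>0>r$). This case cannot be dismissed by your ``no solution, hence no minimizer'' elimination, because by theorem \ref{theorem1} those classes \emph{do} contain solutions, all of which are minimizers (the holomorphic, resp.\ antiholomorphic, maps). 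It must instead be excluded by an energy identity, exactly parallel to your $d=0$ argument: if $u\in\I_{r,s}$ with $2\geq r>0>s$ and $u$ holomorphic, then $E(u)=m(r,s)=\pi(r+|s|)$ while lemma \ref{behavior1} gives $E(u)=m(2,2)-\pi\left((2-r)+(2+|s|)\right)$, whence $m(2,2)=4\pi+2\pi|s|>4\pi$, contradicting $m(2,2)\leq E(u_2)<4\pi$. The same computation (or the remark that $E(u)\geq 0$) disposes of diagonal classes $\I_{d,d}$ with $d<0$. With this case added your proof closes and coincides with the paper's.
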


\begin{proof}
Thanks to theorem \ref{BerlyandMironescu} we know that $u_1(z)=\frac{1}{1+\varrho}(r+\frac{\varrho}{r})e^{i\theta}$ is a minimizer of $E$ in $\I_{1,1}$ for all $0<\varrho<1$ thus $m(1,1)= E(u_1)=2\pi \frac{1-\varrho}{1+\varrho}$. Besides from the definition of $\varrho'_2$ (c.f. \eqref{inequrad}) it holds that if $\varrho \geq \varrho'_2$ then $u_2(z)= \frac{1}{1+\varrho^2}(r^2+\frac{\varrho^2}{r^2})e^{i2\theta}$ satisfies that 
\begin{equation}\label{conttradiction}
E(u_2) \leq m(1,1)+2\pi.
\end{equation}

By contradiction if $m(2,2)$ is not attained for $\varrho \geq \varrho'_2$. Then applying lemma \ref{behavior1} and \ref{fall} we can see that 
\begin{equation}\label{contradictiion}
m(2,2)=m(1,1)+2\pi.
\end{equation}
Indeed let $(u^{(n)})$ be a minimizing sequence for $E$ in $\I_{2,2}$ thanks to lemma \ref{behavior1} we can deduce that $u^{(n)}$ converges weakly in $H^1$ to some $u \in \I_{r,s}$ and such that $E(u)=m(r,s)$. Furthermore lemma \ref{fall} allows us to obtain that $r<2$ and $s<2$. We claim that $r=s=1$. The argument is similar to the one of proposition \ref{theorem32}. Since  $u$ is a solution of \eqref{semi-stiff 1}, we have $u \in \I_{r,r}$ for some $r \in \mathbb{Z}$, $u \in \I_{r,s}$ with $2>r>0>s$ and $u$ is holomorphic or $u \in \I_{r,s}$ with $2>s>0>r$ and $u$ is antiholomorphic. We claim that the two last cases do not occur. Indeed if $u \in \I_{r,s}$ with $r>0>s$ and $u$ is holomorphic for example. We then have 

\begin{eqnarray}
E(u) &=& \pi(r+|s|) =  \lim_{n\rightarrow +\infty } E(u_n) -\pi(2-r +2+|s|) \nonumber \\
&=& m(2,2) -4\pi +\pi(r-|s|) \label{ineq22}
\end{eqnarray}

The last equalities are obtained by using lemma \ref{behavior1} and \ref{minholo}. However because we always have $m(1,1) < 2\pi$  we obtain that $m(2,2) < 2\pi +2\pi=4\pi$  by lemma \ref{Comparison1}. This implies, using \eqref{ineq22} that $\pi(r+|s|) < \pi(r-|s|)$ which is a contradiction. The reasoning for $u$ antiholomorphic is the same. \\
Thus we obtain that $u \in \I_{r,r}$ for some $r \in \mathbb{Z}, r <2$ and 

\begin{equation}\label{eqfall22}
E(u)=m(r,r) = \lim_{n \rightarrow +\infty} E(u_n) -\pi(2-r +2-r) =m(2,2)-4\pi +2r\pi.
\end{equation}

Hence we deduce that $d \geq 1$. Indeed since  $m(2,2) < 4\pi$, if $r \leq 0$ then $E(u) <0$ which is a contradiction. Thus we obtain that $r=s=1$ and \eqref{contradictiion} holds. But this means that 
$$E(v) > m(1,1)+ 2\pi, \ \text{for all} \ v \in \I_{2,2}$$
and this is a contradiction with \eqref{conttradiction}. Thus the proposition is proved.
\end{proof}

Note that in the previous proposition we do not know if $u_2$ is a minimizer. We also do not know if a minimizer 
is unique (up to rotation). We obtain this information only requiring the stronger condition $\varrho >\varrho_2$ with 
$\varrho_2$ as in theorem \ref{unicityrad}. Note also that we are not able to obtain similar result for the case 
$p=q \geq 3$. Indeed the important ingredient in the proof of proposition \ref{casep=2} is that we know the exact 
value of $m(1,1)$ for all $0<\varrho <1$. We do not have this information for $m(2,2)$ and hence we can not argue by 
induction. Theorems \ref{unicityrad}, \ref{nonminrad} and proposition \ref{casep=2} prove \textbf{theorem \ref{theorem2}}.

\subsection{The case $c>0$ : geometry of the helicoid}

This case is more complicated than the previous one. This is because this time the imaginary part in the integral of \eqref{Hopfdef} is not single-valued because it is precisely $\arg(z)$. Thus it is more difficult to make a clear link between the problem \eqref{semi-stiff 1} with $c>0$ and a minimal surface problem. However we can still lift a solution to the problem \eqref{semi-stiff 1} in a minimal surface if we define the surface in an appropriated domain.

\begin{proposition}\label{buildingminsurf2}
Let $u$ be a solution of \eqref{semi-stiff 1} with $\H_u(z)=\frac{c}{z^2}$ $c>0$  then 
\begin{equation}
\begin{array}{rcll}
X:[\varrho,1]\times [0,2\pi[ & \rightarrow & \R³ \\
(r,\theta) & \mapsto & (\tilde{u}(r,\theta), -2\sqrt{c}\theta)
\end{array}
\end{equation}

(where $\tilde{u}(r,\theta)=u(r\cos\theta,r\sin\theta))$ is a conformal immersion which parametrizes a minimal surface.
\end{proposition}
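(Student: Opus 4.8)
The plan is to read this statement as the $c>0$ counterpart of Proposition \ref{buildingminsurf1}, that is, as an application of the lifting of Proposition \ref{lifting}, the only genuinely new feature being that the height function produced by the Hopf differential is now multivalued on $A$. First I would identify the height. Since $\H_u(z)=\frac{c}{z^2}$ with $c>0$, choosing the branch $\sqrt{\H_u(z)}=-\frac{\sqrt c}{z}$ in \eqref{Hopfdef} gives
$$h(z)=\text{Re}\int_{z_0}^{z}-2i\sqrt{\H_u(z)}\,dz=\text{Re}\big(2i\sqrt c\,\log z\big)=-2\sqrt c\,\arg z,$$
so that the third component of $X$ is exactly $h=-2\sqrt c\,\theta$. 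The imaginary part of the primitive is now $\arg z=\theta$, which is single-valued only once $\theta$ is taken as a coordinate ranging over $[0,2\pi[$; this is precisely why the parameter domain is the slit annulus $[\varrho,1]\times[0,2\pi[$ rather than $A$ itself.

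I would then verify the two conditions that, by Proposition \ref{minsurfaceanddharmfunction}, make $X=(u,h)$ a conformal parametrisation of a minimal surface. Harmonicity is immediate: $\Delta u=0$ because $u$ solves \eqref{semi-stiff 1}, while $h=-2\sqrt c\,\text{Im}(\log z)$ is locally the imaginary part of a holomorphic function, hence $\Delta h=0$ and $\Delta X=0$. For the conformality relation \eqref{conformality2} I would compute directly from $h=-2\sqrt c\,\theta$ that $\partial_z h=i\frac{\sqrt c}{z}$, whence $(\partial_z h)^2=-\frac{c}{z^2}=-\H_u$ and therefore
$$\H_u+(\partial_z h)^2=0,$$
which is exactly \eqref{conformality2}. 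Spelled out in polar coordinates this is the pair consisting of $\langle\partial_r u,\partial_\theta u\rangle=0$ from \eqref{orthoradial} and the equality of the two diagonal metric coefficients, which follows from the fact that $z^2\H_u\equiv c$ (i.e.\ relation \eqref{constHopf}) once the contribution of the height component is added; this short computation is what I would present in detail.

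It remains to check that $X$ is an immersion. Because the parametrisation is conformal, its conformal factor $\lambda$ satisfies $|\nabla X|^2=2\lambda$, and since $h$ is one of the three components of $X$ we have $|\nabla h|^2\le |\nabla X|^2=2\lambda$; as $|\nabla h|^2=4c\,|\nabla\theta|^2=\frac{4c}{r^2}>0$ on $A$, it follows that $\lambda\ge\frac{2c}{r^2}>0$, so $\partial_x X$ and $\partial_y X$ are everywhere orthogonal with a common nonzero length, and $X$ is an immersion. Here the sign $c>0$ is doing the work: it is the height, not $u$, that guarantees the immersion property. I expect the only real obstacle to be the conceptual point already isolated above, namely making precise the sense in which $X$ is conformal: the height $h=-2\sqrt c\,\theta$ increases by $-4\pi\sqrt c$ along a loop encircling the hole and hence does not descend to a single-valued map on $A$, so one must work on the slit annulus with conformality understood with respect to the complex coordinate $z=re^{i\theta}$. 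All the pointwise identities above are insensitive to the slit, so passing to the cut domain repairs the multivaluedness without affecting any of the computations.
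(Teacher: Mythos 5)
Your proof is correct and follows the same route the paper intends: the paper offers no written proof for this proposition (its $c<0$ twin, Proposition \ref{buildingminsurf1}, is dispatched with ``this is an application of proposition \ref{lifting}''), and your argument is exactly that application spelled out --- choice of the branch $\sqrt{\H_u}=-\sqrt{c}/z$ giving $h=-2\sqrt{c}\,\theta$, verification of $\H_u+(\partial_z h)^2=0$, and the passage to the slit domain $[\varrho,1]\times[0,2\pi[$ to cure the multivaluedness of $\arg z$, which is precisely the point the paper flags in the remark after Proposition \ref{lifting}. Your extra check that $|\nabla h|^2=4c/r^2>0$ forces the conformal factor to be positive (so $X$ is genuinely an immersion) is a detail the paper glosses over, and it is welcome.
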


We also have radial solutions of problem \eqref{semi-stiff 1} with $c>0$.
\begin{proposition}
Let $p \geq 1$ then for all $\alpha \in \mathbb{S}^1$ the functions 

\begin{equation}
\tilde{u}_p(z)= \frac{\alpha}{1-\varrho^p}(r^p-\frac{\varrho^p}{r^p})e^{ip\theta}
\end{equation}

\noindent are (non-minimizing) solutions of \eqref{semi-stiff 1}. Furthermore we have 
$E(\tilde{u}_p)=2\pi p\frac{1+\varrho^p}{1-\varrho^p}> E(u_p)= 2\pi p\frac{1-\varrho^p}{1+\varrho^p}$.
\end{proposition}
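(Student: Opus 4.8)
The plan is to verify directly that each $\tilde{u}_p$ satisfies the three conditions of \eqref{semi-stiff 1} and lies in $\I_{p,p}$, and then to compute its energy explicitly and compare it with that of the radial map $u_p$. Since all three boundary conditions and the energy $E$ are invariant under multiplication by a unimodular constant, it suffices to treat $\alpha=1$. First I would rewrite the map in complex form: since $r^pe^{ip\theta}=z^p$ and $r^{-p}e^{ip\theta}=\bar z^{-p}$, we have $\tilde{u}_p(z)=\frac{1}{1-\varrho^p}\bigl(z^p-\varrho^p\bar z^{-p}\bigr)$, a sum of a holomorphic and an antiholomorphic function, hence harmonic on $A$ (where $\bar z\neq 0$). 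This also exhibits $\tilde{u}_p$ in the form $A(r)e^{ip\theta}$ with $A(r)=\frac{1}{1-\varrho^p}(r^p-\varrho^p r^{-p})$ real-valued, which makes the remaining checks transparent.

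Next I would check the boundary behaviour. On $\mathbb{S}^1$ one has $A(1)=1$, so $\tilde{u}_p=e^{ip\theta}$, of modulus one and degree $p$; on $C_\varrho$ one has $A(\varrho)=\frac{\varrho^p-1}{1-\varrho^p}=-1$, so $\tilde{u}_p=-e^{ip\theta}$, again of modulus one and of degree $p$. Thus $|\tilde{u}_p|=1$ on $\partial A$ and $\tilde{u}_p\in\I_{p,p}$. For the third condition, observe that $\partial_r\tilde{u}_p=A'(r)e^{ip\theta}$ is a real multiple of $\tilde{u}_p=A(r)e^{ip\theta}$, so the two vectors are collinear in $\R^2$ and $\tilde{u}_p\wedge\partial_r\tilde{u}_p=A(r)A'(r)\,(e^{ip\theta}\wedge e^{ip\theta})=0$; since $\partial_\nu=\pm\partial_r$ on $\partial A$ this gives $\tilde{u}_p\wedge\partial_\nu\tilde{u}_p=0$. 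Hence $\tilde{u}_p$ solves \eqref{semi-stiff 1}.

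For the energy I would work in polar coordinates, using $|\nabla\tilde{u}_p|^2=|\partial_r\tilde{u}_p|^2+r^{-2}|\partial_\theta\tilde{u}_p|^2=A'(r)^2+p^2r^{-2}A(r)^2$. The step requiring the most care is the algebra: expanding $A'(r)^2$ and $p^2r^{-2}A(r)^2$ produces cross terms proportional to $\varrho^pr^{-2}$ which cancel, leaving $|\nabla\tilde{u}_p|^2=\frac{2p^2}{(1-\varrho^p)^2}\bigl(r^{2p-2}+\varrho^{2p}r^{-2p-2}\bigr)$. Integrating against $r\,dr\,d\theta$ over $A$, both resulting radial integrals equal $\frac{1-\varrho^{2p}}{2p}$, so $E(\tilde{u}_p)=2\pi\frac{p^2}{(1-\varrho^p)^2}\cdot\frac{1-\varrho^{2p}}{p}$; factoring $1-\varrho^{2p}=(1-\varrho^p)(1+\varrho^p)$ yields $E(\tilde{u}_p)=2\pi p\frac{1+\varrho^p}{1-\varrho^p}$.

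Finally, the analogous (and shorter, since there is no cancellation to track) computation for $u_p=\frac{1}{1+\varrho^p}(r^p+\varrho^pr^{-p})e^{ip\theta}$ gives $E(u_p)=2\pi p\frac{1-\varrho^p}{1+\varrho^p}$, with $u_p\in\I_{p,p}$ by the same boundary check. Since $(1+\varrho^p)^2>(1-\varrho^p)^2$ for $0<\varrho<1$, we have $E(\tilde{u}_p)>E(u_p)$; as $u_p$ is a competitor in $\I_{p,p}$ of strictly smaller energy, $\tilde{u}_p$ cannot minimise $E$ in $\I_{p,p}$, which is precisely the assertion that it is non-minimising. I expect no genuine obstacle beyond careful bookkeeping in the energy integral; the only conceptual point is recognising that the collinearity of $\tilde{u}_p$ and $\partial_r\tilde{u}_p$ makes the free-boundary condition $u\wedge\partial_\nu u=0$ automatic.
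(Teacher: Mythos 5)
Your proof is correct and is exactly what the paper intends: the paper's own proof is just the remark ``The proof is a simple verification,'' and your argument carries out precisely that verification (harmonicity via the decomposition $z^p-\varrho^p\bar z^{-p}$, the boundary and degree checks, the collinearity of $\tilde u_p$ and $\partial_r\tilde u_p$, the energy computation, and the comparison with $u_p$). The only nitpick is your parenthetical claim that the computation for $u_p$ involves ``no cancellation'': the cross terms $\pm 2\varrho^p r^{-2}$ cancel there in exactly the same way, but this does not affect the correctness of anything you wrote.
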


The proof is a simple verification.  \\

The solution $\tilde{u}_p$ when lifted to a minimal surface with proposition \ref{buildingminsurf2} gives rise to an helicoid. It is a well-known fact that catenoid and helicoid are two minimal surfaces which are conjugates to each other. However if we denote by $S$ the minimal surface obtained by proposition \ref{buildingminsurf1} and the solution $u_p$ and by $\tilde{S}$ the minimal surface obtained by proposition \ref{buildingminsurf2} then $S$ and $\tilde{S}$ are not conjugates. Indeed if they were, in terms of isothermal parameters the height function of $S$, denoted by $h$ and the height function of $\tilde{S}$ denoted by $\tilde{h}$ would satisfy 
$$\partial_zh =i\partial_z \tilde{h}$$
because of the definition of conjugate surface (see for example \cite{minimalsurfaces} p.93 or \cite{Hauswirth}). On the other hand because of relation \eqref{conformality2} we have  $\H_u +(\partial_z h)^2=0$ and $\H_{\tilde{u}} +(\partial_z \tilde{h})^2=0$. Hence we would obtain
$$\H_u(z)=-\H_{\tilde{u}}(z).$$
But a direct computation shows that $z^2\H_u(z)=\frac{-4p^2\varrho^p}{(1+\varrho^p)^2}$ and $z^2\H_{\tilde{u}}(z)=\frac{4p^2\varrho^p}{(1-\varrho^p)^2}$.

\section{Non rotationally symmetric minimal surfaces bounded by two circles in parallel planes}
The aim of this section is to prove that there exist non radially symmetric solutions to problem 

\begin{equation}\nonumber
\left\{
\begin{array}{rcll}
\Delta u & =& 0, \ \ \text{in} \ A, \\
|u|&=&1, \ \ \text{a.e.  on} \ \partial  A, \\
u\wedge \partial_\nu u & =& 0, \ \ \text{a.e.   on} \ \partial A.
\end{array}
\right.
\end{equation}

We have seen that finding solutions to \eqref{semi-stiff 1} with $c<0$ (where $c$ is the constant in the Hopf differential) is equivalent to finding minimal surfaces bounded by two circles in parallel planes. A beautiful theorem of M.Shiffman (see \cite{Shiffman}) proves that such a surface is necessarily a portion of a catenoid if the degree of the immersion is one. However we prove in this section the existence of non rotationally symmetric minimal surfaces obtained by bifurcation of a $p$-covering ($p\geq2$) of the  catenoid. 

\begin{theorem}\label{nonsymmetricmminsurf}
There exist non-rotationally  symmetric immersed  minimal surfaces bounded by two concentric $p$-coverings of circles in parallel planes. These surfaces are symmetric with respect to reflections around the planes $P:=\{(x,y,z) \in \R^3; \ z=0\}$ and $P':=\{(x,y,z)\in \R³; \ y=0\}$.
\end{theorem}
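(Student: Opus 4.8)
The plan is to realize the non-rotationally-symmetric surfaces as a branch bifurcating from the one-parameter family of $p$-covered catenoids, and to produce this branch via the Crandall--Rabinowitz theorem \cite{Crandall}. Throughout I use the equivalence established above between solutions of \eqref{semi-stiff 1} with $c<0$ and doubly connected minimal surfaces bounded by two $p$-coverings of circles in parallel planes, so that it suffices to construct the minimal surfaces. By Proposition \ref{1symetry} and Lemma \ref{reduction1}, together with the Schwarz reflection principle used in Proposition \ref{planesymmetry}, I may restrict attention to surfaces symmetric across $P=\{z=0\}$, reducing to one half of the surface carrying a homogeneous Neumann condition on $C_{\sqrt{\varrho}}$; I additionally impose symmetry across $P'=\{y=0\}$, i.e. I work in the class of normal perturbations invariant under $\theta\mapsto-\theta$ composed with complex conjugation. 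These two symmetry restrictions are exactly what will later cut the kernel of the linearized operator down to one dimension.

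Concretely, I fix an abstract parameter domain (equivalently the fixed annulus) and parametrize the $p$-covered catenoids by a single real parameter $\lambda$, the geometric aspect ratio of the catenoid, equivalently the induced conformal modulus. Writing a neighbouring surface as a normal graph of height $w$ over the catenoid $\Sigma_\lambda$, the minimal-surface equation, supplemented by the requirement that the two boundary curves remain round circles in the two fixed parallel planes, becomes a quasilinear elliptic boundary value problem
\begin{equation}\nonumber
F(\lambda,w)=0,\qquad F:\R\times X\to Y,
\end{equation}
posed between H\"older (or Sobolev) spaces $X,Y$ of functions carrying the two symmetries above, and normalized so that the trivial branch $F(\lambda,0)=0$ encodes precisely the $p$-covered catenoids. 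The map $F$ is smooth; being a uniformly elliptic problem with admissible boundary conditions, $D_wF(\lambda,0)$ is Fredholm of index zero, so a one-dimensional kernel automatically forces the range to have codimension one.

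The heart of the argument is the analysis of the linearization $L_\lambda:=D_wF(\lambda,0)$, which is the Jacobi operator $\Delta_{\Sigma_\lambda}+|A|^2$ of the $p$-covered catenoid together with the linearized planarity/Neumann boundary conditions. Since $\Sigma_\lambda$ is rotationally invariant as an immersed $p$-fold surface, $L_\lambda$ separates in the angular variable, and expanding a symmetric perturbation in modes $\cos(k\theta)$ reduces the kernel equation, mode by mode, to a second-order ODE in the meridian variable. I would then exhibit a critical value $\lambda_0$ (equivalently a critical modulus) at which this ODE first admits a nontrivial solution in a fixed mode $k$ with $1\le k\le p-1$. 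The restriction $p\ge 2$ enters precisely here: these low modes are genuinely shape-changing and are \emph{new} relative to the single catenoid ($p=1$), where they are absent by Shiffman's theorem; in particular, with the imposed symmetries and the planar-boundary constraint they are neither the mode $k=0$ (moving one along the catenoid family, the neck deformation) nor a rigid motion of $\R^3$. Proving that the kernel at $\lambda_0$ is exactly one-dimensional is the first delicate point.

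The second, and I expect the principal, obstacle is the transversality hypothesis of Crandall--Rabinowitz. Letting $w_0$ span $\ker L_{\lambda_0}$, I must verify that $(\partial_\lambda L_{\lambda})|_{\lambda=\lambda_0}\,w_0\notin\operatorname{Range}(L_{\lambda_0})$, equivalently that the eigenvalue responsible for the kernel crosses zero with nonzero speed as $\lambda$ passes $\lambda_0$. This is a monotonicity statement for the meridian eigenvalue as a function of the catenoid parameter, and checking it requires a careful but elementary differentiation of the radial problem using the explicit catenoid profile. Once simplicity and transversality are established, the Crandall--Rabinowitz theorem yields a $C^1$ curve of solutions $w(s)\not\equiv 0$ through $(\lambda_0,0)$. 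Because $w(s)$ lies in the chosen symmetry class and carries a nonzero $\cos(k\theta)$ component with $k\ge 1$, the associated surfaces are symmetric with respect to $P$ and $P'$ yet not rotationally symmetric, which is exactly the assertion of the theorem.
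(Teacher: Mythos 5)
Your architecture is exactly the paper's: perturb the $p$-covered catenoids as normal graphs over a fixed parameter domain, impose evenness in the meridian variable and in $\theta$, separate the Jacobi operator into angular modes, and apply Crandall--Rabinowitz at a parameter where a mode $\cos(k\theta)$, $1\le k\le p-1$, enters the kernel, with transversality coming from an eigenvalue crossing zero at nonzero speed. But the two steps you yourself flag as ``delicate'' are precisely the mathematical content of the theorem, and you leave both unproved. First, you give no mechanism for the \emph{existence} of the critical parameter. The paper's mechanism is a comparison with the complete $p$-covered catenoid: the meridian problem $w''+\bigl(\tfrac{2p^2}{\cosh^2(pr)}+\mu\bigr)w=0$ on all of $\R$ has the explicit positive eigenfunction $1/\cosh(pr)$ with eigenvalue $\mu_1=-p^2$, while for small height $t$ the truncated surface is stable by Barbosa--do Carmo (the Gauss image has area $<2\pi$), so $\mu_1(A_t)\ge 0$; continuity and strict domain-monotonicity of $\mu_1$ then produce instants $t_1<\dots<t_{p-1}$ with $\mu_1(A_{t_k})=-k^2$, i.e.\ where the mode-$k$ kernel appears. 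This is exactly where $p\ge 2$ enters: for $p=1$ the potential only drives $\mu_1$ down to $-1$ in the limit $t\to\infty$, so no crossing occurs at finite $t$. Your substitute justification --- that for $p=1$ such modes ``are absent by Shiffman's theorem'' --- is not an argument: Shiffman's theorem is a rigidity statement about the surfaces, not a spectral statement about the Jacobi operator, and in any case it gives you nothing constructive for $p\ge 2$.

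Second, your plan to bifurcate ``in a fixed mode $k$ with $1\le k\le p-1$'' would fail as stated for $k\ge 2$: at $t_k$ with $k\ge 2$ one cannot exclude extra kernel elements coming from lower modes (e.g.\ at $t_2$ it could happen that $\mu_2(A_{t_2})=-1$, contributing $f(r)\cos\theta$ inside your symmetry class), and the paper explicitly remarks that it cannot prove simplicity there. One must take $k=1$, i.e.\ work at $t_1$, and the one-dimensionality of the kernel in the even class then rests on a Sturm--Liouville argument (borrowed from Shiffman's paper): the $\mu=0$ solutions $a\tanh(pr)+b\,[1-pr\tanh(pr)]$ of the meridian equation have at most two zeros, which forces $\mu_2(A_{t_1})>0$, hence the kernel is spanned by $v_1(r)\cos\theta$ and $v_1(r)\sin\theta$ and evenness in $\theta$ cuts it to one dimension (one also checks $v_1(-r)=v_1(r)$ so that this function really lies in the symmetric space). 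Your transversality route is the correct one and matches the paper, which implements it via strict monotonicity of the eigenvalue in $t$ together with Kato's analytic perturbation theory to differentiate the eigenbranch; but as written it too is a statement of intent, not a proof. Finally, you infer non-rotational symmetry directly from the nonzero $\cos(k\theta)$ component; the paper supplements this with a short but necessary argument that the bifurcating graphs are not reparametrizations of catenoids (the only other catenoid through the same circles --- the stable one --- lies outside a small tubular neighbourhood of $X_t$, and since $u_1$ changes sign the bifurcating surface lies on both sides of $X_t$), which you should include.
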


\begin{proof}

Let us parametrize a catenoid covered $p$ times ($p \geq 2$) by 

\begin{equation} \nonumber
X(r,\theta)=(\cosh(pr)\cos(p\theta),\cosh(pr)\sin(p\theta),pr), \ \ \ \text{for} \ r\in ]-\infty,+\infty[,\ \theta \in [-\pi,\pi].
\end{equation}

This immersion provides us with a family of compact portions of catenoids. The parameter of that family is $t$ such that $r \in [-t, t]$ ($t$ is the height of the compact portion of the catenoid).
We want to parametrize this family on a fix domain so we set 

\begin{equation}
\begin{array}{rclll}
X_t(r,\theta) &: &[-1,1] \times [-\pi,\pi] & \rightarrow & \R^3 \\
 & &(r,\theta) &\mapsto & (\cosh(tpr)\cos(p\theta),\cosh(tpr)\sin(p\theta),tpr)

\end{array}
\end{equation}

\noindent Note that this parametrization  is not conformal anymore if $t\neq 1$.
Let
$$N_t =\frac{\partial_rX_t \wedge \partial_\theta X_t}{|\partial_rX_t \wedge \partial_\theta X_t|}$$
be the unit normal vector to $X_t$, 
$$N_t = \frac{1}{\cosh^2(tpr)}(\cosh(tpr)\cos(p\theta),\cosh(tpr)\sin(p\theta),\sinh(tpr)\cosh(tpr)).$$ 
 Let us fix $0< \alpha<1$. We denote by $H$ the mean curvature operator from $\mathcal{C}^{2,\alpha}$ to $C^{0,\alpha}$. We  look for minimal surfaces of the form 

\begin{equation}\nonumber
Y_t=X_t+uN_t, \ \ \ \text{with} \ u\in \mathcal{C}^{2,\alpha}_0([-1,1]\times [-\pi,\pi], \R),
\end{equation}
\noindent where $\mathcal{C}^{2,\alpha}_0([-1,1]\times [-\pi,\pi], \R)$ is the set of $\mathcal{C}^{2,\alpha}$ functions in the annulus which are null on the boundary $\partial A$ . However for $t$ fixed $Y_t$ is an immersion only if $u$ is small enough (in the $\mathcal{C}^{2,\alpha}$ norm), that is why we need to control the norm of $u$. Moreover we need this control to be independent of $t$. 

\begin{lemma}\label{immersion}
If $\|u\|_{\mathcal{C}^{2,\alpha}} < 1$ then $Y_t=X_t+uN_t$ defines an immersion for all $t\in \R^*$.
\end{lemma}

\begin{proof}
Let us drop the subscript $t$ for this proof. Differentiating $Y$ with respect to $r$ and $\theta$ we find:
\begin{eqnarray}
\partial_r Y &=& \partial _r X+u\partial_r N +\partial_r u N \nonumber \\
\partial_\theta Y & = & \partial _\theta X + u \partial_\theta N +\partial_\theta u N \nonumber
\end{eqnarray}

We denote by $E,F,G$ the coefficients of the first fundamental form and by $e,f,g$ the ones of the second fundamental form of $X_t$. We use the fact that $f=0$ (the first and second fundamental forms of $X_t$ are computed in step 1 of this section) and  we can write 
\begin{eqnarray}
\partial_rN &=&\frac{-e}{E}\partial_r X \nonumber \\
\partial_\theta N &=& \frac{-N}{G}\partial_\theta X \nonumber 
\end{eqnarray}

\begin{eqnarray}
\partial_r Y \wedge \partial_\theta Y = (1-u\frac{e}{E})(1-u\frac{N}{G}) \partial_r X \wedge \partial_\theta X + (1-u\frac{e}{E})\partial_rX \wedge N +(1-u\frac{N}{G})\partial_\theta X \wedge N \nonumber \\
= (1-u(\frac{Ge+EN}{EG})+u^2\frac{eN}{EG}) \partial_rX \wedge\partial_\theta X \nonumber \\
+(1-u\frac{e}{E})\partial_rX \wedge N +(1-u\frac{N}{G})\partial_\theta X \wedge N. \nonumber
\end{eqnarray}

\noindent Thus a sufficient condition for $Y$ to be an immersion is 
\begin{equation}\nonumber
1-u(\frac{Ge+Eg}{EG})+u^2\frac{eg}{EG} \neq 0 
\end{equation}
\noindent and because $\frac{Ge+Eg}{EG}=0$ since $X$ is a minimal surface this amounts to ask
\begin{equation}\nonumber
1-u^2 \frac{1}{\cosh^4(tpr)} \neq  0. 
\end{equation}
Hence if $\|u\|_{L^\infty} <1$ then $1-u^2 \frac{1}{\cosh^4(tpr)} \neq 0$ for all $t \in \R$ and $X+uN$ is an immersion. This result is also true if $\|u\|_{\mathcal{C}^{2,\alpha}} <1$.

\end{proof}

We are interested in minimal surfaces which are normal on domain of the catenoid. Since we have 

$${Y_t} _{|{\{-1\}\times [-\pi,\pi[}} ={X_t} _{| {\{-1\} \times [-\pi,\pi]}}$$
$${Y_t} _{|{\{1\}\times [-\pi,\pi[}} ={X_t} _{| {\{1\}\times [-\pi,\pi]}} $$

\noindent the surfaces $Y_t$ are bounded by two $p$ coverings of circles in parallel planes, more specifically $Y_t$ and $X_t$ have same boundaries. We are looking for solutions of the following problem
\begin{equation}\label{bifurcation}
F(t,u):=H(X_t+uN_t)=0,
\end{equation}
   
\noindent with $u\in V:=\{v \in \mathcal{C}^{2,\alpha}_0([-1,1]\times [-\pi,\pi], \R); \ \|v\|_{\mathcal{C}^{2,\alpha}}<1 \}$. Thus if $u$ is a solution of \eqref{bifurcation} for some $t\in \R$ then $Y_t$ is a minimal surface bounded by two coaxial circles in parallel planes. The function $u =0$ is a trivial solution for all $t \in \R$ and we are looking for bifurcations from this trivial branch of solutions.

\begin{definition}
We say that $t^*$ is a bifurcation point for $F$ (from the trivial solution) if there is a sequence $(t_n,u_n) \in \R \times V $ with $u_n \neq 0$ and $F(t_n,u_n)=0$ such that 
$$(t_n,u_n) \rightarrow (t^*,0).$$
\end{definition}
We set 

$$ \E = \{ u\in \mathcal{C}^{2,\alpha}_0([-1,1]\times [-\pi,\pi], \R); u(r,\theta)=u(-r,\theta) \ \text{and} \ u(r,-\theta)=u(r,\theta)  \}. $$
\noindent and

$$\mathcal{V}:= V \cap \E.$$

We want to find a non trivial branch of solutions $u$ of \eqref{bifurcation} in $\mathcal{V}$ by applying the Crandall-Rabinowitz theorem (see \cite{Crandall}). Let us recall the conditions which allow us to apply this theorem. We must prove that 

\begin{itemize}\label{Conditions}
\item[1)] There exists some $t^*$ and $u^*$ such that $\text{Ker} (D_uF(t^*,0))= \text{Vect}(u^*)$.
\item[2)] For this $t^*$, $\text{Im} (D_uF(t^*,0))$ is of codimension one. 
\item[3)] Letting $M:= D_{u,t}F(t^*,0)$ we have $Mu^* \notin \text{Im}(D_uF(t^*,0))$.
\end{itemize}

By definition $D_uF(t,0)$ is the Jacobi operator of the surface $X_t$ hence we must compute the Jacobi operator of the immersion $X_t$.
\begin{itemize}

\item[\textbf{Step 1}:] Computation of the Jacobi operator. \\

Here we give the computations which lead to the expression of the Jacobi operator. The formula for the Jacobi operator is 

\begin{equation}
J_tu:=D_uF(t,0) =\Delta_{X_t} u-2K_tu
\end{equation}
\noindent where $\Delta_{X_t}$ is the Laplace-Beltrami operator of the surface $X_t$ and $K_t$ is the Gauss curvature of $X_t$. Both quantities can be computed with the first and second fundamental forms. Let
\begin{eqnarray}
I&=& Edr^2+2Fdrd\theta +Gd\theta^2 \nonumber \\
II&=& edr^2+2fdrd\theta+gd\theta^2 \nonumber
\end{eqnarray}
denote respectively the first and second fundamental forms.
One has

\begin{eqnarray}
\partial_rX_t &=& pt(\sinh(tpr)\cos(p\theta),\sinh(tpr)\sin(p\theta),1) \nonumber \\
\partial_\theta X_t &=& p (-\cosh(tpr)\sin(p\theta),\cosh(tpr)\cos(p\theta),0). \nonumber
\end{eqnarray}
Thus we can write the first fundamental form of $X_t$. 

\begin{eqnarray}
E = \langle \partial_r X_t,\partial_rX_t \rangle &=& t^2p^2(\sinh^2(tpr)+1)= t^2p^2\cosh^2(tpr) \nonumber \\
F= \langle \partial_rX_t, \partial_\theta X_t \rangle &=& 0 \nonumber \\
G= \langle \partial_\theta X_t,\partial_\theta X_t \rangle &=& p^2\cosh^2(tpr). \nonumber
\end{eqnarray}

In order to obtain the second fundamental form we also compute the second derivative  and the normal vector to $X_t$.

\begin{eqnarray}
N_t &=& \frac{1}{\cosh^2(tpr)}(\cosh(tpr)\cos(p\theta),\cosh(tpr)\sin(p\theta),\sinh(tpr)\cosh(tpr)) \nonumber \\
\partial_r^2 X_t &=& t^2p^2 (\cosh(tpr)\cos(p\theta),\cosh(tpr)\sin(p\theta),0) \nonumber \\
\partial_\theta ^2 X_t &=&-p^2(\cosh(tpr)\cos(p\theta),\cosh(tpr)\sin(p\theta),0) \nonumber \\
\partial_{r\theta}^2 X_t &=& tp^2(-\sinh(tpr)\sin(p\theta),\sinh(tpr)\cos(p\theta),0). \nonumber
\end{eqnarray}
Thus 
\begin{eqnarray}
e= \langle \partial_r^2 X_t, N_t \rangle &=& t^2p^2 \nonumber \\
f= \langle \partial_{r\theta}^2 X_t, N_t \rangle &=&0 \nonumber \\
g= \langle \partial_\theta ^2 X_t, N_t \rangle &=&-p^2  \nonumber.
\end{eqnarray}
Now we find (see for example \cite{minimalsurfaces}) that 
$$K_t=\frac{eg-f^2}{EG-F^2}=-\frac{1}{\cosh^4(tpr)}$$
and 
\begin{eqnarray}
\Delta_{X_t}&=&\frac{1}{\sqrt{EG}}(\partial_r(E^{-1}\sqrt{EG}\partial_r)+\partial_\theta (G^{-1}\sqrt{EG}\partial_\theta)) \nonumber \\
&=&\frac{1}{tp^2\cosh^2(tpr)}(\frac{1}{t}\partial_r^2+t\partial_\theta^2).\nonumber
\end{eqnarray}

Thus we obtain that 
$$J_tu=\frac{1}{t^2p^2\cosh^2(tpr)}\left[\partial_r ^2 u +t^2(\partial_\theta ^2+\frac{2p^2}{\cosh^2(tpr)})\right]u.$$ 

We can easily see that $J_t: \mathcal{C}^{2,\alpha}_0 \rightarrow \mathcal{C}^{0,\alpha}$ is a symmetric operator on the space $L^2 ([-1,1]\times [-\pi,\pi],dA)$ with the usual inner product. Here $dA$ denotes the area element defined by $dA=t^2p^2\cosh^2(tpr)drd\theta)$. Thus $J_t$ is a Fredholm operator with null index and if the condition 1) of the Crandall-Rabinowitz theorem is realized so is condition 2). 
 
\item[\textbf{Step 2}:] Decomposition in Fourier series and existence of Jacobi fields. \\
We are looking for non trivial solutions of $J_tv=0$
\begin{equation}\label{jacobi1}
\left\{
\begin{array}{rcll}
\partial_r ^2 v +t^2(\partial_\theta ^2+\frac{2p^2}{\cosh^2(tpr)})v &= &0 \\
v(1,\theta)=v(-1,\theta)= 0
\end{array}
\right.
\end{equation}
Finding a solution of \eqref{jacobi1} is equivalent to find a solution of 

\begin{equation}\label{jacobi2}
\left\{
\begin{array}{rcll}
\partial_r ^2 v +\partial_\theta ^2v+\frac{2p^2}{\cosh^2(pr)}v &= &0 \\
v(t,\theta)=v(t,\theta)= 0
\end{array}
\right.
\end{equation}

\noindent(indeed if we have a solution $w(r,\theta)$ of \eqref{jacobi2} then we set $\tilde{r}=\frac{r}{t}$, $\tilde{r} \in [-1,1]$ and we obtain that     $v(\tilde{r},\theta)= w(t\tilde{r},\theta)$ is a solution of \eqref{jacobi1}).

Let us denote by $A_t:=[-t,t]\times [-\pi,\pi]$ and $A_\infty:=\R \times [-\pi,\pi]$.
We denote by $J$ the operator $\partial_r^2+\partial_\theta^2+ \frac{2p^2}{\cosh(pr)}$. With a slight abuse of language we will also call $J$ the Jacobi operator on the surface $X_t$. We consider the following eigenvalue problem:

\begin{equation}\label{eigenvalue1}
\left\{
\begin{array}{rclll}
\partial_r^2v+\partial_\theta^2v +(\frac{2p^2}{\cosh^2(pr)}+\lambda )v =0 \\
v(t,\theta)=v(-t,\theta)=0.
\end{array}
\right.
\end{equation}
Finding a solution of \eqref{jacobi2} is equivalent to say that $0$ is an eigenvalue of $J$ in $A_{t_0}$. Now we use a decomposition in Fourier series:
$$v(r,\theta)=\sum_{n\in \mathbb{Z}} v_n(r)e^{in\theta}. $$
Then $v$ is solution of \eqref{eigenvalue1} if and only if $v_n$ is solution of 
\begin{equation}\label{eigenvalueFourier}
\left\{
\begin{array}{rcll}
v_n''(r)+\big[\frac{2p^2}{\cosh^2(pr)}+(\lambda-n^2)\big]v_n =0 \\
v_n(t)=v_n(-t)=0
\end{array}
\right.
\end{equation}
for all $n\in \mathbb{Z}$.
Hence we consider another eigenvalue problem

\begin{equation}\label{eigenvalueradial}
\left\{
\begin{array}{rcll}
w''(r)+(\frac{2p^2}{\cosh^2(pr)}+ \mu)w &= &0 \\
w(t)=w(-t)=0
\end{array}
\right.
\end{equation}
The eigenvalue of \eqref{eigenvalue1} and \eqref{eigenvalueradial} are linked by the following relation 

\begin{equation}\label{relation}
\lambda =\mu + n^2.
\end{equation}

More precisely the preceding relation means that if $\lambda$ is an eigenvalue of problem \eqref{eigenvalue1} 
then there exists an integer $n$ and a real number $\mu$ which is an eigenvalue of \eqref{eigenvalueradial} such 
that $\lambda=\mu+n^2$ and conversely if $\mu$ is an eigenvalue of \eqref{eigenvalueradial} then $\mu+n^2$ is an 
eigenvalue of \eqref{eigenvalue1} for all integers $n$. Thus we deduce that the first eigenvalues of the two 
problems are the same $\lambda_1=\mu_1$ (with $n=0$). \\
We use a result of Barbosa-Do Carmo to say that if $t$ is small enough then the portion of catenoid $X_t$ is stable and $\lambda_1(A_t) \geq 0$. Indeed let us denote by $g_t$ the Gauss map of $X_t$. If $t$ is small enough then the area of $g_t(X_t)$ is strictly less than $2\pi$ (this area depends continuously of $t$ and when $t=0$ it is equal to $0$). Then using the result of Barbosa Do-Carmo \cite{BarbosaDoCarmo} we find that $X_t$ is stable for $t$ small enough and the first eigenvalue of the Jacobi operator is non negative.
 However we claim that the first eigenvalue of the infinite catenoid covered $p$-times is $-p^2$. This is because the function
$$w(r)=\frac{1}{\cosh(pr)}$$ is a positive function which satisfies
\begin{equation}\label{eigenvalueinfinite}
\left\{
\begin{array}{rcll}
w''(r)+(\frac{2p^2}{\cosh^2(pr)} -p^2)w &= &0 \\
w(+\infty)=w(-\infty)=0
\end{array}
\right.
\end{equation}
thus $w$ is the first eigenfunction of the operator $J$ and  $$\lambda_1(A_\infty)=\mu_1(A_
\infty)=-p^2.$$ We know use the continuity and strict monotonicity of the eigenvalues of the operator $J$
with respect to variations of the domain. This fact is due to the min-max principle of Courant-Fischer for eigenvalue of a selfadjoint operator and we refer to \cite{courant1989methods} p.407, \cite{Chavel} p.18 or the appendix A for a proof.  We obtain:
there exist $t_0,t_1,...t_{p-1}$ such that 
$$\mu_1(A_{t_0})=0,\ \mu_1(A_{t_1})=-1, \ \mu_1(A_{t_2})=-4, ..., \ \mu_1 (A_{t_{p-1}})=-(p-1)^2.$$

Thus there exist $p$ possible instants of bifurcations. More precisely there exist  $t_0,t_1,...t_{p-1}$ such that $0$ is an eigenvalue of the Jacobi operator $J_{t_k}$ on $A_{t_k}$. The corresponding eigenfunctions are sums of functions of the form $v_k(r)(A\cos(k\theta)+B\sin(k\theta))$, where $v_k$ are simple eigenfunctions of \eqref{eigenvalueradial} associated to the eigenvalue $-k^2$. 
The case $k=0$ is special. Indeed one can see that at $t=t_0$, $0$ is the first  eigenvalue of the Jacobi operator $J_{t_0}$, and one can show that bifurcation occurs at this instant but to give another catenoid. This is known as the stable/unstable catenoid bifurcation. For $t=t_k$, $k\neq0$, $k=1,...,(p-1)$ then $0$ is an eigenvalue of multiplicity at least 2 in the space $\mathcal{C}^{2,\alpha}_0 $. However we will prove that for $t=t_1$, $0$ is a simple eigenvalue of $J_t$ in the space $\mathcal{V}$.

\item[\textbf{Step 3:}] For $t=t_1$, $0$ is a simple eigenvalue in $\mathcal{V}$. \\

For $t=t_1$ because of the relation \eqref{relation}, we are able to rank the eigenvalues of $J_{t_1}$, denoted by $\lambda_1<\lambda_2\leq...$. By definition of $t_1$ we have 
$$\lambda_1(A_{t_1})=\mu_1(A_{t_1})=-1.$$
Denoting the eigenvalues of \eqref{eigenvalueradial} by $\mu_1 < \mu_2 \leq...$  we claim that it holds that $\mu_2(A_{t_1})>0$. The proof of this fact is the same of the discussion in the paper of Shiffman \cite{Shiffman} p.82. We reproduce this proof for the comfort of the reader. A direct computation shows that solutions of the problem \eqref{eigenvalueradial} with $\mu=0$ are of the form 
\begin{equation}\label{26}
w(r)=a \tanh(pr)+b[1-(pr)\tanh(pr)], \ \text{for} \ a,b \in \R.
\end{equation}

For $b \neq 0$, the zeros of the function \eqref{26} occur at the solutions of 
$$\coth(pr)=pr-c, \ \text{where} \ c=\frac{-a}{b}. $$
We can see that there are two solutions of this equation $r_1(c),r_2(c)$ with $r_1(c)<0<r_2(c)$; and both $r_1(c)$ and $r_2(c)$ are increasing functions of $c$ with $r_1(c)\rightarrow -\infty$, $r_2(c)\rightarrow 0$ as $c \rightarrow -\infty$ and $r_1(c) \rightarrow 0, r_2(c) \rightarrow +\infty$ as $c \rightarrow +\infty$. Thus every real number is covered exactly once by all the numbers $r_1(c),r_2(c)$ the number $0$ being covered by the case $b=0$. Select one function \eqref{26} which vanishes at $r_1$. Then, in the interval $r_1 \leq r \leq r_2,$ this function vanishes for at most one additional value of $r$. This means in accordance with Sturm-Liouville theory, that 
$$\mu_2(A_{t_1})>0.$$
Another way to see this fact is the following: since the functions \eqref{26} have only two zeros, the Courant nodal's theorem allows us to say that $0$ can not be an $n$th-eigenvalue of \eqref{eigenvalueradial} with $n\geq 2$. For $t$ small we have $\mu_2(A_t) >\mu_1(A_t)>0$, hence because the eigenvalues $\mu_n(A_t)$ are decreasing with respect to $t$ (see Appendix A) we find that $\mu_2(A_t)>0$.
Thus we can deduce that 

$$\lambda_2=\mu_1+1=0< \mu_2$$ and we have the alternative
$$\lambda_3=-1+4, \ \lambda_3=\mu_2+1 \ \text{or} \ \lambda_3=\mu_3.$$
In any case we have $\lambda_3>0$ and for $i\geq 3$, $\lambda_i >0$. 
Hence the eigenfunctions associated to $0$ are only of the form $A_1v_1\cos(\theta)+B_1v_1\sin(\theta)$ with 
$v_1$  the first eigenfunction of $J_{t_1}$ and $A_1,B_1$ two constants. However because we choose to work in the 
space $\mathcal{V}=V \cap \mathcal{E}$, the only eigenfunctions admissible in $\mathcal{E}$ are of the form 
$A_1v_1\cos(\theta)$. Thus if $0$ is an eigenvalue of $J_{t_1}$ in $\mathcal{E}$ it is simple. We denote by 
$u_1:=\frac{v_1\cos(\theta)}{\|v_1\cos(\theta)\|_{L^2}}$ we prove in the next step that $u_1$ is indeed in $\E$ and hence $0$ is a simple 
eigenvalue of $J_{t_1}$ in $\mathcal{E}$. 
\\

\item[\textbf{Step 4:}] $u_1(-r,\theta)=u_1(r,\theta)$ for all $(r,\theta) \in [-1,1] \times [-\pi,\pi]$.

This comes from the fact that $u_1(r,\theta)=\frac{v_1(r)\cos(\theta)}{\|v_1\cos(\theta)\|_{L^2}}$, with $v_1$ which is the first eigenfunction 
associated to the eigenvalue $-1$ of the problem \eqref{eigenvalueradial}. Indeed if we set $\tilde{v_1}(r)=v_1(-
r)$ for $r \in [-1,1]$. Then $\tilde{v_1}$ is also a solution of \eqref{eigenvalueradial} with $\mu=-1$. Thus we 
deduce that $\tilde{v_1}=av_1$ for some real constant $a$. Because $\|\tilde{v_1}\|_{L^2}=\|v_1\|_{L^2}$ we find 
that $\tilde{v_1}(r)=\pm v_1(r)$. However if $\tilde{v_1}(r)=-v_1(r)$ this would imply that $v_1$ is odd which is 
not possible because $v_1 >0$ in $[-1,1]$ (this is the first eigenfunction). Hence we find that $\tilde{v_1}=v_1$ 
and then that $v_1$ is symmetric with respect to $r\mapsto -r$. This proves the claim. This also proves the fact that for $t=t_1$ the Jacobi operator $J_{t_1}$ is not invertible in the space $\E$. 

\begin{remark} We are not able to prove that $0$ is a simple eigenvalue of $J_{t_k}$ in $\mathcal{E}$ for $k 
\geq2$. Indeed for example if $k=2$, one has that $\mu_1(A_{t_2})=-4$, but if it occurs that $\mu_2(A_{t_2})=-1$ 
then the eigenfunctions associated to $0$ are of the form $v_2(r)(A\cos(2\theta)+B\sin(2\theta))$ for some constants $A$ and 
$B$ and $v_2$ the first eigenfunction of $X_{t_2}$, or of the form $f(r)[A\cos(\theta)+B\sin(\theta)]$ with $f$ 
an eigenfunction of \eqref{eigenvalueradial} associated to the eigenvalue $-1$.
\end{remark}

\item[\textbf{Step 5:}] Verification of the Crandall-Rabinowitz condition.

Here we assume that $t=t_1$. In order to apply the Crandall-Rabinowitz theorem we need to verify one last condition. We still denote by $u_1=\frac{v_1\cos(\theta)}{\|v_1\cos(\theta)\|_{L^2}}$ the eigenfunction associated to $0$ in the space $\E$ and we set $M:=D_{u,t}F(t_1,0)$, $L:=J_{t_1}$. $M$ is equal to the operator obtained by differentiating $J_t$ with respect to $t$ and taking $t=t_1$.  We must check that 
\begin{equation}\label{crandall}
Mu_1 \notin \text{Im}(L).
\end{equation}
In order to prove this it suffices to prove that $\langle Mu_1,u_1 \rangle \neq 0$ where $\langle \cdot,\cdot\rangle$ stands for the inner product in $L^2$. Indeed if $Mu_1$ is in $\text{Im}(L)$ then we can write $Mu_1=Lw$ for some $w$ in $\E$ but then
\begin{eqnarray}
\langle Mu_1,u_1 \rangle &=& \langle Lw,u_1 \rangle \nonumber \\
&=& \langle w,L^*u_1 \rangle ,\ \text{with $L^*$ the adjoint of $L$} \nonumber \\
&=&\langle w,Lu_1 \rangle, \ \text{because $L$ is symmetric} \nonumber \\
&=&0 ,\ \text{because $Lu_1=0$ by definition}. \nonumber
\end{eqnarray}

Thus we must prove that $ \langle Mu_1,u_1 \rangle \neq 0$. We use an argument taken from \cite{ABM} and 
\cite{Xavier}.
We already mentioned that the first eigenvalue of the catenoid is strictly decreasing with respect to variations of the domain, i.e.
$\lambda_1(A_t)=\mu_1(A_t)$ is strictly decreasing with respect to $t$. Furthermore this quantity depends 
smoothly on $t$ and if  we denote by $u_t$ the eigenfunction in $\E$ associated to the eigenvalue $\lambda_2(A_t)=\lambda_1(A_t)+1$ (c.f.relation \eqref{relation}) the function $u_t$ depends smoothly on $t$. This is due to the fact that we can rewrite the eigenvalue problem \eqref{jacobi1} on a fix domain with a new operator which depends analytically on $t$. We then use the \textit{Kato selection theorem} (see \cite{Kato} and the appendix A) to obtain the fact that the eigenvalue and  the eigenfunction are analytic. We have  $u_{t_1}=u_1$ and

$$\lambda_2(A_t)= \langle J_t u_t ,u_t \rangle $$ thus

\begin{eqnarray}
0 &> &\lambda_2'(A_{t_1})= \frac{d}{dt}_{|t=t_1} \langle J_t u_t ,u_t \rangle \nonumber \\
&=&\langle[\frac{d}{dt}_{|t=t_1}J_t ]u_1,u_1\rangle + \langle J_{t_1}\frac{d}{dt}_{|t=t_1}u_t,u_1\rangle +\langle J_{t_1}u_1,\frac{d}{dt}_{|t=t_1}u_t\rangle \nonumber \\
&=& \langle[\frac{d}{dt}_{|t=t_1}J_t ]u_1,u_1 \rangle, \ \text{because $J_{t_1}$ is symmetric and $J_{t_1}u_1=0$} \nonumber
\end{eqnarray} 
\noindent but $[\frac{d}{dt}_{|t=t_1}J_t] u_1=Mu_1$ by definition. Hence we proved that $\langle Mu_1,u_1 \rangle <0 $ and then \eqref{crandall} is satisfied. 
\end{itemize}

We have all the ingredients to apply the bifurcation theorem of \cite{Crandall} and we obtain the following:

\begin{theorem}
For any integer $p\geq 2$ there exists an instant $t^*(p)$ such that $(t^*,0)$ is a bifurcation point for $F$ (c.f. \eqref{bifurcation}). In addition the set of solutions of $F=0$  near $(t^*,0)$ is formed by two $\mathcal{C}^1$ Cartesian curves which intersect each other transversely in $0$ . 
\end{theorem}

\begin{proof}
It suffices to take $t^*=t_1(p)$ in the previous construction and apply the bifurcation theorem.
\end{proof}
\begin{itemize}

\item[\textbf{ Step 6:}] The bifurcating surfaces are not catenoids.

Now in order to conclude the proof of theorem \ref{nonsymmetricmminsurf} we must check that 
parametrizations $Y_t=X_t+u(t)N_t$, with $u(t)\neq 0$ the solution of \eqref{bifurcation} obtained via the
bifurcation, are not portions of catenoids and that there are symmetric with respect to the planes $P=\{z=0\}$ 
and $P'=\{x=0\}$. The surfaces $Y_t$ are indeed symmetric with respect to these planes because we choose to 
work  in the space $\E$ to prove the bifurcation. Thus the solution $u$ of \eqref{bifurcation} inherits the 
symmetry 
of the space $\E$ and this implies the symmetries of the surfaces $Y_t$. Indeed because of the symmetry of $X_t$ and $N_t$ if $u \in \E$ then $Y_t=X_t+u(t)N_t$ satisfies that its coordinates $(y_1,y_2,y_3)$ are such that
$$(y_1(r,-\theta),y_2(r,-\theta),y_3(r,-\theta))=(y_1(r,\theta),-y_2(r,\theta),y_3(r,\theta)).$$
 
 We prove now that $Y_t$ are not catenoids. Recall that, because $u(t) \in \mathcal{C}^{2,\alpha}
 _0$ (i.e. $u(t)$ is zero on the boundaries of the domain) $X_t$ and $Y_t$ have same boundaries. We know that 
 there are at most two catenoids which pass through two 
 given circles  in parallel planes. One stable catenoid and one unstable catenoid. $X_t$ 
 for $t$ near $t_1$ is the unstable catenoid because $\lambda_1(A_t) <0$. If $t$ is near
enough from $t_1$ then $u(t)$ is small (that is $\|u\|_{\mathcal{C}^{2,\alpha}}$ is small).
Thus $Y_t=X_t+u(t)N_t$ is close to the unstable catenoid, but the stable catenoid  is ``far
away" from the unstable one. More precisely for $t$ near $t_1$, $X_t+u(t)N_t$ is in a
 small tubular neighborhood of $X_t$ whereas the stable catenoid is not contained in
such small tubular neighborhood. Note that when $t=t_0$ the stable and unstable
catenoid coincide and for $t$ near $t_0$ the stable catenoid is in a small tubular
 neighborhood of the unstable one, this is not the case here because $t_1>t_0$.
We also claim that $Y_t$ can not be a re-parametrization of the unstable catenoid parametrized by $X_t$ if $u$ is small enough but different from $0$. Besides for $t-t_1$ small the surfaces $Y_t=X_t+u(t)N_t$ are on both sides of the catenoid. Indeed making an expansion of $Y_t$ for $t-t_1$ small we find that:

\begin{equation}\nonumber
Y_t=X_{t_1}+(t-t_1)[\partial_t X_{|t=t_1} + u(t_1) \partial_tN_{|t=t_1}+ \partial_tu_{|t=t_1} N_{t_1}]+o(t-t_1)
\end{equation}

$\partial_tu_{|t=t_1}=u_1$ is the Jacobi field found previously thus we know that $u_1=\frac{v_1(r)\cos(\theta)}{\|v_1(r)\cos(\theta)\|_{L^2}}$. We can check that $\partial_t X_{|t=t_1}$ and $\partial_tN_{|t=t_1}$ are both tangent to the surface $X_{t_1}$. Hence we can focus on the normal variation. Because the sign of function $u_1$ changes with $\theta$ we obtain  that $Y_t=X_t+u(t)N_t$ is on both sides of the catenoid for $t$ near $t_1$. Thus we can deduce theorem \ref{nonsymmetricmminsurf}. 
\end{itemize}
\end{proof}
From this theorem and the equivalence between finding minimal surfaces bounded by two coaxial circles in parallel planes and finding solutions of \eqref{semi-stiff 1} with $c<0$ we obtain:

\begin{theorem}
There exist non-radial solutions of \eqref{semi-stiff 1}.
\end{theorem}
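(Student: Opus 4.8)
The plan is to obtain this statement as a short corollary of Theorem \ref{nonsymmetricmminsurf}, using the equivalence established earlier in this section between minimal surfaces bounded by two $p$-coverings of circles in parallel planes and solutions of \eqref{semi-stiff 1} with $c<0$. First I would invoke Theorem \ref{nonsymmetricmminsurf} to produce a non-rotationally symmetric immersed minimal surface $S$ bounded by two concentric $p$-coverings of circles lying in parallel planes (with $p\geq 2$). Since $S$ is a regular surface of class $\mathcal{C}^2$, it can be represented by conformal parameters; after a conformal change of the parameter domain I may assume $S$ is parametrized by some $X=(u,h):A\to\R^3$ on an annulus $A=A_\varrho$.

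Next I would apply the proposition characterizing conformal parametrizations of minimal surfaces bounded by two circles in parallel planes (the unnumbered proposition preceding Shiffman's theorem): it yields $h=a\ln r+b$ together with the fact that $u$ is harmonic, satisfies $u\wedge\partial_\nu u=0$ on $\partial A$, and has $|u|$ equal to a constant $C_1$ on $\mathbb{S}^1$ and $C_2$ on $C_\varrho$, these constants being the radii of the two bounding circles. Here the two bounding circles are the boundary of the truncated $p$-covering of the catenoid $X_{t}$ shared by the bifurcating surfaces $Y_{t}=X_t+u(t)N_t$, and they have the same radius $R=\cosh(t^{*}p)$; hence $C_1=C_2=R$. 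I would then set $\tilde u:=u/R$. All three semi-stiff conditions survive this positive rescaling, namely $\Delta\tilde u=0$, $|\tilde u|=1$ on $\partial A$, and $\tilde u\wedge\partial_\nu\tilde u=R^{-2}(u\wedge\partial_\nu u)=0$, so $\tilde u$ is a genuine solution of \eqref{semi-stiff 1}.

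Finally I would identify the class and exclude radial symmetry. Because $S$ is a $p$-covering of each bounding circle, the restriction of $\tilde u$ to either component of $\partial A$ is a degree-$p$ map into $\mathbb{S}^1$, so $\tilde u\in\I_{p,p}$ (consistent with proposition \ref{nopq}, since $c<0$). To see $\tilde u$ is non-radial I argue by contraposition: if $\tilde u=\alpha u_p$ or $\tilde u=\alpha\tilde u_p$, then the surface obtained from $\tilde u$ via proposition \ref{buildingminsurf1} is invariant under the maps $z\mapsto e^{i\psi}z$ on the domain coupled with the planar rotations $R_{p\psi}$ of the $xy$-plane in the target, because $|u_p|$ and $h$ depend only on $r$ while the phase is $e^{ip\theta}$; thus its image would be a rotationally symmetric $p$-covering of a catenoid, contradicting the non-rotational symmetry of $S$. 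Therefore $\tilde u$ is a non-radial solution of \eqref{semi-stiff 1} in $\I_{p,p}$, which proves the theorem.

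I expect the only delicate points to be the passage from the explicit but non-conformal parametrization $Y_{t}=X_{t}+u(t)N_{t}$ of the bifurcating surfaces to an honest conformal parametrization $X=(u,h)$ on an annulus, and the observation that the two bounding circles share a common radius, so that a single rescaling normalizes $|u|$ to $1$ on both components of $\partial A$ at once. Once these are settled, the semi-stiff conditions, the degree count, and the non-radiality all follow immediately from the results already proved in this section.
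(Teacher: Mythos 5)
Your proposal is correct and follows essentially the same route as the paper: the authors deduce this theorem in a single line from Theorem \ref{nonsymmetricmminsurf} combined with the equivalence, established in Section 5, between solutions of \eqref{semi-stiff 1} with $c<0$ and minimal surfaces bounded by two $p$-coverings of circles in parallel planes. Your write-up merely makes explicit the steps the paper leaves implicit (conformal reparametrization on an annulus, the fact that both boundary circles share the radius $\cosh(t^*p)$ so a single rescaling normalizes $|u|$ on both components, the degree count, and non-radiality via the lifting back to a surface), all of which are sound.
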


\begin{figure}[ht!]

\begin{center}
\includegraphics[scale=1.5]{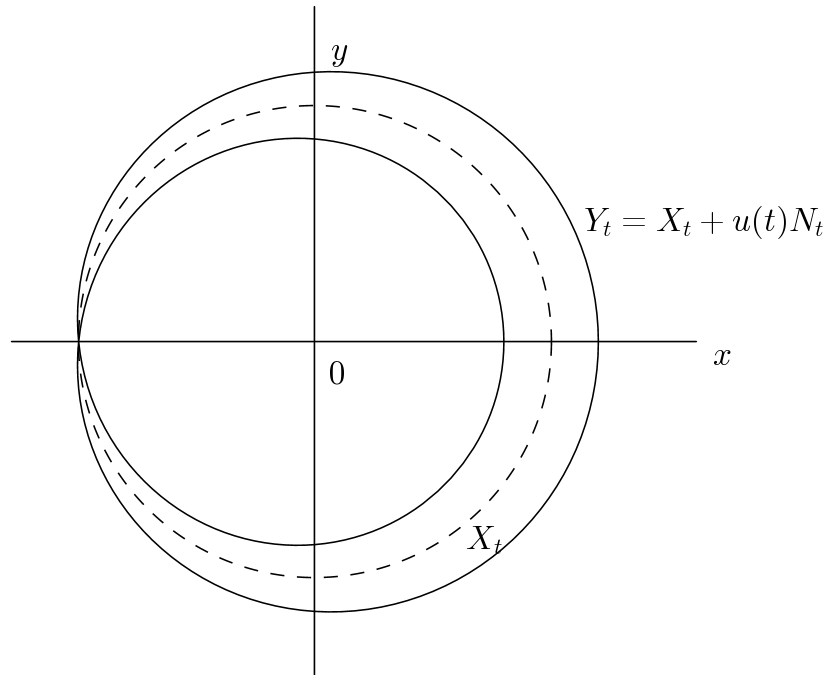}
\end{center}
\caption{View of bifurcating solutions for $p=2$ in the plane $\{z=0\}$.}
\end{figure}

\begin{figure}[ht!]
\begin{center}
\scalebox{0.6}{\input{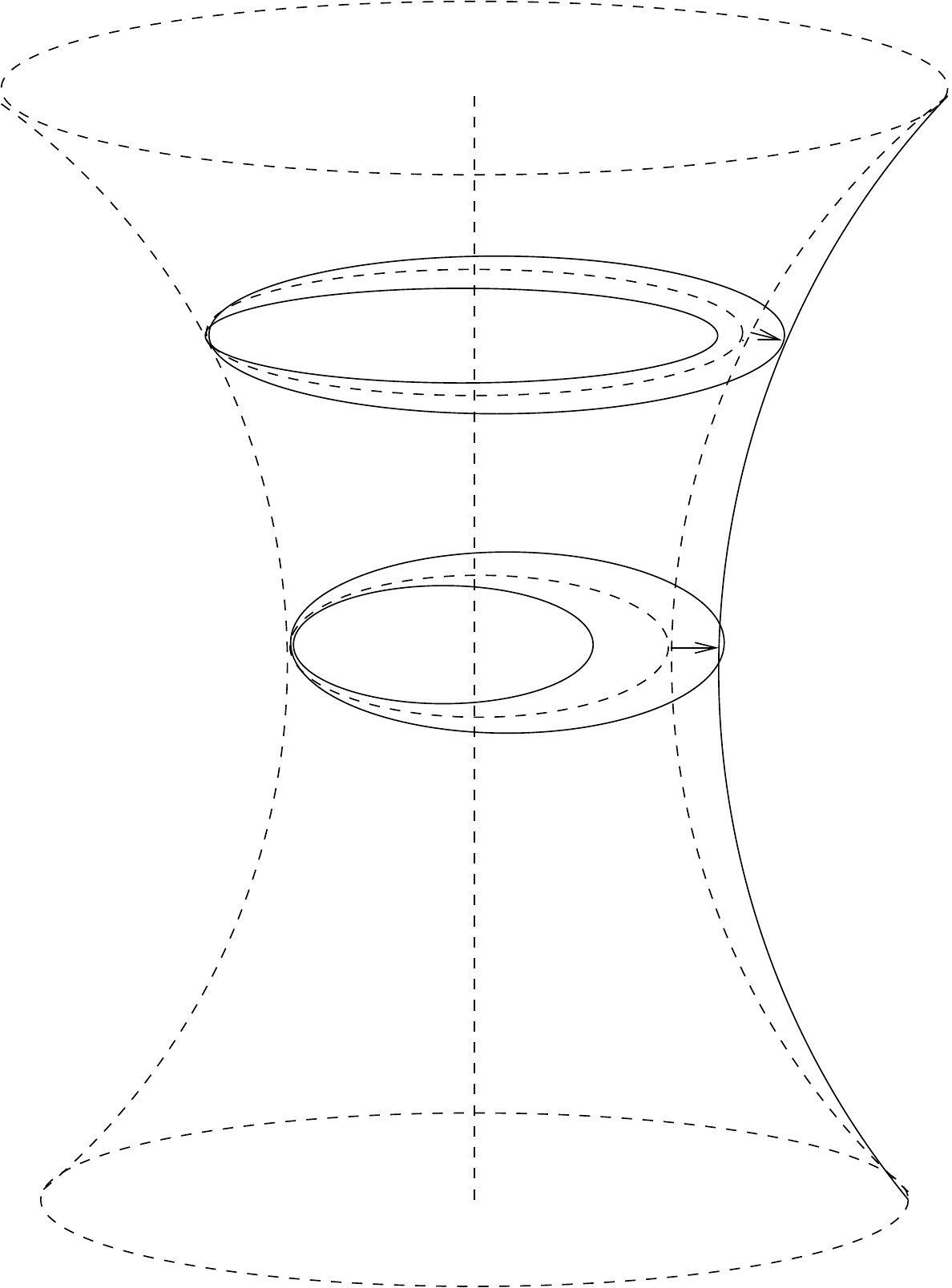_t}}
\end{center}
\caption{Bifurcating solutions for $p=2$.}
\end{figure}

\section*{Conclusion and open problems}

We classified all holomorphic (or anti-holomorphic) solutions of \eqref{semi-stiff 1} and we discussed some properties of radial solutions of \eqref{semi-stiff 1}. We also discussed the link between problem \eqref{semi-stiff 1} and minimal surfaces. Besides thanks to this link we built new minimal surfaces in $\R^3$ bounded by two circles in parallel planes and then new non radial solutions of \eqref{semi-stiff 1}. Finding solutions to problem \eqref{semi-stiff 1} is a non trivial issue because of the lack of compactness of the problem. Studying problem \eqref{semi-stiff 1} may be useful to understand more general problems of lack of compactness like the G.L equations with semi-stiff boundary conditions.

Three main questions remain open after the completion of this work.

\textbf{Open problem 1:} Do the non radial solutions constructed in section 6 minimize the energy in the class $\I_{p,p}$? If the minimal surfaces obtained from these solutions by proposition \ref{buildingminsurf1} were not symmetric with respect to some horizontal planes then we would be able to deduce that the solutions are not minimizers of $E$ in $\I_{p,p}$ because of proposition \ref{planesymmetry}. But we constructed minimal surfaces which are symmetric with respect to some horizontal planes hence the question remains open.\\

If the non radial solutions of \eqref{semi-stiff 1} do not minimize $E$ in $\I_{p,p}$, do there exist minimizer of $E$ in this class? Do we have $\varrho_p=\varrho'_p$ in theorem \ref{theorem2}? If this is not the case it would imply that there exists non radially symmetric minimizing solutions which have the same energy as the minimizing radial solution.
These questions are interesting from the point of view of the study of lack of compactness of the problem. \\

\textbf{Open problem 2:} Can we construct (non-minimizing) non-radial solutions of \eqref{semi-stiff 1} with $c>0$? If yes what is the geometry of the minimal surfaces linked to these new solutions? \\

An other open question is:

\textbf{Open problem 3:} Do there exist non constant solution of \eqref{semi-stiff 1} in $\I_{0,0}$? \\

We can not give an answer to that seemingly simple question, however we can prove that if such non constant solution in $\I_{0,0,}$ $u$ exists then its Hopf differential satisfies $\H_u(z)= \frac{c}{z^2}$ with $c>0$ and then it has a geometry of ``helicoid" type according to the terminology we used in section 5. 

\begin{proposition}
Let $u\in \I_{0,0}$ be a non constant solution of \eqref{semi-stiff 1} then $\H_u(z)= \frac{c}{z^2}$ with $c>0$.
\end{proposition}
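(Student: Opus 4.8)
The plan is to start from Lemma~\ref{Hopfcst}, which already tells us that any solution $u$ of \eqref{semi-stiff 1} satisfies $z^2\H_u(z)=c$ for some real constant $c$, so that $\H_u(z)=c/z^2$ with $c\in\R$. It therefore only remains to exclude the cases $c=0$ and $c<0$ for a non-constant $u\in\I_{0,0}$, after which $c>0$ follows. Note that Proposition~\ref{nopq} alone does not settle the matter, since it only yields $p=q$, which is consistent with $\I_{0,0}$; a finer, degree-based argument is needed.

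First I would rule out $c=0$. If $c=0$ then $\H_u\equiv0$, so by point 2) of Proposition~\ref{Hopf differential} the map $u$ is conformal, i.e.\ holomorphic or anti-holomorphic. Since $u$ is assumed non-constant, Lemma~\ref{obstruction} (together with its anti-holomorphic counterpart obtained by conjugation) forces the two boundary degrees to have strictly opposite signs, in particular both non-zero. This is incompatible with $u\in\I_{0,0}$, so $c\neq0$.

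The main step is to exclude $c<0$, and I expect this to be the principal obstacle. Suppose $c<0$. Then equation \eqref{constHopf} reads $|\partial_\theta u|^2=r^2|\partial_r u|^2-c=r^2|\partial_r u|^2+|c|\ge|c|>0$, so, by smoothness up to the boundary (Proposition~\ref{regularity}), $\partial_\theta u$ never vanishes on $\overline A$, in particular on $\mathbb{S}^1$. On $\mathbb{S}^1$ we have $|u|=1$, so we may write $u=e^{i\varphi(\theta)}$ with $\varphi$ smooth; then $\partial_\theta u=i\varphi'(\theta)e^{i\varphi(\theta)}$ and $u\wedge\partial_\theta u=\varphi'(\theta)$. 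Because $\partial_\theta u\neq0$, the continuous function $\varphi'$ never vanishes and hence keeps a constant sign, so that
$$\deg(u,\mathbb{S}^1)=\frac{1}{2\pi}\int_0^{2\pi}u\wedge\partial_\theta u\,d\theta=\frac{1}{2\pi}\int_0^{2\pi}\varphi'(\theta)\,d\theta=\frac{\varphi(2\pi)-\varphi(0)}{2\pi}\neq0,$$
the last value being a non-zero integer multiple of $2\pi$ by periodicity of $u$ together with the strict monotonicity of $\varphi$. This contradicts $\deg(u,\mathbb{S}^1)=0$, so $c<0$ is impossible.

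Combining the two exclusions gives $c>0$, as claimed. The delicate point, and the one I would treat most carefully, is the $c<0$ case: there the sign condition on $c$ is exactly what guarantees that $\partial_\theta u$ cannot vanish, which in turn pins down the strict monotonicity of the boundary phase and produces a non-zero degree — the geometric counterpart being that a $c<0$ surface is wound non-trivially around its axis, incompatible with degree zero. Since this reasoning is local to a single boundary component, the identical argument carried out on $C_\varrho$ furnishes an independent confirmation.
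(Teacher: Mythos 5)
Your proof is correct and follows essentially the same route as the paper: reduce to $\H_u=c/z^2$ with $c\in\R$ via Lemma~\ref{Hopfcst}, exclude $c=0$ by Lemma~\ref{obstruction}, and exclude $c<0$ by observing that $\partial_\theta u$ cannot vanish, so the boundary phase is strictly monotone and the degree on $\mathbb{S}^1$ is non-zero, contradicting $u\in\I_{0,0}$. No gaps.
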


\begin{proof}
Let $u\in \I_{0,0}$ be a non constant solution of \eqref{semi-stiff 1}, we know from lemma \ref{Hopfcst} that 
$$\H_u(z)=\frac{c}{z^2}, \ \text{in} \ A $$
with $c$ a real constant. If $c=0$ then $u$ is holomorphic or antiholomorphic and then $u$ must belong to $\I_{r,s}$ with $r>0>s$ or with $r<0<s$ because of lemma \ref{obstruction} which is a contradiction. If $c<0$ then we obtain that 
$$r^2|\partial_ru|^2-|\partial_\theta u|^2 =c <0, \ \text{in} A .$$

Thus $\partial_\theta u$ does not vanish in $A$. But on $\partial A$ because $u$ is smooth and $|u|=1$ on can write $u =|u|e^{i\varphi}$ the fact that $|u|=1$ on $\partial A$ gives us that $|\partial_\theta u|=|\partial_\theta \varphi|$ and then $\partial_\theta \varphi$ does not vanish on $\partial A$ so has constant sign on $\mathbb{S}^1$ and on $C_\varrho$. Let us assume that $\partial_\theta \varphi >0$. Then we  find that 
$$\deg(u,\mathbb{S}^1)=\frac{1}{2\pi}\int_{\mathbb{S}^1} u \wedge u_\tau =\frac{1}{2\pi}\int_0^{2\pi} \partial_\theta \varphi d\theta > 0$$
which is a contradiction with the fact that $u\in \I_{0,0}$ then we must have $c>0$.

\end{proof}

\section*{Appendix A: On the properties of the eigenvalues of the Jacobi operator }

The aim of this appendix is to collect and prove some results about the monotonicity and differentiability of eigenvalues of some elliptic operators. These results were used in a crucial way in the proof of theorem \eqref{nonsymmetricmminsurf}. We refer to \cite{Chavel}, \cite{courant1989methods} and \cite{Kato} for the proofs of these results. Like before we let $A_t:= [-t,t] \times [-\pi,\pi]$ for $t>0$. We consider the following eigenvalue problems

\begin{equation}\label{JacobiA1}
\left\{
\begin{array}{rcll}
\partial_r ^2 v +\partial_\theta ^2 v+\frac{2p^2}{\cosh^2(pr)}v +\lambda &= &0 \\
v(1,\theta)=v(-1,\theta)= 0
\end{array}
\right.
\end{equation}
and 

\begin{equation}\label{eigenvalueradialA2}
\left\{
\begin{array}{rcll}
w''(r)+(\frac{2p^2}{\cosh^2(pr)}+ \mu)w &= &0 \\
w(t)=w(-t)=0
\end{array}
\right.
\end{equation}

For $t>0$ we denote by $\lambda_1(A_t) \leq \lambda_2(A_t) \leq...$ the eigenvalues of problem \eqref{JacobiA1} and by $\mu_1(A_t) \leq \mu_2(A_t) \leq ...$ the eigenvalues of problem \eqref{eigenvalueradialA2}. We already pointed out that the eigenvalues of problem \eqref{JacobiA1} and \eqref{eigenvalueradialA2} are linked. If $\lambda$ is an eigenvalue of \eqref{JacobiA1} then there exist an integer $n$ and an eigenvalue $\mu$ of \eqref{eigenvalueradialA2} such that 
$$\lambda= \mu +n^2.$$
Conversely  if $\mu$ is an eigenvalue of \eqref{eigenvalueradialA2} then $\lambda=\mu+n^2$ is an eigenvalue of \eqref{JacobiA1} for all $n\in \mathbb{Z}$. From this relation we easily deduce that 
$$\lambda_1(A_t)=\mu_1(A_t).$$
The first result we used is the strict monotonicity of the eigenvalues $\lambda$ and $\mu$ with respect to domain variations:

\begin{proposition}
For all $k$ in $\mathbb{N}$ the functions $t \mapsto \lambda_k(A_t)$ and $t \mapsto \mu_k(A_t)$ are decreasing on $]0,+\infty[$.
\end{proposition}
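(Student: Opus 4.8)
The plan is to deduce the statement from the Courant--Fischer (min--max) characterization of the eigenvalues, using a domain-inclusion argument for the weak monotonicity and a one-dimensional Hadamard variation formula to upgrade it to strict monotonicity.

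First I would record the variational characterization. Writing $V(r)=\frac{2p^2}{\cosh^2(pr)}$, both operators are of Schr\"odinger type $-\Delta-V$ (respectively $-\frac{d^2}{dr^2}-V$ in the radial variable) with Dirichlet conditions, and on a bounded domain with bounded potential they have discrete spectrum bounded below. Courant--Fischer gives
$$\mu_k(A_t)=\min_{\substack{S\subset H^1_0(-t,t)\\ \dim S=k}}\ \max_{\substack{w\in S\\ w\neq 0}}\frac{\int_{-t}^t\big((w')^2-Vw^2\big)}{\int_{-t}^t w^2},$$
and the analogous formula for $\lambda_k(A_t)$ over $H^1_0(A_t)$. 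For $0<s<t$ the zero-extension map $H^1_0(A_s)\hookrightarrow H^1_0(A_t)$ (and its one-dimensional analogue) leaves both the numerator and the denominator of each Rayleigh quotient unchanged, so the min--max for the larger domain runs over a larger admissible class. Hence $\lambda_k(A_t)\le\lambda_k(A_s)$ and $\mu_k(A_t)\le\mu_k(A_s)$, which already yields that both families are non-increasing.

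To obtain \emph{strict} monotonicity I would reduce everything to the one-dimensional problem \eqref{eigenvalueradialA2}, where classical Sturm--Liouville theory guarantees that every eigenvalue $\mu_k(A_t)$ is simple. Consequently $t\mapsto\mu_k(A_t)$ is real-analytic and the normalized eigenfunction $w_k(\cdot,t)$ depends analytically on $t$ (the Kato selection theorem \cite{Kato} applies, although in the simple case this is already elementary). Differentiating $\int_{-t}^t w_k^2=1$ together with the eigenvalue equation leads to the one-dimensional Hadamard formula
$$\frac{d}{dt}\mu_k(A_t)=-\,w_k'(t,t)^2-w_k'(-t,t)^2.$$
Since $w_k(\pm t,t)=0$ while $w_k$ is a nontrivial solution of a second-order linear ODE, uniqueness for the initial value problem forces $w_k'(\pm t,t)\neq 0$; thus the right-hand side is strictly negative and $\mu_k(A_t)$ is strictly decreasing.

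Finally I would transfer strictness to the two-dimensional eigenvalues via the relation $\lambda=\mu+n^2$. Setting $f_{j,n}(t):=\mu_j(A_t)+n^2$, the ordered family $(\lambda_k(A_t))_{k\ge1}$ is the increasing rearrangement of $\{f_{j,n}(t):j\ge1,\ n\in\mathbb{Z}\}$, so that
$$\lambda_k(A_t)=\min_{|I|=k}\ \max_{(j,n)\in I}f_{j,n}(t),$$
the minimum taken over $k$-element subsets $I$ of the index set. Each $f_{j,n}$ is strictly decreasing by the previous step; letting $I^*$ realize the minimum at $s$, the maximum over a finite index set of strictly decreasing functions is strictly decreasing, whence $\lambda_k(A_s)=\max_{(j,n)\in I^*}f_{j,n}(s)>\max_{(j,n)\in I^*}f_{j,n}(t)\ge\lambda_k(A_t)$. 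I expect the \emph{main obstacle} to be the differentiability issue: the naive Hadamard computation needs a smooth eigenbranch, which can fail at eigenvalue crossings in two dimensions. Reducing to the one-dimensional (hence simple, crossing-free) spectrum is precisely what makes the variation formula legitimate and the argument clean.
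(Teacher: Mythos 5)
Your proof is correct, and its first half coincides with the paper's: both establish the weak inequality by extending test functions (and $k$-dimensional subspaces) by zero and invoking the Courant--Fischer principle. Where you genuinely diverge is in upgrading to \emph{strict} monotonicity. The paper stays variational and two-dimensional: for $k=1$, equality would make the zero extension of the first eigenfunction a first eigenfunction of the larger domain, contradicting interior positivity of first eigenfunctions; for $k\ge2$ it asserts, with only ``one can see'', that equality forces an eigenfunction of $A_{t'}$ to vanish on the open set $A_{t'}\setminus A_t$, a unique-continuation contradiction. You instead reduce strictness to the one-dimensional problem, where simplicity of Sturm--Liouville eigenvalues makes the eigenbranches analytic, derive the Hadamard formula $\frac{d}{dt}\mu_k(A_t)=-w_k'(t,t)^2-w_k'(-t,t)^2$, which is strictly negative since a nontrivial solution of a second-order linear ODE cannot have a double zero at $\pm t$, and then carry strictness back to $\lambda_k$ via the rearrangement identity $\lambda_k(A_t)=\min_{|I|=k}\max_{(j,n)\in I}(\mu_j(A_t)+n^2)$. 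The trade-off: the paper's argument is shorter, needs no differentiability in $t$, and would survive on non-separable domains once the unique-continuation step for $k\ge2$ is written out (the step it leaves implicit); yours is tied to the separable structure $\lambda=\mu+n^2$, but it bypasses the delicate equality-case analysis entirely and delivers extra information --- an explicit strictly negative derivative and analytic eigenbranches, which the paper ends up needing anyway (Proposition \ref{analytic}, via Kato). Two details to add for completeness: the minimum over $k$-element index sets is attained because $\mu_j(A_t)+n^2\to+\infty$ as $j+|n|\to\infty$, so only finitely many indices lie below any threshold; and your differentiation of the boundary conditions, giving $\dot w_k(\pm t,t)=\mp w_k'(\pm t,t)$, is legitimate precisely because of the analyticity of the branch established just before, so that order of sub-steps must be kept.
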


\begin{proof}
Let $k \in \mathbb{N}$, we only do the proof for $t \mapsto \lambda_k(t)$ because it is similar for $t\mapsto \mu_k(t)$.
For the first eigenvalue $\lambda_1(A_t)$ using the Rayleigh quotient we have that 
$$\lambda_1(A_t)=\min_{v\in H^1_0(A_t)} \{\langle Jv,v \rangle ; \|v\|_{L^2}=1\}.$$
Let $t<t'$, we denote by $v_1(A_t)$ the first eigenfunction associated to the eigenvalue $\lambda_1(A_t)$. We can extend $v_1$ in a function $\tilde{v_1}$ defined on $A_{t'}$ by setting 
$$\tilde{v_1}(x)= \begin{cases} v_1(x), \ \text{if} \ x\in A_t \\
0, \ \ \text{if} \ x \in A_{t'}\setminus A_t.
\end{cases}$$

We then find that 
\begin{eqnarray}
\lambda_1(A_{t'}) & = & \min _{v\in H^1_0(A_{t'})} \{\langle Jv,v \rangle ; \|v\|_{L^2}=1 \} \nonumber \\
 & \leq & \langle J\tilde{v_1}, \tilde{v_1} \rangle \nonumber \\
 & \leq  &\lambda_1(A_t). \nonumber
\end{eqnarray}

Besides if $\lambda_1(A_t)=\lambda_1(A_{t'})$ it is easy to see that $\tilde{v_1}$ is the first eigenvalue of $J$ on $A_{t'}$. But a first eigenvalue is positive in the interior if the domain and $\tilde{v_1}$ vanishes in the interior of $A_{t'}$ this is a contradiction. \\
For the other eigenvalues one can use the Min-Max theorem (cf. \cite{courant1989methods}) to obtain that 
$$\lambda_k(A_t)=\min \{ \max_{v \in E_k, \|v\|_{L^2}=1} \langle Jv,v \rangle, \ E_k \subset H^1_0(A_t) \ \text{subspace of dimension} \ k \}.$$
Because the subspaces of dimension $k$ of $H^1_0(A_t)$ can be viewed as subspaces of dimension $k$ of $H^1_0(A_{t'})$ (just by extending a function in the initial subspace by $0$ in $A_{t'} \setminus A_t$) we find that 
$$\lambda_k(A_{t'}) \leq \lambda_k(A_t).$$
Furthermore if there is equality one can see that an eigenfunction $v_k$ of $J$ in $A_{t'}$ vanishes in  $A_{t'} \setminus A_t$ and this is a contradiction.
\end{proof}

In the last section of the paper, during the proof of theorem \ref{nonsymmetricmminsurf} we used the fact that the function $t \mapsto \lambda_1(A_t)$ is differentiable for $t>0$ and that the first eigenfunction associate to this eigenvalue is also differentiable with respect to $t$. This fact result from the following 

\begin{proposition}\label{analytic}
Let $t_0>0$ and $\lambda$ be an eigenvalue of multiplicity $k$ of $J_{t_0}$ ($J_{t_0}$ denotes the Jacobi operator for functions defined on $A_{t_0}$). Then there exists $k$ real numbers $\Lambda^i$ and $k$ functions $\Phi^i$, $i=1,...,k$ which are real analytic functions of $t$ such that 
\begin{itemize}
\item[1)] $J_t \Phi^i_t +{\Lambda^i}_t \Phi^i_t =0$
\item[2)] $\Lambda^i({t_0})=\lambda$
\item[3)] $\{ \Phi^i_t \}_{i=1,...,k}$ is orthonormal for all $t$.
\end{itemize}
\end{proposition}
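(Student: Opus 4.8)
The plan is to convert the moving-domain eigenvalue problem into a fixed-domain one and then quote Kato's analytic perturbation theory. Since $J_t$ acts on functions on $A_t=[-t,t]\times[-\pi,\pi]$, the operators $J_t$ live a priori on different Hilbert spaces for different $t$, so the first step is to pull everything back to the fixed rectangle $A_1=[-1,1]\times[-\pi,\pi]$ through the rescaling $\tilde r=r/t$. If $v$ solves $J_tv+\lambda v=0$ on $A_t$ then $w(\tilde r,\theta):=v(t\tilde r,\theta)$ solves, using $\partial_r^2=t^{-2}\partial_{\tilde r}^2$,
\begin{equation}\nonumber
\tilde J_t w+\lambda w=0,\qquad \tilde J_t:=\frac{1}{t^2}\partial_{\tilde r}^2+\partial_\theta^2+\frac{2p^2}{\cosh^2(pt\tilde r)},
\end{equation}
with Dirichlet conditions at $\tilde r=\pm1$ and $2\pi$-periodicity in $\theta$. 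This rescaling is a bijection between eigenpairs of $J_t$ on $A_t$ and of $\tilde J_t$ on the \emph{fixed} domain $A_1$, it preserves the eigenvalue $\lambda$, and it changes the $L^2$ inner products only by the constant Jacobian factor $t$; hence orthonormality will be preserved up to an explicit, analytic and nonvanishing normalization.

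Next I would check that $t\mapsto\tilde J_t$ is a self-adjoint holomorphic family of type (A) in the sense of Kato. The operator domain, $H^2(A_1)\cap H^1_0$ with periodic conditions in $\theta$, does not depend on $t$; and for each fixed $\phi$ in this domain the map $t\mapsto\tilde J_t\phi$ extends holomorphically to a complex neighbourhood of any $t_0>0$, because $t^{-2}$ is holomorphic away from $0$ and $\tilde r\mapsto 2p^2/\cosh^2(pt\tilde r)$ is a holomorphic function of $t$ near $t_0$, uniformly on $[-1,1]$ (the entire function $\cosh$ is bounded below by $1$ at real arguments, hence stays away from its zeros for $t$ in a small complex disc around $t_0$). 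For real $t$ the operator has real coefficients and is symmetric, hence self-adjoint on the above domain.

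The conclusion then follows from the Rellich--Kato selection theorem for self-adjoint holomorphic families of type (A) (see \cite{Kato}): if $\lambda$ is an eigenvalue of $\tilde J_{t_0}$ of multiplicity $k$, there exist real-analytic functions $\Lambda^1(t),\dots,\Lambda^k(t)$ and real-analytic eigenvector branches $\Psi^1_t,\dots,\Psi^k_t$, defined near $t_0$, with $\tilde J_t\Psi^i_t+\Lambda^i(t)\Psi^i_t=0$, $\Lambda^i(t_0)=\lambda$, and $\{\Psi^i_t\}_{i=1}^k$ orthonormal in $L^2(A_1)$ for every $t$. Setting $\Phi^i_t(r,\theta):=t^{-1/2}\Psi^i_t(r/t,\theta)$ transports these branches back to $A_t$; the factor $t^{-1/2}$ is analytic and nonvanishing and restores $L^2(A_t)$-orthonormality, so the $\Phi^i_t$ satisfy $J_t\Phi^i_t+\Lambda^i(t)\Phi^i_t=0$, $\Lambda^i(t_0)=\lambda$, and form an orthonormal analytic family, which is exactly the statement of the proposition.

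The genuinely delicate point is the passage to a fixed functional-analytic framework: one must make sure the rescaled family really is of type (A), i.e. that the common operator domain is truly $t$-independent and that the dependence on $t$ is analytic in the operator (graph-norm) sense rather than merely pointwise in $\tilde r$. Once these hypotheses are verified the result is essentially a citation; no eigenfunction computation beyond the above is needed, and the differentiability of $t\mapsto\lambda_1(A_t)$ and of its eigenfunction used in the proof of Theorem \ref{nonsymmetricmminsurf} is the special case $k=1$.
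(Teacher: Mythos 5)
Your proof is correct and takes essentially the same route as the paper: rescale the eigenvalue problem to the fixed domain $A_1$, verify that the resulting family of operators is a self-adjoint holomorphic family of type (A) in the sense of Kato (common domain, holomorphy of $t\mapsto z^{\pm 2}$ and of the potential $2p^2/\cosh^2(pz\tilde r)$ near $t_0>0$), and apply the Kato--Rellich selection theorem. The only cosmetic differences are that the paper multiplies the rescaled equation by $t^2$, so its analytic branches are $t^2\Lambda^i(t)$ rather than $\Lambda^i(t)$ itself, and it leaves implicit the pull-back of eigenfunctions to $A_t$ with the normalizing factor $t^{-1/2}$, which you carry out explicitly.
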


\begin{proof}
We first note that if $\lambda(A_t)$ is an eigenvalue for the problem \eqref{JacobiA1} on $A_t$ then $\tilde{\lambda}(t):=t^2\lambda (A_t)$ is an eigenvalue of the following problem:

\begin{equation}\label{jacobi1'}
\left\{
\begin{array}{rcll}
\partial_r ^2 v +t^2(\partial_\theta ^2+\frac{2p^2}{\cosh^2(ptr)})v +\tilde{\lambda}(t)&= &0 \\
v(1,\theta)=v(-1,\theta) &=& 0
\end{array}
\right.
\end{equation}
Thus it suffices to prove the proposition \ref{analytic} for an eigenvalue $\tilde{\lambda}$ of \eqref{jacobi1'}. We used an argument taken from \cite{MarcelBerger}. For $z$ complex we can define an operator $J_{z}: \mathcal{C}^\infty(A_1,\C) \rightarrow \mathcal{C}^\infty(A_1,\C)$ defined by
$$J_zv:= \partial_r ^2 v +z^2(\partial_\theta ^2+\frac{2p^2}{\cosh^2(pzr)})v.$$

The domain of this operator is the space $H^2(A_1,\C)$ and it is independent of $z$. Furthermore for $v \in H^2(A_1,\C)$ the function $z \mapsto J_z f$ is holomorphic for $z$ in a neighborhood of $t_0>0$ because the application $z\mapsto z^2$ and $z \mapsto \frac{1}{\cosh(pzr)}$ are holomorphic on   $\C \setminus \{ \frac{i\pi}{pr}(l+\frac{1}{2}), l \in \mathbb{Z} \}$ for $r>0$. We can then deduce that $J_z$ is a holomorphic family of operator of type (A) in the sense of \cite{Kato} p.375, for $z$ near $t_0>0$. We can easily see that his family is also selfadjoint on $L^2(A_1)$. We can thus apply the result of the last paragraph of p.386 in \cite{Kato} and this proves the proposition.
\end{proof}

\section*{Appendix B: On the harmonic extension in a circular domain}

In this appendix we give the computations which lead to a formula for the harmonic extension of $g=e^{ip\theta}$ on $\mathbb{S}^1$ and $g=\alpha e^{iq\theta}$ for $\alpha \in \mathbb{S}^1$ (see also appendix D in \cite{unpublished}). These formulas where needed in the proof of proposition \ref{linkwithSteklov}. 

For any function $g \in H^{\frac{1}{2}}(\partial A)$, using Fourier series one can write 
$$g=\sum_{n\in \mathbb{Z}} a_ne^{in\theta}, \ \text{on} \ C_\varrho $$
$$g=\sum_{n\in \mathbb{Z}} b_ne^{in\theta}, \ \text{on} \ \mathbb{S}^1,$$

\noindent with $\sum_{n\in \mathbb{Z}}|n||a_n|^2 < +\infty$ and $\sum_{n\in \mathbb{Z}}|n||b_n|^2 < +\infty$.
Let $u$ be the harmonic extension of $g$ to $A$. We may write in polar coordinates
\begin{equation}\nonumber
u=A_0+B_0\ln r +\sum_{n\neq 0} (A_nr^{\n}+B_nr^{-\n})e^{in\theta}.
\end{equation}

\noindent Identification of the coefficients on $\partial A$ yields 

\begin{equation}\nonumber
A_0=b_0, \ \ \ B_0=\frac{a_0-b_0}{\ln \varrho}
\end{equation}

\noindent and 
\begin{equation}\nonumber
A_n=\frac{b_n-a_n\varrho^{\n}}{1-\varrho^{2|n|}}, \ \ \ B_n=\frac{\varrho^{\n}(a_n-\varrho^{\n} b_n)}{1-\varrho^{2|n|}}
\end{equation}

We now want to obtain a formula for $u$ the harmonic extension of $g=e^{ip\theta}$ on $\mathbb{S}^1$ and $g=\alpha e^{iq\theta}$ on $C_\varrho$ (with $(p,q)\in \mathbb{Z}^2$ and $\alpha \in \mathbb{S}^1)$. Application of the previous formulas yields if $p\neq q$

\begin{equation}\nonumber
A_p=\frac{1}{1-\varrho^{2|p|}}, \ \ \ A_q=\frac{-\alpha \varrho^{\q}}{1-\varrho^{2|q|}}, \ \ A_n=0 \ \text{if} \ n\neq p,q
\end{equation}

\begin{equation}\nonumber
B_p=\frac{-\varrho^{2\p}}{1-\varrho^{2|p|}}, \ \ \ B_q=\frac{\alpha \varrho^{\q}}{1-\varrho^{2|q|}}, \ \ A_n=0 \ \text{if} \ n \neq p,q 
\end{equation}

\noindent and if $p=q$ one obtains

\begin{equation}\nonumber
A_p=\frac{1-\alpha \varrho^{\p}}{1-\varrho^{2|p|}}, \ \ \ B_p=\frac{\varrho^{\p} (\alpha-\varrho^{\p})}{1-\varrho^{2|p|}}, \ A_n=0 \ \text{if} \ n\neq p.
\end{equation}

In any case we obtain

\begin{equation}
u=\frac{1}{1-\varrho^{2|p|}}(r^{\p}-\frac{\varrho^{2\p}}{r^{\p}})e^{ip\theta}-\frac{\alpha \varrho^{\q}}{1-\varrho^{2|q|}}(r^{\q}-\frac{1}{r^{\q}})e^{iq\theta}.
\end{equation}
We can thus deduce that 

\begin{equation}
\partial_r u =\frac{|p|}{1-\varrho^{2|p|}}(r^{\p-1}+\frac{\varrho^{2\p}}{r^{\p+1}})e^{ip\theta}-\frac{\q\alpha \varrho^{\q}}{1-\varrho^{2|q|}}(r^{|q|-1}+\frac{1}{r^{\q+1}})e^{iq\theta}.
\end{equation}

This yields on $\mathbb{S}^1$, with $\alpha=e^{i\varphi} $
\begin{eqnarray}
u\wedge \partial_r u &= &\frac{|q|\varrho^{|q|}}{(1-\varrho^{2|p|})(1-\varrho^{2|q|})}(r^{\p}-\frac{\varrho^{2\p}}{r^{\p}})(r^{\q-1} 
+\frac{1}{r^{\q+1}})\sin([p-q]\theta+\varphi) \nonumber \\ &+&\frac{\p \varrho^{\q}}{(1-\varrho^{2|p|})(1-\varrho^{2\q})}(r^{\q}-\frac{1}{r^{\q}})(r^{\p-1}+\frac{\varrho^{2\p}}{r^{\p+1}})\sin([p-q]\theta+ \varphi)
\end{eqnarray}

With a similar computation on $C_\varrho$ one can see that $u\wedge \partial_\nu u =0$ on $\partial A$ if and only if $q=p$ and $\alpha=1$ or $q=p$ and $\alpha =-1$. This concludes the proof of proposition \ref{linkwithSteklov}.
\section*{Acknowledgements}

The second author wishes to warmly thank Etienne Sandier for his help and for constant support during the elaboration of this paper. He also wants to thank Mickaël Dos Santos, Yuxin Ge, Laurent Mazet, Rabah Souam, Eric Toubiana and his colleague Peng Zhang for very useful discussions on this paper.
\nocite*

\bibliographystyle{plain}
\bibliography{lapla}

\begin{thebibliography}{10}

\bibitem{ABM}
Stan Alama, Lia Bronsard, and Petru Mironescu.
\newblock On compound vortices in a two-component {G}inzburg-{L}andau
  functional.
\newblock {\em Indiana Univ. Math. J.}, 61(5):1861--1909, 2012.

\bibitem{BarbosaDoCarmo}
J.~L. Barbosa and M.~do~Carmo.
\newblock On the size of a stable minimal surface in {$R^{3}$}.
\newblock {\em Amer. J. Math.}, 98(2):515--528, 1976.

\bibitem{MarcelBerger}
M.~Berger.
\newblock Sur les premi\`eres valeurs propres des vari\'et\'es riemanniennes.
\newblock {\em Compositio Math.}, 26:129--149, 1973.

\bibitem{Nonexistence}
L.~Berlyand, D.~Golovaty, and V.~Rybalko.
\newblock Nonexistence of {G}inzburg-{L}andau minimizers with prescribed degree
  on the boundary of a doubly connected domain.
\newblock {\em C. R. Math. Acad. Sci. Paris}, 343(1):63--68, 2006.

\bibitem{unpublished}
L.~Berlyand and P.~Mironescu.
\newblock {G}inzburg-{L}andau minmizers in perforated domains with prescibed
  degrees.
\newblock http://math.univ-lyon1.fr/~mironescu/3.pdf.

\bibitem{Capacity}
L.~Berlyand and P.~Mironescu.
\newblock {G}inzburg-{L}andau minimizers with prescribed degrees. {C}apacity of
  the domain and emergence of vortices.
\newblock {\em J.Funct.Anal.}, 239(1):76--99, 2006.

\bibitem{BRMS}
L.~Berlyand, P.~Mironescu, V.~Rybalko, and E.~Sandier.
\newblock Minimax critical points in {G}inzburg-{L}andau problems with
  semi-stiff boundary conditions: existence and bubbling.
\newblock {\em Comm. Partial Differential Equations}, 39(5):946--1005, 2014.

\bibitem{Solutions}
L.~Berlyand and V~Rybalko.
\newblock Solutions with vortices of a semi-stiff boundary value problem for
  the ginzburg-landau equation.
\newblock {\em Journal European Math. Society}, 12(6):1497--1531, 2009.

\bibitem{berlyand2002symmetry}
L.~Berlyand and K.~Voss.
\newblock Symmetry breaking in annular domains for a ginzburg-landau
  superconductivity model.
\newblock In {\em IUTAM Symposium on Mechanical and Electromagnetic Waves in
  Structured Media}, pages 189--200. Springer, 2002.

\bibitem{BBH}
F.~Bethuel, H.~Br{\'e}zis, and F.~H{\'e}lein.
\newblock {\em Ginzburg-{L}andau vortices}.
\newblock Progress in Nonlinear Differential Equations and their Applications,
  13. Birkh\"auser Boston Inc., Boston, MA, 1994.

\bibitem{Ogabber}
A.~Boutet~de Monvel-Berthier, V.~Georgescu, and R.~Purice.
\newblock A boundary value problem related to the {G}inzburg-{L}andau model.
\newblock {\em Comm. Math. Phys.}, 142(1):1--23, 1991.

\bibitem{oldanew}
H.~Brézis.
\newblock Degree theory: old and new.
\newblock In {\em Topological nonlinear analysis, {II} ({F}rascati, 1995)},
  volume~27 of {\em Progr. Nonlinear Differential Equations Appl.}, pages
  87--108. Birkh\"auser Boston, Boston, MA, 1997.

\bibitem{Newquestiondegree}
H.~Brézis.
\newblock New questions related to the topological degree.
\newblock In {\em The unity of mathematics}, volume 244 of {\em Progr. Math.},
  pages 137--154. Birkh\"auser Boston, Boston, MA, 2006.

\bibitem{breziscoron}
H.~Brézis and J.M. Coron.
\newblock Large solutions for harmonic maps in two dimensions.
\newblock {\em Commun. Math. Phys.}, 92, 1983.

\bibitem{Chavel}
Isaac Chavel.
\newblock {\em Eigenvalues in {R}iemannian geometry}, volume 115 of {\em Pure
  and Applied Mathematics}.
\newblock Academic Press, Inc., Orlando, FL, 1984.

\bibitem{courant1989methods}
R.~Courant and D.~Hilbert.
\newblock {\em Methods of mathematical physics}.
\newblock Number vol.~1 in Wiley classics library. Wiley, 1989.

\bibitem{Crandall}
Michael~G. Crandall and Paul~H. Rabinowitz.
\newblock Bifurcation from simple eigenvalues.
\newblock {\em J. Functional Analysis}, 8:321--340, 1971.

\bibitem{DaLio}
Francesca Da~Lio and Tristan Rivi{\`e}re.
\newblock Three-term commutator estimates and the regularity of
  {$\frac12$}-harmonic maps into spheres.
\newblock {\em Anal. PDE}, 4(1):149--190, 2011.

\bibitem{minimalsurfaces}
U.~Dierkes, S.~Hildebrandt, and F.~Sauvigny.
\newblock {\em Minimal surfaces}, volume 339 of {\em Grundlehren der
  Mathematischen Wissenschaften [Fundamental Principles of Mathematical
  Sciences]}.
\newblock Springer, Heidelberg, second edition, 2010.
\newblock With assistance and contributions by A. K{\"u}ster and R. Jakob.

\bibitem{DoCarmo}
Manfredo~P. do~Carmo.
\newblock {\em Differential geometry of curves and surfaces}.
\newblock Prentice-Hall, Inc., Englewood Cliffs, N.J., 1976.
\newblock Translated from the Portuguese.

\bibitem{multiplyconnected}
M.~Dos~Santos.
\newblock Local minimizers of the {G}inzburg-{L}andau functional with
  prescribed degrees.
\newblock {\em J.Funct.Anal.}, 257(4):1053--1091, 2009.

\bibitem{evans}
Lawrence~C. Evans.
\newblock {\em Partial differential equations}, volume~19 of {\em Graduate
  Studies in Mathematics}.
\newblock American Mathematical Society, Providence, RI, second edition, 2010.

\bibitem{Fraser}
Ailana Fraser and Richard Schoen.
\newblock The first {S}teklov eigenvalue, conformal geometry, and minimal
  surfaces.
\newblock {\em Adv. Math.}, 226(5):4011--4030, 2011.

\bibitem{Fraserr}
Ailana Fraser and Richard Schoen.
\newblock Minimal surfaces and eigenvalue problems.
\newblock In {\em Geometric analysis, mathematical relativity, and nonlinear
  partial differential equations}, volume 599 of {\em Contemp. Math.}, pages
  105--121. Amer. Math. Soc., Providence, RI, 2013.

\bibitem{uniqueness}
D.~Golovaty and L.~Berlyand.
\newblock On uniqueness of vector-valued minimizers of the {G}inzburg-{L}andau
  functional in annular domains.
\newblock {\em Calc. Var. Partial Differential Equations}, 14(2):213--232,
  2002.

\bibitem{Hauswirth}
Laurent Hauswirth, Ricardo Sa~Earp, and Eric Toubiana.
\newblock Associate and conjugate minimal immersions in {$M\times\bold R$}.
\newblock {\em Tohoku Math. J. (2)}, 60(2):267--286, 2008.

\bibitem{Hopfdifferential}
F.~H{\'e}lein.
\newblock {\em Constant mean curvature surfaces, harmonic maps and integrable
  systems}.
\newblock Lectures in Mathematics ETH Z\"urich. Birkh\"auser Verlag, Basel,
  2001.
\newblock Notes taken by Roger Moser.

\bibitem{doublyconnectedminsurface}
T.~Iwaniec, L.~Kovalev, and J.~Onninen.
\newblock Doubly connected minimal surfaces and extremal harmonic mappings.
\newblock {\em J. Geom. Anal.}, 22(3):726--762, 2012.

\bibitem{nharmonicmappings}
T.~Iwaniec and J.~Onninen.
\newblock {$n$}-harmonic mappings between annuli: the art of integrating free
  {L}agrangians.
\newblock {\em Mem. Amer. Math. Soc.}, 218(1023):viii+105, 2012.

\bibitem{mappingsofleastenergy}
T.~Iwaniec and J.~Onninen.
\newblock Mappings of least {D}irichlet energy and their {H}opf differentials.
\newblock {\em Arch. Ration. Mech. Anal.}, 209(2):401--453, 2013.

\bibitem{Kato}
Tosio Kato.
\newblock {\em Perturbation theory for linear operators}.
\newblock Classics in Mathematics. Springer-Verlag, Berlin, 1995.
\newblock Reprint of the 1980 edition.

\bibitem{KPP}
Myuki Koiso, Bennet Palmer, and Piccione Paolo.
\newblock Bifurcation and symmetry breaking of nodoids with fixed boundary.
\newblock to appear in advances in Calculus of variations.

\bibitem{kuwert}
E.~Kuwert.
\newblock Minimizing the energy of maps from a surface into a 2-sphere with
  prescribed degree and boundary values.
\newblock {\em Manuscripta Math.}, 83:31--38, 1994.

\bibitem{existenceofminmaxcritical}
X.~Lamy and M.~Petru.
\newblock Existence of critical points with semi-stiff boundary conditions for
  singular perturbation problems in simply connected planar domains.
\newblock {\em J. Math. Pures Appl. (9)}, 102(2):385--418, 2014.

\bibitem{Xavier}
Xavier Lamy.
\newblock Bifurcation {A}nalysis in a {F}rustrated {N}ematic {C}ell.
\newblock {\em J. Nonlinear Sci.}, 24(6):1197--1230, 2014.

\bibitem{Size}
P~Mironescu.
\newblock Size of planar domains and existence of minimizers of the
  ginzburg--landau energy with semistiff boundary conditions.
\newblock {\em Journal of Mathematical Sciences}, 202(5):703--734, 2014.

\bibitem{necessarycondinsert}
Oleksandr Misiats.
\newblock The necessary conditions for the existence of local
  {G}inzburg-{L}andau minimizers with prescribed degrees on the boundary.
\newblock {\em Asymptot. Anal.}, 89(1-2):37--61, 2014.

\bibitem{Insertion}
R.~Rodiac and E.~Sandier.
\newblock Insertion of bubbles for the {G}inzburg-{L}andau energy.
\newblock {\em Journal of fixed point theory and Applications},
  DOI:10.1007/s11784-014-0201-4, 2014.

\bibitem{sandier1993symmetry}
E.~Sandier.
\newblock The symmetry of minimizing harmonic maps from a two dimensional
  domain to the sphere.
\newblock In {\em Annales de l'institut Henri Poincar{\'e} (C) Analyse non
  lin{\'e}aire}, volume~10, pages 549--559. Gauthier-Villars, 1993.

\bibitem{ESSS}
E.~Sandier and S.~Serfaty.
\newblock {\em Vortices in the Magnetic Ginzburg-Landau Model}, volume~70.
\newblock Progress in Nonlinear Partial Differential Equations Birkhauser,
  2007.

\bibitem{Schoen}
Richard Schoen.
\newblock Existence and geometric structure of metrics on surfaces which
  extremize eigenvalues.
\newblock {\em Bull. Braz. Math. Soc. (N.S.)}, 44(4):777--807, 2013.

\bibitem{Shiffman}
M.~Shiffman.
\newblock On surfaces of stationary area bounded by two circles, or convex
  curves, in parallel planes.
\newblock {\em Ann. of Math. (2)}, 63:77--90, 1956.

\bibitem{Stein}
E.M. Stein and R.~Shakarchi.
\newblock {\em Complex analysis}.
\newblock Princeton Lectures in Analysis, II. Princeton University Press,
  Princeton, NJ, 2003.

\end{thebibliography}

\end{document}